\documentclass[reqno]{amsart}
\usepackage[T1]{fontenc}
\usepackage[utf8]{inputenc}
\usepackage{lmodern}
\usepackage{amsmath,amssymb,mathtools,mathrsfs}
\usepackage{enumitem,needspace}
\usepackage[dvipsnames]{xcolor}
\usepackage[colorlinks=true,linkcolor=orange,
 citecolor=orange,urlcolor=red!50!black]{hyperref}
\allowdisplaybreaks[1]
\numberwithin{equation}{section}
\newtheorem{theorem}{Theorem}[section]
\newtheorem{proposition}[theorem]{Proposition}
\newtheorem{lemma}[theorem]{Lemma}
\newtheorem{corollary}[theorem]{Corollary}
\theoremstyle{definition}

\theoremstyle{remark}

\setlist[enumerate,1]{label=\textup{(\arabic*)},leftmargin=2em,
 itemsep=.2\baselineskip}
\DeclareMathOperator{\tr}{tr}
\DeclareMathOperator{\Ric}{Ric}
\DeclareMathOperator{\Ent}{Ent}
\DeclareMathOperator{\PSH}{PSH}
\DeclareMathOperator{\MA}{MA}
\DeclareMathOperator{\Lip}{Lip}
\DeclareMathOperator{\diam}{diam}
\definecolor{revisiongreen}{RGB}{0,128,0}
\newcommand{\changed}[1]{{#1}}
\newcommand{\mathchanged}[1]{{#1}}
\newcommand{\ddc}{dd^c}
\newcommand{\R}{\mathbb R}
\newcommand{\C}{\mathbb C}
\newcommand{\cM}{\mathcal{M}}
\newcommand{\hw}{\mathchanged{\tilde\omega}}
\newcommand{\pcan}{\varphi_{\rm can}}
\newcommand{\ocan}{\omega_{\rm can}}
\newcommand{\oep}{\omega_{\varepsilon}}
\newcommand{\osrf}{\omega_{\rm SRF}}
\newcommand{\tom}{\widetilde{\omega}}
\newcommand{\pr}{\operatorname{pr}}
\newcommand{\dcan}{d_{\rm can}}

\title[Canonical limits of cscK metrics]
{Gromov--Hausdorff convergence of constant scalar curvature
K\"ahler metrics near the canonical class}
\author{Frederick Tsz-Ho Fong}
\author{Junbang Liu}

\address{Department of Mathematics, The Hong Kong University of Science and
Technology, Clear Water Bay, Kowloon, Hong Kong}
\email{frederick.fong@ust.hk}
\email{junbangliu@ust.hk}
\date{}

\hypersetup{pdftitle={Gromov-Hausdorff convergence of constant scalar curvature
Kahler metrics near the canonical class},pdfauthor={Junbang Liu}}
\begin{document}

\begin{abstract}
Let $X$ be a compact K\"ahler manifold with semi-ample canonical bundle. For any K\"ahler class $[\mathchanged{\omega}]$, Jian--Shi--Song \cite{JSS18} proved that there exists a unique cscK metric in the class $2\pi c_1(K_X)+\varepsilon[\mathchanged{\omega}]$ when $\varepsilon\in (0,\varepsilon_0]$, based on the properness criterion of Chen--Cheng \cite{CC17}. 
We prove that the cscK metrics in
$2\pi c_1(K_X)+\varepsilon[\mathchanged{\omega}]$ converge, as
$\varepsilon\downarrow0$, in the Gromov--Hausdorff topology to the
metric completion of the twisted K\"ahler-Einstein metric on the
canonical model of $X$ as conjectured by Jian--Shi--Song in \cite{JSS18}. Based on the technique developed in this paper, we can also give a new proof the theorem of Lee--Tosatti--Zhang \cite{LTZ26} on the Gromov-Hausdorff convergence of the normalized K\"ahler--Ricci flow on $X$.
\end{abstract}
\maketitle
\setcounter{tocdepth}{1}
{\hypersetup{linkcolor=black}\tableofcontents}

\section{Introduction}
Let $X$ be a connected compact K\"ahler manifold of complex dimension
$n$ with semi-ample canonical bundle $K_X$. Fix a K\"ahler class $[\mathchanged{\omega}]$. \changed{Jian--Shi--Song} \cite[Theorem 1.1]{JSS18} proved that $-2\pi c_1(X)+\varepsilon[\mathchanged{\omega}]$ contains a unique cscK metric for all sufficiently small $\varepsilon>0$ based on the properness criterion of \changed{Chen--Cheng} \cite{CC17}. They conjectured that these metrics converge to
the canonical metric on the pluricanonical base, both locally away
from singular fibers and globally in the Gromov--Hausdorff topology
\cite[Conjecture~1.1]{JSS18}. 

There are two questions in the degeneration: local convergence of the metrics on the regular part of the fibration, and identification of the global metric limit. In the big and nef case, W. Liu
\cite[Theorem~1.3]{L21} established smooth convergence outside an
effective divisor to the singular negative K\"ahler--Einstein metric.
For an arbitrary semi-ample canonical bundle, Guo--Jian--Shi--Song
\cite[Theorems~1.1 and~2.1]{GJSS24} obtained uniform bounds for
potentials, normalized volume densities and diameters, together with
lower bounds for normalized volumes of metric balls. These imply
Gromov--Hausdorff precompactness. The goal is to identify every limit with
the canonical metric completion.

The corresponding degeneration has also been studied through the
K\"ahler--Ricci flow. Tosatti--Weinkove--Yang
\cite{TWY18} established local convergence to the canonical metric
and convergence of the rescaled fiber metrics. Related geometric
estimates and metric convergence results were obtained in
\cite{FGS19,STZ19,S25}. Lee--Tosatti--Zhang \cite[Theorem~1.2]{LTZ26}
proved the global Gromov--Hausdorff convergence of the normalized K\"ahler--Ricci
flow. 

\changed{We now describe the setting. Since $K_X$ is semi-ample,} $\ell K_X$ is base-point free for some $\ell\ge 1$, and the linear system $|\ell K_X|$ for $\ell$ sufficiently divisible gives a fiber space (the Iitaka fibration of $X$):
$f:X\to Y\subset \C P^N$ with $\ell K_X=f^*(\mathcal{O}_{\C P^N}(1)|_Y)$, onto a normal projective variety $Y$.  Put $m=\dim_\C Y=\kappa(X)$, which is the Kodaira dimension. The generic fibers of $f$ are \changed{Calabi--Yau} $d$-folds with $d=n-m$. For $m>0$, let
\[
 Y^\circ=Y_{\rm reg}\setminus\{\text{discriminant locus of }f\},
 \qquad X^\circ=f^{-1}(Y^\circ).
\]
\changed{Then $f:X^\circ\to Y^\circ$ is a proper holomorphic submersion} with connected $d$-dimensional \changed{Calabi--Yau} fibers. 
The twisted K\"ahler-Einstein metric $\omega_{\rm can}$ on $Y^\circ$
satisfies
\[
 \Ric(\omega_{\rm can})=-\omega_{\rm can}+\omega_{\rm WP},
\]
where $\omega_{\rm WP}$ is the Weil--Petersson form of the smooth
Calabi--Yau fibration. Let $\dcan$ be the distance induced by $\ocan$ on $Y^\circ$ and $\nu_{\rm can}$ be the normalized volume form for the canonical metric. Its metric completion $(\overline{Y^\circ},\dcan, \nu_{\rm can})$ is a compact $RCD(-1,2m)$ space and
\changed{homeomorphic to $Y$, as shown by Sz\'ekelyhidi} \cite{S25RCD}. The main result in this paper is
\begin{theorem}\label{thm:main}
Let $X$ be a connected compact K\"ahler manifold with semi-ample
canonical bundle, and let $[\mathchanged{\omega}]$  be any K\"ahler class.
For all sufficiently small $\varepsilon>0$, let
$\omega_\varepsilon\in 2\pi c_1(K_X)+\varepsilon [\mathchanged{\omega}] $ be the unique cscK metric. If $m=0$, the metrics converge to a point in the Gromov-Hausdorff topology. If $m>0$, then:\begin{enumerate}
    \item For any compact $K\Subset Y^\circ$, 
    \begin{equation}
        \|\omega_\varepsilon-(f^*\ocan+\varepsilon\omega_{\rm SRF})\|_{f^*\ocan+\varepsilon\mathchanged{\omega}}\to 0 \quad \text{uniformly on }f^{-1}(K).
    \end{equation}
    \item Let $d_{\oep}$ be \changed{the distance function induced by the metric} $\oep$ on $X$, \begin{equation}\label{eq:gh}
(X,d_{\omega_\varepsilon})
 \longrightarrow (\overline{Y^\circ},d_{\rm can})
 \qquad\text{in the Gromov--Hausdorff topology}.
 \end{equation}
\end{enumerate}
\end{theorem}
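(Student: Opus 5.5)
We plan to run the standard strategy for collapsing limits: uniform a priori estimates, then local smooth convergence on $X^\circ$, and finally identification of the Gromov--Hausdorff limit by an almost-isometry argument controlled by volume.

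\textbf{Reference form and equation.} Write $\oep=\tom_\varepsilon+\ddc\varphi_\varepsilon$ with $\tom_\varepsilon=f^*\chi+\varepsilon\omega$, where $\chi$ is the restriction of the Fubini--Study form, so that $\chi\in 2\pi c_1(\mathcal O(1))/\ell$ is pulled back to represent $2\pi c_1(K_X)$. The cscK equation then splits into the coupled system
\[
\oep^n=e^{F_\varepsilon}\binom{n}{m}\varepsilon^{d}\,\Omega,\qquad \Delta_{\oep}F_\varepsilon=-R_\varepsilon+\tr_{\oep}\Ric(\Omega),
\]
where $\Omega$ is a smooth volume form with $\ddc\log\Omega$ representing $-2\pi c_1(K_X)$ and $R_\varepsilon\to -m$ is the average scalar curvature. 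From \cite{GJSS24} we take as given uniform bounds on $\varphi_\varepsilon$, on $F_\varepsilon$ (equivalently on the normalized volume density), on $\diam(X,\oep)$, and the noncollapsing lower bound for normalized ball volumes. Together these give Gromov--Hausdorff precompactness.

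\textbf{Step 1: local convergence (part (1)).} On $f^{-1}(K)$ we aim for a Schwarz-lemma bound $\tr_{\oep}f^*\chi\le C$, obtained by the Chern--Lu maximum principle applied to $\log\tr_{\oep}f^*\chi-A\varphi_\varepsilon$. The scalar-curvature term is handled through $\Ric(\oep)=-\ddc F_\varepsilon+\Ric(\Omega)$, and the $\ddc F_\varepsilon$ contribution is absorbed using the second equation of the system. Next we want the reverse bound $\tr_{\tom_\varepsilon}\oep\le C e^{A(\text{pole})}$ on compacts of $X^\circ$, where the pole is built from a local defining function of the discriminant. With this, the collapsing higher-order estimates of Tosatti--Weinkove--Yang / Hein--Tosatti, transplanted from the KRF setting, give $C^k$ bounds in the rescaled fiber directions.

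To identify the limit, compare $\varphi_\varepsilon$ with $\pcan+\varepsilon\rho_\varepsilon$, where $\rho_\varepsilon$ is the fiberwise semi-Ricci-flat correction producing $\osrf$. An $L^\infty$ stability estimate of Kołodziej type for the complex Monge--Ampère part, combined with $F_\varepsilon\to F_{\rm can}$ (the $F$-equation forces $\Delta F$ to be small along fibers), yields $\varphi_\varepsilon\to\pcan$. The higher-order bounds then upgrade this to the stated convergence of $\oep-(f^*\ocan+\varepsilon\osrf)$.

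\textbf{Step 2: Gromov--Hausdorff identification (part (2)).} This is the main obstacle. Fix a sublimit $(Z,d_Z)$. The map $f$ is $C$-Lipschitz from $(X,\oep)$ to $(Y,d_{\chi})$ by the Schwarz bound, so it induces a surjection $Z\to Y$. Step 1 shows that on $f^{-1}(K)$ the fibers have diameter $O(\sqrt\varepsilon)$ and that $f$ is an almost Riemannian submersion onto $(K,\ocan)$. Hence $Y^\circ$ embeds locally isometrically as an open dense subset of $Z$.

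The key difficulty is to show that paths in $X$ cannot shortcut through $f^{-1}(Y\setminus Y^\circ)$, and that this bad set has small $d_Z$-neighbourhoods. We plan two arguments, one for each inequality:
\begin{itemize}[label={}]
\item[(a)] \emph{$d_Z\le \dcan$.} Use the volume estimate of \cite{GJSS24}: a tubular neighbourhood $U_\delta$ of the singular fibers has $\oep$-volume fraction $o_\delta(1)$. By the noncollapsing lower bound on ball volumes, every point of $U_\delta$ then lies within $o(1)$ of $X\setminus U_\delta$. This shows that $Y^\circ$ is dense and that $d_Z\le\dcan$ on it.
\item[(b)] \emph{$d_Z\ge\dcan$.} Use a segment-inequality (Cheeger--Colding) argument. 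The uniform Ricci lower bound is unavailable in the cscK setting, but the $L^p$-smallness of the relevant quantities on $U_\delta$ should still imply that generic curves between two points of $f^{-1}(K)$ spend little length in $U_\delta$. Alternatively, we can invoke the RCD structure of $(\overline{Y^\circ},\dcan)$ from \cite{S25RCD}, which gives that $\dcan$ is realized by curves avoiding a codimension-$\geq 2$ set.
\end{itemize}
Together these show that the map $Z\to\overline{Y^\circ}$ is an isometry. Uniqueness of the limit then gives convergence of the full family. For $m=0$, the diameter bound scales as $O(\sqrt\varepsilon)$ directly from the uniform estimates, so the metrics converge to a point.
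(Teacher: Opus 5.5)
Your Step~1 fails at its first move. In the Chern--Lu computation for $\tr_{\oep}\chi$, work in an $\oep$-unitary frame diagonalizing $\chi$ with eigenvalues $\lambda_i$. There the Ricci term is $\Ric(\oep)_{i\bar j}\chi^{i\bar j}=-|\chi|^2_{\oep}-\sum_i\lambda_iF_{\varepsilon,i\bar i}$. This is the Hessian of $F_\varepsilon$ weighted by the eigenvalues of $\chi$. The second cscK equation only controls the unweighted trace $\sum_iF_{\varepsilon,i\bar i}=\Delta_{\oep}F_\varepsilon$.

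Since $\chi$ is in general not proportional to $\oep$ (it is degenerate along the fibres), nothing at a maximum point bounds this term. Adding multiples of $F_\varepsilon$ or $\varphi_\varepsilon$ to the test function only produces further traces. The same obstruction, an uncontrolled $\Delta_{\tom_\varepsilon}F_\varepsilon$, defeats your Aubin--Yau type reverse bound. Moreover, the Tosatti--Weinkove--Yang and Hein--Tosatti estimates are for second-order Monge--Amp\`ere equations and flows, not the fourth-order cscK system.

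The missing idea is the Chen--Cheng integral method. One tests with $e^{-p\alpha F}(\tr_{\oep}\chi+\delta)^{p-1}$ and integrates the bad term by parts, using that $d\chi=0$ makes the divergence of $\chi$ equal to $\partial\tr_{\oep}\chi$. Closing a square gives $L^p$ bounds (Lemma~\ref{lem:trace-lp}). Then $|\nabla F_\varepsilon|\le C$ and $\tr_{\oep}\chi\le C$ follow by Moser iteration with the Sobolev inequality of \cite{GPSS23}.

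You also assume $F_\varepsilon\to f^*\pcan$ without justification. The $F$-equation only shows that subsequential limits are fibrewise constant, not which function on $Y$ they are. Without this, your Ko{\l}odziej stability step has an unknown right-hand side. The paper identifies the limit through minimality of the Mabuchi energy, $\cM_\varepsilon(\varphi_\varepsilon)\le\cM_\varepsilon(\psi_\varepsilon)$, where $\psi_\varepsilon$ are rescaled K\"ahler--Ricci flow potentials. It passes $E$ and $J$ to $Y$, proves $\MA_Y(u)=e^Ff_*\Omega$ variationally, and uses strict convexity of $s\log s-s+1$ (Lemma~\ref{lem:canonical-volume-density}). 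Only after this, plus $\oep\ge(1-o(1))f^*\ocan$ and fibre-potential convergence, does a blow-up with \cite{CX23} and \cite{CC17} give local convergence.

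Your Step~2(b) is the genuinely hard direction, and neither suggestion supplies it. A segment inequality needs a Ricci lower bound on $(X,\oep)$, which you lack: $\Ric(\oep)=-\chi-\ddc F_\varepsilon$ with only $|\nabla F_\varepsilon|$ bounded. The RCD structure of $(Y,\dcan)$ controls the wrong curves. It says $\dcan$-geodesics avoid $D$, but the danger is $\oep$-short curves in $X$ through $f^{-1}(D)$. There the only comparison is $\oep\ge C^{-1}\chi$, i.e.\ with Fubini--Study distance, not $\oep\ge(1-o(1))f^*\ocan$.

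What is needed is to lift every $1$-Lipschitz $v$ on $(Y,\dcan)$ to a function on $X$ that is close to $f^*v$ with $\Lip_{d_{\oep}}\le1+\delta$ everywhere (Lemma~\ref{lem:lip-lift-approximation}). Applied to a finite net of distance functions, this gives Proposition~\ref{prop:lower-distance}. The paper builds these lifts as resolvents $\Delta_{\oep}h_\varepsilon=\lambda h_\varepsilon+f^*g$, in three steps:
\begin{itemize}
\item On $Y$, Bochner with $\Ric(\ocan)\ge-\ocan$, zero-capacity cut-offs and RCD$(-1,2m)$ regularity give $\Lip_{\dcan}(h)\le\Lip(g)/(\lambda-\frac12)$.
\item On $X$, the bounds $\tr_{\oep}\chi+|\nabla F_\varepsilon|\le C$ plus Sobolev give uniform gradient bounds, with the $\ddc F_\varepsilon$ term again integrated by parts.
\item An $L^1$-to-$L^\infty$ Moser estimate for $(|\nabla h_\varepsilon|^2_{\oep}-c)_+$ carries the sharp Lipschitz constant across the singular fibres.
\end{itemize}
Your part~(a) is fine and is essentially \cite[Lemmas~4.1--4.2]{LTZ26}. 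For the flow, Lee--Tosatti--Zhang obtained (b) from Perelman's reduced volume, which has no cscK analogue.
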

The meaning of $\omega_{\rm SRF}$ will be explained in the next section. For $m=0$, choose $\chi=0$. The cscK equations give \[
\underline{R}_\varepsilon=0,\qquad \Delta_{\oep}F_\varepsilon=0,\qquad \Ric(\oep)=0.
\]
If $\omega_{\rm RF}\in [\mathchanged{\omega}]$ is the Ricci-flat metric, uniqueness gives \[
\oep=\varepsilon\omega_{\rm RF}, \qquad \operatorname{diam}(X,\oep)=\sqrt{\varepsilon}\operatorname{diam}(X,\omega_{\rm RF})\to 0.
\] Hence, we assume $m>0$ in the rest of the paper. 

\textbf{Main ideas of the proof}. The starting point is Guo--Jian--Shi--Song's \changed{uniform $L^\infty$ estimates for the potential and the volume density}. \changed{They established uniform noncollapsing estimates and a uniform Sobolev inequality} for this family of cscK metrics. \changed{These estimates enable us to establish a $C^2$-estimate in the base direction and a gradient estimate for the logarithm of the volume density}:\[
\tr_{\oep}\chi\leq C, \qquad |\nabla F_\varepsilon|_{\oep}\leq C.
\] The method is inspired by the \changed{groundbreaking work of Chen--Cheng} \cite{CC17}, but adapted to the fibration structure of the canonical model here.  The next step is to show that the subsequential weak limit of the volume density $F_\varepsilon$ is exactly the \changed{pullback} of the canonical one. The strategy for this step \changed{is to use the minimizing property of the Mabuchi energy} for cscK metrics and a variational characterization of the Monge-Amp\`ere measures. Next, after identifying the weak limit, the uniform Sobolev inequality enables us to improve the weak convergence to $L^\infty$-convergence away from the singular fibers. This is done by \changed{local Moser iteration, which is available thanks to the uniform Sobolev inequality obtained by Guo--Phong--Song--Sturm} \cite{GPSS23}.

After identifying the limit of the \changed{normalized} volume form, one can show an integral estimate for the difference of $\omega_\varepsilon$ and the metric in the same class from the normalized \changed{K\"ahler--Ricci flow}. With local Moser iteration, one can improve the \changed{previous bound $\tr_{\oep}\chi\leq C$ to} the sharp \changed{Schwarz-type} estimate $\omega_{\varepsilon}\geq (1+o(1))f^*\ocan$.   

With the sharp \changed{Schwarz-type} estimate, we are able to show that the rescaled fiber potential of $\oep$ converges uniformly to the potential of $\omega_{\rm SRF}$. Then a blow-up argument, together with the crucial $W^{2,p}$-estimate of \changed{Cheng--Xu} \cite{CX23}, \changed{and the local estimates for cscK metrics of Chen--Cheng} \cite{CC17} proves the metric convergence away from singular fibers.

After obtaining the metric convergence away from singular fibers, the full Gromov-Hausdorff convergence can be reduced to \changed{a one-sided estimate of the distance functions (by} the work of Lee--Tosatti--Zhang \cite{LTZ26}): \[
\sup_{x,x'\in X}
\big(d_{\rm can}(f(x),f(x'))-d_{\omega_\varepsilon}(x,x')\big)_+
\longrightarrow0 .
\]
\changed{In the setting of the K\"ahler--Ricci flow, an important ingredient} in Lee--Tosatti--Zhang's proof is \changed{Perelman's} monotonicity of the reduced volume. Here \changed{we develop another method based on purely analytic propositions} about Lipschitz functions. We show that the above estimate on the distance functions can be reduced to the following Lipschitz lift proposition: for any $v\in \Lip_{\dcan}(Y)$ with $\Lip(v)\leq 1$, and $\delta>0$, there exists $v_\varepsilon\in C^\infty(X)$ such that $\limsup \|v_\varepsilon-f^*v\|_{L^\infty}\leq \delta$ and $\Lip_{d_{\oep}}(v_\varepsilon)\leq1+\delta.$ \changed{The RCD structure of $Y$ and volume noncollapsing play an important role here. Finally,} we show that this method \changed{can also be applied to the normalized K\"ahler--Ricci flow to give an independent proof of the theorem} of \changed{Lee--Tosatti--Zhang}. 

\textbf{Acknowledgements}. The second author is sincerely grateful to Prof. Xiuxiong Chen for his continued encouragement and \changed{inspiring discussions on cscK metrics}. He would also like to thank Hanbing Fang for \changed{helpful discussions on the convergence problem} for the family of cscK metrics. \changed{This research is partially supported} by the Hong Kong General Research Fund \#16305625 of the Hong Kong Research Grants Council.

\textbf{Declaration on the use of AI}. The key $C^2$-estimate in the base direction and gradient \changed{bound for the logarithm of the volume density} are due to the authors. \changed{We then decomposed the proof into several steps and lemmas and asked ChatGPT 6 to produce counterexamples or proofs in special cases. The feedback inspired ideas that helped us adjust our approach. We arrived at the appropriate lemmas after several rounds of intensive discussion. The entire proof was written by the authors.} 
\section{Basic facts}
We give a preliminary discussion on the family of cscK metrics, known results for uniform estimates along this family, and \changed{known results for the normalized K\"ahler--Ricci flow}. 

We use the conventions \[
\ddc =\sqrt{-1}\partial\bar\partial, \qquad \Delta_{\omega}u=\tr_{\omega}\ddc u, \qquad |\nabla u|_{\omega}=|\partial u|_{\omega}=\frac{1}{\sqrt{2}}|\nabla u|_{g_{\omega}},
\]
where $g_{\omega}$ is the Riemannian metric \changed{corresponding to the K\"ahler form} $\omega.$ The distance and \changed{Lipschitz} constants are defined using the Riemannian metric $g_\omega$, and hence $\Lip_{d_\omega}(u)=\sqrt{2}\||\nabla u|_{\omega}\|_{L^\infty}$. 

Choose a sufficiently large $\ell$ such that $\ell K_X=f^*\mathcal O_Y(1)$ and the pluricanonical map has connected fibers. Set\[
\chi_Y=\ell^{-1}\omega_{FS}|_Y, \qquad \chi=f^*\chi_Y,\qquad \chi_\varepsilon=\chi+\varepsilon\mathchanged{\omega}.
\]
Here $\mathchanged{\omega}$ is a fixed smooth K\"ahler form \changed{in the K\"ahler class} $[\mathchanged{\omega}]$. Let $m$ be the Kodaira dimension, which is equal to the dimension of $Y$ here. Let $d=n-m$, $V_\varepsilon=\int_X\chi_\varepsilon^n, d\mu_\varepsilon=V_\varepsilon^{-1}\omega_\varepsilon^n,$ where $\omega_\varepsilon$ is the unique cscK metric in $[\chi_\varepsilon]$.
We fix a smooth positive volume form $\Omega$ on $X$ with $\int_X\Omega=1$, and $\Ric(\Omega)=-\chi$. 
Let $D\subsetneq Y$ be the singular locus together with the critical
values of $f$. Then
\[Y^\circ=Y\setminus D,\qquad X^\circ=f^{-1}(Y^\circ),\qquad
f:X^\circ\longrightarrow Y^\circ \quad\text{is a proper submersion with connected fibers}.
\]
For $m>0$, \changed{Song--Tian's} canonical metric \cite{ST07,ST12} $\omega_{\rm can}=\chi_Y+\ddc \varphi_{\rm can}$ is the unique solution of the twisted K\"ahler-Einstein equation\[
\Ric(\ocan)=-\ocan+\omega_{WP},
\]where $\omega_{WP}\geq 0$ is the \changed{Weil--Petersson} form of the \changed{Calabi--Yau} fibration $X^\circ\to Y^\circ$. The potential $\varphi_{\rm can}$ satisfies $\varphi_{\rm can}\in \PSH(Y,\chi_Y)\cap L^\infty(Y)\cap C^\infty(Y^\circ)$, and $\ocan$ extends as a positive current on $Y$. 

Set \[
\mathchanged{\omega_y}=\mathchanged{\omega}|_{X_y}, \quad V_F=\int_{X_y}\mathchanged{\omega_y}^d, \quad V_Y=\int_Y\chi_Y^m.
\]
When $d=0$, $V_F:=1$. For $d>0$, 
on $X^\circ$, we have the semi-Ricci-flat form\[
 \omega_{\rm SRF}=\mathchanged{\omega}+dd^c\rho,\qquad
 \int_{X_y}\rho\mathchanged{\,\omega_y^d}=0,\qquad
 \omega_{{\rm SRF},y}:=\omega_{\rm SRF}|_{X_y}>0,
 \qquad\Ric(\omega_{{\rm SRF},y})=0.
\]
The form $\omega_{\rm SRF}$ is smooth but need not be positive on
$X^\circ$. For $d=0$, set $\rho=0$ and $\omega_{\rm SRF}=\mathchanged{\omega}$.  \changed{We have} \begin{equation}\label{eq:cv}
 e^{f^*\varphi_{\rm can}}\Omega
 =\frac{(f^*\omega_{\rm can})^m\wedge\omega_{\rm SRF}^d}{V_FV_Y}.
\end{equation}
and \begin{equation}
(\chi_Y+\ddc\varphi_{\rm can})^m=V_Ye^{\varphi_{\rm can}}f_*\Omega
\end{equation}
Let $\dcan$ be the distance on $Y^\circ$ induced by the canonical metric $\ocan$.
The canonical measure $\nu_{\rm can}$ is defined as $
\nu_{\rm can}:=\frac{\omega_{\rm can}^m}{V_Y}=e^{\varphi_{\rm can}}f_*\Omega.$

Recall first the following theorem of Sz\'ekelyhidi \changed{on the RCD structure of} the canonical model. 
\begin{theorem}[{Sz\'ekelyhidi \cite[Theorem~15]{S25}}]\label{thm:completion}
For the semi-ample canonical fibration with $m>0$, the completion
$\overline{Y^\circ}=\overline{(Y^\circ,d_{\rm can})}$ satisfies:
\begin{enumerate}
\item\label{item:thm-completion-1} $\overline{Y^\circ}$ is compact and canonically homeomorphic to $Y$.
\item\label{item:thm-completion-2} $(Y,\dcan,\nu_{\rm can})$ is a noncollapsed
$\operatorname{RCD}(-1,2m)$ space. 
\item\label{item:thm-completion-3}
$\dim_{\mathcal H}(Y\setminus Y^\circ)\leq2m-2$.
\end{enumerate}
\end{theorem}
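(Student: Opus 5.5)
This is a cited theorem of Sz\'ekelyhidi, so the plan is to reconstruct the argument from known approximation results rather than derive it from the earlier sections.

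\emph{Approximation step.} Approximate $(Y^\circ,\ocan)$ by smooth Kähler metrics with Ricci curvature bounded below. I would take the twisted Kähler--Einstein metrics $\omega_t$ on a resolution (or on $X$ itself, in the classes $\chi+t\omega$), following Song--Tian and Tosatti--Weinkove--Yang. These have uniform diameter bounds, uniform volume noncollapsing, and a uniform lower Ricci bound. Indeed, $\Ric(\omega_t)\ge -\omega_t$ up to a twist by the nonnegative Weil--Petersson type form, which only helps the lower bound. Gromov compactness then gives a subsequential Gromov--Hausdorff limit $(Z,d_Z)$. This limit is a noncollapsed Ricci limit space, so by Cheeger--Colding together with Ambrosio--Gigli--Savaré it is an $\operatorname{RCD}(-1,2m)$ space, and its renormalized limit measure is Hausdorff measure.

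\emph{Identification.} The main task is to show $Z=\overline{Y^\circ}$ and that $Z$ is homeomorphic to $Y$. Smooth local convergence of $\omega_t$ to $\ocan$ on compact subsets of $Y^\circ$ gives an isometric embedding of $(Y^\circ,\dcan)$, locally, into the regular part of $Z$. For this to be global one needs the key estimate that $Y^\circ$ is almost geodesically convex in the approximants. Concretely, for $y_1,y_2\in K\Subset Y^\circ$, near-minimal $\omega_t$-geodesics can be perturbed to avoid a neighborhood of $D$ at small cost. This holds because $D$ has small volume, and a segment inequality (Cheeger--Colding) shows that most geodesics between nearby balls avoid a small-measure set. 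Hence the intrinsic distance $\dcan$ on $Y^\circ$ agrees with $d_Z$ restricted to $Y^\circ$, and $Y^\circ$ is dense in $Z$ because its complement has measure zero in a noncollapsed space. Thus $Z\cong\overline{Y^\circ}$.

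\emph{Homeomorphism and dimension bound.} Compactness of $\overline{Y^\circ}$ follows from the uniform diameter bound. The homeomorphism with $Y$ should come from the fact that the maps $f$ (or their compositions with the resolution) are uniformly Lipschitz from $(X,\omega_t)$ to $(Y,\chi_Y)$, by a Schwarz lemma. This gives a continuous surjection $Z\to Y$. Injectivity is the hard part: one must show distinct points of $Z$ over a point of $D$ cannot exist. I would try to prove this using local connectedness of fibers together with the normality of $Y$ and a three-circle or $L^2$ holomorphic function argument in the style of Donaldson--Sun, separating points of $Z$ by holomorphic functions pulled back from $Y$. Finally, $\dim_{\mathcal H}(Z\setminus Y^\circ)\le 2m-2$ follows from Cheeger--Colding--Tian's codimension-four result, extended to the Kähler setting where the singular set has codimension at least two, combined with the fact that $D\cap$ regular part is an analytic subvariety of real codimension $2$.

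\emph{Main obstacle.} I expect two steps to be hardest: the injectivity over $D$, and the almost-convexity of $Y^\circ$.
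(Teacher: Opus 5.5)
The paper gives no proof of this statement; it is quoted from Sz\'ekelyhidi. Judged on its own, your reconstruction breaks at the approximation step. Noncollapsed $2m$-dimensional smooth approximants of $(Y^\circ,\ocan)$ with a uniform lower Ricci bound are not available. On a resolution $\pi:\widetilde Y\to Y$, the equation for $\varphi_{\rm can}$ has density $\pi^*f_*\Omega$, which carries zeros, poles and logarithmic factors along exceptional and discriminant divisors. So the twist is not just the semipositive $\omega_{WP}$, and natural regularizations have Ricci curvature unbounded below where the density vanishes. Whether singular (twisted) Kähler--Einstein metrics admit smoothings with $\Ric\geq -C$ is not known in general. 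The approximants that do exist, e.g.\ $\Ric(\omega_t)=-\omega_t+t\omega$ on $X$ in $2\pi c_1(K_X)+t[\omega]$, collapse as soon as $d>0$, with fibers of diameter $\sim\sqrt t$. Then $Z$ is a collapsed Ricci limit. Cheeger--Colding and stability of RCD give only $\operatorname{RCD}(-1,2n)$ for the renormalized limit measure. The noncollapsed tools you invoke (limit measure equal to Hausdorff measure, Cheeger--Colding--Tian codimension bounds) do not apply. Lowering $N$ to $2m$ would need separate input: that $\nu_Z$ is a constant multiple of $\mathcal H^{2m}$ on all of $Z$, plus a Honda-type dimension reduction. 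Only for $d=0$, where $X$ itself resolves $Y$, does this step work as you state it.

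The identification step also has a gap. The segment inequality bounds, on average over pairs, the \emph{length} a minimizing geodesic spends in a small-measure set $E$. It does not force the geodesic to avoid $E$, and it says nothing about $f^{-1}(D)$ itself, which has measure zero. To reroute geodesics you would need, for example, that the $\omega_t$-volume of the $\rho$-tube around $f^{-1}(D)$ is $o(\rho)$ uniformly in $t$, or that points of $Y^\circ$ that are $d_Z$-close near $D$ are $\dcan$-close. Neither is proved, and the codimension bound (3) you would use for it is part of the conclusion. This ``no shortcut through $D$'' inequality is the hard direction. Its analogue for the cscK family is Proposition~\ref{prop:lower-distance}, whose proof goes through Lemma~\ref{lem:lip-lift-approximation} and uses the RCD structure of $(Y,\dcan,\nu_{\rm can})$, i.e.\ the very statement at hand; Lee--Tosatti--Zhang need Perelman's reduced volume for it.

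The homeomorphism step has a similar hole. Any function pulled back from $Y$ is constant on the fibers of $Z\to Y$, so it cannot separate two points $p\neq q$ in one fiber. What you would need is a holomorphic function on $U\cap Y^\circ$ with different limits at $p$ and $q$; its boundedness plus Riemann extension on the normal space $Y$ then gives the contradiction. Constructing that function (H\"ormander estimates with good cutoffs on a possibly collapsed limit) is the whole difficulty, and you do not supply it. Finally, for (3), Cheeger--Colding--Tian only bounds the metric singular set, and only for noncollapsed limits with bounded Ricci curvature. But $Z\setminus Y^\circ$ can contain metrically regular points over $D$. The complex codimension of $D$ does not bound their $\dcan$-Hausdorff dimension, because the identification with $Y$ is only a homeomorphism.
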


Now we recall the equations of cscK metrics.
The cscK equation can be expressed as \[\begin{cases}
    \omega_\varepsilon^n=V_\varepsilon e^{F_\varepsilon}\Omega\\
    \Delta_{\omega_\varepsilon}F_\varepsilon=-\underline{R}_\varepsilon-\tr_{\omega_\varepsilon}\chi
\end{cases}
\]
We also set $\omega_\varepsilon=\chi_\varepsilon+\ddc \varphi_\varepsilon$ with $\sup_X\varphi_\varepsilon=0$. \changed{The average scalar curvature} can be computed as \[
\underline R_\varepsilon
 =-n\frac{A\cdot(A+\varepsilon B)^{n-1}}{(A+\varepsilon B)^n}, \quad \text{ where }A=2\pi c_1(K_X), B=[\mathchanged{\omega}].
\]
\changed{Put $a_j=\int_X\chi^j\wedge\mathchanged{\omega}^{n-j}$. Then} \begin{enumerate}
    \item $a_m>0$, $\chi^{m+1}=0$, and $V_\varepsilon=\sum_{j=0}^m\binom nj a_j\varepsilon^{n-j}.$
    \item If $m>0$, then
\[
\begin{aligned}
 V_\varepsilon
 &=\binom nm a_m\varepsilon^d
   +\binom n{m-1}a_{m-1}\varepsilon^{d+1}
   +O(\varepsilon^{d+2}),\\
 \underline R_\varepsilon
 &=-m+\frac{m a_{m-1}}{(d+1)a_m}\varepsilon+O(\varepsilon^2).
\end{aligned}
\]
\item\label{item:zero-dimensional-asymptotics} If $m=0$, then
$V_\varepsilon=a_0\varepsilon^n$ and $\underline R_\varepsilon=0$.
\end{enumerate}

We also recall the uniform estimates of Guo--Jian--Shi--Song. 
\begin{theorem}[{Guo--Jian--Shi--Song \cite[Theorems~1.1 and~2.1]{GJSS24}}]\label{thm:bounds}
There are constants $C,\varepsilon_0,a_{\rm vol}>0$ such that
for $0<\varepsilon<\varepsilon_0$:
\begin{enumerate}
\item\label{item:thm-bounds-1}
$\|\varphi_\varepsilon\|_\infty+\|F_\varepsilon\|_\infty\leq C$,
and hence $C^{-1}\Omega\leq\mathchanged{d\mu_\varepsilon}\leq C\Omega$.
\item\label{item:thm-bounds-2} $\diam(X,\omega_\varepsilon)\leq C$.
\item\label{item:thm-bounds-3}
$\mu_\varepsilon(B_{\omega_\varepsilon}(x,r))\geq C^{-1}r^{a_{\rm vol}}$
for $x\in X$, $0<r<1$. Consequently the family is GH precompact.
\end{enumerate}
\end{theorem}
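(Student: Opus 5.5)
The statement is a cited theorem of Guo--Jian--Shi--Song, so the plan is to reconstruct its proof along their lines rather than re-derive it in detail.

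\textbf{Step 1: the $L^\infty$ bounds.} I would treat the cscK system as a coupled pair of complex Monge--Amp\`ere equations. The first is $(\chi_\varepsilon+\ddc\varphi_\varepsilon)^n=V_\varepsilon e^{F_\varepsilon}\Omega$. The second is the linear equation $\Delta_{\omega_\varepsilon}F_\varepsilon=-\underline R_\varepsilon-\tr_{\omega_\varepsilon}\chi$. The reference forms $\chi_\varepsilon$ are collapsing, with $V_\varepsilon\sim\varepsilon^d$. The tool I would use is the uniform $L^\infty$ theory for families of degenerating classes, namely the auxiliary Monge--Amp\`ere comparison method of Guo--Phong--Tong and Guo--Phong--Song--Sturm. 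Uniformity here requires an entropy bound $\int_X F_\varepsilon e^{F_\varepsilon}\Omega\le C$.

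That entropy bound comes from the Mabuchi energy. The cscK metric minimizes the Mabuchi functional in $[\chi_\varepsilon]$. Comparing its value with a competitor adapted to the fibration, for instance $\chi_\varepsilon+\ddc(f^*\pcan+\varepsilon\rho)$ suitably regularized near $D$, bounds the entropy term uniformly. This works because the $J_{\Ric}$-type term, with $\Ric(\Omega)=-\chi$, is controlled by $\|\varphi\|$-type quantities.

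With the entropy bound in hand, the $L^\infty$ bound on $\varphi_\varepsilon$ follows. The bound on $F_\varepsilon$ then follows from the second equation. Writing $\tr_{\omega_\varepsilon}\chi=\Delta_{\omega_\varepsilon}\psi$ with $\psi=\varphi_\varepsilon-\varphi'$ for a suitable bounded $\chi$-psh comparison function, one gets that $F_\varepsilon+\psi$ is super/subharmonic up to a bounded error. Applying the same auxiliary Monge--Amp\`ere argument, this time to $F_\varepsilon+\lambda\varphi_\varepsilon$, gives two-sided bounds. The upper bound for $F_\varepsilon$ is the delicate direction. The resulting comparison $C^{-1}\Omega\le d\mu_\varepsilon\le C\Omega$ is immediate from the definitions.

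\textbf{Step 2: diameter and noncollapsing.} Once $e^{F_\varepsilon}$ is bounded, the normalized volume form is uniformly comparable to $\Omega$. This places the family in the class of Kähler metrics with $L^p$-bounded volume density, for $p>1$. For that class, Guo--Phong--Song--Sturm prove a uniform Sobolev inequality, a uniform diameter bound, and a volume lower bound $\mu(B(x,r))\ge c\,r^{a}$. Their argument builds Green's function estimates through an auxiliary Monge--Amp\`ere equation with right-hand side localized by the distance function.

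I expect one adaptation to be needed. Their results are normalized with reference class $\chi_\varepsilon$ and volume $V_\varepsilon$, and require an entropy or $L^p$ bound relative to $V_\varepsilon^{-1}\chi_\varepsilon^n$. Since $V_\varepsilon^{-1}\chi_\varepsilon^n\le C\Omega$ uniformly, the bound from Step 1 suffices. Gromov--Hausdorff precompactness then follows from Gromov's criterion: a uniform diameter bound plus a uniform doubling-type lower bound on ball measures gives uniformly bounded covering numbers.

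\textbf{Main obstacle.} The hardest step is the uniform upper bound on $F_\varepsilon$ in Step 1. The difficulty is that the equation for $F_\varepsilon$ involves $\tr_{\omega_\varepsilon}\chi$, which has no a priori bound. The key idea I would try is a Chen--Cheng-style combination: apply the auxiliary equation argument to $F_\varepsilon-\lambda\varphi_\varepsilon$. For this combination the trace term is absorbed, since $\Delta(F-\lambda\varphi)\ge -C+(\lambda-1)\tr\chi-\lambda n$. The needed uniformity in $\varepsilon$ then reduces to the entropy estimate obtained from the Mabuchi energy.
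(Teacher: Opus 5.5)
The paper gives no proof of this statement; it imports it as a black box from \cite{GJSS24}. So the question is whether your reconstruction stands on its own.

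Step~2 is fine once $\|F_\varepsilon\|_\infty\le C$. The hypotheses of \cite{GPSS23} are stated relative to a \emph{fixed} background volume form, so they follow at once from $C^{-1}\Omega\le d\mu_\varepsilon\le C\Omega$; no comparison with $V_\varepsilon^{-1}\chi_\varepsilon^n$ is involved. Precompactness needs only the packing bound coming from $\mu_\varepsilon(X)=1$ and $\mu_\varepsilon(B(x,r))\ge C^{-1}r^{a_{\rm vol}}$, not a doubling property.

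\textbf{The gap: the uniform entropy bound in Step~1.} Minimality gives only the trivial bound $\cM_\varepsilon(\varphi_\varepsilon)\le\cM_\varepsilon(0)\le C$. No competitor regularized near $D$ is needed: $\chi_\varepsilon^n/(V_\varepsilon\Omega)$ is bounded above, and the $J$- and $E$-terms vanish at $0$. To turn this into $\Ent(\varphi_\varepsilon)\le C$, you need a lower bound, uniform in $\varepsilon$, for the twisted part $\mathcal J_\varepsilon:=nJ_{\chi,\varepsilon}+\underline R_\varepsilon E_\varepsilon$ evaluated at $\varphi_\varepsilon$. Your justification, that this term is ``controlled by $\|\varphi\|$-type quantities'', is circular: no bound on $\varphi_\varepsilon$ is available before the entropy bound. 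What holds a priori, for $\sup\varphi=0$, is
\[
\mathcal J_\varepsilon(\varphi)\ \ge\ -n\int_X(-\varphi)\,d\mu_\varphi\ \ge\ -\frac{n}{\alpha}\Bigl(\Ent(\varphi)+C\Bigr).
\]
This uses $\chi\le\chi_\varepsilon$, $\underline R_\varepsilon E_\varepsilon(\varphi)\ge0$, monotonicity of the mixed integrals, and a uniform $\alpha$-invariant (available because $\PSH(X,\chi_\varepsilon)\subset\PSH(X,\chi+\omega)$). The resulting inequality closes only if $\alpha>n$, which fails in general.

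There is also no slack for a cruder argument.
\begin{itemize}
\item $\mathcal J_\varepsilon$ is balanced to leading order. On a product $Y\times F$ with product reference form, $\chi_\varepsilon$ is itself a critical point, $\tr_{\chi_\varepsilon}\chi\equiv-\underline R_\varepsilon$, and $\mathcal J_\varepsilon\ge0$ there only by convexity.
\item For $\chi$ of rank $m<n$, the Song--Weinkove cone condition for $\tr_\omega\chi=-\underline R_\varepsilon$ cannot hold strictly for any $\omega'\in[\chi_\varepsilon]$. At a null direction of $\chi$ it forces $\tr_{\omega'}\chi<-\underline R_\varepsilon$ pointwise, whereas the $\omega'^n$-average of $\tr_{\omega'}\chi$ equals $-\underline R_\varepsilon$. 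So the standard $J$-equation criterion gives nothing.
\end{itemize}
A bound $\cM_\varepsilon\ge\delta\,\Ent-C$ with $\delta,C$ independent of $\varepsilon$ (equivalently, uniform properness) is therefore a substantive input that you must prove or cite. It does not follow from minimality.

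\textbf{Two smaller corrections.}
\begin{itemize}
\item The upper bound for $F_\varepsilon$ is not where the collapse bites. It is the standard Chen--Cheng argument: your inequality for $F_\varepsilon-\lambda\varphi_\varepsilon$, plus an auxiliary potential whose Monge--Amp\`ere measure is comparable to $\omega_\varepsilon^n$ by construction, avoids the degenerate reference form. The degeneration enters the lower bound. There, AM--GM against $\chi_\varepsilon$ fails near singular fibers because $\chi_\varepsilon^n/(V_\varepsilon\Omega)$ is not bounded below. Use instead the minimum of $F_\varepsilon+\lambda(\varphi_\varepsilon-\psi)$ with $(\chi_\varepsilon+\ddc\psi)^n=V_\varepsilon\Omega$.
\item The identity $\tr_{\omega_\varepsilon}\chi=\Delta_{\omega_\varepsilon}(\varphi_\varepsilon-\varphi')$ is false. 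In fact $\Delta_{\omega_\varepsilon}(\varphi_\varepsilon-\varphi')=n-\tr_{\omega_\varepsilon}(\chi_\varepsilon+\ddc\varphi')$.
\end{itemize}
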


\changed{To prove higher-order estimates}, we also need the uniform Sobolev inequality obtained by Guo--Phong--Song--Sturm. 
\begingroup\color{black}
\begin{theorem}[{Guo--Phong--Song--Sturm \cite[Theorem~2.1]{GPSS23}}]\label{thm:sobolev}
Let $\mathchanged{(X,\omega)}$ be a fixed compact K\"ahler manifold. For a K\"ahler form $\mathchanged{\eta}$, set
\[
\color{black}
 V_\eta=\int_X\eta^n,\qquad d\mu_\eta=\frac{\eta^n}{V_\eta},\qquad
 I_\eta=[\eta]\cdot[\omega]^{n-1},\qquad V_\omega=\int_X\omega^n.
\]
Fix $\mathchanged{p>n}$ and constants $\mathchanged{K,c>0}$. Suppose that
\[
\color{black}
 \int_X\left|\log\frac{\eta^n/V_\eta}{\omega^n/V_\omega}\right|^p d\mu_\eta\leq K,
 \qquad \frac{\eta^n/V_\eta}{\omega^n/V_\omega}\geq c.
\]
Then there exist constants $\mathchanged{q=q(n,p)>1}$ and $\mathchanged{C_S>0}$, depending only on the fixed background, $\mathchanged{n,p,K,c}$, and $\mathchanged{q}$, such that, for every $\mathchanged{u\in W^{1,2}(X,\eta)}$,
\[
\color{black}
 \left(\int_X|u-\bar u_\eta|^{2q}d\mu_\eta\right)^{1/q}
 \leq C_S I_\eta\int_X|\nabla u|_\eta^2d\mu_\eta,
 \qquad \bar u_\eta=\int_Xu\,d\mu_\eta.
\]
\end{theorem}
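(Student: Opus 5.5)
The statement just given is the uniform Sobolev inequality of Guo--Phong--Song--Sturm. It is quoted from \cite{GPSS23} and used as a black box, so the task is to outline how one would reprove it. I would follow the Green's function approach. Write $\eta^n/V_\eta = e^{G}\,\omega^n/V_\omega$. The hypotheses say that $G$ is bounded below by $\log c$ and that $\int |G|^p e^G\omega^n/V_\omega\le K$; this is an $L^1(\log L)^p$ bound on the density with $p>n$. The plan has three steps: uniform $L^\infty$ estimates for auxiliary complex Monge--Amp\`ere equations, uniform $L^1$ and integrability bounds for the Green's function $G_\eta$ of $\Delta_\eta$ (after normalizing by $I_\eta$), and conversion of the Green's function bounds into the Sobolev inequality through the representation formula.

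\textbf{Step 1: auxiliary equations.} Fix $x\in X$. Let $\Phi$ be the solution with $\sup\Phi=0$ of
\[
(\eta+\ddc\Phi)^n/V_\eta=\frac{(-G_\eta(x,\cdot))_+ \text{-type truncated weight}}{\int(\cdot)\,d\mu_\eta}\,d\mu_\eta .
\]
More concretely, the right-hand side is the density $\mathcal G_k^+$ truncated at level $k$ and normalized, and we solve in the class $[\eta]$. The Guo--Phong--Song entropy/$L^\infty$ estimates (of the Ko{\l}odziej type with $L^1(\log L)^p$ right-hand side, $p>n$) then give $\|\Phi\|_\infty\le C$. Here $C$ depends only on $K,p,n$ and the background, once one tracks that $[\eta]$ is dominated by a multiple $I_\eta$ of $[\omega]$. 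For this step I would use the comparison of the auxiliary equation with an energy $\int(-\psi)^{\cdot}$ and de Giorgi-type iteration, as in their work.

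\textbf{Step 2: Green's function bounds.} Apply the maximum principle to the test function $-\epsilon(-\Phi+\Lambda)^{\frac{n}{n+1}}+G_\eta(x,\cdot)$ with a suitable sign convention. Combined with the $L^\infty$ bound on $\Phi$ and the AM--GM inequality $\tr_{\eta}\eta_\Phi\ge n(\eta_\Phi^n/\eta^n)^{1/n}$, this gives the estimate
\[
\int_X |G_\eta(x,\cdot)|^{1+\beta}\,d\mu_\eta\le C\, I_\eta^{1+\beta}
\]
for some $\beta=\beta(n,p)>0$, and similarly for $|\nabla G_\eta|$ in $L^{1+\beta'}$. The lower bound $c$ on the density is used here: it compares $d\mu_\eta$ with $\omega^n$ and controls the mean value terms. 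The choice of exponent in $(-\Phi+\Lambda)^{\frac{n}{n+1}}$ is where $p>n$ enters.

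\textbf{Step 3: Sobolev inequality.} For $u$ with $\bar u_\eta=0$, the representation formula gives
\[
u(x)=\int_X\langle\nabla G_\eta(x,\cdot),\nabla u\rangle\,d\mu_\eta .
\]
Then a Young/Hardy--Littlewood type inequality, using the uniform $L^{1+\beta'}$ bound on $\nabla G_\eta$ in both variables by symmetry, yields $\|u\|_{L^{2q}}\le C\|\nabla u\|_{L^2}$ with $q>1$ determined by $\beta'$. The scaling by $I_\eta$ comes from the normalization in Step 2.

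The main obstacle is Step 2, namely the gradient bound on $G_\eta$ that is uniform in the degenerating family. It requires the auxiliary Monge--Amp\`ere $L^\infty$ estimate to hold with only entropy-type control and no curvature assumptions. I would first establish the $L^{1+\beta}$ bound for $G_\eta$ itself. I would then get the gradient bound by testing the equation for $G_\eta$ against $(G_\eta+C)^{-\gamma}$, since that integration by parts needs only the already established integral bound on $G_\eta$.
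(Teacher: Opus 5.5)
The paper does not prove this statement. It quotes \cite{GPSS23}, takes the lower-density function to be a constant, and only checks the hypotheses for $\omega_\varepsilon$, so your outline has to stand on its own. Steps 1 and 2 are a loose but recognizable paraphrase of the Guo--Phong--Sturm Green's function machinery: auxiliary Monge--Amp\`ere equations with truncated Green's-function densities, the comparison function $\epsilon(-\Phi+\Lambda)^{n/(n+1)}$, and the $L^\infty$ estimate under the $L^1(\log L)^p$ hypothesis with $p>n$.

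\textbf{The gap is in Step 3.} A Young/Schur inequality for the kernel $k(x,y)=|\nabla_yG_\eta(x,y)|$ needs uniform $L^r$ bounds in \emph{both} variables:
\[
\sup_x\int_X k(x,y)^r\,d\mu_\eta(y)\qquad\text{and}\qquad \sup_y\int_X k(x,y)^r\,d\mu_\eta(x).
\]
The Green's function estimates give only the first, namely integrability of the gradient of $G_\eta(x,\cdot)$ in its free variable for each fixed pole. Symmetry of $G_\eta$ does not give the second. It only rewrites $\nabla_yG_\eta(x,y)$ as a derivative of $G_\eta(y,\cdot)$ with respect to its \emph{pole}. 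A bound on $\sup_y\int_X|\nabla_yG_\eta(x,y)|^r\,d\mu_\eta(x)$ amounts to a pointwise gradient estimate $\|\nabla\Delta_\eta^{-1}\phi\|_{L^\infty}\le C\|\phi\|_{L^{r'}}$ for the Poisson equation. That is not available in this curvature-free, degenerating setting. With only the one-sided bound, the representation formula does not control $\|u\|_{L^{2q}}$ by $\|\nabla u\|_{L^2}$. Your last paragraph therefore targets an estimate that would not close the argument even if you proved it.

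\textbf{How to close it.} Use the bound on $G_\eta$ itself from Step 2, whose kernel \emph{is} symmetric, together with duality. Normalize $\int_XG_\eta(x,\cdot)\,d\mu_\eta=0$, and put $r=1+\beta$ and $M=\sup_x\|G_\eta(x,\cdot)\|_{L^r(\mu_\eta)}\le CI_\eta$. For $w\in L^{(2r)'}$, set $\phi=\int_XG_\eta(\cdot,y)w(y)\,d\mu_\eta(y)$. Then $-\Delta_\eta\phi=w-\bar w$ and $\int_X\phi\,d\mu_\eta=0$. Young's inequality with $1+\frac1{2r}=\frac1r+\frac1{(2r)'}$ gives $\|\phi\|_{L^{2r}}\le M\|w\|_{L^{(2r)'}}$. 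Hence, for $\bar u_\eta=0$,
\[
 \int_Xuw\,d\mu_\eta=\int_X\langle\nabla u,\nabla\phi\rangle_\eta\,d\mu_\eta
 \le\|\nabla u\|_{L^2}\Bigl(\int_X\phi w\,d\mu_\eta\Bigr)^{1/2}
 \le M^{1/2}\|\nabla u\|_{L^2}\|w\|_{L^{(2r)'}}.
\]
Duality then gives $\|u-\bar u_\eta\|_{L^{2r}}^2\le CI_\eta\|\nabla u\|_{L^2}^2$, which is the statement with $q=1+\beta$. This is the $TT^*$ argument for $(-\Delta_\eta)^{-1/2}$. It makes the gradient bound on $G_\eta$ unnecessary. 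The substantive input is the uniform $L^{1+\beta}$ bound on $|G_\eta|$ that you already list in Step 2.
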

This is the positive-constant lower-density special case of the cited theorem. For $\mathchanged{\eta=\omega_\varepsilon}$, Theorem~\ref{thm:bounds} and the smooth positivity of $\mathchanged{\Omega}$ give both hypotheses uniformly, while $\mathchanged{I_{\omega_\varepsilon}}$ is uniformly bounded. Thus the Sobolev constant for the cscK family is uniform.
\endgroup

We will also need the convergence results of normalized \changed{K\"ahler--Ricci flow}, \[
\begin{cases}
    \partial_t\mathchanged{\tilde\omega}(t)=-\Ric(\mathchanged{\tilde\omega}(t))-\mathchanged{\tilde\omega}(t),\\
    \mathchanged{\tilde\omega}(0)=\mathchanged{\omega}.
\end{cases}
\]
It can be written \changed{as a scalar equation:}\[
\begin{cases}
    \partial_t\mathchanged{\tilde\varphi}=\log\frac{e^{dt}(e^{-t}\mathchanged{\omega}+(1-e^{-t})\chi+\ddc\mathchanged{\tilde\varphi})^n}{\binom{n}{d}V_FV_Y\Omega}-\mathchanged{\tilde\varphi},\\
    \mathchanged{\tilde\varphi}(0)=0.
\end{cases}
\]
We recall the following facts for the above normalized \changed{K\"ahler--Ricci flow}, see \cite{TWY18} and \changed{the references therein}. 
\begin{theorem}
\label{thm:NKRF-convergence}
\leavevmode
\begin{enumerate}
    \item $|\mathchanged{\tilde\varphi}(t)|+|\partial_t\mathchanged{\tilde\varphi}(t)|\leq C$ and $C^{-1}e^{-dt}\mathchanged{\omega}^n\leq (\mathchanged{\tilde\omega}(t))^n\leq Ce^{-dt}\mathchanged{\omega}^n$. \changed{See Song--Tian} \cite[Lemma~2.1 and Proposition~2.1]{ST16}\changed{; the volume bounds follow from the scalar flow equation and these potential bounds.}
    \item $\mathchanged{\tilde\varphi}(t)\to f^*\varphi_{\rm can}$ and $\partial_t\mathchanged{\tilde\varphi}\to 0$ uniformly on every compact set of $X^\circ,$ and in $L^p(X,\Omega)$ for any $1\le p<\infty$. \changed{For convergence of the potential, see Song--Tian} \cite[Proposition~5.4]{ST12}\changed{; for convergence of the time derivative on the regular locus, see Tosatti--Weinkove--Yang} \cite[Lemma~3.1(ii), (iv)]{TWY18}\changed{. The finite-}$L^p$\changed{ convergence follows from part (1) and dominated convergence.}
    \item $\|\mathchanged{\tilde\omega}(t)-(e^{-t}\omega_{\rm SRF}+(1-e^{-t})f^*\omega_{\rm can})\|_{\mathchanged{\tilde\omega}(t)}\to 0$ on every compact subset of $X^\circ$. \changed{See Tosatti--Weinkove--Yang} \cite[Section~3.3, proof of Theorem~1.2(i), immediately after equation~(3.22)]{TWY18}\changed{, where this relative-norm convergence is established using Lemma~2.6.}
\end{enumerate}
\end{theorem}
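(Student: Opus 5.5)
The statement collects known results on the normalized K\"ahler--Ricci flow, so the plan is to reduce each item to the cited literature and add only the elementary deductions that link them.

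For part (1), I would quote the uniform bounds $|\tilde\varphi(t)|\le C$ and $|\partial_t\tilde\varphi(t)|\le C$ from Song--Tian \cite[Lemma~2.1 and Proposition~2.1]{ST16}. The volume bounds then follow from the scalar flow equation. Exponentiating it gives
\[
 \tilde\omega(t)^n=\Big(e^{-t}\omega+(1-e^{-t})\chi+\ddc\tilde\varphi\Big)^n
 =\tbinom{n}{d}V_FV_Y\,e^{-dt}\,e^{\partial_t\tilde\varphi+\tilde\varphi}\,\Omega .
\]
The factor $e^{\partial_t\tilde\varphi+\tilde\varphi}$ is pinched between two positive constants by the potential bounds. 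Since $\Omega$ and $\omega^n$ are smooth positive volume forms on the compact manifold $X$, they are uniformly comparable, and this gives $C^{-1}e^{-dt}\omega^n\le\tilde\omega(t)^n\le Ce^{-dt}\omega^n$.

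For part (2), I would cite the convergence $\tilde\varphi(t)\to f^*\varphi_{\rm can}$ on compact subsets of $X^\circ$ from Song--Tian \cite[Proposition~5.4]{ST12}. For the time derivative, I would cite $\partial_t\tilde\varphi\to0$ locally uniformly on $X^\circ$ from Tosatti--Weinkove--Yang \cite[Lemma~3.1]{TWY18}. To pass to $L^p(X,\Omega)$ for finite $p$, I would argue as follows.
\begin{enumerate}
 \item The complement $X\setminus X^\circ=f^{-1}(D)$ is a proper analytic subset, so it has $\Omega$-measure zero.
 \item The locally uniform convergence on $X^\circ$ therefore gives convergence $\Omega$-almost everywhere.
 \item By part (1) and the boundedness of $\varphi_{\rm can}$, the integrands $|\tilde\varphi-f^*\varphi_{\rm can}|^p$ and $|\partial_t\tilde\varphi|^p$ are uniformly bounded.
 \item The dominated convergence theorem then gives the $L^p$ convergence.
\end{enumerate}

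For part (3), I would invoke the relative-norm convergence established in \cite[Section~3.3]{TWY18}, in the proof of Theorem~1.2(i) right after (3.22), which uses their Lemma~2.6. The only check needed is that the flow normalization and the reference form $e^{-t}\omega_{\rm SRF}+(1-e^{-t})f^*\omega_{\rm can}$ agree with the present conventions. This amounts to matching the scalar equation above with theirs, including the normalization constant $\binom nd V_FV_Y$ and the relation \eqref{eq:cv}, and it is the only point requiring care.

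I expect no genuine obstacle, since everything is imported. The step most likely to hide a discrepancy is the convention matching in part (3), together with making sure that the constants in \eqref{eq:cv} are consistent with the limit $f^*\varphi_{\rm can}$ in part (2).
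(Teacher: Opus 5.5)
Your proposal is correct and follows essentially the same route as the paper. The paper also treats this statement as a collection of cited results: Song--Tian for the potential bounds and the convergence $\tilde\varphi(t)\to f^*\varphi_{\rm can}$, and Tosatti--Weinkove--Yang for $\partial_t\tilde\varphi\to0$ and the relative-norm convergence. It adds only the two deductions you give: the volume bounds from the exponentiated scalar equation $\tilde\omega(t)^n=\binom{n}{d}V_FV_Y\,e^{-dt}e^{\partial_t\tilde\varphi+\tilde\varphi}\,\Omega$, and the finite-$L^p$ convergence from the uniform bounds of part (1) plus dominated convergence, using that $f^{-1}(D)$ is $\Omega$-null.
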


We set \[t(\varepsilon)=\log\frac{1+\varepsilon}{\varepsilon},\qquad
\mathchanged{\tilde\omega}_\varepsilon=(1+\varepsilon)\mathchanged{\tilde\omega}(t(\varepsilon)),
\qquad\psi_\varepsilon=(1+\varepsilon)\mathchanged{\tilde\varphi}(t(\varepsilon)).
\]
\changed{This time reparametrization and rescaling adjust the K\"ahler--Ricci flow metric to lie in the same K\"ahler class as the cscK metric} $\omega_\varepsilon$\changed{. Indeed,} $\mathchanged{\tilde\omega}_\varepsilon=\chi+\varepsilon\mathchanged{\omega}+\ddc \psi_\varepsilon\in 2\pi c_1(K_X)+\varepsilon [\mathchanged{\omega}]$. The above convergence and estimates give \[
\psi_\varepsilon\to f^*\varphi_{\rm can} \text{ in }L^p(X,\Omega), \qquad \|\psi_\varepsilon\|_{L^\infty}\leq C,
\] and \[
\frac{\mathchanged{\tilde\omega}_\varepsilon^n}{V_\varepsilon}
 =\frac{\binom nd V_FV_Y(1+\varepsilon)^m\varepsilon^d}{V_\varepsilon}
                e^{(\mathchanged{\tilde\varphi}+\mathchanged{\dot{\tilde\varphi}})(t(\varepsilon))}\Omega
 \longrightarrow e^{f^*\varphi_{\rm can}}\Omega
       \quad\text{in total variation}.
\]

\section{Uniform second-order estimate in base direction for the cscK metrics}
In this section, we show that the second-order estimate in the base direction for the path of cscK metrics. \changed{This is analogous to the parabolic Schwarz lemma of Song--Tian}~\cite{ST07}\changed{, whose proof uses the parabolic maximum principle.} The proof is inspired by the work of Chen-Cheng~\cite{CC17}, and adapted to our fibration structure over the canonical model here.  

Fix $\varepsilon$, set
\[
 \varphi=\varphi_\varepsilon,\quad
 \omega_\varphi=\omega_\varepsilon,\quad F=F_\varepsilon,\quad
 \underline R=\underline R_\varepsilon,\quad
 \Delta_\varphi=\Delta_{\omega_\varphi},\quad
 d\mu=\frac{\omega_\varphi^n}{\int_X\omega_\varphi^n}.
\]
We write $\tr_\varphi\chi=\tr_{\omega_\varphi}\chi$ and use
$\nabla_\varphi$ for covariant differentiation with respect to
$g_\varphi$.  In this section we prove the $C^2$-estimate in the base direction in the following order:
\[
 \|\tr_\varphi\chi\|_{L^p(\mu)}\leq C_p\quad(p<\infty)
 \ \Longrightarrow\ 
 \|\nabla_\varphi F\|_\infty\leq C
 \ \Longrightarrow\ 
 \|\tr_\varphi\chi\|_\infty\leq C.
\]
At a prescribed point, choose holomorphic normal coordinates for
$g_\varphi$ and diagonalize $\chi$:
\begin{equation}\label{eq:estimate-coordinates}
g_{\varphi,i\bar j}=\delta_{ij},\qquad
 \partial_k g_{\varphi,i\bar j}=0,\qquad
 \chi_{i\bar j}=\lambda_i\delta_{ij},\qquad \lambda_i\geq0.
\end{equation}

\changed{The following lemma provides the first step above: uniform bounds for the trace of the pullback metric in every finite} $\mathchanged{L^p}$\changed{ space. We formulate it for a holomorphic map into a fixed K\"ahler target. In our application, we take} $\mathchanged{Z=\C P^N}$\changed{, equipped with} $\mathchanged{\omega_Z=\ell^{-1}\omega_{FS}}$\changed{, and} $\mathchanged{\Phi=\iota\circ f}$\changed{, where} $\mathchanged{\iota:Y\hookrightarrow\C P^N}$\changed{ is the inclusion. Thus} $\mathchanged{\Phi}$\changed{ is the pluricanonical map supplied by the semi-ampleness of} $K_X$\changed{, and its pullback metric is precisely} $\chi$\changed{.}

\begin{lemma}\label{lem:trace-lp}
Let $\Phi:X\to Z$ be a holomorphic map from a compact complex manifold \changed{$X$}
to a fixed compact K\"ahler manifold $(Z,\omega_Z)$, and put
$\chi=\Phi^*\omega_Z$. Suppose that $\omega_\varphi$, $F$, and a constant
$\underline R$ satisfy
\begin{equation}\label{eq:ht1}
 \Ric(\omega_\varphi)=-\chi-dd^cF,\qquad
 \Delta_\varphi F=-\underline R-\tr_\varphi\chi.
\end{equation}
If $|\underline R|+\operatorname{osc}_X F\leq C_0$, then, for every
finite $p\geq1$,
\begin{equation}\label{eq:ht2}
\int_X(\tr_\varphi\chi)^p\,d\mu\leq C_p.
\end{equation}
The constant \changed{$C_p$} depends only on $p,C_0$ and an upper bound for the
holomorphic bisectional curvature of $\omega_Z$.
\end{lemma}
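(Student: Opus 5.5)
The plan is to combine the Chern--Lu inequality for the holomorphic map $\Phi$ with the second equation in \eqref{eq:ht1}, and then integrate by parts against the weight $(\tr_\varphi\chi)^{p-1}$. The point is that $\Delta_\varphi F=-\underline R-\tr_\varphi\chi$ makes $F$ absorb one power of $\tr_\varphi\chi$.

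\textbf{Step 1: Chern--Lu.} Set $u=\tr_\varphi\chi\ge 0$. In the coordinates \eqref{eq:estimate-coordinates}, the Chern--Lu formula for $\Phi:(X,\omega_\varphi)\to(Z,\omega_Z)$ gives, wherever $u>0$,
\[
\Delta_\varphi \log u\ \ge\ \frac{\langle \Ric(\omega_\varphi),\chi\rangle_\varphi}{u}-B\,u .
\]
Here $\langle\alpha,\chi\rangle_\varphi=g_\varphi^{i\bar l}g_\varphi^{k\bar j}\alpha_{i\bar j}\chi_{k\bar l}$, and $B$ is an upper bound for the bisectional curvature of $\omega_Z$. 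Using $\Ric(\omega_\varphi)=-\chi-\ddc F$, we have $\langle\chi,\chi\rangle_\varphi=|\chi|^2_\varphi\le u^2$. The ratio $\langle\ddc F,\chi\rangle/u$ is awkward, so instead I would work with $\Delta_\varphi u$ directly:
\[
\Delta_\varphi u\ \ge\ -\langle \ddc F,\chi\rangle_\varphi-(1+B)u^2+\frac{|\partial u|^2_\varphi}{u}.
\]
This holds in the weak sense; to avoid zeros of $u$, replace $u$ by $u+\delta$ and let $\delta\to 0$.

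\textbf{Step 2: the test quantity $e^{-AF}$.} To avoid the Hessian of $F$, I would use the auxiliary function $w=e^{-AF}u$ (or $u\,e^{-A F}$ with $A=2+B$), in the spirit of Chen--Cheng. Then
\[
\Delta_\varphi(-AF)=A\underline R+Au ,
\]
which produces a good term $+A u\cdot w$ that dominates $-(1+B)u\,w$. The remaining obstruction is the mixed term $\langle\ddc F,\chi\rangle_\varphi$. I would handle it by integration by parts: multiply by $w^{p-1}e^{\ldots}$ and integrate against $d\mu$. Since $\chi$ is closed, $\int\langle \ddc F,\chi\rangle_\varphi\,h\,d\mu$ becomes $-\int \langle\partial F\wedge\bar\partial h,\chi\rangle_\varphi\,d\mu$, because the divergence of $\chi$ with respect to $\omega_\varphi$ is expressible through $\partial u$. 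This is bounded by $\int |\partial F|\,|\partial h|\,u\,d\mu$.

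\textbf{Step 3: absorbing the gradient terms.} The term $|\partial F|^2$ itself must be absorbed. The tool is the identity
\[
\int |\partial F|_\varphi^2\,G\,d\mu=-\int (F-\inf F)\big(\Delta_\varphi F\,G+\langle\partial F,\partial G\rangle\big)\,d\mu ,
\]
combined with $\Delta_\varphi F=-\underline R-u$ and $\operatorname{osc}F\le C_0$. This bounds weighted $L^2$ norms of $\partial F$ by $C_0\int u\,G$ plus absorbable terms. Choosing $G$ to be a power of $u$ and using Cauchy--Schwarz with the good term $\int u^{p-2}|\partial u|^2$ from Step 1 yields the bound
\[
\int u^{p+1}\,d\mu\le C_p\int u^{p}\,d\mu+C_p .
\]
This inequality has the wrong direction, so the coefficient of $u^{p+1}$ has to come out with a positive margin. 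I would obtain that margin from the $A u$ term of Step 2, with $A$ large relative to $1+B$ and $C_0$. Hölder's inequality then closes the estimate. The base case is $p=1$, where $\int u\,d\mu=-\underline R$ follows by integrating the $F$-equation. Induction on $p$ finishes the proof.

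I expect Step 3 to be the main obstacle: making the constants in front of $\int u^{p+1}$ come out positive uniformly in $p$, given only $\operatorname{osc}F$ and $|\underline R|$. If the direct weight fails, I would fall back on the pointwise-in-$F$ weight $e^{-A(F-\inf F)}$, whose values are trapped in $[e^{-AC_0},1]$ by the oscillation bound.
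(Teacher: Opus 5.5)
You have the right ingredients: the Chern--Lu inequality, closedness of $\chi$ to move the mixed Hessian $\langle\ddc F,\chi\rangle_\varphi$ onto first derivatives, an exponential weight in $F$, and H\"older. But the decisive step is left open, and the mechanism you propose for it does not work. In Step 3 you absorb $|\partial F|^2$ using $F-\inf F$ and $\operatorname{osc}F\le C_0$. With $G=u^p$ this creates the term $C_0p\int u^{p-1}|\partial F|\,|\partial u|\,d\mu$. Cauchy--Schwarz can only exchange it for $\tfrac12\int u^p|\partial F|^2+\tfrac12 C_0^2p^2\int u^{p-2}|\partial u|^2$. The Chern--Lu inequality gives you only $p\int u^{p-2}|\partial u|^2$. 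Its own cross term needs $\int u^p|\partial F|^2$ with a coefficient of order $p$; note that the integration by parts also leaves $\int u^{p-1}\operatorname{Re}\langle\partial u,\partial F\rangle_\varphi$, since $\nabla_i\chi^{i\bar j}=u_j$, so this term does not disappear. The bookkeeping therefore closes only when $C_0p$ is small. Inserting $e^{-AF}$ into $G$ makes things worse. Differentiating the weight inside your identity produces $+A\int(F-\inf F)e^{-AF}u^p|\partial F|^2$ on the wrong side, and the left side no longer dominates it once $AC_0>1$. So the oscillation bound cannot be what controls $|\nabla F|^2$.

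The missing idea is that the weight itself supplies this control. Test the Chern--Lu inequality for $v=\tr_\varphi\chi+\delta$ with $e^{bF}v^{p-1}$, and test $\Delta_\varphi F=-\underline R+\delta-v$ with $e^{bF}v^p$. For $b<0$ the second test produces the two good terms $\int e^{bF}v^{p+1}$ and $|b|\int e^{bF}v^p|\nabla F|^2$ at the same time. Subtract $\Lambda$ times it from the first, where $\Lambda>K\ge1$ (with $K=1+B$ in your notation) is needed for a positive coefficient of $\int e^{bF}v^{p+1}$. Every remaining gradient term carries the same weight $e^{bF}v^p$, so nothing is lower order. In this scheme the resulting Hermitian form in $(v_i/v,F_i)$ must therefore be nonnegative pointwise for each $t=\lambda_i/v\in[0,1]$. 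At $t=1$ this forces $b=-p(\Lambda-1)$, and at $t=0$ it forces $\Lambda-1\ge(p-1)^2/(4p)$. Hence a fixed exponent such as $A=2+B$ fails for large $p$, and tying $A$ to $C_0$ does not help: the exponent must grow with $p$. The paper takes $\alpha=K+p$, $\Lambda=\alpha+1$ and $b=-p\alpha$. This turns the form into the sum of squares \eqref{eq:ht8} and gives \eqref{eq:ht9}. H\"older against $\int e^{-p\alpha F}d\mu$ then yields \eqref{eq:ht10} directly for every $p$, so no induction on $p$ is needed. The oscillation of $F$ enters only through the factor $e^{p\alpha\operatorname{osc}F}$ when the weight is removed.
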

\begin{proof}
Integration of \eqref{eq:ht1} gives
$-\underline R=\int_X\tr_\varphi\chi\,d\mu\geq0$.
\changed{Choose source coordinates satisfying} \eqref{eq:estimate-coordinates} \changed{at} $x_0$\changed{, and normal target coordinates at} $\Phi(x_0)$, and write
$\omega_Z=\sqrt{-1}h_{\alpha\bar\beta}dw^\alpha\wedge d\bar w^\beta$.
Then
\[
 \chi_{i\bar j}=\sum_{\alpha,\beta}
 h_{\alpha\bar\beta}(\Phi)\Phi_i^\alpha\overline{\Phi_j^\beta},
\qquad \tr_\varphi\chi=\sum_{i,\alpha}|\Phi_i^\alpha|^2 =\sum_i\lambda_i.
\]
With $R_{i\bar j k\bar l}=-\partial_k\partial_{\bar l}
g_{\varphi,i\bar j}$ at the normal point, differentiation gives
\begin{align}
 (\tr_\varphi\chi)_k&=\sum_{i,\alpha}\Phi_{ik}^\alpha\overline{\Phi_i^\alpha},
       \label{eq:trace-first}\\
 \Delta_\varphi\tr_\varphi\chi
 &=\sum_{i,k,\alpha}|\Phi_{ik}^\alpha|^2
   +\sum_i\lambda_i\Ric(\omega_\varphi)_{i\bar i}-\sum_{i,k,\alpha,\beta,\gamma,\delta}R^{\omega_Z}_{\alpha\bar\beta\gamma\bar\delta}\Phi_i^\alpha\overline{\Phi_i^\beta}\Phi_k^\gamma\overline{\Phi_k^\delta}.
       \label{eq:map-bochner-coordinates}
\end{align}
By Cauchy--Schwarz inequality,
\[
|\nabla_\varphi\tr_\varphi\chi|_\varphi^2
\leq(\tr_\varphi\chi)\sum_{i,k,\alpha}|\Phi_{ik}^\alpha|^2.\]
By \eqref{eq:ht1},
\[\sum_i\lambda_i\Ric(\omega_\varphi)_{i\bar i}=-\sum_i\lambda_i^2-\sum_i\lambda_iF_{i\bar i}.
\]
Choose $K\geq1$ so that the bisectional curvature of $\omega_Z$ is at most
$K-1$. Raising both indices of $\chi$ with $g_\varphi$, we obtain
\begin{equation}\label{eq:ht3}
 \Delta_\varphi\tr_\varphi\chi
 \geq\sum_{i,k,\alpha}|\Phi_{ik}^\alpha|^2
       -K(\tr_\varphi\chi)^2-\sum_{i,j}\chi^{i\bar j}F_{i\bar j}.
\end{equation}
Here $\chi^{i\bar j}=g_\varphi^{i\bar b}\chi_{a\bar b}
g_\varphi^{a\bar j}$, so $\chi^{i\bar j}=\lambda_i\delta_{ij}$ at
the normal point.

For $0<\delta\leq1$, put $v=\tr_\varphi\chi+\delta$. Since $v>0$,
\begin{equation}\label{eq:ht4}
 \Delta_\varphi v\geq\frac{|\nabla_\varphi v|_\varphi^2}{v}
       -Kv^2-\sum_{i,j}\chi^{i\bar j}F_{i\bar j},\qquad
 \Delta_\varphi F=-\underline R+\delta-v.
\end{equation}
The closedness of $\chi$ gives, at the normal point,
\begin{equation}\label{eq:ht5}
\sum_i\nabla_{\varphi,i}\chi^{i\bar{j}}=\sum_i\nabla_{\varphi,i}\chi_{j\bar i}=\sum_i\nabla_{\varphi,j}\chi_{i\bar i}=v_j.
\end{equation}

Fix $p\geq1$, and choose $b\in \R$ later. Testing \eqref{eq:ht4} with
$e^{bF}v^{p-1}$,
\begin{align*}
 \int_Xe^{bF}v^{p-1}\Delta_\varphi v\,d\mu
 &=-\int_Xe^{bF}v^p\sum_i\left[
 (p-1)\left|\frac{v_i}{v}\right|^2+b\operatorname{Re}\left(\frac{v_i}{v}F_{\bar i}\right)\right]d\mu,\\
 \int_Xe^{bF}v^{p-2}|\nabla_\varphi v|_\varphi^2\,d\mu
&=\int_Xe^{bF}v^p\sum_i\left|\frac{v_i}{v}\right|^2d\mu,\\
 -\int_Xe^{bF}v^{p-1}\chi^{i\bar j}F_{i\bar j}\,d\mu
 &=\operatorname{Re}\int_X
 \nabla_{\varphi,i}(e^{bF}v^{p-1}\chi^{i\bar j})F_{\bar j}\,d\mu.
\end{align*}
Repeated indices in the last line are summed. Expanding its derivative
and using \eqref{eq:ht5} gives
\[
\begin{aligned}
 &\operatorname{Re}\sum_{i,j}
 \nabla_{\varphi,i}(e^{bF}v^{p-1}\chi^{i\bar j})F_{\bar j}\\
 &\quad=e^{bF}v^p\sum_i\left[
 b\frac{\lambda_i}{v}|F_i|^2
 +(p-1)\frac{\lambda_i}{v}
              \operatorname{Re}\left(\frac{v_i}{v}F_{\bar i}\right)
 +\operatorname{Re}\left(\frac{v_i}{v}F_{\bar i}\right)\right].
\end{aligned}
\]
Consequently,
\begin{equation}\label{eq:ht6}
\begin{split}
 &\int_Xe^{bF}v^p\sum_i\left[
 p\left|\frac{v_i}{v}\right|^2
 +\left((p-1)\frac{\lambda_i}{v}+b+1\right)
             \operatorname{Re}\left(\frac{v_i}{v}F_{\bar i}\right)
 +b\frac{\lambda_i}{v}|F_i|^2\right]d\mu\\
 &\hspace{45mm}\leq K\int_Xe^{bF}v^{p+1}\,d\mu.
\end{split}
\end{equation}
Testing the equation $\Delta_\varphi F=-\underline{R}+\delta-v$ with $e^{bF}v^p$ similarly gives
\begin{equation}\label{eq:ht7}
\begin{split}
 &\int_Xe^{bF}v^p\sum_i\left[
 p\operatorname{Re}\left(\frac{v_i}{v}F_{\bar i}\right)+b|F_i|^2
 \right]d\mu\\
 &\qquad=\int_Xe^{bF}v^{p+1}\,d\mu
       -(-\underline R+\delta)\int_Xe^{bF}v^p\,d\mu.
\end{split}
\end{equation}

Subtract $\Lambda$ times \eqref{eq:ht7} from \eqref{eq:ht6}:
\begin{equation}\label{eq:trace-combination}
\begin{split}
 \int_Xe^{bF}v^p\mathcal Q\,d\mu
 +(\Lambda-K)\int_Xe^{bF}v^{p+1}\,d\mu
 \leq\Lambda(-\underline R+\delta)\int_Xe^{bF}v^p\,d\mu,
\end{split}
\end{equation}
where
\[
\begin{split}
 \mathcal Q=\sum_i\bigg[
 &p\left|\frac{v_i}{v}\right|^2
 +\left((p-1)\frac{\lambda_i}{v}+b+1-\Lambda p\right)
        \operatorname{Re}\left(\frac{v_i}{v}F_{\bar i}\right)\\
 &+b\left(\frac{\lambda_i}{v}-\Lambda\right)|F_i|^2\bigg].
\end{split}
\]
We seek $\Lambda>K$ and $\mathcal Q\geq0$. At the endpoint
$\lambda_i/v=1$, requiring the summand to be
$p|v_i/v-\alpha F_i|^2$ gives
\[
 \begin{cases}
 p+b-\Lambda p=-2p\alpha,\\
 b(1-\Lambda)=p\alpha^2.
 \end{cases}
 \quad\Longrightarrow\quad
 (\Lambda-1-\alpha)^2=0,
 \qquad \Lambda=\alpha+1,\quad b=-p\alpha.
\]
For these choices, set $u=e^{-\alpha F}v$. Then
\[
 (\log u)_i=\frac{v_i}{v}-\alpha F_i,
 \qquad 0\leq\frac{\lambda_i}{v}\leq1,
\]
and the coefficients of the remaining terms satisfy
\[
\begin{aligned}
 (p-1)\frac{\lambda_i}{v}+b+1-\Lambda p
 &=-2p\alpha-(p-1)\left(1-\frac{\lambda_i}{v}\right),\\
 b\left(\frac{\lambda_i}{v}-\Lambda\right)
 &=p\alpha^2+p\alpha\left(1-\frac{\lambda_i}{v}\right).
\end{aligned}
\]
Substitution, followed by $v_i/v=(\log u)_i+\alpha F_i$, yields
\[
\begin{aligned}
 \mathcal Q
 &=\sum_i p\left|\frac{v_i}{v}-\alpha F_i\right|^2
 +\sum_i\left(1-\frac{\lambda_i}{v}\right)
 \left[-(p-1)\operatorname{Re}\left(\frac{v_i}{v}F_{\bar i}\right)
          +p\alpha|F_i|^2\right]\\
 &=\sum_i p|(\log u)_i|^2
 +\sum_i\left(1-\frac{\lambda_i}{v}\right)
 \left[-(p-1)\operatorname{Re}((\log u)_iF_{\bar i})
          +\alpha|F_i|^2\right].
\end{aligned}
\]
For each $i$, the last bracket equals
\[
 \alpha\left|F_i-\frac{p-1}{2\alpha}(\log u)_i\right|^2
 -\frac{(p-1)^2}{4\alpha}|(\log u)_i|^2.
\]
Hence one gets the complete square identity
\begin{equation}\label{eq:ht8}
\begin{split}
 \mathcal Q
 ={}&\sum_i\alpha\left(1-\frac{\lambda_i}{v}\right)
       \left|F_i-\frac{p-1}{2\alpha}(\log u)_i\right|^2\\
 &+\sum_i\left[p-\frac{(p-1)^2}{4\alpha}
                  \left(1-\frac{\lambda_i}{v}\right)\right]
                  |(\log u)_i|^2.
\end{split}
\end{equation}
Choose
\[
 \alpha=K+p,\qquad \Lambda-K=p+1,\qquad
 p-\frac{(p-1)^2}{4\alpha}\left(1-\frac{\lambda_i}{v}\right)
 \geq\frac p2.
\]
Thus \eqref{eq:trace-combination} gives
\begin{equation}\label{eq:ht9}
\begin{split}
 &\frac p2\int_Xe^{-p\alpha F}v^p
                   |\nabla_\varphi\log u|_\varphi^2\,d\mu
 +(p+1)\int_Xe^{-p\alpha F}v^{p+1}\,d\mu\\
 &\qquad\leq(\alpha+1)(-\underline R+\delta)
                       \int_Xe^{-p\alpha F}v^p\,d\mu.
\end{split}
\end{equation}

H\"older's inequality gives
\[
 \int_Xe^{-p\alpha F}v^p\,d\mu
 \leq\left(\int_Xe^{-p\alpha F}v^{p+1}\,d\mu\right)^{p/(p+1)}
      \left(\int_Xe^{-p\alpha F}\,d\mu\right)^{1/(p+1)}.
\]
Dropping the first nonnegative term in \eqref{eq:ht9} yields
\[
 \int_Xe^{-p\alpha F}v^{p+1}\,d\mu
 \leq\left(\frac{(\alpha+1)(-\underline R+\delta)}{p+1}\right)^{p+1}
       \int_Xe^{-p\alpha F}\,d\mu.
\]
Since $\mu(X)=1$, multiplying by $e^{p\alpha\sup F}$ gives
\begin{equation}\label{eq:ht10}
 \int_X(\tr_\varphi\chi+\delta)^{p+1}\,d\mu
 \leq\left(\frac{(\alpha+1)(-\underline R+\delta)}{p+1}\right)^{p+1}
       e^{p\alpha\operatorname{osc}F},\qquad \alpha=K+p.
\end{equation}
Let $\delta\downarrow0$. This proves \eqref{eq:ht2} for $p\geq2$;
for $1\leq p<2$, use $\|\tr_\varphi\chi\|_p\leq
\|\tr_\varphi\chi\|_2$.
\end{proof}

For the canonical family, take $Z=\mathbb \C P^N$ and the scaled
Fubini--Study metric whose pullback is $\chi$.
Theorem~\ref{thm:bounds} give
$|\underline R_\varepsilon|+\operatorname{osc}F_\varepsilon\leq C$.
The target is smooth even when $Y$ is singular, so \eqref{eq:ht2}
holds uniformly for every fixed finite $p$.

Now we prove the gradient bound of $F$. 
\begin{lemma}\label{lem:gradient}
Under the same assumptions in Lemma~\ref{lem:trace-lp},
\begin{equation}\label{eq:c-18b}
 \sup_X|\nabla_\varphi F|_\varphi^2\leq C'.
\end{equation}
\end{lemma}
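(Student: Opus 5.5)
We run a maximum principle argument in the spirit of Chen--Cheng's gradient estimate \cite{CC17}, applied to a quantity of the form
\[
 H=e^{\beta F}\,|\nabla_\varphi F|_\varphi^2+A\,\tr_\varphi\chi ,
\]
possibly replacing the weight $e^{\beta F}$ by a function $h(F)$ of the bounded function $F$. The constants $\beta$ and $A$ will be fixed in terms of $C_0$ and $K$. Since $\operatorname{osc}F\le C_0$, a bound on $\sup H$ gives \eqref{eq:c-18b}.

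\textbf{Step 1: Bochner formula for $|\nabla F|^2$.} The basic identity is
\[
 \Delta_\varphi|\nabla F|^2=|\nabla\nabla F|^2+|\nabla\bar\nabla F|^2+2\operatorname{Re}\langle\nabla\Delta_\varphi F,\nabla F\rangle+\Ric(\omega_\varphi)(\nabla F,\overline{\nabla F}).
\]
We substitute the two relations in \eqref{eq:ht1}:
\[
 \nabla\Delta_\varphi F=-\nabla\tr_\varphi\chi,\qquad \Ric=-\chi-\ddc F .
\]
The term $-\chi(\nabla F,\overline{\nabla F})$ is bounded below by $-\tr_\varphi\chi\,|\nabla F|^2$. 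The term $-F_{i\bar j}F_{\bar i}F_j$ is controlled by $\frac14|\nabla\bar\nabla F|^2+C|\nabla F|^4$. The weight $e^{\beta F}$ is meant to produce a good term: from $\Delta_\varphi e^{\beta F}$ we get $\beta^2e^{\beta F}|\nabla F|^4$, together with the term $\beta e^{\beta F}(-\underline R-\tr\chi)|\nabla F|^2$. Choosing $\beta$ large and of the right sign should absorb the quartic error.

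\textbf{Step 2: handle the bad term $\langle\nabla\tr_\varphi\chi,\nabla F\rangle$.} This is where $A\tr_\varphi\chi$ enters. By \eqref{eq:ht3},
\[
 \Delta_\varphi\tr_\varphi\chi\ge \sum|\Phi_{ik}^\alpha|^2-K(\tr\chi)^2-\chi^{i\bar j}F_{i\bar j},
\]
and by Cauchy--Schwarz $\sum|\Phi_{ik}^\alpha|^2\ge|\nabla\tr\chi|^2/\tr\chi$. The cross term $2e^{\beta F}\operatorname{Re}\langle\nabla\tr\chi,\nabla F\rangle$ can be absorbed by $A|\nabla\tr\chi|^2/\tr\chi$ at the cost of $C\,e^{2\beta F}A^{-1}\tr\chi|\nabla F|^2$. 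The term $\chi^{i\bar j}F_{i\bar j}$ is bounded by $\frac{1}{4}|\nabla\bar\nabla F|^2+C(\tr\chi)^2$, using the $|\nabla\bar\nabla F|^2$ term from Step 1 after multiplying by $e^{\beta F}$.

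\textbf{Step 3: the main obstacle.} At a maximum point of $H$ we are left with an inequality of the schematic form
\[
 0\ge c\,|\nabla F|^4-C(1+\tr\chi)|\nabla F|^2-C(\tr\chi)^2 .
\]
This is the step I expect to be hardest, because $\tr\chi$ is not yet known to be bounded; only its $L^p$ bounds from Lemma~\ref{lem:trace-lp} are available. A pure maximum principle therefore fails. It only yields $|\nabla F|^2\le C(1+\tr\chi)$ at the maximum point, and hence $H\le C(1+\tr\chi)$ there, which is circular.

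To break the circularity I would switch to an integral (Moser iteration) form of the same differential inequality. Write it as
\[
 \Delta_\varphi H\ge -C(1+\tr\chi)H-C(1+\tr\chi)^2 .
\]
This is a subsolution inequality whose coefficients lie in every $L^p(\mu)$, $p<\infty$, by Lemma~\ref{lem:trace-lp}. Moser iteration with potential in $L^p$ for some $p>q/(q-1)$ then gives $\sup H\le C\|H\|_{L^1(\mu)}+C$. For the cscK family, the Sobolev inequality needed for the iteration is Theorem~\ref{thm:sobolev}.

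It remains to bound $\|H\|_{L^1}$. For this, integrate $\Delta_\varphi F^2=2F\Delta F+2|\nabla F|^2$, which gives
\[
 \int|\nabla F|^2d\mu=\int F(\underline R+\tr\chi)\,d\mu\le C ,
\]
and add the $L^1$ bound on $\tr\chi$. If a proof using only the hypotheses of Lemma~\ref{lem:trace-lp} is wanted, one should instead run the weighted integration-by-parts scheme of that lemma on $H^p$, again exploiting the complete-square structure \eqref{eq:ht8}. I expect that to be the cleaner route, but the Moser approach above is the one I would try first.
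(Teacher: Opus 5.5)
\textbf{The gap is in Step 1.} The weight $e^{\beta F}$ does not leave a spare good term $\beta^2e^{\beta F}|\nabla_\varphi F|^4_\varphi$ to absorb the quartic error coming from $-F_{i\bar j}F_jF_{\bar i}$. You dropped the cross term
\[
2\operatorname{Re}\langle\nabla_\varphi e^{\beta F},\nabla_\varphi w\rangle=2\beta e^{\beta F}\bigl(\operatorname{Re}\textstyle\sum F_{ij}F_{\bar i}F_{\bar j}+\sum F_{i\bar j}F_jF_{\bar i}\bigr),\qquad w=|\nabla_\varphi F|_\varphi^2 .
\]
Its $(2,0)$-part has no sign and can only be controlled by $|F_{ij}|^2$. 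Doing so uses up exactly the $\beta^2w^2$:
\[
|F_{ij}|^2+2\beta\operatorname{Re}(F_{ij}F_{\bar i}F_{\bar j})+\beta^2w^2=|F_{ij}+\beta F_iF_j|^2 .
\]
What remains of the mixed Hessian is $|F_{i\bar j}|^2+(2\beta-1)F_{i\bar j}F_jF_{\bar i}\geq-\tfrac14(2\beta-1)^2w^2$, and this cannot be improved pointwise. So for every $\beta\neq\frac12$ you are left with a negative quartic term and no positive one. (For a general weight $e^{\phi(F)}$ the best quartic coefficient is $e^{\phi}\bigl(\phi''-(\phi'-\tfrac12)^2\bigr)$.) The residual $-c\,e^{\beta F}w^2\geq-CH^2$ turns your subsolution inequality into $\Delta_\varphi H\geq-C(1+\tr_\varphi\chi+H)H-C(1+\tr_\varphi\chi)^2$. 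Its potential contains $H$ itself, and a priori you only know $H\in L^1(\mu)$, so the Moser iteration of Step 3 becomes circular. The missing idea is the exact choice $\beta=\frac12$, for which the coefficient $2\beta-1$ of the mixed Hessian term vanishes. This Chen--Cheng cancellation is what the paper's proof is built on.

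With $\beta=\frac12$ your scheme does go through, and it differs from the paper in how the third-order term $\langle\nabla_\varphi\tr_\varphi\chi,\nabla_\varphi F\rangle$ is treated. You absorb it pointwise by adding $A\tr_\varphi\chi$, using \eqref{eq:ht3} together with $|\nabla_\varphi\tr_\varphi\chi|^2_\varphi\leq(\tr_\varphi\chi)\sum|\Phi^\alpha_{ik}|^2$; this form also avoids dividing by $\tr_\varphi\chi$ where it vanishes. It gives the pointwise inequality $\Delta_\varphi H\geq-C(1+\tr_\varphi\chi)^2(H+1)$. The paper instead keeps $u=1+e^{F/2}|\nabla_\varphi F|_\varphi^2$ alone and integrates the cross term by parts in the tested inequality, trading $\nabla_\varphi\tr_\varphi\chi$ for $\Delta_\varphi F=-\underline R-\tr_\varphi\chi$. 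Both arguments then use the same inputs:
\begin{enumerate}
\item the bounds of Lemma~\ref{lem:trace-lp} in $L^r$ for some $r>q/(q-1)$;
\item the uniform Sobolev inequality of Theorem~\ref{thm:sobolev}, so neither argument uses only the hypotheses of Lemma~\ref{lem:trace-lp};
\item the $L^1$ bound from $\int w\,d\mu\leq2|\underline R|\,\|F\|_\infty$.
\end{enumerate}
Your version has the bonus that $\sup H\leq C$ also bounds $\tr_\varphi\chi$, so it yields Lemma~\ref{lem:trace} at the same time.
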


\begin{proof}
Put $w=|\nabla_\varphi F|_\varphi^2$. The Bochner formula gives 
\begin{equation}\label{eq:gradient-bochner}
 \Delta_\varphi w
 =\sum_{i,j}(|F_{ij}|^2+|F_{i\bar j}|^2)-2\operatorname{Re}\sum_i(\tr_\varphi\chi)_iF_{\bar i}-\sum_i\lambda_i|F_i|^2
 -\sum_{i,j}F_{i\bar j}F_jF_{\bar i}.
\end{equation}
Direct computation gives
\[w_i=\sum_j(F_{ji}F_{\bar j}+F_jF_{i\bar j}),\]
 \[e^{-bF}\Delta_\varphi e^{bF}=b(-\underline R-\tr_\varphi\chi)+b^2w,\]
 \[2e^{-bF}\operatorname{Re}\sum_i(e^{bF})_iw_{\bar i}=2b\operatorname{Re}\sum_{i,j}F_{ij}F_{\bar i}F_{\bar j}
   +2b\sum_{i,j}F_{i\bar j}F_jF_{\bar i},\]
\[ \sum_{i,j}|F_{ij}|^2
+2b\operatorname{Re}\sum_{i,j}F_{ij}F_{\bar i}F_{\bar j}+b^2w^2=\sum_{i,j}|F_{ij}+bF_iF_j|^2.
\]
Thus
\[
\begin{split}
 e^{-bF}\Delta_\varphi(e^{bF}w)
 =&\sum_{i,j}|F_{ij}+bF_iF_j|^2+\sum_{i,j}|F_{,i\bar j}|^2+(2b-1)\sum_{i,j}F_{i\bar j}F_jF_{\bar i}-2\operatorname{Re}\sum_i(\tr_\varphi\chi)_iF_{\bar i}\\
 &-\sum_i\lambda_i|F_i|^2+b(-\underline R-\tr_\varphi\chi)w.
\end{split}
\]
The mixed Hessian has no sign; its coefficient vanishes for
$b=\frac12$. This is \cite[equations~(4.3)--(4.4)]{CC17}, with
reference Ricci form $-\chi$. Set
\[
 u=1+e^{F/2}|\nabla_\varphi F|_\varphi^2.
\]
Then
\begin{equation}\label{eq:gradient-weighted-exact}
\begin{split}
 \Delta_\varphi u
 =&e^{F/2}\sum_{i,j}\left(|F_{ij}+\tfrac12F_iF_j|^2+|F_{i\bar j}|^2\right)-2e^{F/2}\operatorname{Re}\sum_i(\tr_\varphi\chi)_iF_{\bar i}-e^{F/2}\sum_i\lambda_i|F_i|^2\\
 &+\tfrac12(-\underline R-\tr_\varphi\chi)(u-1).
\end{split}
\end{equation}
Since $0\leq\lambda_i\leq\tr_\varphi\chi$, dropping the Hessian
squares gives
\begin{equation}\label{eq:c-18c}
 \Delta_\varphi u \geq-2e^{F/2}\operatorname{Re}\langle\nabla_\varphi\tr_\varphi\chi,\nabla_\varphi F\rangle_\varphi
 -\left(\tfrac12\underline R+\tfrac32\tr_\varphi\chi\right)(u-1).
\end{equation}

For $p\geq2$, testing \eqref{eq:c-18c} with $u^{p-1}$ gives
\begin{equation}\label{eq:gradient-tested}
\begin{split}
 &(p-1)\int_Xu^{p-2}|\nabla_\varphi u|_\varphi^2\,d\mu\\
 &\quad\leq2\int_Xe^{F/2}u^{p-1}\operatorname{Re}\langle\nabla_\varphi\tr_\varphi\chi,\nabla_\varphi F\rangle_\varphi\,d\mu\\
&\qquad+\int_X\left(\tfrac12\underline R+\tfrac32\tr_\varphi\chi\right)
                      (u-1)u^{p-1}\,d\mu.
\end{split}
\end{equation}
For the first integral on the right,
\begin{equation}\label{eq:gradient-q-parts}
\begin{split}
 &2\int_Xe^{F/2}u^{p-1}\operatorname{Re}\langle\nabla_\varphi\tr_\varphi\chi,\nabla_\varphi F\rangle_\varphi\,d\mu\\
 &\quad=-2\int_X(\tr_\varphi\chi)e^{F/2}u^{p-1}
                      (-\underline R-\tr_\varphi\chi)\,d\mu\\
 &\qquad-\int_X(\tr_\varphi\chi)(u-1)u^{p-1}\,d\mu\\
 &\qquad-2(p-1)\int_X(\tr_\varphi\chi)e^{F/2}u^{p-2}\operatorname{Re}\langle\nabla_\varphi u,\nabla_\varphi F\rangle_\varphi\,d\mu.
\end{split}
\end{equation}
Because $e^{F/2}|\nabla_\varphi F|_\varphi
=e^{F/4}\sqrt{u-1}\leq C\sqrt u$, Young's inequality gives
\[
\begin{split}
 &2(p-1)\left|\int_X(\tr_\varphi\chi)e^{F/2}u^{p-2}
 \operatorname{Re}\langle\nabla_\varphi u,\nabla_\varphi F\rangle_\varphi\,d\mu\right|\\
 &\quad\leq\frac{p-1}{2}\int_Xu^{p-2}|\nabla_\varphi u|_\varphi^2\,d\mu
       +C(p-1)\int_X(\tr_\varphi\chi)^2u^{p-1}\,d\mu.
\end{split}
\]
The terms without $\nabla_\varphi u$ combine to
\[
\begin{split}
 &2\underline R\int_X(\tr_\varphi\chi)e^{F/2}u^{p-1}\,d\mu
 +2\int_X(\tr_\varphi\chi)^2e^{F/2}u^{p-1}\,d\mu\\
 &\quad+\frac12\int_X(\tr_\varphi\chi)(u-1)u^{p-1}\,d\mu
 +\frac{\underline R}{2}\int_X(u-1)u^{p-1}\,d\mu\\
 &\qquad\leq C\int_X\bigl(1+(\tr_\varphi\chi)^2\bigr)u^p\,d\mu,
\end{split}
\]
using $u\geq1$.
Absorbing the gradient term and using
$|\nabla_\varphi u^{p/2}|_\varphi^2
=\frac{p^2}{4}u^{p-2}|\nabla_\varphi u|_\varphi^2$ yields
\begin{equation}\label{eq:c-18d}
 \int_X|\nabla_\varphi u^{p/2}|_\varphi^2\,d\mu
 \leq Cp^2\int_X\bigl(1+(\tr_\varphi\chi)^2\bigr)u^p\,d\mu.
\end{equation}

Let $\theta=q/[r(q-1)]\in(0,1)$, where $q>1$. H\"older and interpolation inequality give
\begin{equation}\label{eq:iteration-interpolation}
\begin{split}
 \int_X\bigl(1+(\tr_\varphi\chi)^2\bigr)u^p\,d\mu
 &\leq C\|u^{p/2}\|_{L^{2r/(r-1)}}^2
 \leq C\|u^{p/2}\|_{L^{2q}}^{2\theta}\|u^{p/2}\|_{L^2}^{2(1-\theta)}\\
 &\leq\tau\|u^{p/2}\|_{L^{2q}}^2
       +C\tau^{-\theta/(1-\theta)}\|u^{p/2}\|_{L^2}^2,
 \qquad 0<\tau\leq1.
\end{split}
\end{equation}
Apply the uniform Sobolev inequality~\ref{thm:sobolev} to $u^{p/2}$. 
Choosing $\tau=(2Cp^2)^{-1}$ in \eqref{eq:iteration-interpolation}
and absorbing half the left side gives
\begin{equation}\label{eq:iteration-step}
 \|u\|_{L^{pq}}\leq\bigl(Cp^{2/(1-\theta)}\bigr)^{1/p}\|u\|_{L^p},
 \qquad p\geq2.
\end{equation}
For $p_j=2q^j$, both $\sum_jp_j^{-1}$ and
$\sum_jp_j^{-1}\log p_j$ converge. Iteration and interpolation yield
\[
 \|u\|_{L^\infty}\leq C\|u\|_{L^2}
 \leq C\|u\|_{L^\infty}^{1/2}\|u\|_{L^1}^{1/2},
 \qquad \|u\|_{L^\infty}\leq C^2\|u\|_{L^1}.
\]
Finally, by \eqref{eq:ht1},
\[
 \int_Xwd\mu
 =-\int_XF(-\underline R-\tr_\varphi\chi)d\mu
 \leq2|\underline R|\|F\|_{L^\infty},
 \qquad \|u\|_{L^1}\leq1+e^{\|F\|_{L^\infty}/2}\int_Xwd\mu\leq C.
\]
This proves \eqref{eq:c-18b}.
\end{proof}

Now we can prove the desired $C^2$-estimates in the base direction. 
\begin{lemma}\label{lem:trace}
Under the assumption of Lemma~\ref{lem:trace-lp}, 
\[
 \tr_\varphi\chi\leq C',\qquad \omega_\varphi\geq(C')^{-1}\chi.
\]
\end{lemma}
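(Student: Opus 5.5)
The plan is a Moser iteration for $v=\tr_\varphi\chi$, in the same spirit as the proof of Lemma~\ref{lem:gradient}. It uses three inputs: the differential inequality \eqref{eq:ht3}, the uniform Sobolev inequality of Theorem~\ref{thm:sobolev}, and the two bounds already proved, namely $\|\tr_\varphi\chi\|_{L^p(\mu)}\le C_p$ from Lemma~\ref{lem:trace-lp} and $|\nabla_\varphi F|_\varphi\le C'$ from Lemma~\ref{lem:gradient}. The second conclusion follows at once from the first: $\tr_\varphi\chi\le C'$ means every eigenvalue $\lambda_i$ of $\chi$ relative to $\omega_\varphi$ is at most $C'$, so $\chi\le C'\omega_\varphi$.

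Start from \eqref{eq:ht4} with $\delta=1$, so that $v=\tr_\varphi\chi+1\ge 1$. The only troublesome term is $-\sum_{i,j}\chi^{i\bar j}F_{i\bar j}$, because it involves the Hessian of $F$, which is not controlled pointwise. We will never estimate it pointwise. Instead we test \eqref{eq:ht4} against $v^{p-1}$ and integrate this term by parts, exactly as in the proof of Lemma~\ref{lem:trace-lp}. Using the divergence identity \eqref{eq:ht5} and the formula for $\nabla$ of $\mu$ (recall $d\mu=V^{-1}e^{F}\Omega$, and $\mu$ is the $\omega_\varphi$-volume, so no extra weight appears), we get
\[
-\int_X v^{p-1}\chi^{i\bar j}F_{i\bar j}\,d\mu=\operatorname{Re}\int_X\Big((p-1)v^{p-2}\chi^{i\bar j}v_iF_{\bar j}+v^{p-1}v_jF_{\bar j}\Big)d\mu .
\]
Since $0\le\chi^{i\bar j}\le v\,g_\varphi^{i\bar j}$ and $|\nabla F|\le C'$, the right side is at most $Cp\int v^{p-1}|\nabla v|\,d\mu$. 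By Young's inequality this is at most
\[
\tfrac{p-1}{4}\int v^{p-2}|\nabla v|^2\,d\mu+Cp\int v^{p}\,d\mu .
\]
The left side of the tested inequality gives $(p-1)\int v^{p-2}|\nabla v|^2$ plus the good term $\int v^{p-2}|\nabla v|^2$ coming from $|\nabla v|^2/v$. The bad zeroth-order term is $K\int v^{p+1}$. Absorbing the gradient term, we expect the energy inequality
\[
\int_X|\nabla_\varphi v^{p/2}|^2_\varphi\,d\mu\le Cp^2\int_X v^{p+1}\,d\mu\le Cp^2\int_X v\cdot v^{p}\,d\mu ,\qquad p\ge 2 .
\]

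This has the same shape as \eqref{eq:c-18d}, with weight $1+(\tr_\varphi\chi)^2$ replaced by $v$, and $v$ lies in $L^r(\mu)$ for every $r<\infty$ by Lemma~\ref{lem:trace-lp}. We choose $r$ large relative to the Sobolev exponent $q$ of Theorem~\ref{thm:sobolev}, so that $\theta=q/[r(q-1)]<1$. Then the interpolation \eqref{eq:iteration-interpolation} and the Sobolev inequality (whose constant is uniform, since $I_{\omega_\varepsilon}$ is bounded) give
\[
\|v\|_{L^{pq}}\le (Cp^{a})^{1/p}\|v\|_{L^p}
\]
for some fixed $a$. Iterating along $p_j=2q^j$ gives $\|v\|_{L^\infty}\le C\|v\|_{L^2}$, and $\|v\|_{L^2}\le C$ again by Lemma~\ref{lem:trace-lp}. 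All constants depend only on $C_0$, $K$, $C'$ and the Sobolev constant.

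The main obstacle is the integration by parts of the Hessian term. We must check two things. First, the divergence of $\chi^{i\bar j}$ is indeed $v_j$; this is \eqref{eq:ht5}, which uses the closedness of $\chi$ together with the Kähler condition. Second, no term with uncontrolled second derivatives of $F$ survives; this holds because $\mu$ is the Riemannian volume of $\omega_\varphi$. If the bare $v^{p-1}$ test produced an awkward cross term, the fallback is to use the weighted test function $e^{bF}v^{p-1}$ from Lemma~\ref{lem:trace-lp}. That is harmless now, since $F$ and $\nabla F$ are bounded, so $e^{bF}$ changes only constants.
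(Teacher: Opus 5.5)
Your proposal is correct and follows the paper's proof essentially step for step. The paper also takes $\delta=1$, $b=0$ in \eqref{eq:ht6}, which amounts to testing \eqref{eq:ht4} with $v^{p-1}$ and integrating the Hessian term by parts via \eqref{eq:ht5}. It then absorbs the cross term using $|\nabla_\varphi F|_\varphi\le C$ to reach $\int_X|\nabla_\varphi v^{p/2}|_\varphi^2\,d\mu\le Cp^2\int_X(1+\tr_\varphi\chi)v^p\,d\mu$, runs the same interpolation and Moser iteration using the $L^r$ bound from Lemma~\ref{lem:trace-lp}, and concludes $\chi\le C'\omega_\varphi$ by the same eigenvalue argument.
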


\begin{proof}
Set $v=1+\tr_\varphi\chi$. Taking $\delta=1$, $b=0$ in
\eqref{eq:ht6} gives, for $p\geq2$,
\[
 \int_Xv^p\sum_i\left[
 p\left|\frac{v_i}{v}\right|^2+
 \left((p-1)\frac{\lambda_i}{v}+1\right)
       \operatorname{Re}\left(\frac{v_i}{v}F_{\bar i}\right)\right]d\mu
 \leq K\int_Xv^{p+1}\,d\mu.
\]
Since $0\leq\lambda_i/v\leq1$,
\[
 \left|\sum_i\left((p-1)\frac{\lambda_i}{v}+1\right)
       \operatorname{Re}\left(\frac{v_i}{v}F_{\bar i}\right)\right|
 \leq p\frac{|\nabla_\varphi v|_\varphi}{v}|\nabla_\varphi F|_\varphi
 \leq\frac p2\frac{|\nabla_\varphi v|_\varphi^2}{v^2}+Cp.
\]
Absorption gives
\[
 \frac p2\int_Xv^{p-2}|\nabla_\varphi v|_\varphi^2\,d\mu
 \leq K\int_Xv^{p+1}\,d\mu+Cp\int_Xv^p\,d\mu,
\]
and hence
\begin{equation}\label{eq:q1}
 \int_X|\nabla_\varphi v^{p/2}|_\varphi^2\,d\mu
 \leq Cp^2\int_X(1+\tr_\varphi\chi)v^p\,d\mu.
\end{equation}
For fixed $r>q/(q-1)$, the coefficient $1+\tr_\varphi\chi$ has a
uniform $L^r$ bound. The interpolation and iteration in
\eqref{eq:iteration-interpolation}--\eqref{eq:iteration-step} therefore give
\[
 \|1+\tr_\varphi\chi\|_{L^\infty}
 \leq C\|1+\tr_\varphi\chi\|_{L^2}\leq C'.
\]
Finally, $\lambda_i\leq\sum_j\lambda_j=\tr_\varphi\chi\leq C'$,
so $\chi\leq C'\omega_\varphi$. 
\end{proof}

\section{Uniform convergence of the normalized volume density}
In this section, we first show that one can always take limit(up to subsequence) of $\varphi_\varepsilon$ and $F_\varepsilon$ in $L^p$ for any finite $p$. Then we analyze the convergence of the Mabuchi energy along the cscK path. Using the minimizing property of the cscK metrics to identify the limit of $F_\varepsilon$ as the canonical volume density $f^*\varphi_{\rm can}$. Then based on the uniform Sobolev inequality of Guo--Phong--Song--Sturm~\cite{GPSS23}, we apply the local Moser iteration to improve the $L^p$ convergence of $F_\varepsilon$ to $L^\infty$ convergence away from the singular fibers.

With the estimates in previous section, we observe that one can take some weak limit up to subsequence. 

\begin{lemma}\label{lem:energy}
The functions $F_\varepsilon$, $\varphi_\varepsilon$, have uniformly bounded normalized Dirichlet energy:
\[
\int_X |\nabla F_\varepsilon|_{\omega_\varepsilon}^2\,d\mu_\varepsilon+\int_X |\nabla \varphi_\varepsilon|_{\omega_\varepsilon}^2\,d\mu_\varepsilon\le C.
\]
\end{lemma}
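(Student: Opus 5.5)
The plan is to test each of the two defining equations against the corresponding function itself and integrate by parts with respect to the normalized measure $d\mu_\varepsilon$. Both bounds will then follow from the uniform $L^\infty$ bounds of Theorem~\ref{thm:bounds} together with the integral identity $-\underline R_\varepsilon=\int_X\tr_{\omega_\varepsilon}\chi\,d\mu_\varepsilon$ from Lemma~\ref{lem:trace-lp}.

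\textbf{The $F_\varepsilon$ term.} This was already done at the end of the proof of Lemma~\ref{lem:gradient}. Multiplying $\Delta_{\omega_\varepsilon}F_\varepsilon=-\underline R_\varepsilon-\tr_{\omega_\varepsilon}\chi$ by $-F_\varepsilon$ and integrating gives
\[
\int_X|\nabla F_\varepsilon|^2_{\omega_\varepsilon}d\mu_\varepsilon
=\int_X F_\varepsilon(\underline R_\varepsilon+\tr_{\omega_\varepsilon}\chi)\,d\mu_\varepsilon
\le \|F_\varepsilon\|_\infty\Bigl(|\underline R_\varepsilon|+\int_X\tr_{\omega_\varepsilon}\chi\,d\mu_\varepsilon\Bigr)
\le 2|\underline R_\varepsilon|\,\|F_\varepsilon\|_\infty\le C .
\]
Alternatively, the pointwise bound of Lemma~\ref{lem:gradient} gives this term at once.

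\textbf{The $\varphi_\varepsilon$ term.} Take the trace of $\omega_\varepsilon=\chi_\varepsilon+\ddc\varphi_\varepsilon$ with respect to $\omega_\varepsilon$:
\[
\Delta_{\omega_\varepsilon}\varphi_\varepsilon=n-\tr_{\omega_\varepsilon}\chi-\varepsilon\tr_{\omega_\varepsilon}\omega .
\]
Multiplying by $-\varphi_\varepsilon$ and integrating gives
\[
\int_X|\nabla\varphi_\varepsilon|^2_{\omega_\varepsilon}d\mu_\varepsilon
=\int_X\varphi_\varepsilon\bigl(\tr_{\omega_\varepsilon}\chi+\varepsilon\tr_{\omega_\varepsilon}\omega-n\bigr)d\mu_\varepsilon .
\]
Since $\tr_{\omega_\varepsilon}\chi$ and $\tr_{\omega_\varepsilon}\omega$ are nonnegative, the right side is at most
\[
\|\varphi_\varepsilon\|_\infty\Bigl(n+\int_X\tr_{\omega_\varepsilon}\chi_\varepsilon\,d\mu_\varepsilon\Bigr).
\]
The integral here is cohomological:
\[
\int_X\tr_{\omega_\varepsilon}\chi_\varepsilon\,d\mu_\varepsilon=\frac{n\int_X\chi_\varepsilon\wedge\omega_\varepsilon^{n-1}}{V_\varepsilon}=n,
\]
because $\chi_\varepsilon$ and $\omega_\varepsilon$ are cohomologous. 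So the bound is $2n\|\varphi_\varepsilon\|_\infty\le C$. (In fact, since $\varphi_\varepsilon\le0$, one can simply drop the positive trace terms.)

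There is no real obstacle. The only point needing care is that the $\varepsilon\tr_{\omega_\varepsilon}\omega$ term is not controlled pointwise, since $\omega_\varepsilon$ degenerates in the fiber directions. This is why I would bound it in integrated form through the identity $\int_X\tr_{\omega_\varepsilon}\chi_\varepsilon\,d\mu_\varepsilon=n$, rather than through Lemma~\ref{lem:trace}.
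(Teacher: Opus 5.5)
Your proof is correct and is essentially the paper's argument: test each equation against the function itself, integrate by parts, and use $\tr_{\omega_\varepsilon}\chi\ge0$ together with $\int_X\tr_{\omega_\varepsilon}\chi\,d\mu_\varepsilon=-\underline R_\varepsilon$ and the cohomological identity $\int_X\tr_{\omega_\varepsilon}\chi_\varepsilon\,d\mu_\varepsilon=n$, which gives the bounds $2|\underline R_\varepsilon|\|F_\varepsilon\|_\infty$ and $2n\|\varphi_\varepsilon\|_\infty$. Your side remark that $\varphi_\varepsilon\le0$ lets you drop the trace terms is also valid, and it improves the second bound to $n\|\varphi_\varepsilon\|_\infty$.
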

\begin{proof}
$\chi\ge 0$ gives
\[
-\underline R_\varepsilon=\int_X\operatorname{tr}_{\omega_\varepsilon}\chi\,d\mu_\varepsilon\ge0.
\]
Note that  $\underline R_\varepsilon\to-m$. Integration by parts yields
\[
\int_X|\nabla F_\varepsilon|_{\omega_\varepsilon}^2\,d\mu_\varepsilon
=-\int_XF_\varepsilon(-\underline R_\varepsilon-\operatorname{tr}_{\omega_\varepsilon}\chi)\,d\mu_\varepsilon
\le2|\underline R_\varepsilon|\|F_\varepsilon\|_\infty.
\]
Likewise,
\[
\Delta_{\omega_\varepsilon}\varphi_\varepsilon=n-\operatorname{tr}_{\omega_\varepsilon}\chi_\varepsilon,
\qquad
\int_X\operatorname{tr}_{\omega_\varepsilon}\chi_\varepsilon\,d\mu_\varepsilon=n,
\]
where the last identity follows from $[\chi_\varepsilon]=[\omega_\varepsilon]$. Both $\chi_\varepsilon$ and $\omega_\varepsilon$ are positive. Thus
\[
\int_X|\nabla\varphi_\varepsilon|_{\omega_\varepsilon}^2\,d\mu_\varepsilon\le2n\|\varphi_\varepsilon\|_\infty.
\]
\end{proof}

\begin{lemma}\label{lem:compactness}
Suppose
\[
 \|u_\varepsilon\|_{L^\infty}\leq C,\qquad
 \int_X|\nabla u_\varepsilon|_{\omega_\varepsilon}^2\,d\mu_\varepsilon\leq C.
\]
Then:
\begin{enumerate}
\item\label{item:lem-compactness-1} $\|u_\varepsilon\|_{W^{1,1}(X,\mathchanged{\omega})}\leq C$. Hence every sequence has a subsequence converging in every finite
$L^p(X,\Omega)$.
\item\label{item:lem-compactness-2} For any compact subset $K\Subset Y^\circ$, put
\[
 \bar u_\varepsilon(y)=V_F^{-1}\int_{X_y}u_\varepsilon\mathchanged{\omega_y}^d.
\]
Then
\begin{align*}
 \int_{f^{-1}(K)}|\nabla_Vu_\varepsilon|_{\mathchanged{\omega_y}}\Omega
 &\leq C_K\sqrt\varepsilon,\\
 \int_{f^{-1}(K)}|u_\varepsilon-f^*\bar u_\varepsilon|\Omega
 &\leq C_K\sqrt\varepsilon.
\end{align*}
Here $\nabla_V u$ is the covariant derivative in the fiber direction. 
Hence every limit is constant on almost every smooth connected fiber.
For $d=0$, the fibers on $Y^\circ$ are points and this last assertion
is automatic.
\end{enumerate}
\end{lemma}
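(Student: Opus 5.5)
The plan is to reduce both estimates to one pointwise inequality followed by Cauchy--Schwarz. For any two K\"ahler forms $\eta,\omega$ and any function $u$,
\[
|\nabla u|_\omega^2\le(\tr_\omega\eta)\,|\nabla u|_\eta^2 .
\]
The same inequality holds on a fiber, with $\eta$ replaced by its restriction. In each case the trace integral is then a cohomological quantity, so it is controlled by intersection numbers. The two-sided comparison $C^{-1}\Omega\le d\mu_\varepsilon\le C\Omega$ from Theorem~\ref{thm:bounds}\eqref{item:thm-bounds-1} lets us pass freely between $\Omega$, $\omega^n$ and $d\mu_\varepsilon$.

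\textbf{Part (1).} Take $\eta=\omega_\varepsilon$. Using $\Omega\le C\omega^n$ and Cauchy--Schwarz,
\[
\int_X|\nabla u_\varepsilon|_{\omega}\,\omega^n\le\Bigl(\int_X|\nabla u_\varepsilon|^2_{\omega_\varepsilon}\,\omega^n\Bigr)^{1/2}\Bigl(\int_X\tr_\omega\omega_\varepsilon\,\omega^n\Bigr)^{1/2}.
\]
For the first factor, $\omega^n\le C\,d\mu_\varepsilon$, so it is bounded by the assumed energy bound. For the second factor,
\[
\int_X\tr_\omega\omega_\varepsilon\,\omega^n=n\int_X\omega_\varepsilon\wedge\omega^{n-1}=n\,[\chi_\varepsilon]\cdot[\omega]^{n-1},
\]
which is bounded uniformly for $\varepsilon\le1$. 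Together with $\|u_\varepsilon\|_\infty\le C$ this gives the $W^{1,1}(X,\omega)$ bound. Rellich compactness of $W^{1,1}\hookrightarrow L^1$ on the fixed compact manifold $(X,\omega)$ gives a subsequence converging in $L^1$. The uniform $L^\infty$ bound then upgrades this to convergence in every finite $L^p(X,\Omega)$, by interpolating $\|u-u'\|_p^p\le (2C)^{p-1}\|u-u'\|_1$.

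\textbf{Part (2), fiber gradient.} Fix $K\Subset Y^\circ$. On $f^{-1}(K)$ the map is a smooth submersion, so $\Omega\le C_K\,\chi^m\wedge\omega^d$, and fiber integration gives $\int_{f^{-1}(K)}g\,\Omega\le C_K\int_K\bigl(\int_{X_y}g\,\omega_y^d\bigr)\chi_Y^m$.

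Pointwise on $X_y$ we have $|\nabla_Vu_\varepsilon|^2_{\omega_y}\le\tr_{\omega_y}(\omega_\varepsilon|_{X_y})\,|\nabla u_\varepsilon|^2_{\omega_\varepsilon}$. This holds because the restriction of $\omega_\varepsilon$ to the fiber is dominated by $\omega_\varepsilon$ itself on vertical vectors.

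The restriction $\omega_\varepsilon|_{X_y}$ is a closed positive form, and it is cohomologous to $\varepsilon\,\omega_y$ since $\chi|_{X_y}=0$. Hence
\[
\int_{X_y}\tr_{\omega_y}(\omega_\varepsilon|_{X_y})\,\omega_y^d=d\,\varepsilon\,V_F .
\]
Combining this with Cauchy--Schwarz and the energy bound (again via $\Omega\le C\,d\mu_\varepsilon$) yields
\[
\int_{f^{-1}(K)}|\nabla_Vu_\varepsilon|_{\omega_y}\,\Omega\le C\Bigl(\int|\nabla u_\varepsilon|^2_{\omega_\varepsilon}d\mu_\varepsilon\Bigr)^{1/2}\bigl(C_K\,d\,\varepsilon\,V_F\bigr)^{1/2}\le C_K\sqrt\varepsilon .
\]

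\textbf{Part (2), oscillation and constancy on fibers.} The fibers $(X_y,\omega_y)$ for $y\in K$ form a smooth compact family, so they satisfy a uniform $L^1$-Poincar\'e inequality:
\[
\int_{X_y}|u-\bar u(y)|\,\omega_y^d\le C_K\int_{X_y}|\nabla_Vu|_{\omega_y}\,\omega_y^d .
\]
Integrating over $K$ and comparing $\Omega$ with $\chi^m\wedge\omega^d$ gives the second $C_K\sqrt\varepsilon$ bound.

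For the last assertion, let $u_\varepsilon\to u$ in $L^1$. Then $f^*\bar u_\varepsilon\to f^*\bar u$ in $L^1(f^{-1}(K))$, since fiber averaging is $L^1$-continuous on $f^{-1}(K)$. Therefore $u=f^*\bar u$ a.e.\ on $f^{-1}(K)$. Exhausting $Y^\circ$ by compact sets shows that $u$ is constant on almost every fiber. For $d=0$ there is nothing to prove.

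\textbf{Main obstacle.} The substantive step is the fiber trace identity, namely recognising that $\omega_\varepsilon|_{X_y}$ has total mass of order $\varepsilon$ even though there is no pointwise control of $\omega_\varepsilon$ in the fiber directions. The remaining steps are routine compactness arguments.
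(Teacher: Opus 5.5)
Your proposal is correct and follows the paper's proof essentially step for step. Part (1) uses the pointwise bound $|\nabla u|_\omega^2\le(\tr_\omega\omega_\varepsilon)\,|\nabla u|_{\omega_\varepsilon}^2$ together with the cohomological trace integral, and part (2) uses its fiberwise version (the dual norm decreases under restriction to the vertical subspace), the identity $\int_{X_y}\tr_{\omega_y}(\omega_\varepsilon|_{X_y})\,\omega_y^d=d\varepsilon V_F$, fiber integration, and a uniform fiberwise $L^1$-Poincar\'e inequality. The only difference is cosmetic: you get fiberwise constancy of the limit directly from the $L^1$ oscillation estimate and $L^1$-continuity of fiber averaging, while the paper passes to distributions to obtain $\nabla_V u=0$ and then uses connectedness; both routes work.
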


\begin{proof}
1. By Cauchy--Schwarz inequality
\[
 |\nabla u_\varepsilon|_{\mathchanged{\omega}}^2
 \leq|\nabla u_\varepsilon|_{\omega_\varepsilon}^2
                 \tr_{\mathchanged{\omega}}\omega_\varepsilon.
\]
Since $\|F_\varepsilon\|_\infty\le C$, $\mu_\varepsilon, \Omega, \mathchanged{\omega}^n$ are all equivalent,
\begin{align*}
\int_X\tr_{\mathchanged{\omega}}\omega_\varepsilon\,\Omega
 &\leq Cn\int_X\omega_\varepsilon\wedge\mathchanged{\omega}^{n-1}
 =Cn(2\pi c_1(K_X)+\varepsilon [\mathchanged{\omega}])\cdot [\mathchanged{\omega}]^{n-1}\leq C,\\
 \int_X|\nabla u_\varepsilon|_{\mathchanged{\omega}}\Omega
 &\leq C\left(\int_X|\nabla u_\varepsilon|_{\omega_\varepsilon}^2d\mu_\varepsilon\right)^{1/2}
       \left(\int_X\tr_{\mathchanged{\omega}}\omega_\varepsilon\,\Omega\right)^{1/2}\leq C.
\end{align*}
Rellich compactness gives $h_{\varepsilon_j}\to h$ in $L^1$; then
\[
 \|h_{\varepsilon_j}-h\|_{L^p}^p
 \leq(2C)^{p-1}\|h_{\varepsilon_j}-h\|_{L^1}\longrightarrow0.
\]

2. Take $K\Subset K'\Subset Y^\circ$.
For $d>0$,
\[
 [\omega_\varepsilon|_{X_y}]=\varepsilon[\mathchanged{\omega_y}],\qquad
 \int_{X_y}\tr_{\mathchanged{\omega_y}}(\omega_\varepsilon|_{X_y})\mathchanged{\omega_y}^d
 =d\varepsilon V_F.
\]
On $f^{-1}(K')$, $\Omega, \mathchanged{\omega}^d\wedge f^*\chi_Y^m$ are equivalent;
hence fiber integration yields
\[
 \int_{f^{-1}(K)}\tr_{\mathchanged{\omega_y}}(\omega_\varepsilon|_{X_y})\Omega
 \leq C_K\varepsilon.
\]
The dual norm decreases on restriction, so
\begin{align*}
 |\nabla_Vu_\varepsilon|_{\mathchanged{\omega_y}}^2
 &\leq|\nabla h_\varepsilon|_{\omega_\varepsilon}^2
              \tr_{\mathchanged{\omega_y}}(\omega_\varepsilon|_{X_y}),\\
 \int_{f^{-1}(K)}|\nabla_Vu_\varepsilon|_{\mathchanged{\omega_y}}\Omega
 &\leq C_K\left(\int_X|\nabla u_\varepsilon|_{\omega_\varepsilon}^2d\mu_\varepsilon\right)^{1/2}
              \varepsilon^{1/2}\leq C_K\sqrt\varepsilon.
\end{align*}
The smooth compact family of connected fibers has a uniform
$L^1$ Poincar\'e inequality:
\[
 \int_{X_y}|u_\varepsilon-\bar u_\varepsilon(y)|\mathchanged{\omega_y}^d
 \leq C_K\int_{X_y}|\nabla_V u_\varepsilon|_{\mathchanged{\omega_y}}\mathchanged{\omega_y}^d.
\]
Integrating proves the second estimate. Passing to distributions,
$\nabla_V u=0$; connectedness gives fiberwise constancy.
\end{proof}

Applying the energy and compactness estimates to the two functions in the cscK system gives the following subsequential limits.

\begin{proposition}\label{prop:volume}

Every sequence $\varepsilon_j\downarrow0$ has a subsequence, still denoted $\varepsilon_j$, with the following properties.
\begin{enumerate}
\item\label{item:volume-potentials}
\[
F_{\varepsilon_j}\longrightarrow F_\infty,\qquad
\varphi_{\varepsilon_j}\longrightarrow\varphi_\infty
\quad\text{ in every }L^p(X,\Omega)\quad (p<\infty).
\]
On $X^\circ$, the two limits are almost everywhere pullbacks of bounded measurable functions on $Y^\circ$. \item\label{item:volume-density}
\[
\frac{\omega_{\varepsilon_j}^n}{V_{\varepsilon_j}}\longrightarrow e^{F_\infty}\Omega
\quad\text{in total variation},
\qquad
\int_XF_{\varepsilon_j}e^{F_{\varepsilon_j}}\Omega\longrightarrow
\int_XF_\infty e^{F_\infty}\Omega.
\]
\item\label{item:volume-current} The weak current limit $\chi+dd^c\varphi_\infty$ is positive and has bounded local potentials. 
\end{enumerate}
\end{proposition}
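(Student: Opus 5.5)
The plan is to apply Lemma~\ref{lem:energy} and Lemma~\ref{lem:compactness} to $u_\varepsilon=F_\varepsilon$ and $u_\varepsilon=\varphi_\varepsilon$, and then pass to the limit in the structural identities.

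\textbf{Step 1: $L^p$ limits and fiberwise constancy.} Theorem~\ref{thm:bounds} gives $\|F_\varepsilon\|_\infty+\|\varphi_\varepsilon\|_\infty\le C$. Lemma~\ref{lem:energy} gives the uniform Dirichlet energy bound. Part~\eqref{item:lem-compactness-1} of Lemma~\ref{lem:compactness} then yields a subsequence along which $F_{\varepsilon_j}\to F_\infty$ and $\varphi_{\varepsilon_j}\to\varphi_\infty$ in every finite $L^p(X,\Omega)$. A diagonal extraction handles both functions simultaneously, and the limits inherit the $L^\infty$ bound $C$.

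To see that the limits are pullbacks, exhaust $Y^\circ$ by compacts $K_k$. By part~\eqref{item:lem-compactness-2}, $\int_{f^{-1}(K_k)}|u_{\varepsilon}-f^*\bar u_{\varepsilon}|\Omega\le C_k\sqrt\varepsilon$. It follows that $f^*\bar u_{\varepsilon_j}\to u_\infty$ in $L^1(f^{-1}(K_k))$. Pushing forward, $\bar u_{\varepsilon_j}$ is Cauchy in $L^1(K_k,f_*\Omega)$, because $f_*\Omega$ is a smooth positive density on $Y^\circ$. Its limit $\bar u_\infty$ is bounded by $C$ and satisfies $u_\infty=f^*\bar u_\infty$ a.e.\ on $f^{-1}(K_k)$. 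Since the limits agree on overlaps, this holds on all of $X^\circ$. This proves \eqref{item:volume-potentials}.

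\textbf{Step 2: convergence of the volume densities.} We have $\omega_\varepsilon^n/V_\varepsilon=e^{F_\varepsilon}\Omega$. Since $|e^a-e^b|\le e^{C}|a-b|$ for $|a|,|b|\le C$,
\[
\int_X|e^{F_{\varepsilon_j}}-e^{F_\infty}|\,\Omega\le e^C\|F_{\varepsilon_j}-F_\infty\|_{L^1(\Omega)}\to0,
\]
which is the total variation convergence. For the entropy term, $s\mapsto se^s$ is Lipschitz on $[-C,C]$, so the same estimate gives $\int F_{\varepsilon_j}e^{F_{\varepsilon_j}}\Omega\to\int F_\infty e^{F_\infty}\Omega$. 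Since $\int e^{F_\varepsilon}\Omega=1$, the limit measure has mass one.

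\textbf{Step 3: the limit current.} Because $\varphi_{\varepsilon_j}\to\varphi_\infty$ in $L^1$, we get $dd^c\varphi_{\varepsilon_j}\to dd^c\varphi_\infty$ as currents. Also $\chi_{\varepsilon_j}=\chi+\varepsilon_j\omega\to\chi$ smoothly. Each $\chi_{\varepsilon_j}+dd^c\varphi_{\varepsilon_j}=\omega_{\varepsilon_j}$ is positive, so the weak limit $\chi+dd^c\varphi_\infty$ is a positive current. In particular $\varphi_\infty$ agrees a.e.\ with a $\chi$-psh function; we replace it by its usc regularization, which leaves the a.e.\ class unchanged.

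For bounded local potentials, write $\chi=dd^c g$ locally with $g$ smooth, which is possible since $\chi=f^*\chi_Y$ and $\chi_Y$ is the restriction of a Fubini--Study form. The local potential is then $g+\varphi_\infty$. It is bounded because $|\varphi_\infty|\le C$ a.e.\ and a psh function bounded a.e.\ is bounded everywhere by the sub-mean-value property.

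The only delicate point is the identification in Step~1 of the limit with a pullback on the whole of $X^\circ$ rather than on a single compact subset. The exhaustion together with the quantitative $\sqrt\varepsilon$ bound from Lemma~\ref{lem:compactness} should handle it; I expect no serious obstacle.
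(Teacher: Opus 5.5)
Your proposal is correct and follows the paper's argument. Lemmas~\ref{lem:energy} and~\ref{lem:compactness} give a common $L^p$ subsequence and fiberwise constancy; Lipschitz continuity of $e^x$ and $xe^x$ on the bounded range gives total-variation and entropy convergence; and passing to the current limit in $\chi+\varepsilon_j\omega+dd^c\varphi_{\varepsilon_j}\ge0$, followed by quasi-psh regularization, gives (3). Your compact-exhaustion argument just spells out the pullback identification that the paper leaves inside Lemma~\ref{lem:compactness}. One small precision for the lower bound in Step~3: you need that a psh function is the limit of its ball averages (upper semicontinuity plus the sub-mean-value property), not the sub-mean-value inequality alone.
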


\begin{proof}
Apply Lemmas~\ref{lem:energy} and~\ref{lem:compactness} and a common subsequence. Exponentiation and the function $x\mapsto xe^x$ are Lipschitz on the common bounded range. Hence their $L^1$ convergence follows from that of $F_{\varepsilon_j}$, proving total variation and entropy convergence. Positivity follows by passing to the current limit in $\chi+\varepsilon_j\mathchanged{\omega}+dd^c\varphi_{\varepsilon_j}\ge0$. The uniform potential bound passes to an almost everywhere representative of the limit, whose quasi-plurisubharmonic regularization has the corresponding bounded local potentials.
\end{proof}

We recall some facts about bounded psh functions on singular space. 
\begin{enumerate}
    \item For a bounded $\chi$-psh function $u$, one has \[
u|_{X_y}\in\PSH(X_y)\Longrightarrow u|_{X_y}\equiv h(y),
 \qquad h\in\PSH(Y^\circ,\chi_Y)\cap L^\infty.
    \]By \cite[Chatper I, Theorem 5.24]{D12}, $h$ can be extended to a bounded psh function $Y_{\rm reg}$. Thus $h$ is a weak psh function on a normal space $Y$. By \cite[Theorem 1.2]{GGZ24}, the extension $g(y)=\limsup_{Y^\circ\ni y'\to y}h(y')$ is psh on $Y$. The extension does not increase the $L^\infty$-norm of the psh function. 
\item Let $h\in \PSH(Y,\chi_Y)\cap L^\infty$. There exists a sequence of smooth functions $h_j\in \PSH(\C P^N,\ell^{-1}\omega_{FS})$ which decrease pointwisely on $\C P^N$ so that $\lim h_j=h$ on $Y$. See \cite[Corollary C]{CGZ13}.
\item Let $\pi:\widetilde{Y}\to Y$ be a resolution which is obtained by composition of blowups with smooth centers. Then the pullback identidies the bounded psh spaces:\[
\PSH(Y,\chi_Y)\cap L^\infty\longleftrightarrow\PSH(\widetilde{Y},\pi^*\chi_Y)\cap L^\infty.
\]See \cite{D85}. Moreover, by monotone convergence theorem of Bedford-Taylor, for bounded $u_j\downarrow u\in \PSH(Y,\chi_Y)\cap L^\infty$, \[
\MA_Y(u_j)\rightharpoonup\MA_Y(u), \quad E_Y(u_j)\downarrow E_Y(u),
\] where \[
\MA_Y(u)=V_Y^{-1}(\chi_Y+\ddc u)^m, \qquad 
E_Y(u)=\frac1{(m+1)V_Y}\sum_{j=0}^m\int_Yu(\chi_Y+dd^c u)^j\chi_Y^{m-j}.
\]
The convergence also holds for mixed products. And with this identification of psh functions, we can regard all functionals on $Y$ as those on its resolutions. 

\item For $\varphi\in \PSH(\widetilde{Y},\pi^*\chi_Y)\cap L^\infty$, $g\in C^0(\widetilde{Y})$, and the envelope $P_{\pi^*\chi_Y}(h):=\sup\{u\in \PSH:u\leq h\}$, \[\left.
\frac{d}{dt}\right|_{t=0}E_{\pi^*\chi_Y}(P_{\pi^*\chi_Y}(\varphi+tg))=\int_{\widetilde{Y}}g\MA_{\pi^*\chi_Y}(\varphi).
\]This is proved in \cite[Lemma~4.2]{BBGZ13}.
\end{enumerate}

We want to use the variational method to show that $F_\varepsilon\to f^*\varphi_{\rm can}.$ Let $\nu=e^{F}f_*\Omega$, where $F$ is a bounded function. If $u$ is a bounded psh function that maximizes \[
E_Y(\varphi)-\int_Y\varphi d\nu,
\]
then $MA_Y(u)=\nu$. To see this, lift $\nu$ to $\widetilde{Y}$ with zero measure on the exceptional set. Set $\tilde{u}=\pi^*u\in \PSH(\widetilde{Y},\pi^*\chi_Y)\cap L^\infty$, and $w_s=P_{\pi^*\chi_Y}(\tilde{u}+sg)$ for a bounded continuous $g$. Hence $w_s\in \PSH(\widetilde{Y},\pi^*\chi_Y)\cap L^\infty$. Since $\tilde{u}$ maximizes the functional, we get \[
E_{\pi^*\chi_Y}(w_s)-\int_{\widetilde{Y}}\tilde{u}d\nu-s\int_{\widetilde{Y}}gd\nu\leq E_{\chi_Y}(u)-\int_Yud\nu.
\]
Apply the variational formula, we get $\MA_Y(u)=\nu$.

In the following,  we suppose a subsequence $\varphi_\varepsilon\to f^*u$, and $F_\varepsilon\to f^*F$ in $L^p(X,\Omega)$, and set $\nu=e^Ff_*\Omega$, \[
    E_\varepsilon(\varphi)=\frac{1}{(n+1)V_{\varepsilon}}\sum_{j=0}^n\int_X\varphi(\chi_\varepsilon+\ddc\varphi)^j\chi_{\varepsilon}^{n-j}.
    \]
    
\begin{lemma}\label{lem:am-limit}
\leavevmode
\begin{enumerate}
        \item $E_\varepsilon(\varphi_\varepsilon)\to E_Y(u)$.
        \item $u$ maximizes the functional $E_Y(\varphi)-\int_Y\varphi d\nu$.
        \item $\MA_Y(u)=\nu$.
    \end{enumerate}
\end{lemma}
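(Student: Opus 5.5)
For (1) I would expand $E_\varepsilon(\varphi_\varepsilon)$ in mixed products and use the degeneracy $\chi^{m+1}=0$. Write $\omega_\varepsilon=\chi+\ddc\varphi_\varepsilon+\varepsilon\omega$ and $\chi_\varepsilon=\chi+\varepsilon\omega$. Expanding, every term in $\int_X\varphi_\varepsilon(\chi_\varepsilon+\ddc\varphi_\varepsilon)^j\chi_\varepsilon^{n-j}$ has the form
\[
\varepsilon^{k}\int_X\varphi_\varepsilon(\chi+\ddc\varphi_\varepsilon)^a\wedge\chi^b\wedge\omega^{k}, \qquad a+b+k=n .
\]
The $(1,1)$-forms $\chi+\ddc\varphi_\varepsilon$ and $\chi$ are pullbacks of classes from $Y$, so terms with $a+b>m$ vanish cohomologically. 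Hence $k\ge d$, and terms with $k>d$ are $O(\varepsilon^{d+1})$ by the uniform $L^\infty$ bound of Theorem~\ref{thm:bounds} together with Chern--Levine--Nirenberg/Bedford--Taylor mass bounds. (A term like $\int\varphi(\chi+\ddc\varphi)^a\chi^b\omega^k$ is bounded by $\|\varphi\|_\infty$ times a cohomological mass, since the forms are positive modulo $\varepsilon\omega$ corrections, which also have controlled mass.)

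Dividing by $V_\varepsilon\sim\binom nm a_m\varepsilon^d$, the leading part is a combinatorial multiple of
\[
\int_X\varphi_\varepsilon(\chi+\ddc\varphi_\varepsilon)^a\chi^{m-a}\wedge\omega^d .
\]
The main obstacle is passing to the limit in this term, since $\chi+\ddc\varphi_\varepsilon$ alone need not be positive and we only have $L^p$ convergence. I would handle this by rewriting $(\chi+\ddc\varphi_\varepsilon)^a$ via $\omega_\varepsilon-\varepsilon\omega$. I would then use the $C^2$ base estimate $\omega_\varepsilon\ge C^{-1}\chi$ of Lemma~\ref{lem:trace} and the Dirichlet energy bounds of Lemma~\ref{lem:energy} to integrate by parts. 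Each step $\int(\varphi_\varepsilon-\psi)\,\ddc(\cdot)\wedge T$ becomes $-\int d(\varphi_\varepsilon-\psi)\wedge d^c(\cdot)\wedge T$, controlled by Cauchy--Schwarz.

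Concretely, I would compare with a smooth approximant: fix $h_k\in\PSH(Y,\chi_Y)$ smooth decreasing to $u$ (via \cite{CGZ13}). I would then show the difference of the fiber-averaged leading terms for $\varphi_\varepsilon$ and $f^*h_k$ is bounded by $C\|\varphi_\varepsilon-f^*h_k\|_{L^1}^{1/2}$ plus $o(1)$. This is the standard Hölder-type continuity estimate for the energy, with the energy $\int d\phi\wedge d^c\phi\wedge\cdots$ bounded as above. Fiber integration of $\omega^d$ produces $V_Ff^*$ of the base measure, and the combinatorial constants assemble to the $\frac1{(m+1)V_Y}\sum_j$ of $E_Y$. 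Letting $\varepsilon\to0$ and then $k\to\infty$ (Bedford--Taylor monotone continuity) gives $E_\varepsilon(\varphi_\varepsilon)\to E_Y(u)$.

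For (2) I would use that cscK metrics minimize the Mabuchi functional. By the Chen--Tian formula,
\[
\mathcal M_\varepsilon(\varphi)=\int_X\log\frac{\omega_\varphi^n}{V_\varepsilon\Omega}\,d\mu_\varphi+\underline R_\varepsilon E_\varepsilon(\varphi)+(\text{the }\chi\text{-twisted term } n\,E^{\chi}_\varepsilon(\varphi))+\text{const}.
\]
With $\Ric(\Omega)=-\chi$, the twisted term is, to leading order, $-m$ times the energy, and $\underline R_\varepsilon\to-m$. To leading order, then, $\mathcal M_\varepsilon\approx\Ent-m\,E_\varepsilon+ m\int\varphi\,d\mu_\varphi$-type terms; I would verify the exact combination, which reduces to $\Ent_\Omega(\mu_\varphi)-\int\varphi\,d\mu_\varphi+E$-terms. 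For a competitor $f^*h$ with $h$ smooth $\chi_Y$-psh, I would take $\varphi=f^*h+\varepsilon\rho$-type test potentials, or the Kähler--Ricci flow potentials $\psi_\varepsilon$ adapted. Their entropy and energy limits are computable by (1) and by \eqref{eq:cv}.

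I would compare $\mathcal M_\varepsilon(\varphi_\varepsilon)\le\mathcal M_\varepsilon(\text{competitor})$ and pass to the limit, using entropy convergence from Proposition~\ref{prop:volume}. I would then optimize over the fiber-wise volume density, which is the Legendre duality $\Ent\ge\int F\,d\mu-\log\int e^F$. This should yield, for all bounded $\chi_Y$-psh $h$ (first smooth, then by monotone approximation),
\[
E_Y(u)-\int u\,d\nu\ \ge\ E_Y(h)-\int h\,d\nu .
\]
Here the identification $\int u\,d\nu=\lim\int\varphi_\varepsilon e^{F_\varepsilon}\Omega$ follows from $L^p$ convergence.

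Finally, (3) follows from (2) by the variational argument already recorded above, via the envelope derivative of \cite[Lemma~4.2]{BBGZ13} on a resolution $\widetilde Y$.
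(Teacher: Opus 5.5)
\paragraph{Part (1).} This has a genuine gap. The terms $\varepsilon^{k}\int_X\varphi_\varepsilon(\chi+\ddc\varphi_\varepsilon)^a\wedge\chi^b\wedge\omega^{k}$ with $a+b>m$ do \emph{not} vanish.

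\begin{itemize}
\item Since $\chi^{m+1}=0$ pointwise, $(\chi+\ddc\varphi_\varepsilon)^a\wedge\chi^b=\ddc\varphi_\varepsilon\wedge(\text{closed})$. This form is exact but not zero, because $\varphi_\varepsilon$ is not a pullback.
\item The weight $\varphi_\varepsilon$ is not closed, so integrating by parts leaves $-\int_X d\varphi_\varepsilon\wedge d^c\varphi_\varepsilon\wedge(\cdots)$, which has no reason to vanish.
\item Substituting $\chi+\ddc\varphi_\varepsilon=\omega_\varepsilon-\varepsilon\omega$ and using positivity bounds these terms only by $O(\varepsilon^d)$. That is the order of $V_\varepsilon$, so they cannot be dropped without fibre-direction information you do not have at this stage.
\item The remaining terms need limits of mixed measures such as $\omega_\varepsilon^a\wedge\chi^{m-a}\wedge\omega^d/V_\varepsilon$. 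These are not known to be dominated by $C\Omega$, since you control $\tr_{\omega_\varepsilon}\chi$ but nothing in the fibre directions. So $L^1$ convergence of $\varphi_\varepsilon$ does not pass to them.
\item Cauchy--Schwarz with Lemma~\ref{lem:energy} gives boundedness, not smallness. Any continuity constant would depend on $\ddc h_k$, which breaks the order of limits ($\varepsilon\to0$, then $k\to\infty$).
\end{itemize}
In the paper, limits of mixed products of $\omega_\varepsilon$ (Lemma~\ref{lem:mix-current}) are obtained only \emph{after} $\MA_Y(u)=\nu$ is known.

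\paragraph{Part (2).} Here the approach cannot work, because the inequality points the wrong way. Minimality of $\varphi_\varepsilon$ for $\mathcal M_\varepsilon$ gives, in the limit, an \emph{upper} bound for $\Ent(\nu)+E_Y(u)-\int_Y u\,d\nu$ by the value at a competitor. Statement (2) is a \emph{lower} bound for $E_Y(u)-\int_Y u\,d\nu$, and Legendre duality for the entropy does not reverse this. In addition, the limit of $nJ_{\chi,\varepsilon}(\varphi_\varepsilon)$ is again a mixed-product limit, which the paper derives from (3).

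\paragraph{The missing idea.} The lemma is purely pluripotential and needs no cscK input, only concavity of $E_\varepsilon$. For smooth $h\in\PSH(Y,\chi_Y)$ (restrictions from $\C P^N$), the two tangent-line inequalities give
\[
\int_X(\varphi_\varepsilon-f^*h)\,d\mu_\varepsilon\;\le\; E_\varepsilon(\varphi_\varepsilon)-E_\varepsilon(f^*h)\;\le\;\frac1{V_\varepsilon}\int_X(\varphi_\varepsilon-f^*h)\,(\chi_\varepsilon+\ddc f^*h)^n .
\]
Only two Monge--Amp\`ere measures appear.
\begin{itemize}
\item $d\mu_\varepsilon=e^{F_\varepsilon}\Omega$, which converges to $e^{f^*F}\Omega$ in total variation.
\item The measure of the pullback $f^*h$. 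For this one your binomial expansion \emph{is} legitimate, since $f^*(\chi_Y+\ddc h)^{m+1}=0$ pointwise. It gives $E_\varepsilon(f^*h)\to E_Y(h)$ and pushforward limit $\MA_Y(h)$.
\end{itemize}
Let $\varepsilon\to0$, then take smooth $h=h_k\downarrow u$. Three facts squeeze $E_\varepsilon(\varphi_\varepsilon)$ to $E_Y(u)$, which is (1):
\begin{itemize}
\item the sign $\int_Y(u-h_k)\MA_Y(h_k)\le0$;
\item monotone convergence $\int_Y(h_k-u)\,d\nu\to0$;
\item $E_Y(h_k)\downarrow E_Y(u)$.
\end{itemize}
With (1) in hand, the left inequality in the limit reads $E_Y(h)-\int_Y h\,d\nu\le E_Y(u)-\int_Y u\,d\nu$. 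This is (2) for smooth $h$, and decreasing approximation extends it to bounded $h$. Your deduction of (3) via the envelope derivative on a resolution is then fine as stated.
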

\begin{proof}
    Fix a smooth $\varphi\in \PSH(Y,\chi_Y)$, one has \[
    E_{\varepsilon}(f^*\varphi)=\frac{1}{V_{\varepsilon}}\int_X\int_0^1f^*\varphi(\chi+\varepsilon\mathchanged{\omega}+t\ddc f^*\varphi)^ndt
    \]
    Since powers of a pull-back form above degree $m$ vanish, uniformly for $t\in [0,1]$, \[
\begin{aligned}
        &\int_X f^*\varphi\,
       \bigl(\chi+tdd^c f^*\varphi+\varepsilon\mathchanged{\omega}\bigr)^n\\
 &\quad=\sum_{k=d}^n\binom nk\varepsilon^k
       \int_X f^*\bigl(\varphi(\chi_Y+tdd^c\varphi)^{n-k}\bigr)
                      \wedge\mathchanged{\omega}^k\\
 &\quad=\binom nd\varepsilon^dV_F
       \int_Y\varphi(\chi_Y+tdd^c\varphi)^m+O_\varphi(\varepsilon^{d+1}),\\
 &V_\varepsilon=\binom nd\varepsilon^dV_FV_Y+O(\varepsilon^{d+1}).
    \end{aligned}
    \]
Here we used that $f_*(\mathchanged{\omega}^d)=V_F$, so division cancels the factor
$\binom nd\varepsilon^dV_F$:\[
 \begin{aligned}
 E_\varepsilon(f^*\varphi)
 &=\frac1{V_Y}\int_0^1\int_Y
       \varphi\bigl((1-t)\chi_Y+t\chi_\varphi\bigr)^m\,dt+O_\varphi(\varepsilon)\\
 &=\frac1{(m+1)V_Y}\sum_{j=0}^m
       \int_Y\varphi\chi_\varphi^j\wedge\chi_Y^{m-j}
          +O_\varphi(\varepsilon)
 =E_Y(\varphi)+O_\varphi(\varepsilon).
\end{aligned}
\]
A similar argument also shows that \[
\frac{(\chi+\varepsilon\mathchanged{\omega}+\ddc f^*\varphi)^n}{V_\varepsilon}\to \frac{\mathchanged{\omega}^d\wedge f^*(\chi_Y+\ddc\varphi)^m}{V_F V_Y},
\]
and integration along fiber gives \[
f_*\left(\frac{\mathchanged{\omega}^d\wedge f^*(\chi_Y+\ddc\varphi)^m}{V_F V_Y}\right)=V_Y^{-1}(\chi_Y+\ddc\varphi)^m.
\]
Note that the energy $E_\varepsilon$ is concave, so \[
\begin{aligned}
&E_{\varepsilon}(\varphi_\varepsilon)\leq E_\varepsilon(f^*\varphi)+\int_X(\varphi_\varepsilon-f^*\varphi)\frac{(\chi+\varepsilon\mathchanged{\omega}+\ddc f^*\varphi)^n}{V_\varepsilon}\\
&E_{\varepsilon}(\varphi_\varepsilon)\geq E_\varepsilon(f^*\varphi)+\int_X(\varphi_\varepsilon-f^*\varphi)d\mu_\varepsilon.
\end{aligned}
\]
Taking $\varepsilon\to 0$, we have \[
\begin{aligned}
    &\limsup_{\varepsilon\to 0}E_\varepsilon(\varphi_\varepsilon)\leq E_Y(\varphi)+\int_Y(u-\varphi)\MA_Y(\varphi),\\
    &\liminf_{\varepsilon\to 0}E_\varepsilon(\varphi_\varepsilon)\geq E_Y(\varphi)+\int_Y(u-\varphi)d\nu.
\end{aligned}
\]
Here, we used \[
 \begin{aligned}
\left|\int_X(\varphi_\varepsilon-f^*u)\,d\mu_\varepsilon\right|
 &\leq C\|\varphi_\varepsilon-f^*u\|_{L^1(\Omega)}\longrightarrow0,\\
 \left|\int_X f^*(u-\varphi)
        \bigl(e^{F_\varepsilon}-e^{f^*F}\bigr)\Omega\right|
 &\leq\|u-\varphi\|_\infty
       \|e^{F_\varepsilon}-e^{f^*F}\|_{L^1(\Omega)}\longrightarrow0,
 \end{aligned}
\]Since $\varphi_\varepsilon$ and $F_\varepsilon$ are uniformly bounded. 

Now we choose a sequence of bounded smooth $\varphi_j\in \PSH(\mathbb{CP}^N,\ell^{-1}\omega_{FS})$ that $\varphi_j\downarrow u$. First let $\varepsilon\to 0$ with $j$ fixed. Since $\varphi_j-u\geq 0$, we have \[
 \begin{aligned}
 E_Y(\varphi_j)+\int_Y(u-\varphi_j)\,d\nu
 &\leq\liminf_{\varepsilon\to0}E_\varepsilon(\varphi_\varepsilon)\\
 &\leq\limsup_{\varepsilon\to0}E_\varepsilon(\varphi_\varepsilon)\\
 &\leq E_Y(\varphi_j)+\int_Y(u-\varphi_j)\MA_Y(\varphi_j)
 \leq E_Y(\varphi_j).
 \end{aligned}
\]
Let $j\to \infty$, \[
E_Y(\varphi_j)\downarrow E_Y(u),\qquad \int_Y(u-\varphi_j)d\nu\to 0.
\]
Hence $E_{\varepsilon}(\varphi_\varepsilon)\to E_Y(u)$. Moreover, we have \[
E_Y(u)\geq E_Y(\varphi)+\int_Y(u-\varphi)d\nu
\]
for any smooth  $\varphi\in \PSH(Y,\chi_Y)$. Apply the decreasing approximation again, it follows that the above inequality holds for any bounded $\varphi\in \PSH(Y,\chi_Y)$. The equation follows from the variational formula discussed before. 

\end{proof}

\begin{lemma}\label{lem:mix-current}\leavevmode
\begin{enumerate}
\item\label{item:mixed-local} On $Y^\circ$,
\begin{equation}\label{eq:c-6}
f_*\left(\frac{\omega_\varepsilon^{n-1}}{V_\varepsilon}\right)\rightharpoonup\frac{m}{nV_Y}(\chi_Y+dd^cu)^{m-1}.
\end{equation}
\item\label{item:mixed-global} On $Y$,
\begin{equation}\label{eq:c-7}
f_*\left(\frac{\chi\wedge\omega_\varepsilon^{n-1}}{V_\varepsilon}\right)\rightharpoonup\frac{m}{nV_Y}\chi_Y\wedge(\chi_Y+dd^cu)^{m-1}.
\end{equation}
\end{enumerate}
\end{lemma}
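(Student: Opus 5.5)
The plan is to write $\omega_\varepsilon=\chi_\varepsilon+dd^c\varphi_\varepsilon$, expand $\omega_\varepsilon^{n-1}$ binomially, and push forward term by term. Against a test form the pushforward sees only pieces with the correct number of fiber directions. In both parts the limit is forced by the weak convergence $\varphi_\varepsilon\to f^*u$ in every $L^p$ (Proposition~\ref{prop:volume}), the uniform bounds of Theorem~\ref{thm:bounds}, and the identity $\MA_Y(u)=\nu$ from Lemma~\ref{lem:am-limit}.

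\textbf{Part (1).} Fix a smooth compactly supported $(1,1)$-form $\beta$ on $Y^\circ$. We must compute $V_\varepsilon^{-1}\int_X \omega_\varepsilon^{n-1}\wedge f^*\beta$. Since $f^*\beta$ and $\chi$ are pulled back from $Y$, only $m-1$ further base directions can be absorbed. Hence in the expansion of $(\chi+\varepsilon\omega+dd^c\varphi_\varepsilon)^{n-1}$, the term that survives to order $\varepsilon^d$ is, heuristically,
\[
\binom{n-1}{d}\varepsilon^d\,\omega^d\wedge(\chi+dd^c\varphi_\varepsilon)^{m-1}.
\]
The difficulty is that $dd^c\varphi_\varepsilon$ is not small in the fiber directions. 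The clean route is cohomological together with the positivity of $\omega_\varepsilon$. Write $\omega_\varepsilon^{n-1}\wedge f^*\beta$ and integrate by parts to move each $dd^c\varphi_\varepsilon$ onto the other factors. This expresses $\int_X\omega_\varepsilon^{n-1}\wedge f^*\beta$ as a polynomial in the mixed quantities
\[
\int_X\varphi_\varepsilon\, dd^c f^*\beta\wedge\omega_\varepsilon^{k}\wedge\chi_\varepsilon^{n-2-k}.
\]
Each of these is controlled because $\varphi_\varepsilon$ is bounded and $\omega_\varepsilon^k\wedge\chi_\varepsilon^{n-2-k}$ has total mass $O(\varepsilon^d)V_\varepsilon$-comparable.

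Concretely, I would prove by induction on $k$ that
\[
f_*\bigl(V_\varepsilon^{-1}\omega_\varepsilon^{k}\wedge\chi_\varepsilon^{n-1-k}\bigr)\rightharpoonup \frac{c_{k}}{V_Y}(\chi_Y+dd^cu)^{k'}\wedge\chi_Y^{m-1-k'}
\]
with the correct combinatorial constants. The induction step writes $\omega_\varepsilon=\chi_\varepsilon+dd^c\varphi_\varepsilon$ and passes to the limit in $\int \varphi_\varepsilon\,(\cdots)\wedge dd^cf^*\beta$. This uses $L^1$ convergence of $\varphi_\varepsilon$ against measures whose densities are uniformly bounded relative to $\Omega$ on $f^{-1}(\mathrm{supp}\,\beta)$. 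The top case ($k=n$) is exactly the volume statement: $\mu_\varepsilon\to e^{f^*F}\Omega$, whose pushforward is $\nu=\MA_Y(u)$ by Lemma~\ref{lem:am-limit}.

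\textbf{Main obstacle.} The expected obstacle is exactly this passage to the limit. The intermediate mixed measures $\omega_\varepsilon^k\wedge\chi_\varepsilon^{n-1-k}\wedge(\text{base form})$ are not known to have bounded densities. To handle this I would first try the Bedford--Taylor-type argument on the base, via fiber integration over the smooth submersion $X^\circ\to Y^\circ$. The fiber averages $\bar\varphi_\varepsilon$ are quasi-psh up to $O(\varepsilon)$ errors. Lemma~\ref{lem:compactness}(2) gives $\varphi_\varepsilon-f^*\bar\varphi_\varepsilon\to0$ in $L^1$ on $f^{-1}(K)$. The trace bound of Lemma~\ref{lem:trace}, $\omega_\varepsilon\ge C^{-1}\chi$, together with the volume identity, should then give a uniform mass bound. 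Convergence would follow since the limit $u$ is bounded. This yields \eqref{eq:c-6} on $Y^\circ$, with constant $\binom{n-1}{d}/\binom nd=m/n$.

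\textbf{Part (2).} Wedging with $\chi$ is harmless, since $\chi$ is pulled back. Testing against a global smooth form on $Y$, the total masses converge:
\[
V_\varepsilon^{-1}\int_X\chi\wedge\omega_\varepsilon^{n-1}=V_\varepsilon^{-1}\,[\chi]\cdot[\chi_\varepsilon]^{n-1}\to \frac{m}{n}.
\]
This equals the total mass of the right-hand side of \eqref{eq:c-7}. Both sides are positive currents. The convergence on $Y^\circ$ follows from part (1), and the limit current puts no mass on $D$, because bounded potentials give non-pluripolar Monge--Amp\`ere products. Hence no mass escapes to $D$, and the weak convergence holds on all of $Y$.
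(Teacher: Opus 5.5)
Your Part (1) has a genuine gap at the step you call the main obstacle, and the remedy you sketch does not close it. After integrating by parts, the induction needs
\[
V_\varepsilon^{-1}\int_X\varphi_\varepsilon\, f^*(dd^c\beta)\wedge\omega_\varepsilon^{k}\wedge\chi_\varepsilon^{n-2-k}
\]
to converge to the corresponding base integral of $u$. You only know $\varphi_\varepsilon\to f^*u$ in $L^1$, and these mixed measures have no uniform density bound with respect to $\Omega$. Lemma~\ref{lem:trace} gives $\chi\le C\omega_\varepsilon$, which controls the factors of $\chi$. Nothing controls the factors $\varepsilon\omega$ coming from $\chi_\varepsilon$: for instance, $\tr_{\omega_\varepsilon}(\varepsilon\omega)$ is only bounded in $L^1(\mu_\varepsilon)$, by cohomology. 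A uniform mass bound together with bounded $L^1$ convergence of potentials does not give convergence of mixed Monge--Amp\`ere products. The operator is not continuous under $L^1$ convergence of uniformly bounded psh functions even on a fixed domain; one needs monotone convergence or convergence in capacity. So ``convergence would follow since $u$ is bounded'' is a non sequitur. Passing to the fiber averages $\bar\varphi_\varepsilon$ does not help, for the same reason and because the nonlinear terms also involve the mixed base--fiber components of $dd^c\varphi_\varepsilon$, which you do not estimate. Note too that your scheme never really uses $\MA_Y(u)=\nu$: the volume statement is not a case of an induction on $(n-1)$-forms. Yet that identity, i.e.\ the absence of energy loss, is exactly what makes the limit identifiable.

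What is missing is an energy argument that only passes to the limit against top-degree measures, which do have bounded densities. The paper compares $\omega_\varepsilon$ with $\widehat\omega=\chi+\varepsilon\omega+dd^cf^*\varphi$ for smooth $\varphi$ with $\chi_Y+dd^c\varphi\ge\delta\chi_Y$, and puts $h=\varphi_\varepsilon-f^*\varphi$. It then shows
\[
I_\varepsilon=V_\varepsilon^{-1}\int_Xh\,(\widehat\omega^n-\omega_\varepsilon^n)\longrightarrow\int_Y(u-\varphi)\bigl(\MA_Y(\varphi)-\MA_Y(u)\bigr),
\]
using $\omega_\varepsilon^n/V_\varepsilon=e^{F_\varepsilon}\Omega$, $\widehat\omega^n/V_\varepsilon\le C_\varphi\Omega$, and $\MA_Y(u)=\nu$ from Lemma~\ref{lem:am-limit}. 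Since $I_\varepsilon=V_\varepsilon^{-1}\sum_k\int_X i\partial h\wedge\bar\partial h\wedge\omega_\varepsilon^k\wedge\widehat\omega^{n-1-k}$ and $\bar\partial f^*\Xi$ is horizontal, Cauchy--Schwarz with $\widehat\omega\ge\delta\chi$ gives
\[
\bigl|\langle\Xi,f_*((\omega_\varepsilon^{n-1}-\widehat\omega^{n-1})/V_\varepsilon)\rangle\bigr|\le C_\Xi\delta^{-1/2}\sqrt{I_\varepsilon}.
\]
Choosing $\varphi=(1-\delta)u_j$ with smooth $u_j\downarrow u$, and using $I_Y(u,(1-\delta)u)\le 2m\|u\|_\infty\delta^2$, makes the error $O(\sqrt\delta)$.

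Given (1), your mass-conservation argument for Part (2) is valid, but it gives convergence only against continuous functions. The paper instead uses $f_*(\chi\wedge\omega_\varepsilon^{n-1}/V_\varepsilon)\le Cf_*\Omega$, from $\tr_{\omega_\varepsilon}\chi\le C$ and $|F_\varepsilon|\le C$. This yields convergence against every $b\in L^1(f_*\Omega)$, which Lemma~\ref{lem:mixed-energy} needs because $u-\varphi$ is only bounded.
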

\begin{proof}
If $n=1$, then $m=1$, $d=0$, and $f$ is an isomorphism of smooth
curves. Both conclusions follow from $V_\varepsilon\to V_Y$.
For the rest of the proof assume $n\geq2$.
Fix smooth $\varphi$ with $\chi_Y+dd^c \varphi\ge\delta\chi_Y$, put $\widehat\omega=\chi+\varepsilon\mathchanged{\omega}+\ddc f^*\varphi$, $h=\varphi_\varepsilon-f^*\varphi$, and 
\[
I_\varepsilon=V_\varepsilon^{-1}\int_Xh(\widehat\omega^n-\omega_\varepsilon^n)
=V_\varepsilon^{-1}\sum_{k=0}^{n-1}\int_Xi\partial h\wedge\bar\partial h\wedge\omega_\varepsilon^k\wedge\widehat\omega^{n-1-k}.
\]
We claim that 
\begin{equation}\label{eq:c-8}
I_\varepsilon\to I_Y(u,\varphi):=\int_Y(u-\varphi)(\operatorname{MA}_Y(\varphi)-\operatorname{MA}_Y(u)).
\end{equation}
Recall the binomial expansion in Lemma~\ref{lem:am-limit}:\[
\frac{\widehat{\omega}^n}{V_\varepsilon}=\frac{\mathchanged{\omega}^d\wedge f^*(\chi_Y+\ddc \varphi)^m}{V_FV_Y}+O_{\varphi}(\varepsilon)\Omega\leq C_{\varphi}\Omega.
\]
Write $h=(\varphi_\varepsilon-f^*u)+(f^*u-\varphi)$. Then 
\[
 \begin{aligned}
 &\left|\frac1{V_\varepsilon}\int_Xh\,\widehat\omega^n
                   -\int_Y(u-\varphi)\MA_Y(\varphi)\right|\\
 &\quad\leq
 \left|\frac1{V_\varepsilon}\int_X
             (\varphi_\varepsilon-f^*u)\widehat\omega^n\right|
 +\left|\int_X f^*(u-\varphi)
       \left(\frac{\widehat\omega^n}{V_\varepsilon}
             -\frac{\mathchanged{\omega}^d\wedge f^*\chi_\varphi^m}{V_FV_Y}\right)\right|\\
 &\quad\leq C_\varphi\|\varphi_\varepsilon-f^*u\|_{L^1(\Omega)}
       +C_\varphi\varepsilon\|u-\varphi\|_\infty\longrightarrow0.
 \end{aligned}
\]
For the second integral, Lemma~\ref{lem:am-limit} has already proved
$\MA_Y(u)=e^Ff_*\Omega$. Thus
\[
 \begin{aligned}
&\left|\frac1{V_\varepsilon}\int_Xh\,\omega_\varepsilon^n
-\int_Y(u-\varphi)\MA_Y(u)\right|\\
 &\quad\leq
 \left|\int_X(\varphi_\varepsilon-f^*u)e^{F_\varepsilon}\Omega\right|+\left|\int_Xf^*(u-\varphi)(e^{F_\varepsilon}-e^{f^*F})\Omega\right|\\
 &\quad\leq C\|\varphi_\varepsilon-f^*u\|_{L^1(\Omega)}+\|u-\varphi\|_{L^\infty}\|e^{F_\varepsilon}-e^{f^*F}\|_{L^1(\Omega)}
       \longrightarrow0.
 \end{aligned}
\]
The last limit uses $e^{F_\varepsilon}\leq C$ and
$\|e^{F_\varepsilon}-e^{f^*F}\|_{L^1}
\leq C\|F_\varepsilon-f^*F\|_{L^1}\to0$.
Subtracting these two limits proves the claim. 

Next, for smooth compactly supported $(1,1)$ test form $\Xi$ on $Y^\circ$, pair it with $f_*(V_\varepsilon^{-1}(\widehat{\omega}^{n-1}-\omega_\varepsilon^{n-1}))$. The pairing is a sum of the following terms
\[
V_\varepsilon^{-1}\int_Xi\partial h\wedge\bar\partial f^*\Xi\wedge T_k,
\qquad T_k=\omega_\varepsilon^k\wedge\widehat\omega^{n-2-k}.
\]
All covectors of $\bar\partial f^*\Xi$ are horizontal with bounded coefficients in $\chi$.  Cauchy--Schwarz inequality gives
\begin{equation}\label{eq:c-9}
\left|V_\varepsilon^{-1}\int i\partial h\wedge\bar\partial f^*\Xi\wedge T_k\right|
\le C_\Xi
\left(V_\varepsilon^{-1}\int i\partial h\wedge\bar\partial h\wedge\chi\wedge T_k\right)^{1/2}
\left(V_\varepsilon^{-1}\int\chi^2\wedge T_k\right)^{1/2}.
\end{equation}
Since $\widehat\omega\ge\delta\chi$, the first squared factor is at most $\delta^{-1}I_\varepsilon$. The other squared factor is
\[
 \begin{aligned}
 V_\varepsilon^{-1}\int_X\chi^2\wedge T_k
 &=\frac{[\chi]^2[\chi_\varepsilon]^{n-2}}{V_\varepsilon}\\
 &=\frac{\binom{n-2}{d}\varepsilon^dV_FV_Y+O(\varepsilon^{d+1})}
         {\binom nd\varepsilon^dV_FV_Y+O(\varepsilon^{d+1})}
 \longrightarrow\frac{m(m-1)}{n(n-1)}\qquad(m\geq2).
 \end{aligned}
\]
 If $m=1$, this factor is identically
zero because $\chi^2=0$. Therefore
\begin{equation}\label{eq:c-10}
\left|\left\langle\Xi,f_*\frac{\omega_\varepsilon^{n-1}-\widehat\omega_{\varepsilon,\varphi}^{n-1}}{V_\varepsilon}\right\rangle\right|
\le C_\Xi\delta^{-1/2}\sqrt{I_\varepsilon}.
\end{equation}

For $0<\delta<1$, put $v_\delta=(1-\delta)u$, so that
$\chi_{v_\delta}=(1-\delta)\chi_u+\delta\chi_Y\geq\delta\chi_Y$.
For bounded psh functions, we have 
\begin{equation}\label{eq:c-11}
 \begin{split}
 I_Y(u,v_\delta)
 &=\frac{\delta^2}{V_Y}\sum_{k=0}^{m-1}
    \int_Yi\partial u\wedge\bar\partial u
              \wedge\chi_u^k\wedge\chi_{v_\delta}^{m-1-k}\\
 &\leq2m\|u\|_{L^\infty}\delta^2.
 \end{split}
\end{equation}
Indeed, for $T=\chi_u^k\wedge\chi_{v_\delta}^{m-1-k}$, bounded
Bedford--Taylor integration by parts on the resolution yields
\[
 \begin{aligned}
 0\leq\int_Yi\partial u\wedge\bar\partial u\wedge T
 &=-\int_Yu\,dd^cu\wedge T
   =\int_Yu(\chi_Y-\chi_u)\wedge T\\
 &\leq\|u\|_\infty\int_Y(\chi_Y+\chi_u)\wedge T
   =2\|u\|_{L^\infty} V_Y.
 \end{aligned}
\]
This proves the uniform bound in \eqref{eq:c-11}.

Choose smooth $u_j\downarrow u$, and set
$\varphi_{\delta,j}=(1-\delta)u_j$.
The three limits are taken in the order
$\varepsilon\to0$, then $j\to\infty$, then $\delta\to0$.
For fixed $\delta,j$, direct computation gives
\[
 f_*\left(\frac{(\chi+\varepsilon\mathchanged{\omega}+\ddc f^*\varphi_{j,\delta})^{n-1}}
                         {V_\varepsilon}\right)
 \longrightarrow
 \frac{\binom{n-1}{d}V_F}{\binom nd V_FV_Y}
                 ( \chi_Y+\ddc \varphi_{\delta,j})^{m-1}
 =\frac{m}{nV_Y}(\chi_Y+\ddc \varphi_{\delta,j})^{m-1}.
\]
Hence \eqref{eq:c-8} and \eqref{eq:c-10} imply
\[
 \limsup_{\varepsilon\to0}
 \left|\left\langle\Xi,\,
 f_*\frac{\omega_\varepsilon^{n-1}}{V_\varepsilon}
       -\frac{m}{nV_Y}\chi_{\varphi_{\delta,j}}^{m-1}
 \right\rangle\right|
 \leq C_\Xi\delta^{-1/2}\sqrt{I_Y(u,\varphi_{\delta,j})}.
\]
For fixed $\delta$, monotone convergence of the Bedford-Taylor products implies 
\[
I_Y(u,\varphi_{\delta,j})\longrightarrow I_Y(u,v_\delta),\qquad
 (\chi_Y+\ddc{\varphi_{\delta,j}})^{m-1}\rightharpoonup(\chi_Y+\ddc{v_\delta})^{m-1}.
\]
Letting $j\to\infty$ in the preceding estimate and using
\eqref{eq:c-11}, we obtain
\[
 \limsup_{\varepsilon\to0}
 \left|\left\langle\Xi,\,
 f_*\frac{\omega_\varepsilon^{n-1}}{V_\varepsilon}
       -\frac{m}{nV_Y}(\chi_Y+\ddc{v_\delta})^{m-1}
 \right\rangle\right|
 \leq C_\Xi\sqrt\delta.
\]
Finally,
\[
 (\chi_Y+\ddc{v_\delta})^{m-1}
 =\sum_{k=0}^{m-1}\binom{m-1}{k}
      (1-\delta)^k\delta^{m-1-k}
                     (\chi_Y+\ddc u)^k\wedge\chi_Y^{m-1-k}
 \rightharpoonup(\chi_Y+\ddc u)^{m-1}.
\]
Let $\delta\to0$ to obtain \eqref{eq:c-6}. These formulas also
hold when $m=1$.

\emph{Proof of \textup{(2)}.}
The bounds for $F_\varepsilon$ and
$\tr_{\omega_\varepsilon}\chi$ imply
\[
0\leq\frac{\chi\wedge\omega_\varepsilon^{n-1}}{V_\varepsilon}
 =\frac1n(\tr_{\omega_\varepsilon}\chi)e^{F_\varepsilon}\Omega
 \leq C\Omega.
\]
Thus, for every bounded nonnegative measurable $b$ on $Y$,
\[
 \begin{aligned}
 0\leq\frac1{V_\varepsilon}\int_Xf^*b\,
                       \chi\wedge\omega_\varepsilon^{n-1}
 &\leq C\int_X f^*b\,\Omega=C\int_Y b\,f_*\Omega.
 \end{aligned}
\]
Equivalently, as positive measures on $Y$,
\[
 0\leq
 f_*\!\left(\frac{\chi\wedge\omega_\varepsilon^{n-1}}{V_\varepsilon}\right)
 \leq C f_*\Omega .
\]

For $b\in C^\infty_c(Y^\circ)$, apply \textup{(i)} with the
test form $\Xi=b\chi_Y$. Since $\chi=f^*\chi_Y$, this gives
\[
 \begin{aligned}
 \frac1{V_\varepsilon}\int_X f^*b\,
                          \chi\wedge\omega_\varepsilon^{n-1}
 &=\left\langle b\chi_Y,\,
                 f_*\frac{\omega_\varepsilon^{n-1}}{V_\varepsilon}
   \right\rangle\\
 &\longrightarrow
 \frac{m}{nV_Y}\int_{Y^\circ}b\,\chi_Y\wedge(\chi_Y+\ddc u)^{m-1}.
 \end{aligned}
\]
For nonnegative $b$, the preceding bound passes to this limit.
Consequently,
\[
 0\leq\frac{m}{nV_Y}\chi_Y\wedge(\chi_Y+\ddc u)^{m-1}
 \leq C f_*\Omega\quad\text{on }Y^\circ.
\]
Recall $D=Y\setminus Y^\circ$. Its inverse image is proper analytic,
and bounded Bedford--Taylor products give no mass to proper
analytic sets on a resolution.
Therefore
\[
 (f_*\Omega)(D)=\int_{f^{-1}(D)}\Omega=0,\qquad
 \int_D\chi_Y\wedge(\chi_Y+\ddc u)^{m-1}=0.
\]
The same measure inequality thus holds on all of $Y$.

It remains to allow test functions without compact support in
$Y^\circ$. Fix $b\in L^1(Y,f_*\Omega)$. The restriction of
$f_*\Omega$ to $Y^\circ$ is a smooth positive volume form; it has
finite total mass and gives no mass to $D$. Smooth approximation
and exhaustion of $Y^\circ$ therefore give
\[
 b_j\in C^\infty_c(Y^\circ),\qquad
 \|b_j-b\|_{L^1(f_*\Omega)}\longrightarrow0.
\]
The two measure bounds imply
\[
 \begin{aligned}
 &\left|\frac1{V_\varepsilon}\int_X f^*(b-b_j)\,
                          \chi\wedge\omega_\varepsilon^{n-1}\right|
 \leq C\|b-b_j\|_{L^1(f_*\Omega)},\\
 &\left|\frac{m}{nV_Y}\int_Y(b-b_j)\,
                          \chi_Y\wedge(\chi_Y+\ddc u)^{m-1}\right|
 \leq C\|b-b_j\|_{L^1(f_*\Omega)}.
 \end{aligned}
\]
First let $\varepsilon\to0$ with $j$ fixed, using the convergence
already proved for $b_j$. The triangle inequality gives
\[
 \begin{aligned}
 &\limsup_{\varepsilon\to0}
 \left|\frac1{V_\varepsilon}\int_Xf^*b\,
                          \chi\wedge\omega_\varepsilon^{n-1}
       -\frac{m}{nV_Y}\int_Yb\,\chi_Y\wedge(\chi_Y+\ddc u)^{m-1}\right|\\
 &\qquad\leq2C\|b-b_j\|_{L^1(f_*\Omega)}
       \xrightarrow{j\to\infty}0 .
 \end{aligned}
\]
This proves \eqref{eq:c-7}.

\end{proof}

A similar concavity argument as in Lemma~\ref{lem:am-limit}, together with the above convergence of mixed products, shows convergence of the $J$-functional as follows. The $J$-functional is defined as \[
\begin{aligned}
J_{\chi,\varepsilon}(\varphi)&:=\frac{1}{n V_\varepsilon}\sum_{j=0}^{n-1}\int_X\varphi\chi(\chi_\varepsilon+\ddc \varphi)^{j}\chi_\varepsilon^{n-1-j},\\
J_{\chi_Y,Y}(\varphi)&:=\frac{1}{m V_Y}\sum_{j=0}^{m-1}\int_Y\varphi\chi_Y(\chi_Y+\ddc \varphi)^{j}\chi_Y^{m-1-j}.
\end{aligned}
\]

\begin{lemma}
    \label{lem:mixed-energy}
$nJ_{\chi,\varepsilon}(\varphi_\varepsilon)\to mJ_{\chi_Y,Y}(u)$.
\end{lemma}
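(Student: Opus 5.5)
The plan is to copy the concavity sandwich from Lemma~\ref{lem:am-limit}, applied to the functional $nJ_{\chi,\varepsilon}$. It is the energy functional obtained by replacing one factor of the reference form with $\chi$. Its first variation is
\[
\frac{d}{dt}\,nJ_{\chi,\varepsilon}(\varphi+t\dot\varphi)=V_\varepsilon^{-1}\int_X\dot\varphi\,\chi\wedge(\chi_\varepsilon+\ddc\varphi)^{n-1}\cdot n .
\]
Up to the normalization, this is the pairing of $\dot\varphi$ with $n\chi\wedge\omega_\varphi^{n-1}/V_\varepsilon$. Along segments $\varphi_0+t(\varphi_1-\varphi_0)$, its second derivative is
\[
-\,n(n-1)V_\varepsilon^{-1}\int_X i\partial h\wedge\bar\partial h\wedge\chi\wedge\omega_t^{n-2},
\]
which is $\le0$ since $\chi\ge0$. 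Hence $J_{\chi,\varepsilon}$ is concave on $\chi_\varepsilon$-psh functions. The same holds on $Y$ for $mJ_{\chi_Y,Y}$, whose derivative is $\frac{m}{V_Y}\chi_Y\wedge\chi_{\varphi}^{m-1}$. I will check the normalizations so that these derivative formulas match exactly the measures appearing in Lemma~\ref{lem:mix-current}\eqref{item:mixed-global}, up to the factor $n$ versus $m$.

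\textbf{Step 1: smooth pullbacks.} Fix a smooth $\varphi\in\PSH(Y,\chi_Y)$, or a restriction of a smooth $\ell^{-1}\omega_{FS}$-psh function. Expanding binomially as in Lemma~\ref{lem:am-limit} and using $\chi^{m+1}=0$ and $f_*\omega^d=V_F$, one gets
\[
nJ_{\chi,\varepsilon}(f^*\varphi)=mJ_{\chi_Y,Y}(\varphi)+O_\varphi(\varepsilon).
\]
The same expansion gives
\[
f_*\bigl(n\chi\wedge(\chi_\varepsilon+\ddc f^*\varphi)^{n-1}/V_\varepsilon\bigr)\to \frac{m}{V_Y}\chi_Y\wedge\chi_\varphi^{m-1}
\]
uniformly, with a density bound $\le C_\varphi\Omega$. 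The coefficients $\binom{n-1}{d}/\binom nd=m/n$ are the ones computed in Lemma~\ref{lem:mix-current}.

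\textbf{Step 2: sandwich.} Concavity gives
\[
\begin{aligned}
nJ_{\chi,\varepsilon}(\varphi_\varepsilon)&\le nJ_{\chi,\varepsilon}(f^*\varphi)+\int_X(\varphi_\varepsilon-f^*\varphi)\,\frac{n\chi\wedge(\chi_\varepsilon+\ddc f^*\varphi)^{n-1}}{V_\varepsilon},\\
nJ_{\chi,\varepsilon}(\varphi_\varepsilon)&\ge nJ_{\chi,\varepsilon}(f^*\varphi)+\int_X(\varphi_\varepsilon-f^*\varphi)\,\frac{n\chi\wedge\omega_\varepsilon^{n-1}}{V_\varepsilon}.
\end{aligned}
\]
In the upper bound, the density is $\le C_\varphi\Omega$ and $\varphi_\varepsilon\to f^*u$ in $L^1(\Omega)$. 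The limit is therefore
\[
mJ_{\chi_Y,Y}(\varphi)+\int_Y(u-\varphi)\,\tfrac{m}{V_Y}\chi_Y\wedge\chi_\varphi^{m-1}.
\]
In the lower bound, the density is $\frac1n(\tr_{\omega_\varepsilon}\chi)e^{F_\varepsilon}\Omega\cdot n\le C\Omega$ by Lemma~\ref{lem:trace}. So I replace $\varphi_\varepsilon$ by $f^*u$ at cost $C\|\varphi_\varepsilon-f^*u\|_{L^1}$. Then Lemma~\ref{lem:mix-current}\eqref{item:mixed-global} applies with the bounded measurable test function $b=u-\varphi\in L^1(f_*\Omega)$, giving the limit
\[
mJ_{\chi_Y,Y}(\varphi)+\int_Y(u-\varphi)\tfrac{m}{V_Y}\chi_Y\wedge\chi_u^{m-1}.
\]

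\textbf{Step 3: approximation.} Take smooth $\varphi_j\downarrow u$ as in Lemma~\ref{lem:am-limit}. Then $u-\varphi_j\le0$, so the upper bound gives $\limsup\le mJ_{\chi_Y,Y}(\varphi_j)$. The lower bound gives
\[
\liminf\ge mJ_{\chi_Y,Y}(\varphi_j)-\int_Y(\varphi_j-u)\,\tfrac{m}{V_Y}\chi_Y\wedge\chi_u^{m-1}.
\]
As $j\to\infty$, the Bedford--Taylor monotone convergence (fact (3), valid for mixed products on the resolution) gives $J_{\chi_Y,Y}(\varphi_j)\to J_{\chi_Y,Y}(u)$. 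The remaining integral tends to $0$ by monotone convergence, since $\chi_Y\wedge\chi_u^{m-1}$ is a finite measure not charging pluripolar sets.

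The main obstacle is the lower bound in Step 2. There one needs convergence of $\int(\varphi_\varepsilon-f^*\varphi)\,\chi\wedge\omega_\varepsilon^{n-1}/V_\varepsilon$ against a merely bounded, non-continuous limit $u$. This is exactly why the global $L^1(f_*\Omega)$ form of Lemma~\ref{lem:mix-current}\eqref{item:mixed-global}, together with the uniform density bound from $\tr_{\omega_\varepsilon}\chi\le C$, is needed. The case $m=1$ is consistent, since $\chi_Y\wedge\chi_\varphi^{0}=\chi_Y$.
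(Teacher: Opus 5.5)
Your proposal is correct and is essentially the paper's argument. Concavity of $J_{\chi,\varepsilon}$ gives the two-sided sandwich at smooth pullbacks $f^*\varphi$, the binomial expansion gives $nJ_{\chi,\varepsilon}(f^*\varphi)\to mJ_{\chi_Y,Y}(\varphi)$, part (2) of Lemma~\ref{lem:mix-current} with the bounded test function $u-\varphi$ handles the lower bound, and the decreasing smooth approximation $\varphi_j\downarrow u$ with Bedford--Taylor monotone convergence closes it; the only differences are cosmetic (you use the $L^\infty$ trace bound instead of the paper's $L^2$ Cauchy--Schwarz when replacing $\varphi_\varepsilon$ by $f^*u$, and your normalization $mV_Y^{-1}\chi_Y\wedge\chi_\varphi^{m-1}$ for the first variation on $Y$ is the correct one, whereas the paper's displayed formula has $V_Y$ in place of $V_Y^{-1}$).
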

\begin{proof}
  Direct computation shows \[
    \frac{d}{dt}|_{t=0}J_{\chi,\varepsilon}(\varphi+th)=V_{\varepsilon}^{-1}\int_Xh\chi\wedge(\chi+\varepsilon\mathchanged{\omega}+\ddc\varphi)^{n-1},
    \]
   and \[
   \frac{d}{dt}|_{t=0}J_{\chi_Y,Y}(\varphi+th)=V_Y\int_Yh\chi_Y\wedge(\chi_Y+\ddc\varphi)^{m-1}.
   \]
Moreover \[
\frac{d^2}{dt^2}|_{t=0}J_{\chi,\varepsilon}\leq 0, \qquad \frac{d^2}{dt^2}|_{t=0}J_{\chi_Y,Y}\leq 0.\]
Then similar expansion in the proof of Lemma~\ref{lem:am-limit} gives, for fixed smooth psh function $\varphi\in \PSH(Y,\chi_Y)$, \[
J_{\chi,\varepsilon}(f^*\varphi)\to \frac{m}{n}J_{\chi_Y,Y}(\varphi).
\]
Note that $|F_\varepsilon|, \tr_{\omega_\varepsilon}\chi$ are uniformly bounded,\[
\left|\frac{1}{V_\varepsilon}\int_X(\varphi_\varepsilon-f^*u)\chi\wedge\omega_\varepsilon^{n-1}\right|\leq C\|\varphi_{\varepsilon}-f^*u\|_{L^2(\Omega)}\|\tr_{\omega_\varepsilon}\chi\|_{L^2(\mu_\varepsilon)}\to 0.
\]
By Lemma~\ref{lem:mix-current}, \[
\frac{1}{V_\varepsilon}\int_Xf^*(u-\varphi)\chi\wedge\omega_\varepsilon^{n-1}\to \frac{m}{nV_Y}\int_Y(u-\varphi)\chi_Y\wedge(\chi_Y+\ddc u)^{m-1}. 
\]
Hence, a similar concavity argument as in Lemma~\ref{lem:am-limit} gives, \[
\limsup_{\varepsilon\to 0}nJ_{\chi,\varepsilon}(\varphi_\varepsilon)\leq mJ_{\chi_Y,Y}(\varphi)+\frac{m}{V_Y}\int_Y(u-\varphi)\chi_Y\wedge(\chi_Y+\ddc\varphi)^{m-1},
\]
\[
\liminf_{\varepsilon\to 0}nJ_{\chi,\varepsilon}(\varphi_\varepsilon)\geq mJ_{\chi_Y,Y}(\varphi)+\frac{m}{V_Y}\int_Y(u-\varphi)\chi_Y\wedge(\chi_Y+\ddc u)^{m-1}.
\]

We choose $\varphi_j\downarrow u$, with $\varphi_j\in \PSH(\C P^N,\ell^{-1}\omega_{FS})$ being smooth. Monotone convergence of the mixed energies gives the desired convergence. 
\end{proof}
Now, we use the Mabuchi energy to characterize the weak limit of the volume density. 
\begin{lemma}\label{lem:canonical-volume-density}
    Any subsequential limit $f^*F$ satisfies $F=\varphi_{\rm can}$ $f_*\Omega$-almost everywhere. Thus $F_\varepsilon\to f^*\varphi_{\rm can}$ in $L^p(X,\Omega)$ for any finite $p.$
\end{lemma}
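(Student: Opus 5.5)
The plan is to compare Mabuchi energies along the cscK family with those of a competitor in the same class built from the normalized K\"ahler--Ricci flow. Minimality of the cscK metric then passes to a limiting inequality on $Y$, whose equality case forces $u=\varphi_{\rm can}$ up to a constant.

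\textbf{Step 1: the Mabuchi energy.} Since $\Ric(\Omega)=-\chi$, Chen's formula writes the Mabuchi energy of the class $[\chi_\varepsilon]$ as
\[
\mathcal M_\varepsilon(\phi)=\int_X\log\frac{\omega_\phi^n}{V_\varepsilon\Omega}\,\frac{\omega_\phi^n}{V_\varepsilon}+\underline R_\varepsilon E_\varepsilon(\phi)+nJ_{\chi,\varepsilon}(\phi),
\]
up to a normalizing constant independent of $\phi$. I will first double-check the signs by differentiating: the first variation should be $-\int \dot\phi\,(R_\phi-\underline R_\varepsilon)\,d\mu_\phi$. The cscK metric $\varphi_\varepsilon$ minimizes $\mathcal M_\varepsilon$ on the class; this uses Chen--Cheng together with the uniqueness and existence results quoted from \cite{JSS18}. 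Hence $\mathcal M_\varepsilon(\varphi_\varepsilon)\le\mathcal M_\varepsilon(\psi_\varepsilon)$, with $\psi_\varepsilon$ the rescaled K\"ahler--Ricci flow potential introduced in Section~2.

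\textbf{Step 2: the limit of the cscK side.} Along the subsequence, Proposition~\ref{prop:volume} gives convergence of the entropy to $\int_X f^*F\,e^{f^*F}\Omega=\int_Y F e^F f_*\Omega$. Lemma~\ref{lem:am-limit} gives $E_\varepsilon(\varphi_\varepsilon)\to E_Y(u)$, and $\underline R_\varepsilon\to-m$. Lemma~\ref{lem:mixed-energy} gives $nJ_{\chi,\varepsilon}(\varphi_\varepsilon)\to mJ_{\chi_Y,Y}(u)$. Since $\MA_Y(u)=e^Ff_*\Omega$ by Lemma~\ref{lem:am-limit}, the limit equals $\mathcal M_Y(u)$, where
\[
\mathcal M_Y(v):=\int_Y\log\frac{\MA_Y(v)}{f_*\Omega}\,\MA_Y(v)-mE_Y(v)+mJ_{\chi_Y,Y}(v).
\]
This is the Mabuchi functional of the twisted equation $\Ric(\ocan)=-\ocan+\omega_{WP}$.

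\textbf{Step 3: the limit of the competitor side.} For $\psi_\varepsilon$, Section~2 provides $\psi_\varepsilon\to f^*\varphi_{\rm can}$ in $L^p$, uniform bounds on $\psi_\varepsilon$, and total variation convergence of $\tilde\omega_\varepsilon^n/V_\varepsilon$ to $e^{f^*\varphi_{\rm can}}\Omega$ with uniformly bounded densities. So the entropy converges to $\int\varphi_{\rm can}e^{\varphi_{\rm can}}f_*\Omega$. The proofs of Lemmas~\ref{lem:am-limit}, \ref{lem:mix-current} and \ref{lem:mixed-energy} use only these properties plus an $L^2$ bound on $\tr\chi$. For the flow, that bound comes from Song--Tian's parabolic Schwarz lemma, $\tr_{\tilde\omega}\chi\le C$. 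Rerunning those proofs gives $\mathcal M_\varepsilon(\psi_\varepsilon)\to\mathcal M_Y(\varphi_{\rm can})$. Therefore
\[
\mathcal M_Y(u)\le\mathcal M_Y(\varphi_{\rm can}).
\]

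\textbf{Step 4: uniqueness of the minimizer, the main obstacle.} I must show that $\varphi_{\rm can}$ minimizes $\mathcal M_Y$ over bounded $\chi_Y$-psh functions and is the unique minimizer modulo constants. Note that $-mE_Y+mJ_{\chi_Y,Y}$ is invariant under adding constants, so only $\MA_Y(u)$ is determined.

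My first attempt is the Legendre/entropy route. Write $\mathcal M_Y(v)=\big[\Ent(\MA_Y(v)\,|\,f_*\Omega)-\int v\,\MA_Y(v)\big]+\big[\int v\,\MA_Y(v)-mE_Y(v)+mJ_{\chi_Y,Y}(v)\big]$. The Gibbs inequality $\Ent(\mu|f_*\Omega)\ge\int v\,d\mu-\log\int e^v f_*\Omega$ holds with equality iff $\mu=e^v f_*\Omega/\int e^v f_*\Omega$. This reduces the problem to a Ding-type functional, which is concave along the affine path $t\mapsto(1-t)\varphi_{\rm can}+tu$ thanks to the negative sign of the curvature. The second bracket is handled by the concavity of $E_Y$ and $J_{\chi_Y,Y}$ recorded in the proofs above.

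If the algebra of the second bracket does not close, I will instead cite the convexity of $\mathcal M_Y$ along weak geodesics on the resolution $\widetilde Y$ (Berman--Berndtsson, or BBGZ in the twisted setting). Strict convexity of the entropy term then gives $\MA_Y(u)=\MA_Y(\varphi_{\rm can})$, and uniqueness in the bounded Monge--Amp\`ere problem gives $u=\varphi_{\rm can}+c$.

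In either case, $e^Ff_*\Omega=\MA_Y(u)=\MA_Y(\varphi_{\rm can})=e^{\varphi_{\rm can}}f_*\Omega$, so $F=\varphi_{\rm can}$ holds $f_*\Omega$-a.e. Since every subsequence has a further subsequence with this same limit, the full family satisfies $F_\varepsilon\to f^*\varphi_{\rm can}$ in every $L^p(X,\Omega)$.
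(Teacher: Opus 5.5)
Your Steps 1--3 coincide with the paper's proof: the same normalized Mabuchi energy, the same competitor $\psi_\varepsilon$, and the same limits via Lemmas~\ref{lem:am-limit}, \ref{lem:mix-current} and~\ref{lem:mixed-energy}. Step~4, which carries the actual content, does not work as written.

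By $mJ_{\chi_Y,Y}(v)=(m+1)E_Y(v)-\int_Yv\,\MA_Y(v)$, your second bracket is just $E_Y(v)$. The Gibbs inequality with test function $w=v$ then gives $\cM_Y(v)\geq D(v):=E_Y(v)-\log\int_Ye^vf_*\Omega$. In this negative-curvature setting $D$ is concave, as you say, and $\varphi_{\rm can}$ is a critical point of $D$. So $\varphi_{\rm can}$ \emph{maximizes} $D$, i.e.\ $D(u)\leq D(\varphi_{\rm can})=\cM_Y(\varphi_{\rm can})$. The chain $\cM_Y(u)\geq D(u)\leq D(\varphi_{\rm can})$ therefore gives no comparison between $\cM_Y(u)$ and $\cM_Y(\varphi_{\rm can})$. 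The ``Ding below Mabuchi'' trick needs Ding \emph{convexity}, which is the Fano situation.

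Your fallback is not a usable citation either. On $\widetilde Y$ the class $\pi^*\chi_Y$ is only big and semipositive, and the reference measure $f_*\Omega$ has singular density along the discriminant. Moreover, convexity along weak geodesics does not by itself give uniqueness of minimizers, and ``strict convexity of the entropy'' along weak geodesics is not an available theorem.

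The missing idea is to apply Gibbs with the canonical potential rather than with $v$, and to use concavity of $E_Y$ only through its tangent inequality at $u$. This is exactly the paper's argument. Since $\int_Ye^{\varphi_{\rm can}}f_*\Omega=1$, Gibbs gives $\Ent(\MA_Y(u)\,|\,f_*\Omega)\geq\int_Y\varphi_{\rm can}\,\MA_Y(u)$, with equality iff $\MA_Y(u)=e^{\varphi_{\rm can}}f_*\Omega$. Concavity gives $E_Y(\varphi_{\rm can})\leq E_Y(u)+\int_Y(\varphi_{\rm can}-u)\,\MA_Y(u)$. Hence
\[
\begin{aligned}
\cM_Y(u)&=\Ent(\MA_Y(u)\,|\,f_*\Omega)+E_Y(u)-\int_Y u\,\MA_Y(u)\\
&\geq E_Y(u)+\int_Y(\varphi_{\rm can}-u)\,\MA_Y(u)\geq E_Y(\varphi_{\rm can})=\cM_Y(\varphi_{\rm can}),
\end{aligned}
\]
where the last equality uses $\MA_Y(\varphi_{\rm can})=e^{\varphi_{\rm can}}f_*\Omega$.

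Equivalently, $\cM_Y(u)-\cM_Y(\varphi_{\rm can})\geq\Ent(\MA_Y(u)\,|\,\MA_Y(\varphi_{\rm can}))\geq0$. This is the paper's quantity $\int_Y(s\log s-s+1)\,\MA_Y(\varphi_{\rm can})$ with $s=e^{F-\varphi_{\rm can}}$. Together with your Step~3 inequality, all of these are equalities. So $e^Ff_*\Omega=\MA_Y(u)=e^{\varphi_{\rm can}}f_*\Omega$, and $F=\varphi_{\rm can}$ almost everywhere. No geodesics, resolution-level convexity, or uniqueness theorem for the Monge--Amp\`ere equation is needed.
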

\begin{proof}
    For a smooth $\chi_\varepsilon$-psh function $\varphi$, the normalized Mabuchi energy is defined as \begin{equation}
        \label{eq:mabuchi-energy}
        \cM_\varepsilon(\varphi):=\frac{1}{V_\varepsilon}\int_X\log{\frac{(\chi_\varepsilon+\ddc\varphi)^n}{V_\varepsilon\Omega}}(\chi_\varepsilon+\ddc\varphi)^n+nJ_{\chi,\varepsilon}(\varphi)+\underline{R}_\varepsilon E_{\varepsilon}(\varphi)
    \end{equation}
The Mabuchi energy is convex along geodesics, and cscK metrics minimize it.
Now we apply this to two family of metrics:\begin{enumerate}
    \item For $\hw_\varepsilon$, the potential $\psi_\varepsilon\to f^*\varphi_{\rm can}$, and $\nu_{\rm can}=e^{f^*\varphi_{\rm can}}f_*\Omega$, and $\tr_{\hw_\varepsilon}\chi\leq C$.
    \item For $\omega_\varepsilon$, by passing to subsequence, the potential $\varphi_\varepsilon\to f^*u$, and $\nu_u=e^Ff_*\Omega=\MA_Y(u)$, and $\tr_{\omega_\varepsilon}\chi\leq C$.
\end{enumerate}
On $Y$, one has \[
mJ_{\chi_Y,Y}(u)=(m+1)E_Y(u)-\int_Yu\MA_Y(u).
\]
Since $\underline{R}_\varepsilon\to -m$ and the bounded $\log$-density converges in $L^1$, we get \[
\cM_\varepsilon(\varphi_\varepsilon)\to \int_YFe^{f^*F}f_*\Omega+mJ_{\chi_Y,Y}(u)-mE_Y(u)=\int_YFe^{f^*F}f_*\Omega+E_Y(u)-\int_Yu\MA_Y(u).
\]
The same argument in Lemma~\ref{lem:am-limit},\ref{lem:mix-current},\ref{lem:mixed-energy} also works for $\hw_\varepsilon$, and we get \[
\cM_\varepsilon(\psi_\varepsilon)\to \int_Y\varphi_{\rm can}e^{\varphi_{\rm can}}f_*\Omega+E_Y(\varphi_{\rm can})-\int_Y\varphi_{\rm can}\MA_{Y}(\varphi_{\rm can}).
\]
Since $\omega_\varepsilon$ is the unique cscK metric, we have $\cM_\varepsilon(\varphi_\varepsilon)\leq \cM_{\varepsilon}(\psi_\varepsilon)$.

Note that from concavity, we also have \[
E_Y(\varphi_{\rm can})\leq E_Y(u)+\int_Y(\varphi_{\rm can}-u)\MA_Y(u).
\]

Therefore \[
\begin{aligned}
    0\geq& \int_Y(Fe^{f^*F}-\varphi_{\rm can}e^{\varphi_{\rm can}})f_*\Omega+E_Y(u)-E_Y(\varphi_{\rm can})-\int_Yu\MA_Y(u)+\int_Y\varphi_{\rm can}\MA_Y(\varphi_{\rm can})\\
    =&\int_X(F-\varphi_{\rm can}) e^Ff_*\Omega+E_Y(u)-E_Y(\varphi_{\rm can})-\int_Y(u-\varphi_{\rm can})\MA_Y(u)\\
    \ge &\int_Y(F-\varphi_{\rm can})e^Ff_*\Omega.
\end{aligned}
\]
Here we used that $\MA_Y(u)=e^Ff_*\Omega$ by Lemma~\ref{lem:am-limit}, and the equation of the canonical metric on the base $\MA_Y(\varphi_{\rm can})=e^{\varphi_{\rm can}}f_*\Omega$. 

Now the difference of the entropy term \[
\int_Y(F-\varphi_{\rm can})e^Ff_*\Omega=\int_Y(F-\varphi_{\rm can})e^{F-\varphi_{\rm can}}\MA_Y(\varphi_{\rm can})=\int_Y(s\log s-s+1)\MA_Y(\varphi_{\rm can})\geq 0,
\]where we let $s=e^{F-\varphi_{\rm can}}$ and used the facts that $s\log s-s+1\geq 0$ and $=$ attains only at $s=1$. Therefore, every inequality is equality. In particular, $s=1$ almost everywhere, i.e. \[
F=\varphi_{\rm can}.
\]
\end{proof}

Next, we show that the base direction of $\omega_\varepsilon$ converges to $f^*\omega_{\rm can}$ in the following weak $L^p$-sense. We started with a general results concerning entropy convergence in the same cohomology class. 
\begin{lemma}\label{lem:entropy-to-metric}
Let $\omega,\widehat\omega$ be cohomologous K\"ahler forms on a compact
complex $n$-fold. Set
$V=\int_X\omega^n$, $\mu=\omega^n/V$, and $\nu=\widehat\omega^n/V$.
\begin{enumerate}
\item\label{item:entropy-bound}
\[
 \int_X|\widehat\omega-\omega|_\omega\,d\mu
 \leq2\sqrt{n\Ent_\nu(\mu)},\qquad
 \text{ where }\Ent_\nu(\mu)=\int_X\log\frac{d\mu}{d\nu}d\mu.
\]
\item\label{item:entropy-convergence}
Suppose $\mu_j=\rho_j\Omega$, $\nu_j=\widehat\rho_j\Omega$, where
$C^{-1}\leq\rho_j,\widehat\rho_j\leq C$ and
$\|\rho_j-\widehat\rho_j\|_{L^1(\Omega)}\to0$. Then
\[
 \int_X|\widehat\omega_j-\omega_j|_{\omega_j}\,d\mu_j\to0,
 \qquad
 \int_X|\omega_j-\widehat\omega_j|_{\widehat\omega_j}\,d\nu_j\to0.
\]
For every $\eta>0$, outside a set of $\Omega$-measure tending to zero,
\[
 (1-\eta)\omega_j\leq\widehat\omega_j\leq(1+\eta)\omega_j.
\]
\end{enumerate}
\end{lemma}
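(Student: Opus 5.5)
The plan for \eqref{item:entropy-bound} is to reduce everything to the eigenvalues of $\widehat\omega$ relative to $\omega$ and to apply a scalar inequality followed by Cauchy--Schwarz. At each point, choose coordinates with $\omega_{i\bar j}=\delta_{ij}$ and $\widehat\omega_{i\bar j}=\lambda_i\delta_{ij}$, where $\lambda_i>0$. Then
\[
|\widehat\omega-\omega|_\omega=\Bigl(\sum_i(\lambda_i-1)^2\Bigr)^{1/2}\le\sum_i|\lambda_i-1|,
\qquad \frac{d\nu}{d\mu}=\prod_i\lambda_i .
\]
Two global identities will be used. First, since $[\widehat\omega]=[\omega]$,
\[
\int_X\tr_\omega\widehat\omega\,d\mu=\frac{n}{V}\int_X\widehat\omega\wedge\omega^{n-1}=n,
\qquad\text{so}\qquad \int_X\sum_i(\lambda_i-1)\,d\mu=0 .
\]
Second, $\Ent_\nu(\mu)=-\int_X\sum_i\log\lambda_i\,d\mu$. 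Adding these gives
\[
\Ent_\nu(\mu)=\int_X\sum_i g(\lambda_i)\,d\mu,\qquad g(\lambda)=\lambda-1-\log\lambda\ge0 .
\]

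The key scalar estimate is
\[
g(\lambda)\ge\frac{(\lambda-1)^2}{2\max(1,\lambda)}.
\]
For $\lambda\ge1$, write $g(\lambda)=\int_1^\lambda\frac{t-1}{t}\,dt$ and use $\frac{t-1}{t}\ge\frac{t-1}{\lambda}$ on $[1,\lambda]$. For $\lambda<1$, write $g(\lambda)=\int_\lambda^1\bigl(\tfrac1t-1\bigr)dt$ and use $\tfrac1t-1\ge 1-t$. Since $\max(1,\lambda)\le1+\lambda$, this gives
\[
|\lambda_i-1|\le\sqrt{2g(\lambda_i)}\,\sqrt{1+\lambda_i}.
\]
Apply Cauchy--Schwarz jointly in the index $i$ and in the integral against $d\mu$:
\[
\int_X|\widehat\omega-\omega|_\omega\,d\mu\le\Bigl(2\int_X\sum_ig(\lambda_i)\,d\mu\Bigr)^{1/2}\Bigl(\int_X\sum_i(1+\lambda_i)\,d\mu\Bigr)^{1/2}.
\]
The first factor equals $\sqrt{2\Ent_\nu(\mu)}$. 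The second equals $\sqrt{n+n}=\sqrt{2n}$, again by the trace identity. Their product is $2\sqrt{n\Ent_\nu(\mu)}$, which is \eqref{item:entropy-bound}. The step that needs care is keeping the constant at $2\sqrt n$: the unbounded large-$\lambda$ regime is controlled only because of the trace identity $\int_X\sum_i\lambda_i\,d\mu=n$.

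For \eqref{item:entropy-convergence}, write
\[
\Ent_{\nu_j}(\mu_j)=\int_X\rho_j\log\frac{\rho_j}{\widehat\rho_j}\,\Omega .
\]
On $[C^{-1},C]$ the logarithm is Lipschitz, so $\bigl|\log\rho_j-\log\widehat\rho_j\bigr|\le C|\rho_j-\widehat\rho_j|$. Hence $\Ent_{\nu_j}(\mu_j)\le C^2\|\rho_j-\widehat\rho_j\|_{L^1(\Omega)}\to0$. Now \eqref{item:entropy-bound} gives the first convergence. Swapping the roles of $\omega_j$ and $\widehat\omega_j$, which is allowed because the hypotheses are symmetric, gives the second.

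For the pinching statement, fix $\eta>0$ and let $E_j=\{|\widehat\omega_j-\omega_j|_{\omega_j}\ge\eta\}$. Since $\mu_j\ge C^{-1}\Omega$, Chebyshev's inequality gives
\[
\Omega(E_j)\le C\eta^{-1}\int_X|\widehat\omega_j-\omega_j|_{\omega_j}\,d\mu_j\to0 .
\]
Off $E_j$, every eigenvalue satisfies $|\lambda_i-1|\le|\widehat\omega_j-\omega_j|_{\omega_j}<\eta$, which is exactly $(1-\eta)\omega_j\le\widehat\omega_j\le(1+\eta)\omega_j$.
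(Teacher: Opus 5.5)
Your proof is correct and follows the paper's argument. Both rest on the eigenvalue inequality $(\lambda-1)^2\le 2(1+\lambda)(\lambda-1-\log\lambda)$, the cohomological trace identity $\int_X\tr_\omega\widehat\omega\,d\mu=n$ and Cauchy--Schwarz, and then on the bound $\Ent\le C^2\|\rho_j-\widehat\rho_j\|_{L^1(\Omega)}$ and Markov's inequality for part (2). The differences are cosmetic: the paper gets the scalar inequality from $|t-1|=|\sqrt t-1|(\sqrt t+1)$ with $(\sqrt t-1)^2\le t-1-\log t$ and applies Cauchy--Schwarz pointwise in the index before integrating, whereas you use an integral representation of $g$ and a joint Cauchy--Schwarz.
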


\begin{proof}
Note the following elementary inequality 
\[
 (\sqrt t-1)^2\leq t-1-\log t,\qquad
 (\sqrt t+1)^2\leq2(t+1),\qquad
 |t-1|=|\sqrt t-1|(\sqrt t+1).
\]
It follows that 
\[
 |\widehat\omega-\omega|_\omega^2
 \leq2(\tr_\omega\widehat\omega+n)
 \left(\tr_\omega\widehat\omega-n-
                    \log\frac{\widehat\omega^n}{\omega^n}\right).
\]
The first factor is bounded due to the cohomology class
\[
\int_X\tr_\omega\widehat\omega d\mu
 =\frac{n}{V}\int_X\widehat\omega\wedge\omega^{n-1}=n,
\]
and 
\[
 \int_X\left(\tr_\omega\widehat\omega-n-
                 \log\frac{\widehat\omega^n}{\omega^n}\right)d\mu
 =\Ent_\nu(\mu).
\]
By Cauchy--Schwarz inequality, one gets (i).

For (2),
\[
 0\leq\Ent_{\nu_j}(\mu_j)
 \leq C^2\|\rho_j-\widehat\rho_j\|_{L^1(\Omega)}\to0,
\]
and likewise with the measures interchanged. Finally,
\[
 |\widehat\omega_j-\omega_j|_{\omega_j}\leq\eta
 \quad\Longrightarrow\quad
 -\eta\omega_j\leq\widehat\omega_j-\omega_j\leq\eta\omega_j;
\]
Markov's inequality gives the last assertion.
\end{proof}

\begin{corollary}\label{cor:Lp-convergence-in-base-direction}
 For every smooth $(1,0)$-form $\alpha$ on $Y^\circ$, and $K\Subset Y^\circ$
\[|f^*\alpha|_{\omega_\varepsilon}^2
 \longrightarrow f^*|\alpha|_{\omega_{\rm can}}^2
 \quad\text{in }L^p(f^{-1}(K),\Omega),\quad p<\infty.
\]
\end{corollary}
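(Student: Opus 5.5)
The strategy is to compare $\omega_\varepsilon$ with the rescaled Kähler--Ricci flow metric $\hw_\varepsilon=(1+\varepsilon)\hw(t(\varepsilon))$, which lies in the same class. We apply Lemma~\ref{lem:entropy-to-metric}\eqref{item:entropy-convergence} to the pair $(\omega_\varepsilon,\hw_\varepsilon)$. The normalized densities are
\[
\rho_\varepsilon=e^{F_\varepsilon},\qquad \widehat\rho_\varepsilon=\frac{\hw_\varepsilon^n}{V_\varepsilon\Omega}.
\]
Both are bounded above and below uniformly, by Theorem~\ref{thm:bounds} and Theorem~\ref{thm:NKRF-convergence}(1) respectively. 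Lemma~\ref{lem:canonical-volume-density} gives $e^{F_\varepsilon}\to e^{f^*\pcan}$ in $L^1(\Omega)$. The computation after Theorem~\ref{thm:NKRF-convergence} gives $\widehat\rho_\varepsilon\to e^{f^*\pcan}$ in total variation. Hence $\|\rho_\varepsilon-\widehat\rho_\varepsilon\|_{L^1(\Omega)}\to0$, and Lemma~\ref{lem:entropy-to-metric} yields
\[
\int_X|\hw_\varepsilon-\omega_\varepsilon|_{\omega_\varepsilon}\,d\mu_\varepsilon\to0 .
\]

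Next, we pass from this metric comparison to the horizontal quantity. Fix $K\Subset K'\Subset Y^\circ$ and a smooth $(1,0)$-form $\alpha$ on $Y^\circ$. We use the pointwise identity for dual norms,
\[
\bigl||f^*\alpha|^2_{\omega_\varepsilon}-|f^*\alpha|^2_{\hw_\varepsilon}\bigr|\le |f^*\alpha|_{\omega_\varepsilon}\,|f^*\alpha|_{\hw_\varepsilon}\,|\omega_\varepsilon-\hw_\varepsilon|_{\omega_\varepsilon}\cdot C,
\]
which follows by diagonalizing $\hw_\varepsilon$ with respect to $\omega_\varepsilon$. It is bounded by $\sum_i|\alpha_i|^2|1-\mu_i^{-1}|$ together with a symmetric version. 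To avoid dividing by small eigenvalues, it is safer to write the difference as
\[
\langle \alpha,\alpha\rangle_{\omega_\varepsilon^{-1}}-\langle\alpha,\alpha\rangle_{\hw_\varepsilon^{-1}}=\omega_\varepsilon^{-1}(\hw_\varepsilon-\omega_\varepsilon)\hw_\varepsilon^{-1}(\alpha,\bar\alpha).
\]
This gives the bound $|f^*\alpha|_{\omega_\varepsilon}|f^*\alpha|_{\hw_\varepsilon}|\hw_\varepsilon-\omega_\varepsilon|_{\omega_\varepsilon}$. On $f^{-1}(K)$ we have $|f^*\alpha|^2_{\omega_\varepsilon}\le C_K\tr_{\omega_\varepsilon}\chi\le C$ by Lemma~\ref{lem:trace}, since $\chi$ is comparable to $f^*\chi_Y$ and $\chi_Y$ is positive on $K$. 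Similarly $|f^*\alpha|^2_{\hw_\varepsilon}\le C\tr_{\hw_\varepsilon}\chi\le C$ by the parabolic Schwarz lemma for the flow. Therefore
\[
\int_{f^{-1}(K)}\bigl||f^*\alpha|^2_{\omega_\varepsilon}-|f^*\alpha|^2_{\hw_\varepsilon}\bigr|\,\Omega\le C\int_X|\hw_\varepsilon-\omega_\varepsilon|_{\omega_\varepsilon}d\mu_\varepsilon\to0 .
\]

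Then we identify the flow limit. By Theorem~\ref{thm:NKRF-convergence}(3), after rescaling by $1+\varepsilon$ and using $e^{-t(\varepsilon)}=\varepsilon/(1+\varepsilon)$, we have $\hw_\varepsilon=(1+o(1))\bigl(f^*\ocan+\varepsilon\osrf\bigr)$ uniformly on $f^{-1}(K)$ in the $\hw_\varepsilon$-norm. For a horizontal form, the dual norm with respect to $f^*\ocan+\varepsilon\osrf$ converges uniformly to $f^*|\alpha|^2_{\ocan}$. To see this, split $T X$ into the $\osrf$-horizontal distribution and the vertical one. The fiberwise part $\varepsilon\osrf|_{X_y}$ only adds vertical directions, and $\osrf$ has bounded mixed terms on $f^{-1}(K')$, so by the block-inverse formula the horizontal block of the inverse is $\ocan^{-1}+O(\varepsilon)$. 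Hence $|f^*\alpha|^2_{\hw_\varepsilon}\to f^*|\alpha|^2_{\ocan}$ uniformly on $f^{-1}(K)$.

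Combining the last two steps gives $L^1(f^{-1}(K),\Omega)$ convergence. All functions involved are uniformly bounded, so interpolation upgrades this to every $L^p$ with $p<\infty$. The main obstacle is the uniform horizontal bound $\tr_{\hw_\varepsilon}\chi\le C$ for the flow. It is only quoted implicitly in the proof of Lemma~\ref{lem:canonical-volume-density}. If it is unavailable, an alternative is to use only local uniform convergence from Theorem~\ref{thm:NKRF-convergence}(3) on $f^{-1}(K')$, which already gives boundedness of $|f^*\alpha|_{\hw_\varepsilon}$ there.
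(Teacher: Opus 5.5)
Your outline is sound and uses the same inputs as the paper: Lemma~\ref{lem:entropy-to-metric} for the pair $(\omega_\varepsilon,\hw_\varepsilon)$, Lemma~\ref{lem:canonical-volume-density}, the regular-locus flow convergence with the horizontal/vertical block decomposition, and Lemma~\ref{lem:trace} for the upgrade to $L^p$. However, the pointwise dual-norm inequality you rely on is false as stated.

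Work in an $\omega_\varepsilon$-unitary frame in which $\hw_\varepsilon$ is diagonal with eigenvalues $\mu_i$. Take $n=1$, $\mu_1=\mu\to0$ and $|\alpha_1|^2=\mu$. Then the left side is $1-\mu$, while $|f^*\alpha|_{\omega_\varepsilon}|f^*\alpha|_{\hw_\varepsilon}|\hw_\varepsilon-\omega_\varepsilon|_{\omega_\varepsilon}=\sqrt\mu\,(1-\mu)$. So no constant $C$ works, even though both dual norms stay bounded. The rewriting via $\omega_\varepsilon^{-1}(\hw_\varepsilon-\omega_\varepsilon)\hw_\varepsilon^{-1}$ does not rescue it. Estimating the middle factor by $|\hw_\varepsilon-\omega_\varepsilon|_{\omega_\varepsilon}$ forces you to measure $\hw_\varepsilon^{-1}\alpha$ in the $\omega_\varepsilon$-norm, which is $(\sum_i|\alpha_i|^2\mu_i^{-2})^{1/2}$, not $|\alpha|_{\hw_\varepsilon}$.

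The correct bound is already contained in your first expression:
\[
\Bigl||f^*\alpha|^2_{\omega_\varepsilon}-|f^*\alpha|^2_{\hw_\varepsilon}\Bigr|
=\Bigl|\sum_i\frac{|\alpha_i|^2}{\mu_i}(\mu_i-1)\Bigr|
\le|f^*\alpha|^2_{\hw_\varepsilon}\,|\hw_\varepsilon-\omega_\varepsilon|_{\omega_\varepsilon}.
\]
The small eigenvalues are absorbed by the factor $|f^*\alpha|^2_{\hw_\varepsilon}$. This factor is bounded on $f^{-1}(K)$ by the local flow convergence alone, so you do not need the global parabolic Schwarz bound. With this replacement, your displayed $L^1$ estimate holds exactly as you wrote it, and the rest of the argument goes through.

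The paper avoids pointwise eigenvalue algebra. It applies Markov's inequality to the same $L^1$ bound to get $(1-\eta)\omega_\varepsilon\le\hw_\varepsilon\le(1+\eta)\omega_\varepsilon$ off a set of small measure. On that set, monotonicity of dual norms compares $|f^*\alpha|^2_{\omega_\varepsilon}$ with $|f^*\alpha|^2_{\hw_\varepsilon}$ up to factors $1\pm\eta$. This yields convergence in measure, which it upgrades to $L^p$ using $|f^*\alpha|^2_{\omega_\varepsilon}\le C\tr_{\omega_\varepsilon}\chi\le C$. Your corrected route gives the transfer step directly in $L^1$, with an explicit bound $C\sqrt{\Ent}$. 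It needs Lemma~\ref{lem:trace} only for the final passage to $L^p$.
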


\begin{proof}
Apply Lemma~\ref{lem:entropy-to-metric} to $\omega_\varepsilon$ and
$\mathchanged{\tilde\omega}_\varepsilon$. Their normalized densities are uniformly
bounded above and below and have the same total-variation limit.
The uniform convergence of the metric in regular part in Theorem~\ref{thm:NKRF-convergence} gives
\[
 (1-o_K(1))(f^*\omega_{\rm can}+\varepsilon\omega_{\rm SRF})\leq\mathchanged{\tilde\omega}_\varepsilon
 \leq(1+o_K(1))(f^*\omega_{\rm can}+\varepsilon\omega_{\rm SRF}).
\]
Use the $\omega_{\rm SRF}$-orthogonal complement of the vertical
tangent bundle. The reference form has zero mixed term; its
horizontal restriction is $f^*\omega_{\rm can}+O_K(\varepsilon)$
and its vertical restriction is $\varepsilon\omega_{{\rm SRF},y}$.
Since $f^*\alpha$ vanishes on vertical vectors, duality gives
\[|f^*\alpha|_{\mathchanged{\tilde\omega}_\varepsilon}^2\longrightarrow f^*|\alpha|_{\omega_{\rm can}}^2\quad\hbox{uniformly on }f^{-1}(K).
\]
When $d=0$, this follows directly from convergence of positive
metrics on $f^{-1}(K)$.
For $0<\eta<1$, outside a set of measure tending to zero,
\[
 (1-\eta)\omega_\varepsilon\leq\mathchanged{\tilde\omega}_\varepsilon\leq(1+\eta)\omega_\varepsilon,
\]
so duality gives
\[
 (1-\eta)|f^*\alpha|_{\mathchanged{\tilde\omega}_\varepsilon}^2\leq|f^*\alpha|_{\omega_\varepsilon}^2\leq(1+\eta)|f^*\alpha|_{\mathchanged{\tilde\omega}_\varepsilon}^2.
\]
The comparison-family limit gives convergence in measure as
$\eta\downarrow0$. Since
$|f^*\alpha|_{\omega_\varepsilon}^2
\leq C_\alpha\tr_{\omega_\varepsilon}\chi\leq C_\alpha$,
the convergence holds in every finite $L^p$. 
\end{proof}

Next we use the Moser iteration to improve the $L^p$
 convergence of $F_\varepsilon$ to $L^\infty$-convergence away from singular fibers.  We summarize it as a lemme here. 

Fix a holormorhic coordinate $z$ on a neighborhood of  $B_{2r}\Subset Y^\circ$, where $B_{2r}=\{|z|\leq 2r\}$. Set $\omega_E=\ddc |z|^2$ and assume $\omega_E\leq C\chi_Y$. Fix a smooth function $0\le \eta\le 1$ on $\R$, with $\eta=1$ on $(-\infty,0]$ and $\eta=0$ on $[\frac12,\infty)$. For $r\leq s<t\leq 2r$, set $
\eta_{s,t}(z)=\eta\left(\frac{|z|^2-s^2}{t^2-s^2}\right)$, and $\xi_{s,t}=f^*\eta_{s,t}.$ Then \[
\eta_{s,t}\in C^\infty_c(B_t), \quad \eta_{s,t}=1 \text{ on }B_s, \quad |\nabla\eta|^2_{\omega_E}\leq \frac{C}{(t-s)^2},
\]
and on the pullback, \[
|\nabla\xi_{s,t}|_{\omega_\varepsilon}^2\leq \frac{C}{(t-s)^2}\tr_{\omega_\varepsilon}f^*\omega_E\leq 
\frac{C}{(t-s)^2}.
\]
The following estimates may depend on the fixed coordinate chart but uniform on $\varepsilon$. Recall that we use $q$ to denote the exponent of uniform Sobolev inequality obtained by Guo--Phong--Song--Strum \cite{GPSS23}.  

\begin{lemma}\label{lem:iteration}
Fix $q/(q-1)<p_*<\infty$. 
Every smooth real function $U$ on $f^{-1}(B_{2r})$ satisfies
\begin{equation}
\|U\|_{L^\infty(f^{-1}(B_r))}
 \leq C\left(\|U\|_{L^2(f^{-1}(B_{2r}),\mu_\varepsilon)}+\|\Delta_{\omega_\varepsilon}U\|_{L^{p_*}(f^{-1}(B_{2r}),\mu_\varepsilon)}
 \right).
\end{equation}
The constant depends only on $p_*$ and the fixed chart. 
\end{lemma}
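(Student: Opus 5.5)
We run a standard local Moser iteration for the inhomogeneous equation $\Delta_{\omega_\varepsilon}U=g$, $g:=\Delta_{\omega_\varepsilon}U$. The inputs are the cutoffs $\xi_{s,t}=f^*\eta_{s,t}$, which satisfy $|\nabla\xi_{s,t}|^2_{\omega_\varepsilon}\le C(t-s)^{-2}$ uniformly in $\varepsilon$ by Lemma~\ref{lem:trace}, and the uniform Sobolev inequality of Theorem~\ref{thm:sobolev}. Since the Sobolev inequality is global, we apply it to compactly supported functions $\xi w$. Its mean term is harmless: $|\overline{\xi w}|\le\|\xi w\|_{L^1}\le\|\xi w\|_{L^2}$ because $\mu_\varepsilon(X)=1$. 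This gives the local form
\[
\|\xi w\|_{L^{2q}(\mu_\varepsilon)}^2\le C\bigl(\|\nabla(\xi w)\|_{L^2(\mu_\varepsilon)}^2+\|\xi w\|_{L^2(\mu_\varepsilon)}^2\bigr),
\]
where we used that $I_{\omega_\varepsilon}$ is uniformly bounded. All norms are with respect to $\mu_\varepsilon$.

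\textbf{Caccioppoli step.} Put $k=\|g\|_{L^{p_*}(f^{-1}(B_{2r}))}$ and $\bar U=|U|+k$. For $\beta\ge1$, test the equation against $\xi^2\,\mathrm{sgn}(U)\,\bar U^{\beta-1}$, suitably regularized near $U=0$, for instance via $\sqrt{U^2+\delta}$. The gradient terms give
\[
\int\xi^2\bar U^{\beta-2}|\nabla U|^2\le C\int|\nabla\xi|^2\bar U^\beta+\int\xi^2|g|\,\bar U^{\beta-1}.
\]
Because $\bar U\ge k$, we have $|g|\bar U^{\beta-1}\le(|g|/k)\bar U^\beta$, and the function $|g|/k$ has $L^{p_*}$ norm at most $1$. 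With $w=\bar U^{\beta/2}$ this yields
\[
\int|\nabla(\xi w)|^2\le C\beta^2\Bigl(\frac{1}{(t-s)^2}\int_{\mathrm{supp}\,\xi}w^2+\int\xi^2\frac{|g|}{k}w^2\Bigr).
\]
The last term is handled exactly as in \eqref{eq:iteration-interpolation}. Hölder with exponent $p_*$ bounds it by $\|\xi w\|_{L^{2p_*/(p_*-1)}}^2$, and the condition $p_*>q/(q-1)$ means precisely that $2p_*/(p_*-1)<2q$. Interpolating between $L^2$ and $L^{2q}$ and absorbing via the Sobolev inequality gives
\[
\|w\|_{L^{2q}(f^{-1}(B_s))}\le C\beta^{a}(t-s)^{-1}\|w\|_{L^2(f^{-1}(B_t))},
\]
with $a$ depending only on $p_*$ and $q$.

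\textbf{Iteration and conclusion.} Take $\beta_j=q^j$ and radii $r_j=r(1+2^{-j})$. The factors $(Cq^{ja}2^{j})^{1/q^j}$ have a convergent product, so
\[
\|\bar U\|_{L^\infty(f^{-1}(B_r))}\le C\|\bar U\|_{L^2(f^{-1}(B_{2r}))}\le C\bigl(\|U\|_{L^2}+k\bigr),
\]
which is the claim. The constants depend only on $p_*$, $q$, the chart (through the bound $\omega_E\le C\chi_Y$ and the cutoff $\eta$), and the uniform bounds of Theorem~\ref{thm:bounds} and Lemma~\ref{lem:trace}. None of them depends on $\varepsilon$.

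\textbf{Main obstacle.} The genuinely delicate point is uniformity in $\varepsilon$. The cutoff gradients are controlled only because $\tr_{\omega_\varepsilon}f^*\omega_E\le C\,\tr_{\omega_\varepsilon}\chi\le C$, and the Sobolev constant is uniform only in the normalized-measure form of Theorem~\ref{thm:sobolev}. We must therefore keep every integral against $\mu_\varepsilon$, with no reference to the collapsing fiber volumes. The sign-regularization near $U=0$ is routine.
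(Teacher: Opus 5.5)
Your proposal is correct and is essentially the paper's argument. Both shift by $k=\|\Delta_{\omega_\varepsilon}U\|_{L^{p_*}}$ so that $\||g|/k\|_{L^{p_*}}\le1$, test with cut-off powers, use H\"older plus $L^2$--$L^{2q}$ interpolation (possible exactly because $p_*/(p_*-1)<q$), absorb via the uniform Sobolev inequality, and iterate on radii $r(1+2^{-l})$. The differences are cosmetic: you handle $|U|$ at once with a sign-weighted test function, whereas the paper treats $U_+$ (via $-\Delta U_+\le|\Delta U|$) and then $-U$ separately. The paper also adds $\delta>0$ to $k$ to avoid the degenerate case $k=0$, where your quotient $|g|/k$ is undefined.

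One small fix is needed. Your Caccioppoli inequality carries a factor $\beta-1$ on the gradient term, so it requires $\beta\ge2$, not $\beta\ge1$. Accordingly the iteration should run over $\beta_j=2q^j$, as in the paper's $p_l=2q^l$. This is also what produces the $L^2$ norm in your conclusion.
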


\begin{proof}
Suppress $\varepsilon$ and use
$\mu_\varepsilon$ for all integrals and norms.
We first estimate $U_+=\max\{U,0\}$. For $\delta>0$, set
\[
 v=U_++\|\Delta_\omega U\|_{L^{p_*}(f^{-1}(B_{2r}))}+\delta,
 \qquad
 \left\|\frac{|\Delta_\omega U|}{v}\right\|_{L^{p_*}(f^{-1}(B_{2r}))}
 \leq1.
\]
$-\Delta_\omega U_+\leq|\Delta_\omega U|$ in the weak sense, 
hence $-\Delta_\omega v\leq|\Delta_\omega U|$.
For $p\geq2$ and $\xi=\xi_{s,t}$, use the nonnegative test function
$\xi^2v^{p-1}$.
All integrals in the next display are over
$f^{-1}(B_t)$, and all differential norms use $\omega_\varepsilon$.
Integration by parts and Young's inequality give
\[
 \begin{aligned}
 (p-1)\int\xi^2v^{p-2}|\nabla v|^2
 &\leq2\int\xi v^{p-1}|\nabla\xi|\,|\nabla v|
       +\int\frac{|\Delta_\omega U|}{v}\,\xi^2v^p\\
 &\leq\frac{p-1}{2}\int\xi^2v^{p-2}|\nabla v|^2
       +\frac{2}{p-1}\int v^p|\nabla\xi|^2
       +\int\frac{|\Delta_\omega U|}{v}\,\xi^2v^p.
 \end{aligned}
\]
The cutoff bound and the uniform Sobolev inequality imply
\[
 \|\xi v^{p/2}\|_{L^{2q}}^2
 \leq Cp^2\left[
       \bigl(1+(t-s)^{-2}\bigr)\int_{f^{-1}(B_t)}v^p+\int_{f^{-1}(B_t)}\frac{|\Delta_\omega U|}{v}(\xi v^{p/2})^2\right].
\]
Put $p_*'=p_*/(p_*-1)<q$ and
$\theta=q/[p_*(q-1)]\in(0,1)$. For $h=\xi v^{p/2}$,
H\"older's inequality and interpolation give
\[
 \begin{aligned}
\int\frac{|\Delta_\omega U|}{v}h^2&\leq\|h\|_{L^{2p_*'}}^2
\leq\|h\|_{L^{2q}}^{2\theta}\|h\|_{L^2}^{2(1-\theta)}\\
&\leq\tau\|h\|_{L^{2q}}^2+C_\theta\tau^{-\theta/(1-\theta)}\|h\|_{L^2}^2\qquad(\tau>0).
 \end{aligned}
\]
Choose $\tau=(2Cp^2)^{-1}$ and absorb the first term. Since
$\|h\|_{L^2}^2\leq\int_{f^{-1}(B_t)}v^p$ and $\xi=1$ on $f^{-1}(B_s)$,
\begin{equation}\label{eq:dg4}
 \|v\|_{L^{pq}(f^{-1}(B_s))}
 \leq\left[Cp^{2/(1-\theta)}\bigl(1+(t-s)^{-2}\bigr)\right]^{1/p}\|v\|_{L^p(f^{-1}(B_t))}.
\end{equation}

Set $r_l=r(1+2^{-l})$ and $p_l=2q^l$. Taking
$(s,t,p)=(r_{l+1},r_l,p_l)$ in \eqref{eq:dg4} yields
\begin{equation}\label{eq:dg5}
 \|v\|_{L^{p_{l+1}}(f^{-1}(B_{r_{l+1}}))}
 \leq\left(C_r4^lp_l^{2/(1-\theta)}\right)^{1/p_l}\|v\|_{L^{p_l}(f^{-1}(B_{r_l}))}.
\end{equation}
Consequently,
\[\prod_{l=0}^\infty\left(C_r4^lp_l^{2/(1-\theta)}\right)^{1/p_l}<\infty,
\qquad\|v\|_{L^\infty(f^{-1}(B_r))}
\leq C\|v\|_{L^2(f^{-1}(B_{2r}))}.
\]
Since $U_+\leq v$ and $\mu_\varepsilon(X)=1$,
\[
 \|U_+\|_{L^\infty(f^{-1}(B_r))}
 \leq C\left(\|U_+\|_{L^2(f^{-1}(B_{2r}))}
       +\|\Delta_\omega U\|_{L^{p_*}(f^{-1}(B_{2r}))}+\delta\right).
\]
Let $\delta\downarrow0$. The same argument for $-U$ estimates
$U_-=(-U)_+$ and proves the desired estimate.
\end{proof}

 \begin{lemma}\label{lem:Linfty-density}
     For any compact $K\Subset X^\circ$, $\|F_\varepsilon-f^*\varphi_{\rm can}\|_{L^\infty(K)}\to 0$.
 \end{lemma}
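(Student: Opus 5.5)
The plan is to apply the local Moser estimate of Lemma~\ref{lem:iteration} to
\[
U_\varepsilon=F_\varepsilon-f^*\varphi_{\rm can}
\]
on preimages of small coordinate balls in $Y^\circ$, and then cover. Since $f$ is proper, $f(K)\Subset Y^\circ$ for a compact $K\Subset X^\circ$. Cover $f(K)$ by finitely many coordinate balls $B_r$ with $B_{2r}\Subset Y^\circ$ and $\omega_E\le C\chi_Y$ on $B_{2r}$. It then suffices to show $\|U_\varepsilon\|_{L^\infty(f^{-1}(B_r))}\to0$ for each such ball. Note that $U_\varepsilon$ is smooth on $f^{-1}(B_{2r})$, since $\varphi_{\rm can}\in C^\infty(Y^\circ)$.

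\emph{Step 1: the $L^2$ term.} By Lemma~\ref{lem:canonical-volume-density}, $F_\varepsilon\to f^*\varphi_{\rm can}$ in every $L^p(X,\Omega)$ along the full family. By Theorem~\ref{thm:bounds}, $\mu_\varepsilon\le C\Omega$. Hence $\|U_\varepsilon\|_{L^2(f^{-1}(B_{2r}),\mu_\varepsilon)}\to0$.

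\emph{Step 2: the Laplacian term.} Using the cscK equation and $\ddc\varphi_{\rm can}=\ocan-\chi_Y$ on $Y^\circ$, we get
\[
\Delta_{\omega_\varepsilon}U_\varepsilon=-\underline R_\varepsilon-\tr_{\omega_\varepsilon}\chi-\tr_{\omega_\varepsilon}f^*(\ocan-\chi_Y)=-\underline R_\varepsilon-\tr_{\omega_\varepsilon}f^*\ocan .
\]
Since $\underline R_\varepsilon\to-m$, it remains to show $\tr_{\omega_\varepsilon}f^*\ocan\to m$ in $L^{p_*}(f^{-1}(B_{2r}),\Omega)$ for some fixed $p_*>q/(q-1)$. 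Write $\ocan=\sqrt{-1}\,g^{\rm can}_{a\bar b}\,dz^a\wedge d\bar z^b$ on $B_{2r}$. Then
\[
\tr_{\omega_\varepsilon}f^*\ocan=\sum_{a,b}f^*g^{\rm can}_{a\bar b}\,\langle f^*dz^a,f^*dz^b\rangle_{\omega_\varepsilon}
\]
up to the convention for the order of indices. By polarization, each inner product is a linear combination of the quantities $|f^*\alpha|^2_{\omega_\varepsilon}$ with $\alpha\in\{dz^a,\ dz^a+dz^b,\ dz^a+\sqrt{-1}dz^b\}$. Corollary~\ref{cor:Lp-convergence-in-base-direction}, applied with compact set $\overline{B_{2r}}$ and suitable smooth extensions of these forms, gives $L^p$ convergence of each to the corresponding $f^*|\alpha|^2_{\ocan}$. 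Recombining, $\tr_{\omega_\varepsilon}f^*\ocan\to f^*\tr_{\ocan}\ocan=m$ in every finite $L^p(f^{-1}(B_{2r}),\Omega)$. The coefficients $g^{\rm can}_{a\bar b}$ are bounded on $\overline{B_{2r}}$, and the uniform bound $\tr_{\omega_\varepsilon}\chi\le C$ from Lemma~\ref{lem:trace} dominates everything. Therefore $\|\Delta_{\omega_\varepsilon}U_\varepsilon\|_{L^{p_*}(\mu_\varepsilon)}\to0$.

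\emph{Step 3: conclusion.} Lemma~\ref{lem:iteration} gives
\[
\|U_\varepsilon\|_{L^\infty(f^{-1}(B_r))}\le C\bigl(\|U_\varepsilon\|_{L^2(f^{-1}(B_{2r}))}+\|\Delta_{\omega_\varepsilon}U_\varepsilon\|_{L^{p_*}(f^{-1}(B_{2r}))}\bigr)\to0,
\]
with $C$ independent of $\varepsilon$. Taking the finitely many balls covering $f(K)$ finishes the proof.

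The main point to check is Step 2. This requires identifying the $L^p$ limit of the full base trace $\tr_{\omega_\varepsilon}f^*\ocan$, which was only stated in Corollary~\ref{cor:Lp-convergence-in-base-direction} for individual pulled-back $(1,0)$-forms. I expect the polarization argument, together with the uniform trace bound providing domination, to handle this.
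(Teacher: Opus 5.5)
Your proposal is correct and follows the paper's proof essentially verbatim: apply Lemma~\ref{lem:iteration} to $F_\varepsilon-f^*\varphi_{\rm can}$ on preimages of coordinate balls, use $\Delta_{\omega_\varepsilon}U=-\underline R_\varepsilon-\tr_{\omega_\varepsilon}f^*\omega_{\rm can}$, and control the trace through Corollary~\ref{cor:Lp-convergence-in-base-direction}. The only difference is that the paper avoids your polarization step by taking an $\omega_{\rm can}$-unitary coframe $\alpha_1,\dots,\alpha_m$ near $\overline{B_{2r}}$, so that $\tr_{\omega_\varepsilon}f^*\omega_{\rm can}=\sum_j|f^*\alpha_j|^2_{\omega_\varepsilon}$ and the corollary applies term by term.
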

\begin{proof}
    Fix $q/(q-1)<p_*<\infty$. On any coordinate ball $B_{2r}\Subset Y^\circ$, set $U=F_\varepsilon-f^*\varphi$. Then one has \[
    \Delta_{\omega_\varepsilon}U=-\underline{R}_\varepsilon-\tr_{\omega_\varepsilon}\chi-\tr_{\omega_{\varepsilon}}(f^*(\omega_{\rm can}-\chi_Y))=-\underline{R}_\varepsilon-\tr_{\omega_\varepsilon}f^*\omega_{\rm can}.
    \]
    Choose a smooth $\omega_{\rm can}$-unitary $(1,0)$-froms $\alpha_1,...,\alpha_m$ on a neighborhood of $B_{2r}$. Then by Lemma~\ref{cor:Lp-convergence-in-base-direction}, and $|F_\varepsilon|\leq C$, \[
    \omega_{\rm can}=\sqrt{-1}\sum_i\alpha_j\wedge\bar{\alpha}_j, \quad \tr_{\omega_\varepsilon}f^*\omega_{\rm can}\to m \text{ in } L^{p^*}(f^{-1}(B_{2r}),\mu_\varepsilon)
    \]
Combined with $\underline{R}_\varepsilon\to -m$, it follows that \[
\|\Delta_{\omega_\varepsilon}U\|_{L^{p^*}(f^{-1}(B_{2r}),\mu_\varepsilon)}\leq |\underline{R}_{\varepsilon}+m|+\|\tr_{\omega_\varepsilon}f^*\omega_{\rm can}-m\|_{L^{p^*}(f^{-1}(B_{2r}),\mu_\varepsilon)}\to 0.
\]
Moreover, we showed in Lemma~\ref{lem:canonical-volume-density} that $F_\varepsilon\to f^*\varphi\
_{\rm can}$ in $L^p$ for any finite $p$. Hence $\|U\|_{L^{p^*}(f^{-1}(B_{2r}),\mu_\varepsilon)}\to 0$. For any given $K\Subset X^\circ$, choose finitely many coordinate balls such that \[
f(K)\subset \cup_{j=1}^MB_{r_{j}}, \quad B_{2r_j}\Subset Y^\circ.
\]
Applying Lemma~\ref{lem:iteration}, we get the desired $L^\infty$ convergence. 
\end{proof}

\section{Sharp Schwarz-type estimate}

\begin{lemma}\label{lem:sharp-schwarz-estimate}
    For every $K\Subset X^\circ$, $\omega_\varepsilon\geq (1-o(1))f^*\omega_{\rm can}$
\end{lemma}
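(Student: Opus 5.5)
The plan is to prove the sharp trace bound
\[
 w_\varepsilon:=\tr_{\omega_\varepsilon}f^*\omega_{\rm can}\leq m+o(1)\qquad\text{uniformly on }f^{-1}(B_r),
\]
and then to upgrade it to the eigenvalue statement $\omega_\varepsilon\geq(1-o(1))f^*\omega_{\rm can}$. By a finite covering, as in Lemma~\ref{lem:Linfty-density}, it suffices to work over a coordinate ball $B_{2r}\Subset Y^\circ$. The integral input is already available. By Corollary~\ref{cor:Lp-convergence-in-base-direction}, applied to an $\omega_{\rm can}$-unitary coframe $\alpha_1,\dots,\alpha_m$, we have $w_\varepsilon\to m$ in every $L^p(f^{-1}(B_{2r}),\mu_\varepsilon)$. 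Hence $\|(w_\varepsilon-m)_+\|_{L^2(f^{-1}(B_{2r}),\mu_\varepsilon)}\to0$. The remaining task is a one-sided local Moser iteration for $w_\varepsilon$, in the spirit of Lemma~\ref{lem:iteration}.

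For the differential inequality I would rerun the computation \eqref{eq:map-bochner-coordinates}, now with target metric $\omega_{\rm can}$ on $B_{2r}$ instead of $\chi_Y$. On $B_{2r}$ the bisectional curvature of $\omega_{\rm can}$ is bounded by a constant $K_r$. The key point is that, with $U_\varepsilon=F_\varepsilon-f^*\varphi_{\rm can}$, the cscK equation reads
\[
 \Ric(\omega_\varepsilon)=-f^*\omega_{\rm can}-dd^cU_\varepsilon .
\]
The Chern--Lu formula then gives
\[
 \Delta_{\omega_\varepsilon}w_\varepsilon\geq\frac{|\nabla w_\varepsilon|^2}{w_\varepsilon}-\sum_i\lambda_i^2-\sum_{i,j}(f^*\omega_{\rm can})^{i\bar j}U_{\varepsilon,i\bar j}-K_r w_\varepsilon^2 ,
\]
where $\lambda_i$ are the eigenvalues of $f^*\omega_{\rm can}$ with respect to $\omega_\varepsilon$. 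The $\lambda_i^2$ and $K_rw_\varepsilon^2$ terms are uniformly bounded by Lemma~\ref{lem:trace}. What matters is that, tested against $\xi^2(w_\varepsilon-m)_+^{p-1}$, their contribution is only of size $\int\xi^2(w_\varepsilon-m)_+^{p-1}$, up to the curvature constant. I would absorb this into the iteration exactly as the bounded coefficient $1+\tr_\varphi\chi$ was absorbed in Lemma~\ref{lem:trace}. This yields
\[
 \sup_{f^{-1}(B_r)}(w_\varepsilon-m)_+\leq C\bigl(\|(w_\varepsilon-m)_+\|_{L^2}+\text{error}\bigr).
\]
The $dd^cU_\varepsilon$ term I would handle as in Lemma~\ref{lem:trace-lp}: integrate by parts using the closedness relation \eqref{eq:ht5}, so that only $\nabla U_\varepsilon$ appears. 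Then use $|\nabla U_\varepsilon|_{\omega_\varepsilon}\leq C$ (Lemma~\ref{lem:gradient} together with $\tr_{\omega_\varepsilon}f^*\omega_{\rm can}\leq C$) and $\|U_\varepsilon\|_{L^\infty(f^{-1}(B_{2r}))}\to0$ (Lemma~\ref{lem:Linfty-density}). Alternatively, run the iteration for $e^{-AU_\varepsilon}w_\varepsilon$, as with $u=e^{-\alpha F}v$ in \eqref{eq:ht8}, so that the mixed term becomes a complete square. In either version the error tends to $0$, so $\sup(w_\varepsilon-m)_+\to0$.

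The hardest step is that a trace bound alone does not give the eigenvalue bound $\lambda_{\max}\leq1+o(1)$. I would close the argument with an AM--GM/determinant comparison: it suffices to show $\prod_i\lambda_i\geq1-o(1)$ on $f^{-1}(K)$, because then $\sum\lambda_i\leq m+o(1)$ forces every $\lambda_i=1+o(1)$. For this I would use the $L^\infty$ convergence $e^{F_\varepsilon}\to e^{f^*\varphi_{\rm can}}$ from Lemma~\ref{lem:Linfty-density}, together with \eqref{eq:cv}, which identifies $\omega_\varepsilon^n/V_\varepsilon$ uniformly with $(f^*\omega_{\rm can})^m\wedge\omega_{\rm SRF}^d/(V_FV_Y)$. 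This must be combined with a fiberwise upper bound for the vertical part of $\omega_\varepsilon$ relative to $\varepsilon\omega_{{\rm SRF},y}$, coming from the cohomological identity $\int_{X_y}\tr(\omega_\varepsilon|_{X_y})\omega_y^d=d\varepsilon V_F$. Because that identity is only integral along fibers, I expect this determinant step to be the main obstacle. If it does not close pointwise, the fallback is to repeat the Moser argument for each quantity $|f^*\alpha|^2_{\omega_\varepsilon}$, with $\alpha$ ranging over a finite net of $\omega_{\rm can}$-unit $(1,0)$-forms on $B_{2r}$. This gives a uniform bound $|f^*\alpha|^2_{\omega_\varepsilon}\leq1+o(1)$ for all unit $\alpha$, which is exactly the dual formulation of $\omega_\varepsilon\geq(1-o(1))f^*\omega_{\rm can}$.
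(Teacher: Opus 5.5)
\textbf{Trace step.} Your first step, $\sup_{f^{-1}(B_r)}(w_\varepsilon-m)_+\to0$, is plausible, but the bookkeeping does not work as you describe it. The terms $\sum_i\lambda_i^2+K_rw_\varepsilon^2$ are $O(1)$ and are not proportional to $(w_\varepsilon-m)_+$. After testing they leave an inhomogeneous term $C\int\xi^2(w_\varepsilon-m)_+^{p-1}$. In Lemma~\ref{lem:trace} such terms were harmless because $v=1+\tr_\varphi\chi\geq1$. Here $(w_\varepsilon-m)_+$ is small, so the iteration cannot give a bound linear in $\|(w_\varepsilon-m)_+\|_{L^2}$. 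What it does give, via $\|v\|_{L^{pq}}^p\le Cp^2\|v\|_{L^{p-2}}^{p-2}$ as in the paper, is the sublinear bound $\|v\|_{L^\infty}\le C\|v\|_{L^2}^{(q-1)/(2q-1)}$. That still yields $\sup(w_\varepsilon-m)_+\to0$.

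\textbf{The gap: trace to eigenvalues.} The real problem is the upgrade from the trace bound to the eigenvalue bound, for $d>0$. Since $\prod_{i\le m}\lambda_i=\binom nm(f^*\ocan)^m\wedge\oep^d/\oep^n$, Lemma~\ref{lem:Linfty-density} and \eqref{eq:cv} give
\[
\prod_{i\le m}\lambda_i=(1+o(1))\,\frac{(\oep|_{X_y})^d}{(\varepsilon\,\omega_{{\rm SRF},y})^d}.
\]
So you need a pointwise \emph{lower} bound on the fiber volume density. An upper bound on the vertical part points the wrong way: it only bounds $\prod\lambda_i$ from above, which AM--GM already gives from the trace bound. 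The identity $\int_{X_y}\tr(\oep|_{X_y})\,\omega_y^d=d\varepsilon V_F$ is in any case only integral along fibers. A pointwise lower bound on a fiberwise Monge--Amp\`ere density is a higher-order statement. In the paper it appears only after the blow-up argument of Lemma~\ref{lem:metric-convergence}, using Cheng--Xu and Chen--Cheng. That argument needs $T_\varepsilon\to\ocan$, whose proof uses $\oep-(1-\delta_\varepsilon)f^*\ocan\ge0$, i.e.\ the lemma you are proving. So the determinant route is circular.

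\textbf{The fallback is the right proof.} Your fallback is exactly the paper's proof, and it is the one to write, but the choice of forms is what makes it work. Take constant-coefficient covectors $a=\sum_\alpha a_\alpha\,dy^\alpha$, so that $f^*a=\partial h$ with $h=\sum_\alpha a_\alpha\,y^\alpha\circ f$ holomorphic. With $\omega_a=\sqrt{-1}\,a\wedge\bar a$, the quantity $u_\varepsilon=|\partial h|^2_{\oep}$ satisfies
\[
\Delta_{\oep}u_\varepsilon=|\nabla^{1,0}\partial h|^2_{\oep}-\langle f^*\omega_a,\chi\rangle_{\oep}-\langle f^*\omega_a,dd^cF_\varepsilon\rangle_{\oep}.
\]
This formula has two useful features:
\begin{enumerate}
\item there is no target-curvature term;
\item the Hessian square absorbs the $dd^cF_\varepsilon$ term after one integration by parts, using $|\nabla F_\varepsilon|\le C$ and $|\nabla f^*\omega_a|\le Cu_\varepsilon^{1/2}|\nabla^{1,0}\partial h|$.
\end{enumerate}
Such $a$ are not $\ocan$-unit, so compare $u_\varepsilon$ with $f^*|a|^2_{\ocan}$; its Laplacian is bounded by $\tr_{\oep}\chi\le C$. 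Run the sublinear iteration on $(u_\varepsilon-f^*|a|^2_{\ocan})_+$. Then use a finite net in $a$, with Lipschitz dependence on $a$ from the trace bound, and homogeneity. This gives the eigenvalue bound directly, so the trace and AM--GM steps can be dropped.
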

\begin{proof}
   Work over $B_{2r}\Subset Y^\circ$, with base coordinates
$y^1,\ldots,y^m$. For
$a=\sum_{\alpha=1}^m a_\alpha\,dy^\alpha$ with constant coefficients
and $|a|_{\omega_{\rm can}}\leq1$ on $B_{2r}$, set
\[\omega_a=\sqrt{-1}a\wedge\bar a,\qquad u_\varepsilon=\tr_{\omega_\varepsilon}f^*\omega_a,\qquad
u_0=\tr_{\omega_{\rm can}}\omega_a.
\]
We have
\[
 0\leq u_\varepsilon\leq C,\qquad
 \|u_\varepsilon-f^*u_0\|_{L^2(f^{-1}(B_{2r}),\mu_\varepsilon)}
 \longrightarrow0.
\]
Suppress $\varepsilon$ and put
\[
 h=\sum_{\alpha=1}^m a_\alpha(y^\alpha\circ f),\qquad
 f^*a=\partial h,\qquad
f^*\omega_a=\sqrt{-1}\,\partial h\wedge\bar\partial\bar h.
\]
Fix $x\in f^{-1}(B_{2r})$ and choose holomorphic coordinates
$z^1,\ldots,z^n$ normal for $\omega$ at $x$:
\[
 \omega=\sqrt{-1}g_{i\bar j}\,dz^i\wedge d\bar z^j,\qquad
 g_{i\bar j}(x)=\delta_{ij},\qquad
 \partial_k g_{i\bar j}(x)=0.
\]
Write $h_i=\partial_i h$, $h_{ik}=\partial_k\partial_i h$, with
$\sum_jg^{i\bar j}g_{k\bar j}=\delta^i_k$. Since $h$ is holomorphic,
\[
 u=\sum_{i,j}g^{i\bar j}h_i\overline{h_j},\qquad
 \partial_{\bar l}h_i=0,\qquad
 \partial_k\overline{h_j}=0.
\]
Straightforward computation gives  
\[
 \partial_k u
 =\sum_{i,j}(\partial_k g^{i\bar j})h_i\overline{h_j}
  +\sum_{i,j}g^{i\bar j}h_{ik}\overline{h_j}.
\]
\[
 u_{k\bar l}=\sum_{i,j}(\partial_k\partial_{\bar l}g^{i\bar j})h_i\overline{h_j}+\sum_ih_{ik}\overline{h_{il}}.
\]
At 
$x$, we have
\[
 \begin{aligned}
\partial_k\partial_{\bar l}g^{i\bar j}
 &=-\partial_k\partial_{\bar l}g_{j\bar i}
   =R_{j\bar i k\bar l}.
 \end{aligned}
\]
Thus, using $\Ric(\omega)=-\chi-\ddc F$,
\[
 \begin{aligned}
 \Delta_\omega u
 &=\sum_{i,k}|h_{ik}|^2+\sum_{i,j}\Ric(\omega)_{j\bar i}h_i\overline{h_j}\\
 &=\sum_{i,k}|h_{ik}|^2
   -\sum_{i,j}\chi_{j\bar i}h_i\overline{h_j}-\sum_{i,j}F_{j\bar i}h_i\overline{h_j}.
 \end{aligned}
\]
Thus
\begin{equation}\label{eq:sharp-schwarz-bochner}
 \begin{split}
 \Delta_\omega u
 &=|\nabla^{1,0}_\omega(f^*a)|_\omega^2
   -\langle f^*\omega_a,\chi\rangle_\omega\\
 &\qquad-\langle f^*\omega_a,\ddc F\rangle_\omega.
 \end{split}
\end{equation}
It is also \eqref{eq:map-bochner-coordinates} for the holomorphic
map $h$ to flat $\C$. A unitary change of the normal coordinates
diagonalizes $\chi$, with eigenvalues $\lambda_i\geq0$, so
\[
 0\leq\langle f^*\omega_a,\chi\rangle_\omega=\sum_i\lambda_i|h_i|^2\leq\left(\sum_i\lambda_i\right)\left(\sum_i|h_i|^2\right)
 =(\tr_\omega\chi)u\leq Cu.
\]
In particular,
\[
 \Delta_\omega u\geq-Cu-\langle f^*\omega_a,\ddc F\rangle_\omega.
\]
We retain the square in \eqref{eq:sharp-schwarz-bochner} below.
The smoothness of $u_0$ on the fixed chart gives
\[
 |\nabla f^*u_0|_\omega^2\leq C\tr_\omega\chi\leq C,
 \qquad
 |\Delta_\omega f^*u_0|
 =|\tr_\omega f^*(\ddc u_0)|\leq C\tr_\omega\chi\leq C.
\]
Consequently,
\begin{equation}\label{eq:sharp-schwarz-excess}
 \Delta_\omega(u-f^*u_0)\geq|\nabla^{1,0}_\omega(f^*a)|_\omega^2-C-\langle f^*\omega_a,\ddc F\rangle_\omega.
\end{equation}

Set $v=(u-f^*u_0)_+$, $\xi=\xi_{s,t}$ and
$\psi=\xi^2v^{p-1}$, where $p\geq3$.
All following integrals use $d\mu_\varepsilon$ on $f^{-1}(B_t)$,
and all differential norms use $\omega$. Testing
\eqref{eq:sharp-schwarz-excess} with $\psi$,
\[
 \begin{aligned}
 &(p-1)\int\xi^2v^{p-2}|\nabla v|^2+\int\psi|\nabla^{1,0}_\omega(f^*a)|^2\\
 &\quad\leq2\int\xi v^{p-1}|\nabla\xi|\,|\nabla v| +C\int\psi+\int\psi\langle f^*\omega_a,\ddc F\rangle_\omega.
 \end{aligned}
\]
To estimate the last term, note that at the normal point
\[\nabla_k(f^*\omega_a)_{i\bar j}=h_{ik}\overline{h_j},\qquad
 \nabla_{\bar k}(f^*\omega_a)_{i\bar j}=h_i\overline{h_{jk}}.
\]
Hence
\[
|\nabla(f^*\omega_a)|_\omega\leq Cu^{1/2}|\nabla^{1,0}_\omega(f^*a)|_\omega.
\]
Integration by parts,  and the bounds
$u+|\nabla F|_\omega\leq C$ from Lemma~\ref{lem:gradient} and
the trace estimate in base  give
\[
 \begin{aligned}
\left|\int\psi\langle f^*\omega_a,\ddc F\rangle_\omega\right|
 &\leq C\int\bigl(u|\nabla\psi|+\psi|\nabla(f^*\omega_a)|\bigr)|\nabla F|\\
&\leq\frac12\int\psi|\nabla^{1,0}_\omega(f^*a)|^2+C\int(|\nabla\psi|+\psi).
 \end{aligned}
\]
Here the square is absorbed using
\[
C\psi|\nabla^{1,0}_\omega(f^*a)|\leq\frac12\psi|\nabla^{1,0}_\omega(f^*a)|^2+C\psi.
\]
Since
\[
|\nabla\psi|\leq2\xi v^{p-1}|\nabla\xi|+(p-1)\xi^2v^{p-2}|\nabla v|,
\]
the remaining terms satisfy
\[
 \begin{aligned}
 2\xi v^{p-1}|\nabla\xi|\,|\nabla v|&\leq\frac{p-1}{4}\xi^2v^{p-2}|\nabla v|^2+\frac{C}{p-1}v^p|\nabla\xi|^2,\\
 C\xi v^{p-1}|\nabla\xi|&\leq Cv^p|\nabla\xi|^2+C\xi^2v^{p-2},\\
 C(p-1)\xi^2v^{p-2}|\nabla v|&\leq\frac{p-1}{4}\xi^2v^{p-2}|\nabla v|^2+C(p-1)\xi^2v^{p-2}.
 \end{aligned}
\]
After absorption, the cutoff bound and $0\leq v\leq C$ imply
\begin{equation}\label{eq:sharp-schwarz-energy}
\int|\nabla(\xi v^{p/2})|^2+\int\xi^2v^p
 \leq Cp^2\bigl(1+(t-s)^{-2}\bigr)\int_{f^{-1}(B_t)}v^{p-2}.
\end{equation}
The uniform Sobolev inequality therefore yields
\[
 \|v\|_{L^{pq}(f^{-1}(B_s))}^p
 \leq Cp^2\bigl(1+(t-s)^{-2}\bigr)\|v\|_{L^{p-2}(f^{-1}(B_t))}^{p-2}.
\]
Then Moser iteration implies
\[
\|v\|_{L^\infty(f^{-1}(B_r))}
 \leq C\|v\|_{L^2(f^{-1}(B_{2r}),\mu_\varepsilon)}^{(q-1)/(2q-1)}
\longrightarrow0.
\]

Write $u_\varepsilon(a),u_0(a)$ to denote the covector dependence. Since $\tr_{\omega_\varepsilon}\chi\leq C$, for $|a|,|b|\leq1$,
\[
 |u_\varepsilon(a)-u_\varepsilon(b)|
 +f^*|u_0(a)-u_0(b)|\leq C|a-b|.
\]
For a finite $\delta$-net of the Euclidean unit covectors, the convergence of $v_+\to 0$ is uniform. Hence \[
\limsup_{\varepsilon\to0}
 \sup_{\substack{x\in f^{-1}(B_r)\\ |a|=1}}
       \bigl(u_\varepsilon(a)-f^*u_0(a)\bigr)
 \leq C\delta.
\]
Let $\delta\downarrow0$. Since $u_0(a)\geq c_r|a|^2$ on
$\overline{B_r}$, homogeneity gives
\[
|f^*\alpha|_{\omega_\varepsilon}^2\leq(1+o_r(1))f^*|\alpha|_{\omega_{\rm can}}^2
 \quad\text{on }f^{-1}(B_r),\qquad \alpha\in T^{*1,0}Y^\circ.
\]
For $Z\in T^{1,0}X$, duality then gives
\[
 |df(Z)|_{\omega_{\rm can}}
 =\sup_{|\alpha|_{\omega_{\rm can}}=1}|f^*\alpha(Z)|
 \leq(1+o_r(1))^{1/2}|Z|_{\omega_\varepsilon}.
\]
Finally choose
$K\subset\bigcup_{j=1}^N B_{r_j}$ with
$\overline{B_{2r_j}}\Subset Y^\circ$.
The maximum of the finitely many errors $o_{r_j}(1)$ tends to zero.
The preceding inequality is therefore equivalent to the desired sharp Schwarz-type estimate on
$f^{-1}(K)$.
\end{proof}

\section{Fiber collapsing and convergence away from singular fibers}
The goal of this section is to prove the metric convergence on the regular part of $X$. 
In the first four lemmas assume $d>0$. All estimates are over a fixed
$K\Subset Y^\circ$, enlarged when necessary to $K'\Subset Y^\circ$
with $K\Subset\operatorname{int}K'$. The smooth proper family
$f^{-1}(K')\to K'$ has uniform Sobolev, Poincar\'e, and Green-function
bounds for $\mathchanged{\omega_y}=\mathchanged{\omega}|_{X_y}$.

\begin{lemma}
    For any $K\Subset Y^\circ$, $\operatorname{osc}_{X_y}\varphi_\varepsilon\leq C_K\varepsilon$, for all $y\in K$.
\end{lemma}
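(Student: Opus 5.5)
The plan is to restrict the cscK potential to a single fiber and use the fiberwise Laplacian. For $y\in K$, the restriction $\omega_\varepsilon|_{X_y}=\varepsilon\omega_y+\ddc(\varphi_\varepsilon|_{X_y})$ is a K\"ahler form on $X_y$, because $\chi|_{X_y}=0$. Put $\phi_y=\varepsilon^{-1}\varphi_\varepsilon|_{X_y}$. Then $\omega_y+\ddc\phi_y>0$ on $X_y$, so
\[
\Delta_{\omega_y}\phi_y\geq -d .
\]
The goal is $\operatorname{osc}_{X_y}\phi_y\leq C_K$, uniformly in $y\in K$ and $\varepsilon$.

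The first step is a one-sided bound from the uniform Green function of the smooth family $f^{-1}(K')\to K'$. Let $G_y$ be the Green function of $\omega_y$, normalized so that $G_y\geq -A_K$. Green's formula gives, for every $x\in X_y$,
\[
\phi_y(x)=\frac{1}{V_F}\int_{X_y}\phi_y\,\omega_y^d-\frac{1}{V_F}\int_{X_y}G_y(x,\cdot)\,\Delta_{\omega_y}\phi_y\,\omega_y^d .
\]
Since $\Delta\phi_y+d\geq0$ and $\int_{X_y}\Delta\phi_y\,\omega_y^d=0$, the second term is at most $dA_K$. Hence $\sup_{X_y}\phi_y\leq \bar\phi_y+C_K$, where $\bar\phi_y$ is the fiber average. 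This is the standard $\sup\leq$ average estimate for quasi-psh functions.

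The remaining, harder step is the lower bound $\inf_{X_y}\phi_y\geq\bar\phi_y-C_K$. Equivalently, the normalized function $\phi_y-\sup\phi_y$ must be bounded below uniformly. Two approaches are available.

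The first approach uses the fiberwise Monge--Amp\`ere equation. Since $\omega_y$ is not Ricci-flat, I would work with the reference form $\omega_{{\rm SRF},y}=\omega_y+\ddc\rho_y$ and set $\tilde\phi_y=\phi_y-\rho_y$. Write $(\omega_{{\rm SRF},y}+\ddc\tilde\phi_y)^d=G\,\omega_{{\rm SRF},y}^d$. Kolodziej's estimate in the smooth family then bounds $\operatorname{osc}\tilde\phi_y$ once $\|G\|_{L^p(X_y)}$ is uniformly bounded for some $p>1$. To get this, decompose
\[
\omega_\varepsilon^n=V_\varepsilon e^{F_\varepsilon}\Omega,
\]
with $V_\varepsilon\sim\varepsilon^d$ and $F_\varepsilon$ bounded. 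By the basic inequality for the fiber and horizontal parts,
\[
\omega_\varepsilon^n\geq c\,(\omega_\varepsilon|_{X_y})^d\wedge(\text{horizontal part})^m .
\]
The Schwarz-type estimate of Lemma~\ref{lem:sharp-schwarz-estimate} gives horizontal part $\geq(1-o(1))f^*\omega_{\rm can}$, which is comparable to $f^*\chi_Y$ on $f^{-1}(K)$. Combining these shows
\[
(\omega_\varepsilon|_{X_y})^d\leq C\varepsilon^d\omega_y^d ,
\]
that is, $G\leq C_K$ pointwise. This is exactly a uniform $L^\infty$ bound on the fiber density, so Kolodziej's estimate (or Yau's $C^0$ estimate) yields $\operatorname{osc}_{X_y}\tilde\phi_y\leq C_K$. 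Since $\rho$ is smooth on $f^{-1}(K')$, $\operatorname{osc}_{X_y}\phi_y\leq C_K$ follows, giving $\operatorname{osc}_{X_y}\varphi_\varepsilon\leq C_K\varepsilon$.

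The main obstacle is making the step $(\omega_\varepsilon|_{X_y})^d\wedge(\text{horizontal})^m\leq C\,\omega_\varepsilon^n$ rigorous. The inequality $\omega^n\geq$ (fiber determinant)$\times$(determinant of the induced form on the normal bundle) follows from the Schur complement, because the induced quotient form dominates $\omega_\varepsilon$ restricted to $f^*T^*Y$ in the dual sense. Concretely, $\det\omega_\varepsilon=\det(\omega_\varepsilon|_V)\cdot\det\big((\omega_\varepsilon^{-1}|_{H^*})^{-1}\big)$. Lemma~\ref{lem:trace} ($\tr_{\omega_\varepsilon}\chi\leq C$) bounds $\omega_\varepsilon^{-1}|_{f^*T^*Y}\leq C\chi_Y^{-1}$, so the second determinant is $\geq c$. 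Thus the weaker Lemma~\ref{lem:trace} already suffices here, and Lemma~\ref{lem:sharp-schwarz-estimate} is not needed. This gives $\det(\omega_\varepsilon|_{V})\leq C\varepsilon^d\det\omega_y$.
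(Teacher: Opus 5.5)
Your argument is correct and is essentially the paper's proof. The paper also bounds the fiberwise Monge--Amp\`ere density $h_{\varepsilon,y}=(\varepsilon^{-1}\omega_\varepsilon|_{X_y})^d/\omega_y^d$ by combining $\omega_\varepsilon^n\le C\varepsilon^d\Omega$ with the trace bound of Lemma~\ref{lem:trace}, and then applies the standard $L^\infty$ estimate uniformly over the smooth family. There the trace bound enters as $\chi^m\wedge\omega_\varepsilon^d\le C^m\omega_\varepsilon^n$, which is your Schur-complement inequality in wedge form.

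Your Green-function one-sided bound and the switch to the reference $\omega_{{\rm SRF},y}$ are harmless but unnecessary. Ko{\l}odziej's (or Yau's) estimate with reference $\omega_y$ and bounded density already controls the full oscillation.
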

\begin{proof}
    Since $\chi|_{X_y}=0$, one has \[
    \varepsilon^{-1}\omega_\varepsilon|_{X_y}=\mathchanged{\omega_y}+\ddc_V v_{\varepsilon,y}, \quad h_{\varepsilon,y}:=\frac{(\varepsilon^{-1}\omega_\varepsilon|_{X_y})^d}{\mathchanged{\omega_y}^d},
    \]
    where $v_{\varepsilon,y}:=\varepsilon^{-1}\left(\varphi_\varepsilon|_{X_y}-V_F^{-1}\int_{X_y}\varphi_\varepsilon\mathchanged{\omega_y}^d\right),$ and $\ddc_V$ means the differential on the fiber(vertical) direction. Moreover, \[
    \chi^m\wedge\omega_\varepsilon^d=\varepsilon^dh_{\varepsilon,y}\chi^m\wedge\mathchanged{\omega}^d.
    \]
$0\leq \chi\leq \tr_{\omega_\varepsilon}\chi\omega_\varepsilon\leq C\omega_\varepsilon$, $\chi^m\wedge\omega_\varepsilon^d\leq C^m\omega_\varepsilon^n$. Note that $V_\varepsilon\leq C\varepsilon^d$ and $F_\varepsilon\leq C$. Over a compact $f^{-1}(K)\Subset X^\circ$, \[
\omega_\varepsilon^n\leq C\varepsilon^d\Omega\leq C_K\varepsilon^d\chi^m\wedge\mathchanged{\omega}^d.
\]
Hence \[
0<h_{\varepsilon,y}=\frac{\chi^m\wedge\omega_\varepsilon^d}{\varepsilon^d\chi^m\wedge\mathchanged{\omega}^d}\leq \frac{C^m\omega_\varepsilon^n}{\varepsilon^d\chi^m\wedge\mathchanged{\omega}^d}\leq C_K.
\]
Standard $L^\infty$-estimate for complex Monge-Amp\`ere equation gives the desired estimate for $v_{\varepsilon,y}$.    
\end{proof}

Next we show that the normalized fiber potential $v_{\varepsilon,y}:=\varepsilon^{-1}\left(\varphi_\varepsilon|_{X_y}-V_F^{-1}\int_{X_y}\varphi_\varepsilon\mathchanged{\omega_y}^d\right)$ indeed converges the the potential of $\omega_{\rm SRF}.$
\begin{lemma}
    $\sup_{y\in K}\|v_{\varepsilon,y}-\rho_y\|_{L^\infty}\to 0$, where $\mathchanged{\omega}+\ddc\rho_y$ is the unique Ricci flat metric in class $[\mathchanged{\omega}|_y]$ with $\int_{X_y}\rho_y\mathchanged{\omega_y}^d=0$.
\end{lemma}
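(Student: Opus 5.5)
The plan is to view the rescaled fiber metric as the solution of a fiberwise complex Monge--Amp\`ere equation, show that its right-hand side converges to the Ricci-flat density, and then invoke stability of the Monge--Amp\`ere equation. By the proof of the previous lemma, for $y\in K$,
\[
(\omega_y+\ddc_V v_{\varepsilon,y})^d=h_{\varepsilon,y}\,\omega_y^d,\qquad 0<h_{\varepsilon,y}\le C_K,\qquad \int_{X_y}v_{\varepsilon,y}\,\omega_y^d=0,
\]
and $\int_{X_y}h_{\varepsilon,y}\omega_y^d=V_F$ by cohomology. The comparison density is $g_y:=\omega_{{\rm SRF},y}^d/\omega_y^d$. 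It is smooth and positive uniformly in $y\in K'$, it also satisfies $\int_{X_y}g_y\omega_y^d=V_F$, and $\omega_y+\ddc\rho_y=\omega_{{\rm SRF},y}$ solves the same equation with right-hand side $g_y$. By \eqref{eq:cv}, on $f^{-1}(K')$,
\[
g_y\,(f^*\ocan)^m\wedge\omega^d=(f^*\ocan)^m\wedge\omega_{\rm SRF}^d=V_FV_Y e^{f^*\pcan}\Omega .
\]

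\textbf{Step 1: one-sided pointwise bound.} I will show that
\[
h_{\varepsilon,y}\le(1+o(1))\,g_y\quad\text{uniformly on }f^{-1}(K).
\]
Put $\beta=f^*\ocan$, which is semipositive of rank $m$. Lemma~\ref{lem:sharp-schwarz-estimate} gives $\omega_\varepsilon\ge(1-\eta_\varepsilon)\beta$ on $f^{-1}(K')$ with $\eta_\varepsilon\to0$. This should yield the elementary linear-algebra inequality
\[
\omega_\varepsilon^n\ge(1-\eta_\varepsilon)^m\binom nm\,\beta^m\wedge\omega_\varepsilon^d .
\]
To check it, write $\omega_\varepsilon=(1-\eta)\beta+\gamma$ with $\gamma\ge0$ and expand. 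Every term in the expansion is nonnegative, and the term $\binom nm(1-\eta)^m\beta^m\wedge\gamma^d$ equals $\binom nm(1-\eta)^m\beta^m\wedge\omega_\varepsilon^d$, because $\beta^{m+1}=0$. Next, $\beta^m\wedge\omega_\varepsilon^d=\varepsilon^dh_{\varepsilon,y}\,\beta^m\wedge\omega^d$, exactly as in the previous lemma with $\chi$ replaced by $f^*\ocan$. Combining these with $\omega_\varepsilon^n=V_\varepsilon e^{F_\varepsilon}\Omega$, the asymptotics $V_\varepsilon=\binom nd\varepsilon^dV_FV_Y(1+O(\varepsilon))$, and the uniform convergence $F_\varepsilon\to f^*\pcan$ on $f^{-1}(K')$ from Lemma~\ref{lem:Linfty-density}, we obtain
\[
h_{\varepsilon,y}\le(1+o(1))\frac{V_FV_Ye^{f^*\pcan}\Omega}{\beta^m\wedge\omega^d}=(1+o(1))\,g_y .
\]

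\textbf{Step 2: $L^1$ closeness.} Since $h_{\varepsilon,y}$ and $g_y$ have the same fiber integral $V_F$, the upper bound from Step 1 gives
\[
\int_{X_y}|h_{\varepsilon,y}-g_y|\,\omega_y^d=2\int_{X_y}(h_{\varepsilon,y}-g_y)_+\,\omega_y^d\le o(1)\,V_F\sup g_y ,
\]
and this is uniform in $y\in K$.

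\textbf{Step 3: stability.} The densities are uniformly bounded, say by $C_K$, and the fibers form a smooth compact family. Kołodziej's $L^\infty$ stability theorem for complex Monge--Amp\`ere equations with $L^p$ densities ($p>1$), in the form of Dinew--Zhang, then gives
\[
\operatorname{osc}_{X_y}(v_{\varepsilon,y}-\rho_y)\le C_K\|h_{\varepsilon,y}-g_y\|_{L^1(\omega_y^d)}^{\gamma}
\]
for some $\gamma>0$. The constant should be uniform in $y$, since it depends only on the background geometry, which varies smoothly over $K'$. Both $v_{\varepsilon,y}$ and $\rho_y$ have zero $\omega_y^d$-average, so the oscillation bound upgrades to an $L^\infty$ bound, and taking the supremum over $y\in K$ proves the lemma.

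\textbf{Main obstacle.} I expect the main obstacle to be Step 1: checking that the sharp Schwarz estimate is exactly what produces the pointwise upper bound $h_{\varepsilon,y}\le(1+o(1))g_y$ with the right constant. The binomial constants, the volume asymptotics, and \eqref{eq:cv} must match so that the limiting constant is exactly $1$; any slack there destroys the $L^1$ argument of Step 2. A secondary point is verifying that the stability constant in Step 3 is uniform over the fibers.
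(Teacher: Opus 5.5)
Your proposal is correct and follows essentially the same route as the paper. The sharp Schwarz estimate together with the uniform convergence $F_\varepsilon\to f^*\pcan$ gives the one-sided bound $h_{\varepsilon,y}\le(1+o(1))\,\omega_{{\rm SRF},y}^d/\omega_y^d$, with exactly the constants you checked; equal fiber mass then yields $L^1$ closeness, which the paper uses implicitly in its energy estimate; and Monge--Amp\`ere stability concludes. The only difference is that you cite Ko{\l}odziej/Dinew--Zhang stability as a black box and assert that its constant is uniform in $y$, whereas the paper sketches a self-contained energy, Poincar\'e and Moser-iteration argument on the fibers, available because both densities are bounded, which makes the uniformity over $K$ explicit through the uniform fiber Sobolev and Poincar\'e constants.
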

\begin{proof}
    By Lemma~\ref{lem:sharp-schwarz-estimate}, we can choose $0<\delta_\varepsilon\to 0$ such that $\omega_\varepsilon-(1-\delta_\varepsilon)f^*\omega_{\rm can}\geq 0$. Thus \[\begin{aligned}
        \omega_\varepsilon^n\geq&\binom{n}{m}(1-\delta_\varepsilon)^m(f^*\ocan)^m\wedge(\oep-(1-\delta_\varepsilon)f^*\ocan)^d\\
        =&\binom{n}{m}(1-\delta_\varepsilon)^m(f^*\ocan)^{m}\oep^d.
    \end{aligned}
    \]
    Let $h_{{\rm RF},y}=\omega_{{\rm SRF},y}^d/\mathchanged{\omega_y}^d$. Then \[
    \frac{h_{\varepsilon,y}}{h_{{\rm RF},y}}=\frac{(f^*\ocan)^m\wedge\omega_\varepsilon
    ^d}{\varepsilon^d(f^*\ocan)^m\wedge\osrf^d}\leq \frac{\omega_\varepsilon^n}{(1-\delta_\varepsilon)^m\binom{n}{m}\varepsilon^d(f^*\ocan)^m\wedge\osrf^d}=1+o(1).
    \]
    Here we used that $F_\varepsilon\to f^*\pcan$ in $L^\infty$ away from singlar fiber(Lemma~\ref{lem:Linfty-density}). 
    Consequently, \[
    0<h_{\varepsilon,y}\leq (1+\eta_{\varepsilon})h_{{\rm RF},y}, \quad \eta_{\varepsilon}\to 0.
    \]
    From stability estimate for complex Monge-Amp\`ere equation, we get the desired $L^\infty$-closeness of the potentials. To see the convergence is uniform on $K$, we sketch a proof here. Set $a=v_{\varepsilon,y}-\rho_y$, and
$\omega_{\varepsilon,y}'=\omega_{{\rm SRF},y}+dd^ca$.
Both forms are positive, $\|a\|_\infty\leq C_K$, and
$(\omega_{\varepsilon,y}')^d/\omega_{{\rm SRF},y}^d\leq C_K$.
The smooth reference metrics $\omega_{{\rm SRF},y}$ over $K'$ have common
Sobolev and Poincar\'e constants and are uniformly equivalent to
$\mathchanged{\omega_y}$. Since $\int a\,\mathchanged{\omega_y}^d=0$, integration by
parts gives
\[
 \begin{split}
\int_{X_y}\sqrt{-1}\partial a\wedge\bar\partial a\wedge\omega_{{\rm SRF},y}^{d-1}
&\leq\int_{X_y}\sqrt{-1}\partial a\wedge\bar\partial a
\wedge\sum_{j=0}^{d-1}(\omega_{\varepsilon,y}')^j\wedge\omega_{{\rm SRF},y}^{d-1-j}\\
&=-\int_{X_y}a((\omega_{\varepsilon,y}')^d-\omega_{{\rm SRF},y}^d)
\leq C_K\eta_\varepsilon,\\
\|a\|_{L^2(\omega_{{\rm SRF},y}^d)}&\leq C_K\eta_\varepsilon^{1/2}.
\end{split}
\]
Let $u=a_+$, from difference of two equations we get
\[
 \begin{split}
 (p-1)\int u^{p-2}\sqrt{-1}\partial u\wedge\bar\partial u\wedge\omega_{{\rm SRF},y}^{d-1}
 &\leq C_K\int u^{p-1}\omega_{{\rm SRF},y}^d,\\
 \|u\|_{L^{pq}(\omega_{{\rm SRF},y}^d)}^p
 &\leq C_Kp^2\int u^{p-1}\omega_{{\rm SRF},y}^d .
 \end{split}
\]
Here $q>1$ is a common fiber Sobolev exponent, and the zero-order
Sobolev term is absorbed using $0\leq u\leq C_K$. Moser iteration gives
\[
 \|u\|_{L^\infty}\leq C_K\|u\|_{L^2}^\beta,\qquad\beta=\lim_{k\to\infty}\frac{2q^k}{p_k} =\frac{2}{2+q/(q-1)}>0.
\]
Applying this to both signs proves
$\|a\|_{L^\infty}\leq C_K\eta_\varepsilon^{\beta/2}\to0$.

\end{proof}

\begin{lemma}
The form
\[
 T_\varepsilon=
 \frac{f_*(\omega_\varepsilon^{d+1}-\varepsilon^{d+1}\omega_{\rm SRF}^{d+1})}{(d+1)\varepsilon^dV_F}
\]
is smooth and closed on $Y^\circ$. For any compact $K\Subset Y^\circ$,  $T_\varepsilon\to \ocan$ uniformly on $K$. 
\end{lemma}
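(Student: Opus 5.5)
Smoothness and closedness are formal. Over $Y^\circ$ the map $f$ is a proper submersion, so fiber integration $f_*$ maps smooth closed $(d+1,d+1)$-forms on $X^\circ$ to smooth closed $(1,1)$-forms on $Y^\circ$, because $f_*$ commutes with $d$. The form $\omega_\varepsilon^{d+1}$ is smooth and closed on $X$. The form $\omega_{\rm SRF}=\omega+\ddc\rho$ is smooth and closed on $X^\circ$, so $\omega_{\rm SRF}^{d+1}$ is smooth and closed there. Hence $T_\varepsilon$ is smooth and closed on $Y^\circ$.

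For the convergence I would write $\omega_\varepsilon=\chi_\varepsilon+\ddc\varphi_\varepsilon$ and push forward. Set $\bar\varphi_\varepsilon(y)=V_F^{-1}\int_{X_y}\varphi_\varepsilon\omega_y^d$. The two preceding lemmas give
\[
\varphi_\varepsilon=f^*\bar\varphi_\varepsilon+\varepsilon\rho+o(\varepsilon)\quad\text{uniformly on } f^{-1}(K').
\]
Hence we may write $\omega_\varepsilon=f^*(\chi_Y+\ddc\bar\varphi_\varepsilon)+\varepsilon\omega_{\rm SRF}+\ddc\theta_\varepsilon$ with $\|\theta_\varepsilon\|_{L^\infty(f^{-1}(K'))}=o(\varepsilon)$. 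Expanding $\omega_\varepsilon^{d+1}$ binomially, only terms with at most one base factor survive fiber integration in bidegree $(1,1)$. Using $f_*(\omega_{\rm SRF}^d)=V_F$ and the projection formula, one gets formally
\[
T_\varepsilon=\chi_Y+\ddc\bar\varphi_\varepsilon+\ddc(\text{fiber average of }\varepsilon^{-1}\theta_\varepsilon\text{ against }\omega_{\rm SRF}^d/V_F)+\dots .
\]
Since $\ddc$ commutes with $f_*$, $T_\varepsilon=\chi_Y+\ddc\Phi_\varepsilon$ on $Y^\circ$, where
\[
\Phi_\varepsilon=\frac{f_*\bigl(\sum_{j}\varphi_\varepsilon\,\omega_\varepsilon^j\wedge(\varepsilon\omega_{\rm SRF})^{d-j}\bigr)}{(d+1)\varepsilon^dV_F},
\]
up to a smooth, $\varepsilon$-controlled correction coming from $\chi$ and $\varepsilon\omega$ versus $\varepsilon\omega_{\rm SRF}$. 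The first goal is $\Phi_\varepsilon\to\varphi_{\rm can}$ uniformly on $K$. Each fiber integral $\int_{X_y}\varphi_\varepsilon\,\omega_\varepsilon^j\wedge\omega_{\rm SRF}^{d-j}$ with $(\varepsilon^{-1}\omega_\varepsilon|_{X_y})$ has total mass $\varepsilon^jV_F$. The osc bound $\operatorname{osc}_{X_y}\varphi_\varepsilon\le C\varepsilon$ then gives $\Phi_\varepsilon=\bar\varphi_\varepsilon+O(\varepsilon)$. Moreover, $\bar\varphi_\varepsilon\to\varphi_{\rm can}$ uniformly on $K$ (after normalization). This follows by Lemma~\ref{lem:Linfty-density}: $f_*(\omega_\varepsilon^n)/V_\varepsilon\to e^{\varphi_{\rm can}}f_*\Omega$ uniformly. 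Together with the Schwarz estimate, it identifies $\chi_Y+\ddc\bar\varphi_\varepsilon$ with a solution of a Monge--Amp\`ere-type equation on the base. Alternatively, the $L^1$ limit $u$ already satisfies $\MA_Y(u)=e^{\varphi_{\rm can}}f_*\Omega$, so $u=\varphi_{\rm can}$ by uniqueness, and the uniform oscillation bound upgrades the convergence to uniform convergence.

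Potential convergence only gives weak convergence of $T_\varepsilon$. To obtain uniform convergence of the forms I would use two-sided pointwise control. The lower bound comes from Lemma~\ref{lem:sharp-schwarz-estimate}: $\omega_\varepsilon\ge(1-\delta_\varepsilon)f^*\ocan$. Combined with the fiberwise convergence $\varepsilon^{-1}\omega_\varepsilon|_{X_y}\to\omega_{{\rm SRF},y}$, which holds in $L^\infty$ of potentials and hence in the sense of measures, this gives $T_\varepsilon\ge(1-o(1))\ocan$ after pairing with any positive $(m-1,m-1)$ test vector. For the upper bound, I would compute the top power $T_\varepsilon^m$ against $f_*(\omega_\varepsilon^n)/(\binom nd\varepsilon^dV_F)$. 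By the density convergence, the latter tends uniformly to $\ocan^m$. A lower bound $T_\varepsilon\ge(1-o(1))\ocan$ together with $\det_{\ocan}T_\varepsilon\le1+o(1)$ forces $T_\varepsilon=(1+o(1))\ocan$ pointwise uniformly on $K$.

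The main obstacle is the determinant inequality $T_\varepsilon^m\le(1+o(1))\ocan^m$ pointwise. Fiber integration does not commute with taking powers. I would try a fiberwise Hölder/mixed-volume inequality: for positive forms, the horizontal part of $\omega_\varepsilon$ averaged over the fiber with weight $\omega_\varepsilon^d$ has determinant bounded by the average of $\omega_\varepsilon^n/\omega_\varepsilon^d$, by concavity of $\det^{1/m}$. Since $F_\varepsilon\to f^*\varphi_{\rm can}$ uniformly and the fiber volume forms $\varepsilon^{-d}\omega_\varepsilon^d|_{X_y}$ converge to $\omega^d_{{\rm SRF},y}$, this should yield the desired bound. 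I expect the difficulty to be controlling the off-diagonal (mixed horizontal--vertical) components of $\omega_\varepsilon$ in this concavity step.
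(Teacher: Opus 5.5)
\textbf{Verdict: there is a genuine gap, in the upper bound $T_\varepsilon\le(1+o(1))\ocan$.}

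Your smoothness/closedness argument is fine. Your lower bound $T_\varepsilon\ge(1-o(1))\ocan$ is also fine, and needs no fiberwise convergence. With $\gamma_\varepsilon:=\omega_\varepsilon-(1-\delta_\varepsilon)f^*\ocan\ge0$ one has the exact identity
\[
T_\varepsilon=(1-\delta_\varepsilon)\ocan+R_\varepsilon-\frac{\varepsilon f_*(\osrf^{d+1})}{(d+1)V_F},\qquad R_\varepsilon=\frac{f_*(\gamma_\varepsilon^{d+1})}{(d+1)\varepsilon^dV_F}\ge0 .
\]
This holds because $\gamma_\varepsilon|_{X_y}=\omega_\varepsilon|_{X_y}$ and $\int_{X_y}\omega_\varepsilon^d=\varepsilon^dV_F$ is cohomological. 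The potential-theoretic detour is unnecessary, and as you say it gives only weak convergence. Two claims in it are also unjustified: $\MA_Y(u)=\MA_Y(\pcan)$ fixes $u$ only up to a constant, and a fiberwise oscillation bound gives no equicontinuity in the base direction.

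You leave $T_\varepsilon^m\le(1+o(1))\ocan^m$ unproved, and the route you suggest cannot work as stated. Your picture of $T_\varepsilon$ as a fiber average of the horizontal part of $\omega_\varepsilon$ is correct. However, concavity of $\det^{1/m}$ (Minkowski) gives $\det(\text{average})^{1/m}\ge\text{average}(\det^{1/m})$. That is a \emph{lower} bound on the determinant of the average, not an upper bound. Pointwise determinant control can never bound the determinant of an average from above: averaging $\mathrm{diag}(2,\tfrac12)$ and $\mathrm{diag}(\tfrac12,2)$, both of determinant $1$, gives determinant $\tfrac{25}{16}$.

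The missing idea is to feed the pointwise lower bound into the average. Once you do, a linear trace estimate suffices and no determinant of $T_\varepsilon$ is needed. The pointwise version is the elementary inequality $\prod_i\bigl((1-\delta)+\mu_i\bigr)\ge(1-\delta)^m+(1-\delta)^{m-1}\sum_i\mu_i$ for $\mu_i\ge0$.

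In form language, expand $\omega_\varepsilon^n=(\gamma_\varepsilon+(1-\delta_\varepsilon)f^*\ocan)^n$. Every term is a nonnegative form, so you may discard all but those containing $f^*\ocan^{m-1}$ and $f^*\ocan^{m}$. Pushing forward then gives
\[
\frac{f_*\omega_\varepsilon^n}{\binom nm\varepsilon^dV_F\,\ocan^m}\ge(1-\delta_\varepsilon)^{m-1}\tr_{\ocan}R_\varepsilon+(1-\delta_\varepsilon)^m .
\]
By Lemma~\ref{lem:Linfty-density} the left side tends to $1$ uniformly on $K$. Hence $\tr_{\ocan}R_\varepsilon\to0$, and positivity gives $|R_\varepsilon|_{\ocan}\le\tr_{\ocan}R_\varepsilon\to0$.

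This also removes your worry about mixed horizontal--vertical components. Positivity of $\gamma_\varepsilon$ and $f^*\ocan$ handles them automatically, with no splitting of $TX$. This is exactly the paper's argument.
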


\begin{proof} 
As before, choose $0<\delta_\varepsilon\to 0$ such that $\gamma_\varepsilon:=\oep-(1-\delta_\varepsilon)f^*\ocan\geq 0$. Then $R_\varepsilon:=f_*(\gamma_\varepsilon^{d+1})/((d+1)\varepsilon^dV_F)\geq 0$.  \[
\omega_\varepsilon^{d+1}-\gamma_\varepsilon^{d+1}=(1-\delta_\varepsilon)f^*\ocan\sum_{i=0}^{d}\omega_\varepsilon^i\gamma_\varepsilon^{d-i}.
\]
Only products in the fiber direction contribute to the fiber integration, hence \[
f_*(\sum_{i=0}^{d}\omega_\varepsilon^i\gamma_\varepsilon^{d-i})=\int_{X_y}(\omega_{\varepsilon,y})^d=(d+1)V_F\varepsilon^d.
\]
As a result, $T_\varepsilon=(1-\delta_\varepsilon)\ocan+R_\varepsilon-\frac{\varepsilon f_*(\osrf^{d+1})}{(d+1)V_F}$.
On the other hand, \[\begin{aligned}
f_*\omega_\varepsilon^n\geq &f_*\left(\binom{n}{d+1}\gamma_\varepsilon^{d+1}(1-\delta_\varepsilon)^{m-1}f^*\ocan^{m-1}+\binom{n}{m}\gamma_\varepsilon^d(1-\delta_\varepsilon)^mf^*\ocan^m\right)\\
=&\binom{n}{d}\varepsilon^dV_F[(1-\delta_\varepsilon)^{m-1}mR_\varepsilon\wedge\ocan^{m-1}+(1-\delta_\varepsilon)^m\ocan^m]
\end{aligned}
\]
Hence
\[
\frac{f_*\omega_\varepsilon^n}{\binom{n}{m}\varepsilon^d V_F\ocan^m}\geq (1-\delta_\varepsilon)^{m-1}\tr_{\ocan}R_\varepsilon+(1-\delta_\varepsilon)^m.
\]
Since $f_*\omega_\varepsilon^n\to V_{\varepsilon}f^*\ocan^m$ by Lemma~\ref{lem:Linfty-density}, we get \[
0\leq |R_\varepsilon|_{\ocan}\leq \tr_{\ocan}R_\varepsilon\to 0.
\]
This proves $T_\varepsilon\to \ocan$ uniformly away from singular fibers. 
\end{proof}

\begin{lemma}\label{lem:r-varepsilon}
 There are smooth function $r_\varepsilon$ on $X^\circ$ such that for any compact $K\Subset Y^\circ$\[
    \omega_\varepsilon=f^*T_\varepsilon+\varepsilon\osrf+\varepsilon\ddc r_\varepsilon \text{ on }X^\circ, \qquad \|r_\varepsilon\|_{L^\infty(f^{-1}(K))}\to 0. 
    \]
\end{lemma}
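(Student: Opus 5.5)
The plan is to construct $r_\varepsilon$ by a $\partial\bar\partial$-lemma on the fibration, and then identify it with the normalized fiber potential $v_{\varepsilon,y}-\rho_y$ from the previous lemma. On $X^\circ$, put $\beta_\varepsilon=\omega_\varepsilon-f^*T_\varepsilon-\varepsilon\osrf$. This is a smooth closed real $(1,1)$-form. Its restriction to each fiber is
\[
\varepsilon\bigl(\ddc_V v_{\varepsilon,y}-\ddc_V\rho_y\bigr)=\varepsilon\,\ddc_V a_{\varepsilon,y},
\]
because $T_\varepsilon$ is a pullback. The natural candidate is therefore
\[
r_\varepsilon:=\varepsilon^{-1}\bigl(\varphi_\varepsilon-f^*\bar\varphi_\varepsilon\bigr)-\rho,
\qquad \bar\varphi_\varepsilon(y)=V_F^{-1}\int_{X_y}\varphi_\varepsilon\,\omega_y^d .
\]
By definition $r_\varepsilon|_{X_y}=v_{\varepsilon,y}-\rho_y$, so the previous lemma gives $\sup_{f^{-1}(K)}|r_\varepsilon|\to0$ at once. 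Smoothness on $X^\circ$ follows because $\bar\varphi_\varepsilon$ is a fiber integral over a proper submersion.

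With this choice,
\[
\omega_\varepsilon-\varepsilon\osrf-\varepsilon\ddc r_\varepsilon=\chi+\ddc f^*\bar\varphi_\varepsilon=f^*\bigl(\chi_Y+\ddc\bar\varphi_\varepsilon\bigr).
\]
Here we use $\omega_\varepsilon=\chi+\varepsilon\omega+\ddc\varphi_\varepsilon$ and $\osrf=\omega+\ddc\rho$. So $\omega_\varepsilon=f^*S_\varepsilon+\varepsilon\osrf+\varepsilon\ddc r_\varepsilon$ with $S_\varepsilon=\chi_Y+\ddc\bar\varphi_\varepsilon$, and it remains to show $S_\varepsilon=T_\varepsilon$. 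For this, compute $f_*\bigl(\omega_\varepsilon^{d+1}-\varepsilon^{d+1}\osrf^{d+1}\bigr)$ by substituting the decomposition and expanding binomially. Since $(f^*S_\varepsilon)^2$ contributes nothing after integration along fibers of degree $d$, the only surviving terms are the following:
\begin{enumerate}
\item $(d+1)f^*S_\varepsilon\wedge(\varepsilon\osrf+\varepsilon\ddc r_\varepsilon)^d$, whose fiber integral is $(d+1)\varepsilon^dV_F\,S_\varepsilon$, because fiberwise the form is cohomologous to $\varepsilon\omega_y$;
\item $\varepsilon^{d+1}\bigl[(\osrf+\ddc r_\varepsilon)^{d+1}-\osrf^{d+1}\bigr]$.
\end{enumerate}

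The main obstacle is showing that the second push-forward vanishes. It equals $\ddc$ of the fiber integral of $r_\varepsilon\sum_j(\osrf+\ddc r_\varepsilon)^j\wedge\osrf^{d-j}$, since $f_*$ commutes with $\ddc$. This is an exact term of the form $\ddc g_\varepsilon$ with $g_\varepsilon$ a function on $Y^\circ$, so the natural identity is
\[
T_\varepsilon=S_\varepsilon+\frac{\varepsilon}{(d+1)V_F}\ddc g_\varepsilon .
\]
One can then simply redefine $r_\varepsilon\mapsto r_\varepsilon-\bigl((d+1)V_F\bigr)^{-1}f^*g_\varepsilon$, which produces exactly $f^*T_\varepsilon$. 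It must then be shown that $g_\varepsilon\to0$ uniformly on $K$. Here
\[
g_\varepsilon(y)=\int_{X_y}r_\varepsilon\sum_{j}(\osrf+\ddc r_\varepsilon)^j\wedge\osrf^{d-j}
\]
involves only the vertical components of the forms appearing in the fiber integral of top vertical degree. Fiberwise these are the positive forms $\omega_{{\rm SRF},y}$ and $\varepsilon^{-1}\omega_\varepsilon|_{X_y}$, each of total mass $V_F$. Hence $|g_\varepsilon(y)|\le(d+1)V_F\sup_{X_y}|r_\varepsilon|\to0$ uniformly on $K$, and the corrected $r_\varepsilon$ still tends to zero in $L^\infty(f^{-1}(K))$. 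I expect the only delicate point is checking that mixed horizontal components of $\osrf$ and $\ddc r_\varepsilon$ really drop out of these fiber integrals. This holds because pushing forward a $(d,d)$-form picks out its purely vertical part. For $d=0$ everything is trivial: take $r_\varepsilon=0$ with $T_\varepsilon=\omega_\varepsilon-\varepsilon\omega$ on $X^\circ\cong Y^\circ$.
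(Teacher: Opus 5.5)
Your proposal is correct and is essentially the paper's argument. By the cocycle formula for the Monge--Amp\`ere energy, your correction $((d+1)V_F)^{-1}g_\varepsilon$ is exactly $E_y(v_{\varepsilon,y})-E_y(\rho_y)$. So your corrected $r_\varepsilon$ coincides with the paper's $v_{\varepsilon,y}-\rho-f^*\bigl(E_y(v_{\varepsilon,y})-E_y(\rho)\bigr)$, and your mass bound $|g_\varepsilon|\le(d+1)V_F\sup_{X_y}|r_\varepsilon|$ is the paper's estimate $|E_y(v_{\varepsilon,y})-E_y(\rho)|\le\|v_{\varepsilon,y}-\rho\|_{L^\infty}$.

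One wording slip: the second push-forward does not vanish. It is only $dd^c$-exact, and, as you then correctly use, that is all that is needed.
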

\begin{proof}
    For a potential $\varphi$, recall that the energy is \[
    E_y(\varphi):=\frac{1}{(d+1)V_F}\sum_{j=0}^d\int_{X_y}\varphi(\mathchanged{\omega_y}+\ddc\varphi)^j\wedge\mathchanged{\omega_y}^{d-j}.
    \]
    Here and in the following $\ddc$ is taken with respect to the full directions. 
    It follows that \[
    f_*((\mathchanged{\omega}+\ddc\varphi)^{d+1}-\mathchanged{\omega}^{d+1})=(d+1)vV_F\ddc E_y(\varphi)
    \]
    We can write $\oep=f^*(\chi_Y+\ddc \bar{\varphi}_{\varepsilon,y})+\varepsilon(\mathchanged{\omega}+\ddc v_{\varepsilon,y})$, where $\bar{\varphi}_{\varepsilon,y}=V_F^{-1}\int_{X_y}\varphi_{\varepsilon}\mathchanged{\omega_y}^d$.
    Hence \[
 \frac{f_*(\omega_\varepsilon^{d+1})}
      {(d+1)\varepsilon^dV_F}
 =\chi_Y+dd^c\bar{\varphi}_{\varepsilon,y}
   +\frac{\varepsilon f_*((\mathchanged{\omega}+dd^c v_{\varepsilon,y})^{d+1})}
          {(d+1)V_F}.
    \]
    By the definition of $T_\varepsilon$, we get \[
    T_\varepsilon=\chi_Y+\ddc \bar{\varphi}_{\varepsilon,y}+\varepsilon\ddc(E_y(v_{\varepsilon,y})-E_{y}(\rho)).
    \]
    Note that \[
    |E_y(v_{\varepsilon,y})-E_y(\rho)|\leq \|v_{\varepsilon,y}-\rho\|_{L^\infty}\to 0.
    \]
    Hence let $r_\varepsilon=v_{\varepsilon,y}-\rho-f^*(E_y(v_{\varepsilon,y})-E_y(\rho))$, we get the desired representation and the estimate. 
\end{proof}

The following $\ddc$-lemma with estimate is standard.

\begin{lemma}\label{lem:local-potential}

Let $\mathcal P\subset\mathbb C^N$ be a fixed polydisc. A smooth closed real
$(1,1)$-form $\alpha$ on a neighborhood of $\overline{\mathcal P}$,
has a real potential $b\in C^\infty(\mathcal P)$ such that
\begin{equation}
dd^c b=\alpha,\qquad
\|b\|_{L^\infty(\mathcal P)}\leq C_{\mathcal P}\|\alpha\|_{L^\infty(\mathcal P)}.
\end{equation}
Here the norms of forms are Euclidean tensor norms.
\end{lemma}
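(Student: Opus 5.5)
We build the potential by an explicit homotopy-type construction on the polydisc and read off the $L^\infty$ bound from the formula. The plan has two steps: first remove the closedness problem with a real primitive, then remove the $(0,2)$ part with a $\bar\partial$-solution.

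\textbf{Step 1: a real primitive.} Write $\mathcal P=\prod_k D_k$ and shrink slightly if needed, so that $\alpha$ is smooth on a neighborhood of $\overline{\mathcal P}$. Since $\mathcal P$ is star-shaped about its center, the Poincar\'e homotopy operator gives a real $1$-form
\[
\beta=\int_0^1 \iota_{R}\alpha(tz)\,t\,dt, \qquad d\beta=\alpha .
\]
Here $R$ is the radial vector field. The formula shows directly that $\|\beta\|_{L^\infty}\le C_{\mathcal P}\|\alpha\|_{L^\infty}$. Decompose $\beta=\beta^{1,0}+\beta^{0,1}$ with $\beta^{1,0}=\overline{\beta^{0,1}}$. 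Comparing bidegrees in $d\beta=\alpha$ gives $\bar\partial\beta^{0,1}=0$ and $\alpha=\partial\beta^{0,1}+\bar\partial\beta^{1,0}$.

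\textbf{Step 2: solving $\bar\partial$.} We solve $\bar\partial g=\beta^{0,1}$ on $\mathcal P$ with $\|g\|_{L^\infty(\mathcal P)}\le C_{\mathcal P}\|\beta^{0,1}\|_{L^\infty}$. One could use the iterated one-variable Cauchy transform (the Dolbeault--Grothendieck construction) on the polydisc. However, that construction produces bounds only on a smaller polydisc, or requires derivative bounds on the data. The clean remedy is to use the Henkin / Grauert--Lieb integral solution operator on the polydisc, which is bounded $L^\infty\to L^\infty$ for $\bar\partial$-closed $(0,1)$-forms on product domains. Alternatively, the statement can be used only on a shrunken polydisc, and one proves it there with interior data. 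Given $g$, set $b=2\,\mathrm{Im}\,g$ up to the normalization of $dd^c=\sqrt{-1}\partial\bar\partial$. Then
\[
\sqrt{-1}\partial\bar\partial(g-\bar g)/\sqrt{-1}=\partial\beta^{0,1}+\bar\partial\beta^{1,0}=\alpha ,
\]
so $b=\frac{1}{\sqrt{-1}}(\bar g-g)$, adjusted by sign, is a real function with $dd^c b=\alpha$. Smoothness of $b$ follows from ellipticity, since $\Delta b=\tr\alpha$ is smooth. The bound $\|b\|_\infty\le 2\|g\|_\infty\le C\|\alpha\|_\infty$ follows from Steps 1 and 2.

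\textbf{Main obstacle.} The obstacle is the sup-norm $\bar\partial$ estimate up to the boundary of the polydisc. The boundary of the polydisc is not smooth, and the naive Cauchy-transform solution loses control near the corners. In the paper's application it should suffice to work on a smaller concentric polydisc $\mathcal P'\Subset\mathcal P$, with $\alpha$ defined on $\mathcal P$. We would therefore first establish the estimate in that form. On the larger polydisc, apply the Cauchy transform in $z_1$ to a cut-off of $\beta^{0,1}_1$. This gives $\bar\partial_1 g_1=\beta_1^{0,1}$ with a sup bound, and the remaining components become $\bar\partial$-closed and independent of $\bar z_1$ on $\mathcal P'$. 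Inducting on the number of variables then gives a sup bound at each step by a fixed constant, so only boundedness of the one-variable Cauchy transform on $L^\infty$ of a disc is used. If the statement is really needed on $\mathcal P$ itself, we would replace $\mathcal P$ by a slightly larger polydisc contained in the domain of $\alpha$, and run the same argument.
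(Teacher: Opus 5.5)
The paper gives no proof and calls the lemma standard. Your scheme is the natural standard argument: take the radial homotopy primitive $\beta$ with $\|\beta\|_{L^\infty(\mathcal P)}\le C_{\mathcal P}\|\alpha\|_{L^\infty(\mathcal P)}$, solve $\bar\partial g=\beta^{0,1}$, and set $b=2\,\mathrm{Im}\,g$. Step 1, the bidegree bookkeeping and the identity $\alpha=\partial\bar\partial(g-\bar g)$ are all correct. Everything rests on the sup-norm $\bar\partial$-estimate, and two points there need fixing.

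\textbf{The full-polydisc estimate.} For the statement exactly as written (bound on all of $\mathcal P$ by $\|\alpha\|_{L^\infty(\mathcal P)}$), the relevant results are Henkin's uniform estimate on the bidisc and its extension to polydiscs (Landucci). Grauert--Lieb concerns strictly pseudoconvex domains. These results give $\|g\|_{L^\infty(\mathcal P)}\le C_{\mathcal P}\|\beta^{0,1}\|_{L^\infty(\mathcal P)}$ for $\bar\partial$-closed data with some regularity up to $\partial\mathcal P$. They are not an $L^\infty\to L^\infty$ bounded operator on arbitrary bounded forms, as you claimed. This is enough here, and you should say why: $\alpha$ is smooth on a neighborhood of $\overline{\mathcal P}$, so the homotopy formula, evaluated on a slightly larger concentric polydisc, makes $\beta^{0,1}$ smooth up to $\partial\mathcal P$. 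The resulting constant only involves $\|\beta^{0,1}\|_{L^\infty(\mathcal P)}$.

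\textbf{The elementary cutoff argument.} It is sound, but you omit its one essential step: why the corrected coefficients stay bounded without derivatives. Write $\beta^{0,1}=\sum_j\beta_j\,d\bar z_j$ and $g_1=T_1(\chi\beta_1)$, where $T_1$ is the Cauchy transform in $z_1$ and $\chi=\chi(z_1)$ is compactly supported in the first disc. For $j\ge2$, closedness and Cauchy--Pompeiu for compactly supported functions give
\[
\partial_{\bar z_j}g_1=T_1(\chi\,\partial_{\bar z_1}\beta_j)=\chi\beta_j-T_1\bigl((\partial_{\bar z_1}\chi)\beta_j\bigr).
\]
So on $\{\chi=1\}$ the new coefficients are $T_1\bigl((\partial_{\bar z_1}\chi)\beta_j\bigr)$. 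These are bounded by $C\|\beta\|_{L^\infty}$ and holomorphic in $z_1$, and this is what lets the induction proceed without losing derivatives.

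This argument proves only $\|b\|_{L^\infty(\mathcal P')}\le C\|\alpha\|_{L^\infty(\mathcal P)}$ for a fixed $\mathcal P'\Subset\mathcal P$. Your last sentence does not upgrade it to the stated form. Enlarging $\mathcal P$ puts the sup of $\alpha$ over the larger polydisc on the right-hand side, with a constant depending on the margin, and the margin depends on $\alpha$. However, the interior version is all that Lemma~\ref{lem:metric-convergence} uses. There the rescaled forms live on a fixed polydisc, their sup norms tend to zero there, and the potentials $\phi_j$ are only used on the smaller ball $B_{r_*}$.
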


Now we can prove the metric convegence by a blow-up argument. We will use the following theorem of Cheng-Xu on the interior $W^{2,p}$-estimate for complex Monge-Amp\`ere equations, regardless of the boundary behavior. The addition assumption is the closeness to a smooth stricly psh function. \begin{theorem}[Cheng-Xu, Theorem 1.4\cite{CX23}]\label{thm:Cheng-Xu.}
    Suppose on $B_1\subset\mathbb C^n$ that
\[
 C_0^{-1}\omega_{\rm E}\leq dd^cw_0\leq C_0\omega_{\rm E},
 \qquad\|D^3w_0\|_\infty\leq C_0.
\]
For every $1<P<\infty$, there are tolerances $\delta_0(C_0,n)>0$
and $\delta_{\rm MA}(n,P)>0$ such that
\[
 \begin{gathered}
 \phi\in C^0\cap\PSH(B_1),\qquad\|\phi-w_0\|_\infty\leq\delta_0,\\
 (1-\delta_{\rm MA})\omega_{\rm E}^n\leq(dd^c\phi)^n\leq(1+\delta_{\rm MA})\omega_{\rm E}^n
 \end{gathered}
 \quad\Longrightarrow\quad
 \|\phi\|_{W^{2,P}(B_{1/2})}\leq C.
\]
\end{theorem}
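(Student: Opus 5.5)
This is the result of Cheng--Xu, quoted from \cite{CX23}. If one wanted to reprove it, I would follow Caffarelli's perturbation scheme for interior $W^{2,P}$ estimates of fully nonlinear uniformly elliptic equations. The required uniform ellipticity is not assumed; it would have to come from the $L^\infty$-closeness to $w_0$ through a small-perturbation (Savin-type) regularity theorem.

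\emph{Step one: regularity of a comparison solution.} Fix a ball $B_\rho(x_0)\subset B_{3/4}$ and solve the Dirichlet problem
\[
(dd^c h)^n=\omega_{\rm E}^n\ \text{ in } B_\rho(x_0),\qquad h=\phi\ \text{ on }\partial B_\rho(x_0),
\]
by Bedford--Taylor. Comparison gives
\[
\|h-\phi\|_\infty\leq C\delta_{\rm MA}\rho^2 .
\]
Subtracting the second-order Taylor polynomial of $w_0$ at $x_0$ and rescaling by $\rho^{-2}$, the bound $\|D^3w_0\|\leq C_0$ and $\|\phi-w_0\|_\infty\leq\delta_0$ show that the rescaled $h$ is $L^\infty$-close to a quadratic polynomial $Q$. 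The closeness is $C(\delta_0\rho^{-2}+C_0\rho)$, and $dd^cQ$ is comparable to $\omega_{\rm E}$. This requires $\delta_0\ll\rho^2$ at the unit scale. I would then apply the small-perturbation theorem for the concave operator $\log\det$, which holds near any quadratic with $dd^cQ$ comparable to $\omega_{\rm E}$. It would give $C^{2,\alpha}$ bounds on $h$ in $B_{\rho/2}(x_0)$, and in particular uniform ellipticity there.

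\emph{Step two: Caffarelli's power decay.} Use the comparison solutions as approximating solutions. Let $G_M$ be the set of points where $\phi$ is touched from above and below by paraboloids of opening $M$. I would show the measure decay
\[
|B_{1/2}\setminus G_{M^k}|\leq \sigma^k,\qquad \sigma\to0 \text{ as }\delta_{\rm MA}\to0 .
\]
This would follow by Calder\'on--Zygmund cube decomposition. On each cube, the rescaled $\phi$ is $\delta_{\rm MA}$-close to a $C^{2,\alpha}$ solution $h$ by Step one. An Aleksandrov--Bakelman--Pucci estimate for $\phi-h$ then controls the measure of the bad set. Here I would use that $\phi-h$ is a sub- or supersolution of the linearized operator, which is uniformly elliptic because $h$ is. Choosing $\sigma M^{P}<1$, i.e. $\delta_{\rm MA}=\delta_{\rm MA}(n,P)$ small, makes $\int M^{kP}|B\setminus G_{M^k}|$ summable, which gives $D^2\phi\in L^P(B_{1/2})$.

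\emph{Main obstacle.} I expect the main difficulty to be the iteration across scales. The closeness to a quadratic must survive at every scale where the cube decomposition is applied, but the data only give $\|\phi-w_0\|_\infty\leq\delta_0$ at the unit scale, which degenerates after rescaling by $\rho^{-2}$. I would first try to propagate the closeness inductively. At good points, the paraboloid bounds at level $M^k$ give ellipticity control. At each dyadic step, the comparison solution $h$ replaces $w_0$ as the new smooth reference: it has uniform $C^{2,\alpha}$ bounds and $dd^ch\geq c\,\omega_{\rm E}$. This keeps the constant $\delta_0$, which depends only on $C_0$ and $n$, sufficient at every scale. This bootstrapping is exactly where Cheng--Xu's argument is needed; its other steps are the standard Caffarelli machinery.
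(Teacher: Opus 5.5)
The paper gives no proof of this statement; it is imported from Cheng--Xu \cite{CX23} and used as a black box in Lemma~\ref{lem:metric-convergence}. Your opening sentence is therefore already the paper's entire treatment. Your reproof sketch, however, has a genuine gap exactly at the step you defer, and the mechanism you propose there fails as stated. You claim that at every dyadic generation the new comparison solution has uniform $C^{2,\alpha}$ bounds and $dd^c h\geq c\,\omega_{\rm E}$ with $c$ fixed. Combined with $\|\phi-h\|_\infty\leq C\delta_{\rm MA}\rho^2$ at each scale, summing the successive first-order Taylor polynomials of these references would show that $\phi$ is touched from above and below by paraboloids of one fixed opening at every point, i.e.\ $\phi\in C^{1,1}(B_{1/2})$. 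That is false under the hypotheses. Already for $n=1$ the equation is linear: with $z=x_1+\sqrt{-1}\,x_2$ one has $dd^c\phi=\tfrac14(\partial_{x_1}^2+\partial_{x_2}^2)\phi\;\omega_{\rm E}$. The function $\phi=|z|^2+\delta\,x_1x_2\log|z|$ satisfies $dd^c\phi=(1+\tfrac{\delta}{2}\sin 2\theta)\,\omega_{\rm E}$, where $\theta=\arg z$, and $\|\phi-|z|^2\|_{L^\infty(B_1)}\leq\delta$. Yet $\partial_{x_1}\partial_{x_2}\phi=\delta(\log|z|+O(1))$ is unbounded at the origin. The cause is visible in your own scheme. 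The right-hand side is only $L^\infty$-pinched, and the pinching does not improve under rescaling. So at each generation the rescaled $\phi$ is only $C\delta_{\rm MA}$-close to the current reference, whose Hessian can move by an amount of order $\delta_{\rm MA}$ per generation. These increments need not be summable; in the example they accumulate to $\delta\log(1/r)$. Control must be lost on some set whose measure has to be estimated, which is why the theorem only asserts $W^{2,P}$ with $\delta_{\rm MA}$ depending on $P$.

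Nor are the remaining steps standard Caffarelli machinery. In Caffarelli's scheme, the generation-$k$ step rescales $\phi$ around a point of $G_{M^k}$ by the opening $M^k$. One then only knows that the rescaled function is bounded by a paraboloid. This suffices because $X\mapsto F(M^kX)/M^k$ stays in a fixed uniformly elliptic concave class, so Evans--Krylov gives the approximating solution $C^{1,1}$ bounds independent of $k$. For complex Monge--Amp\`ere, the rescaled function satisfies $\det(\tilde\phi_{i\bar j})=fM^{-kn}$ with $f=(dd^c\phi)^n/\omega_{\rm E}^n$, so the equation degenerates. Boundedness alone gives no interior regularity: $(1+|z_1|^2)|z_2|$ on $\mathbb C^2$ has $\det\equiv\tfrac14$ but is not even $C^1$. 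Your Step one applies only where the rescaled $\phi$ is close to a quadratic whose Hessian is comparable to the identity, with a tolerance depending on that comparability. At a point of $G_{M^k}$ this comparability is known only up to a factor of order $M^{kn}$. Renormalizing by complex-linear maps with $|\det|=1$ preserves the equation but turns balls into ellipsoids. Controlling the resulting eccentricity, and the measure of the set where it degenerates, is what a genuine proof must supply, and your proposal leaves it entirely to \cite{CX23}. A smaller point in Step one: Savin's small-perturbation theorem requires the quadratic $Q$ to solve the equation, and the Taylor polynomial of $w_0$ need not, since the hypotheses do not tie $w_0$ to the equation. This is repairable: $L^\infty$-closeness forces $\det(Q_{i\bar j})$ close to $1$ by weak continuity of Monge--Amp\`ere measures, and a slight rescaling of $Q$ then fixes it.
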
 

\begin{lemma}\label{lem:metric-convergence}For every compact $K\Subset Y^\circ$, \begin{enumerate}
    \item $\|\oep-
    (f^*\ocan+\varepsilon\osrf)\|_{f^*\ocan+\varepsilon\mathchanged{\omega}}\to 0$ uniformly on $f^{-1}(K)$.
    \item $\sup_{y\in K}\diam(X_y,\omega_\varepsilon|_{X_y})\leq C_K\sqrt{\varepsilon}.$
\end{enumerate}
\end{lemma}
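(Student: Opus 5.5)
The plan is a blow-up (rescaling) argument in local product coordinates, fed by Lemma~\ref{lem:r-varepsilon}, Lemma~\ref{lem:local-potential} and the Cheng--Xu estimate (Theorem~\ref{thm:Cheng-Xu.}).

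\textbf{Setting up the rescaled charts.} Fix $y_0\in K$. Near $X_{y_0}$ choose finitely many holomorphic charts $(y,z)\in B^m\times B^d$ in which $f(y,z)=y$. For $\lambda=\varepsilon^{-1/2}$ consider the anisotropic rescaling $y=y_1+\sqrt\varepsilon\,\hat y$, with $z$ unchanged, centered at a point $y_1$ near $y_0$. Set $\hat\omega_\varepsilon=\varepsilon^{-1}\oep$ in the coordinates $(\hat y,z)$ on a unit polydisc $\mathcal P$. Lemma~\ref{lem:r-varepsilon} gives
\[
\hat\omega_\varepsilon=\varepsilon^{-1}f^*T_\varepsilon+\osrf+\ddc r_\varepsilon .
\]
After rescaling, $\varepsilon^{-1}f^*T_\varepsilon$ becomes a form in $\hat y$ whose coefficients are $T_\varepsilon(y_1+\sqrt\varepsilon\hat y)$. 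These converge uniformly to the constant form $\ocan(y_1)$ by the lemma on $T_\varepsilon$. The term $\osrf$, pulled back, converges smoothly to its restriction in the $z$ directions, because the $\hat y$-derivatives acquire a factor $\sqrt\varepsilon$. So the reference form $\hat\omega_0:=\ocan(y_1)+\osrf|_{\text{fiber at }y_1}$ is a smooth product metric on $\mathcal P$.

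\textbf{Applying Cheng--Xu.} Use Lemma~\ref{lem:local-potential} to write $\varepsilon^{-1}f^*T_\varepsilon+\osrf=\ddc w_\varepsilon$ with $w_\varepsilon\to w_0$ in $L^\infty$, where $\ddc w_0=\hat\omega_0$. The form $T_\varepsilon$ is only known to be $C^0$-close to $\ocan$, so I would choose $w_0$ from the smooth limit and control $w_\varepsilon-w_0$ in $L^\infty$ through the $L^\infty$ bound on the difference of the forms. The full potential is $\phi=w_\varepsilon+r_\varepsilon$, which satisfies $\|\phi-w_0\|_\infty\to0$ by Lemma~\ref{lem:r-varepsilon}. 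Its Monge--Amp\`ere measure is $\varepsilon^{-n}\cdot\varepsilon^{\,m}\oep^n$ in the new coordinates. By the cscK equation this equals $V_\varepsilon\varepsilon^{-d}e^{F_\varepsilon}\Omega$ (rescaled), and Lemma~\ref{lem:Linfty-density} together with \eqref{eq:cv} shows it converges uniformly to $\hat\omega_0^n$. After a linear change making $\hat\omega_0=\omega_E$ at the center, both hypotheses of Theorem~\ref{thm:Cheng-Xu.} hold for small $\varepsilon$. This yields uniform $W^{2,P}$ bounds on $\phi$, and hence a uniform $C^{1,\alpha}$ bound on $\phi$ and an $L^P$ bound on $\ddc\phi$.

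\textbf{Upgrading to $C^0$ convergence and concluding (the main obstacle).} The remaining step is to pass from $W^{2,P}$ to uniform smallness of $\ddc\phi-\hat\omega_0$. My first attempt would be the Chen--Cheng local estimates \cite{CC17} for the coupled cscK system in the rescaled chart. Rescaled, the system reads $\Delta_{\hat\omega}F=-\varepsilon(\underline R+\tr_{\oep}\chi)$, whose right side is $O(\varepsilon)$ by Lemma~\ref{lem:trace}, and $\log(\hat\omega^n/\hat\Omega)=F$. With $W^{2,P}$ bounds, the equation for $F$ gives $C^{1,\alpha}$ bounds on $F$. Evans--Krylov/Caffarelli-type theory for the Monge--Amp\`ere equation with $C^\alpha$ right side then gives $C^{2,\alpha}$ bounds on $\phi$, and bootstrapping gives $C^k$ bounds. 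Arzel\`a--Ascoli combined with $\phi\to w_0$ in $L^\infty$ then forces $\phi\to w_0$ in $C^2$, so $\hat\omega_\varepsilon\to\hat\omega_0$ uniformly. Unscaling, and covering $f^{-1}(K)$ by finitely many such charts (the centers $y_1$ range over the compact set $K$, with uniform constants), gives (1): the relative norm with respect to $f^*\ocan+\varepsilon\omega$ matches the rescaled Euclidean norm. For (2), (1) gives $\oep|_{X_y}\le 2\varepsilon\,\osrf|_{X_y}$ on fibers over $K$. The diameter of $\osrf|_{X_y}$ is uniformly bounded on the smooth compact family, so $\diam(X_y,\oep|_{X_y})\le C_K\sqrt\varepsilon$.
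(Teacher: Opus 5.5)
Your outline follows the paper's proof: the anisotropic blow-up $y=y_1+\sqrt\varepsilon\,\hat y$, the splitting of Lemma~\ref{lem:r-varepsilon}, a potential for the reference form obtained by applying Lemma~\ref{lem:local-potential} to its difference from the limit, $W^{2,P}$ bounds from Theorem~\ref{thm:Cheng-Xu.}, then Chen--Cheng's local estimates and compactness, with (2) read off from (1) on the fibers. You name the right tool for the last step, but the chain of implications you describe for it skips its actual content and does not hold as stated.

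A $W^{2,P}$ bound only gives $\tr_{\omega_E}\ddc\phi\in L^P$. At that stage $\Delta_{\hat\omega}=\hat g^{i\bar j}\partial_i\partial_{\bar j}$ is not known to be uniformly elliptic, and its coefficients are unbounded. So linear theory gives no $C^{1,\alpha}$ bound on $F$ from $\Delta_{\hat\omega}F=O(\varepsilon)$. Classical Evans--Krylov estimates also presuppose exactly the $C^{1,1}$ bound, i.e. $C^{-1}\omega_E\le\ddc\phi\le C\omega_E$, which is the point of this step.

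That two-sided bound is what \cite[Section~6]{CC17} provides. The paper feeds it the following inputs:
\begin{itemize}
\item $L^p$ bounds on both $\tr_{\omega_E}\ddc\phi$ and $\tr_{\ddc\phi}\omega_E$ with $p>3n(n-1)$;
\item the second of these comes from $\tr_\alpha\omega_E\le(\tr_{\omega_E}\alpha)^{n-1}\omega_E^n/\alpha^n$ and the lower density bound, which is why $P$ in Cheng--Xu must satisfy $P>\max\{2n,p,(n-1)p\}$;
\item the bound $|\phi|\le C$;
\item the constant, $O(\varepsilon)$, scalar curvature of $\ddc\phi$.
\end{itemize}
Your route could alternatively be repaired. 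Lemma~\ref{lem:gradient} rescales to $|\nabla F|_{\hat\omega}\le C\sqrt\varepsilon$, and $|\nabla F|_E^2\le(\tr_{\omega_E}\ddc\phi)\,|\nabla F|_{\hat\omega}^2$ puts $\nabla F$ in $L^{2P}$ with $2P>2n$, so $F$ is uniformly H\"older. You would still need a $C^{2,\alpha}$ estimate for complex Monge--Amp\`ere that applies to $W^{2,p}$ solutions, which is an external input of the same weight as Chen--Cheng's. Once uniform ellipticity is in hand, your bootstrap and Arzel\`a--Ascoli identification of the limit go through.

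There are also three smaller points.
\begin{itemize}
\item Theorem~\ref{thm:Cheng-Xu.} requires the pinching $(1\pm\delta_{\rm MA})\omega_E^n$ on the whole ball. Normalizing $\hat\omega_0=\omega_E$ at the center does not give it, since $\hat\omega_0^n/\omega_E^n$ in general varies in $z$. As in the paper, restrict to a fixed ball $B_{r_*}$ on which this ratio is within $\delta_{\rm MA}/2$ of $1$, dilate by $r_*$, and divide the potentials by $r_*^2$. This preserves $\|\phi-w_0\|_\infty\to0$ because $r_*$ is fixed.
\item The rescaled charts shrink with $\varepsilon$, so uniformity over the centers cannot come from a finite cover. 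Use a sequential argument ($\varepsilon_j\to0$, $x_j\to x_0$, as in the paper), or interpolate the uniform $C^k$ bounds against $\|\phi-w_0\|_\infty\to0$.
\item Lemma~\ref{lem:r-varepsilon} and the convergence $T_\varepsilon\to\ocan$ were established only for $d>0$. For $d=0$ no blow-up is needed: $f$ is biholomorphic on $X^\circ$, and Lemma~\ref{lem:sharp-schwarz-estimate} together with $\oep^n/(f^*\ocan)^n\to1$ forces all eigenvalues of $\oep$ relative to $f^*\ocan$ to tend to $1$.
\end{itemize}
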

\begin{proof}
If $d=0$, $f:X^\circ\to Y^\circ$ is biholomorphic.
The sharp Schwarz estimate gives
$\omega_\varepsilon\geq(1-o(1))f^*\omega_{\rm can}$, while
Lemma~\ref{lem:Linfty-density} gives
$\omega_\varepsilon^n/(f^*\omega_{\rm can})^n\to1$.
Every eigenvalue is bounded below by $1-o(1)$ and their product
tends to $1$; hence every eigenvalue tends to $1$.
This proves (1), with the convention $\omega_{\rm SRF}=\mathchanged{\omega}$.
The fibers are points, so (2) is immediate. Assume henceforth $d>0$.

Suppose $\varepsilon_j\to 0$, and choose arbitrary $x_j\in f^{-1}(K)$. After passing to subsequence, $x_j\to x_0\in f^{-1}(K)$. We fix a submersion coordinates $(y,z)\in \C^m\times \C^d$ around $x_0$, such that $f(y,z)=y$. Suppose $x_j=(y_j,z_j)$. Define on a fixed polydisc centered at zero\[
    D_j(w,\zeta)=(y_j+\sqrt{\varepsilon_j}w,z_j+\zeta), \quad \tom_j=\varepsilon_j^{-1}D_j^*\omega_{\varepsilon_j}
    \]
By Lemma~\ref{lem:r-varepsilon}, \[
\tom_j=\varepsilon_j^{-1}D_j^*(f^*T_{\varepsilon_j}+\varepsilon_j\osrf)+\ddc(r_{\varepsilon_j}\circ D_j).
\]
The reference forms converges \[
\varepsilon_j^{-1}D_j^*(f^*T_{\varepsilon_j}+\varepsilon_j\osrf)\to \omega_\infty:=\pr_1^*\ocan(y_0)+\pr_2^*\osrf|_{X_{y_0}}\quad \text{ in } L^\infty,
\]
where $\pr_1$ is the projection to $y$ and $\pr_2$ is the projection to $z$. 

The previous lemma provides $b_j$ such that \[
\ddc b_j=\varepsilon_{j}^{-1}D_j^*(f^*T_{\varepsilon_j}+\varepsilon_j\osrf)-\omega_\infty, \quad \|b_j\|_{L^\infty}\to 0.
\]
Fix $w_0$ with $\ddc w_0=\omega_\infty$. Then \[
\phi_j:=w_0+b_j+r_{\varepsilon_j}\circ D_j, \qquad \ddc\phi_j=\tom_j, \qquad \|\phi_j-w_0\|_{L^\infty}\to 0.
\]
Let $\omega_E=\ddc(|w|^2+|\zeta|^2)$, then we have the volume density convergence \[
\frac{(\ddc\phi_j)^n}{\omega_E^n}=\frac{\varepsilon_j^{-n}D_j^*(\omega_{\varepsilon_j}^n)}{\omega_E^n}\to \frac{\omega_\infty^n}{\omega_E^n}, \quad \text{ in } L^\infty.
\]
Choose $p>3n(n-1)$ and $P>\max\{2n,p,(n-1)p\}$. After a linear change, we can assume $\ddc w_0=\omega_E(0)$. Choose a fixed $r_*>0$ such taht \[
\left|\frac{(\ddc w_0)^n}{\omega_E^n}-1\right|<\frac{1}{2}\delta_{\MA}(n,P) \text{ on }B_{r_*}.
\]
Pull back by dilation $z\mapsto r_*z$ and divide the potentials by
$r_*^2$, retaining their names. The rescaled reference $w_0$ is fixed,
so its ellipticity and third derivatives are bounded by some $C_0$.
Since $r_*$ is fixed, for all
sufficiently large $j$,
\[
 \begin{gathered}
 \|\phi_j-w_0\|_\infty\leq\delta_0(C_0,n),\\
 (1-\delta_{\rm MA}(n,P))\omega_{\rm E}^n
 \leq(dd^c\phi_j)^n
 \leq(1+\delta_{\rm MA}(n,P))\omega_{\rm E}^n.
 \end{gathered}
\]
By Theorem~\ref{thm:Cheng-Xu.}, we get $\|\phi_j\|_{W^{2,P}(B_\frac12)}\leq C.$ Note that for any positive $(1,1)$-form $\alpha$, \[
\tr_\alpha\omega_E\leq (\tr_{\omega_E}\alpha)^{n-1}\frac{\omega_E^n}{\alpha^n}.
\]
Let $\alpha=\ddc\phi_j$, then we get\[
\|\tr_{\omega_E}\ddc\phi_j\|_{L^P(B_\frac12)}+\|\tr_{\ddc\phi_j}\omega_E\|_{L^{P/(n-1)}(B_\frac12)}\leq C.
\]
In particular both traces have bounded $L^p$ norm. 
We also have the cscK equation $R(\ddc\phi_j)=r_*^2\varepsilon_j\underline{R}_{\varepsilon_j}$. So $|R(\ddc\phi_j)|+|\phi_j|\leq C$.
Then we can apply Chen-Cheng's local estimate \cite[Section 6]{CC17}, on $B_\frac14$\[
C^{-1}\omega_E\leq \ddc\phi_j\leq C\omega_E, \qquad \|\phi_j\|_{C^k}\leq C_k,
\]for any $k>0$.
Hence we get the smooth convergence $\tom_j\to \omega_\infty$.

The model $\varepsilon_j^{-1}D_j^*(f^*\ocan
+\varepsilon_j\osrf)$ also converges to $\omega_\infty$.
Moreover, $\varepsilon_j^{-1}D_j^*(f^*\ocan
+\varepsilon_j\mathchanged{\omega})$ has a positive smooth limit. Invariance of
relative form norms under pullback and common scaling gives
\begin{equation}
 \begin{split}
&\left|\omega_{\varepsilon_j}-(f^*\ocan+\varepsilon_j\osrf)\right|_{f^*\ocan+\varepsilon_j\mathchanged{\omega}}(x_j)\\
 &\quad=\left|\tom_j-\varepsilon_j^{-1}D_j^*(f^*\ocan+\varepsilon_j\osrf)\right|_{\varepsilon_j^{-1}D_j^*(f^*\ocan+\varepsilon_j\mathchanged{\omega})}(0)\longrightarrow0.
 \end{split}
\end{equation}
The centers $x_j$ were arbitrary, so (i) is proved.

Restriction $\omega_\varepsilon$ to a fiber, we have 
$\varepsilon^{-1}\omega_\varepsilon|_{X_y}
\leq C_K\omega_{{\rm SRF},y}$. The fixed compact smooth family of
Ricci-flat fibers has uniformly bounded diameters. This proves (ii). 

\end{proof}

\section{Gromov-Hausdorff convergence from Lee--Tosatti--Zhang}
The main content of this section is a summary of the work of Sz\'ekelyhidi on the RCD structure on the canonical space and the argument from Lee--Tosatti--Zhang on the Gromov-Hausdorff convergence of the K\"ahler--Ricci flow. Those enable us to reduce the convergence to a key one-side estimate of the distance functions, Proposition~\ref{prop:lower-distance}.

One side of the comparision of the distance function is easier. It follows from the argument of Lee--Tosatti--Zhang \cite[Section 4.1]{LTZ26}, we summarize it here:
\begin{lemma}\label{lem:ltz-distance}
For $0<m\leq n$:
\begin{enumerate}
\item(\cite[Lemma~4.1]{LTZ26})
For every $\delta>0$, there is a compact $K\Subset Y^\circ$,
independent of $\varepsilon$, such that, for all sufficiently small
$\varepsilon>0$,
\[
 \sup_{y\in Y}d_{\rm can}(y,K)<\delta,\qquad
 \sup_{x\in X}d_{\omega_\varepsilon}(x,f^{-1}(K))<\delta.
\]

\item(\cite[equation~(4.14), using Lemma~4.2]{LTZ26})\label{item:ltz-regular-upper}
For every compact $K\Subset Y^\circ$,
\begin{equation}
 \sup_{x,x'\in f^{-1}(K)}
 \bigl(d_{\omega_\varepsilon}(x,x')
       -d_{\rm can}(f(x),f(x'))\bigr)_+\longrightarrow0.
\end{equation}
\end{enumerate}
\end{lemma}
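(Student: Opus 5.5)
For part (1), I will follow Lee--Tosatti--Zhang's argument, using the uniform estimates available here in place of the Kähler--Ricci flow estimates. For $Y$ I use Theorem~\ref{thm:completion}: $(Y,\dcan)$ is compact and $\dim_{\mathcal H}(Y\setminus Y^\circ)\le 2m-2$. Together with the noncollapsing of $\nu_{\rm can}$ on the RCD space, this shows $Y\setminus Y^\circ$ has empty interior and every point is within $\delta$ of $Y^\circ$. So I can take $K=\{y\in Y^\circ: \dcan(y,D)\ge\eta\}$, or an exhaustion by sublevel sets of a function comparable to the distance to $D$, with $\eta$ small. For $X$, I argue by contradiction. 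Suppose $x_\varepsilon$ satisfies $d_{\omega_\varepsilon}(x_\varepsilon,f^{-1}(K))\ge\delta$. Then the ball $B_{\omega_\varepsilon}(x_\varepsilon,\delta)$ lies in $f^{-1}(Y\setminus K)$. Theorem~\ref{thm:bounds}(3) gives $\mu_\varepsilon(B)\ge C^{-1}\delta^{a_{\rm vol}}$. On the other hand, $\mu_\varepsilon\le C\Omega$, and $\Omega(f^{-1}(Y\setminus K))\to0$ as $K$ exhausts $Y^\circ$, because $f^{-1}(D)$ is a proper analytic set. Choosing $K$ large enough, independent of $\varepsilon$, gives a contradiction. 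The two requirements on $K$ are compatible, since enlarging $K$ only helps both.

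For part (2), I fix $x,x'\in f^{-1}(K)$ and join $f(x),f(x')$ by a curve $\gamma$ in $Y$ whose $\dcan$-length is at most $\dcan(f(x),f(x'))+\eta$. The curve must then be pushed into a compact $K'\Subset Y^\circ$. I expect this to be the main obstacle. Two facts handle it. First, the completion is geodesic. Second, the singular set has codimension at least $2$ in the RCD sense (Theorem~\ref{thm:completion}(3)), so almost every perturbation of $\gamma$ avoids a small neighborhood of $D$. This is the content of \cite[Lemma~4.2]{LTZ26}, which I would quote: by a segment inequality/codimension argument on the noncollapsed RCD space, one finds a curve in some $K'\Subset Y^\circ$, depending only on $K$ and $\eta$, of length at most $\dcan(f(x),f(x'))+\eta$. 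Uniformity over $x,x'\in f^{-1}(K)$ then follows from compactness of $K\times K$ and a finite net.

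With $\gamma\subset K'$ fixed, I lift it horizontally with respect to $f^*\ocan+\varepsilon\osrf$, say using the $\osrf$-orthogonal horizontal distribution, to a curve $\tilde\gamma$ starting at $x$. Lemma~\ref{lem:metric-convergence}(1) gives $|\dot{\tilde\gamma}|_{\omega_\varepsilon}\le(1+o(1))|\dot\gamma|_{\ocan}+O(\sqrt\varepsilon)$ uniformly on $f^{-1}(K')$, where the error term comes from the horizontal part of $\varepsilon\osrf$. The endpoint of $\tilde\gamma$ lies in the fiber $X_{f(x')}$. By Lemma~\ref{lem:metric-convergence}(2), it can be joined to $x'$ within the fiber by a curve of length at most $C_{K'}\sqrt\varepsilon$. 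Combining these,
\[
d_{\omega_\varepsilon}(x,x')\le(1+o(1))\bigl(\dcan(f(x),f(x'))+\eta\bigr)+C_{K'}\sqrt\varepsilon .
\]
The diameter bound on $Y$ makes the $o(1)$ term uniform. Letting $\varepsilon\to0$ and then $\eta\to0$ gives (2).
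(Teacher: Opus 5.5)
Your proposal is correct and follows the paper's route. The paper feeds the same inputs into \cite[Lemmas~4.1--4.2]{LTZ26}: noncollapsing of $\mu_\varepsilon$, $\mu_\varepsilon\le C\Omega$ with $\Omega(f^{-1}(D))=0$, Sz\'ekelyhidi's compactness, local metric convergence and shrinking fiber diameters. Your contradiction argument for (1) and your horizontal-lift argument for (2) are exactly what those citations contain. One simplification: the step you flag as the main obstacle is elementary and needs no codimension argument. Since $d_{\rm can}$ on $Y^\circ$ is by definition the length distance of $\omega_{\rm can}$ on $Y^\circ$, near-minimizing curves between points of $K$ can be taken in $Y^\circ$ directly, and a finite net on $K\times K$ then gives a uniform compact $K'$.
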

\begin{proof}
The cited arguments are metric arguments and apply to the present
family with the following inputs. For each $K\Subset Y^\circ$,
\begin{equation}\label{eq:ltz-local-input}
 \begin{gathered}
 \|\omega_\varepsilon-f^*\omega_{\rm can}\|
       _{C^0(f^{-1}(K),\mathchanged{\omega})}\longrightarrow0,\\
 \sup_{y\in K}\diam(X_y,\omega_\varepsilon|_{X_y})
       \longrightarrow0.
 \end{gathered}
\end{equation}
Theorem~\ref{thm:bounds} gives, with constants independent of
$\varepsilon$,
\begin{equation}\label{eq:ltz-volume-input}
 \mu_\varepsilon\leq C\Omega,\qquad
 \mu_\varepsilon(B_{\omega_\varepsilon}(x,r))\geq C^{-1}r^{a_{\rm vol}}
 \quad(x\in X,\ 0<r<1).
\end{equation}
Since $f^{-1}(D)$ is proper analytic, $\Omega(f^{-1}(D))=0$.
Consequently, for a compact exhaustion $K_j\Subset Y^\circ$,
\[
 \sup_\varepsilon\mu_\varepsilon(X\setminus f^{-1}(K_j))
 \leq C\int_{X\setminus f^{-1}(K_j)}\Omega\longrightarrow0.
\]
Together with the theorem of Szekelyhidi~\ref{thm:completion}, these are the inputs to
\cite[Lemma~4.1]{LTZ26}. The regular path construction in
\cite[Lemma~4.2]{LTZ26} and \eqref{eq:ltz-local-input} give
\cite[equation~(4.14), in the proof of Lemma~4.3]{LTZ26}.
No evolution equation is used in these steps.
\end{proof}
The key to the global Gromov-Hausdorff convergence is the following reverse inequality. 
\begin{proposition}\label{prop:lower-distance}
For $0<m\leq n$,
\[
\sup_{x,x'\in X}
\big(d_{\rm can}(f(x),f(x'))-d_{\omega_\varepsilon}(x,x')\big)_+
\longrightarrow0 .
\]
\end{proposition}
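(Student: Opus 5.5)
Following the strategy announced in the introduction, we reduce the distance estimate to a Lipschitz lift statement. The reduction is by duality. On the length space $(Y,\dcan)$, for fixed $y_0=f(x)$ the function $v=\dcan(y_0,\cdot)$ is $1$-Lipschitz, and $d_{\rm can}(f(x),f(x'))=v(f(x'))-v(f(x))$. Suppose that for every $\delta>0$ and every $1$-Lipschitz $v$ we can find $v_\varepsilon\in C^\infty(X)$ with
\[
\limsup_{\varepsilon\to0}\|v_\varepsilon-f^*v\|_{L^\infty}\leq\delta,\qquad \Lip_{d_{\oep}}(v_\varepsilon)\leq 1+\delta .
\]
Then $\dcan(f(x),f(x'))\leq (1+\delta)d_{\oep}(x,x')+2\delta+o(1)$, and the diameter bound in Theorem~\ref{thm:bounds} turns this into the proposition. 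The only point needing care is uniformity in $(x,x')$. We handle it by compactness of $Y$ (Theorem~\ref{thm:completion}). Choose a finite $\delta$-net $\{y_k\}$ and lift only the finitely many functions $\dcan(y_k,\cdot)$; since each is $1$-Lipschitz, replacing $y_0$ by a nearby $y_k$ costs $O(\delta)$.

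\textbf{Construction of the lift.} By Lemma~\ref{lem:ltz-distance}(1), fix a compact $K\Subset Y^\circ$ that is $\delta$-dense in $Y$. First regularize $v$ on the RCD space, using the heat flow for $(Y,\dcan,\nu_{\rm can})$ (Theorem~\ref{thm:completion}(2)). By the Bakry--\'Emery gradient estimate in $RCD(-1,2m)$, the function $P_sv$ satisfies $\Lip(P_sv)\leq e^{s}$ and $\|P_sv-v\|_\infty\leq C\sqrt s$. On $Y^\circ$ it is smooth, and $|\nabla P_sv|_{\ocan}\leq e^s/\sqrt2$ in our normalization. Next set $\tilde v=\min_k\{P_sv(y_k)+(1+\delta)\dcan(y_k,\cdot)\}$ over a fine finite net $\{y_k\}\subset K$. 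Alternatively, use a McShane-type extension from a finite net in $K$, with smoothed distance functions built from regular geodesics through $Y^\circ$. Then pull back by $f$.

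On $f^{-1}(K')$, with $K'\Supset K$ compact, the sharp Schwarz estimate (Lemma~\ref{lem:sharp-schwarz-estimate}) gives
\[
|\nabla f^*w|_{\oep}\leq (1+o(1))\,f^*|\nabla w|_{\ocan}
\]
for any $w$ smooth on $Y^\circ$. So $f^*P_sv$ is $(1+o(1))e^s$-Lipschitz along every $\oep$-path that stays in $f^{-1}(K')$. The real difficulty is paths that pass near singular fibers, where neither the Schwarz estimate nor the smoothness of $P_sv$ is available.

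\textbf{Main obstacle: the region near $f^{-1}(D)$.} Here I would take
\[
v_\varepsilon(x)=\inf_{x'\in f^{-1}(K')}\bigl(f^*\tilde v(x')+(1+\delta)\,d_{\oep}(x,x')\bigr)
\]
(smoothed afterwards), which is automatically $(1+\delta)$-Lipschitz for $d_{\oep}$. It remains to show $v_\varepsilon\geq f^*\tilde v-\delta$ uniformly, which says that no $\oep$-shortcut through the singular region beats $\dcan$. For this I would use the volume estimates. By Theorem~\ref{thm:bounds}(3) every $\oep$-ball of radius $r$ has $\mu_\varepsilon$-mass at least $cr^{a_{\rm vol}}$. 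Meanwhile $\mu_\varepsilon(X\setminus f^{-1}(K_j))\to0$ uniformly, by the proof of Lemma~\ref{lem:ltz-distance}. Consider a short $\oep$-curve $\gamma$ between $f^{-1}(K')$ points. Thicken it to a tube and apply a Fubini/segment-type (Cheeger--Colding) argument, combined with $\mu_\varepsilon\leq C\Omega$ and the $L^p$ convergence in Corollary~\ref{cor:Lp-convergence-in-base-direction}. This should let us replace $\gamma$ by a nearby curve that spends only a small amount of length outside $f^{-1}(K_j)$. On the remaining part, Lemma~\ref{lem:sharp-schwarz-estimate} bounds the $\dcan$-length of $f\circ\gamma$ by $(1+o(1))$ times its $\oep$-length. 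The excursions have short $\oep$-length. Their endpoints are then $\dcan$-close, because $\dim_{\mathcal H}(Y\setminus Y^\circ)\leq 2m-2$ and $\overline{Y^\circ}$ is a complete length space, so short excursions cannot connect $\dcan$-far points. I expect this step, controlling excursions uniformly in $\varepsilon$ without Perelman's reduced volume, to be the hardest part. If the tube argument fails, I would first try a segment inequality for $\mu_\varepsilon$ obtained from the uniform Sobolev inequality (Theorem~\ref{thm:sobolev}), applied to $\mathbf 1_{X\setminus f^{-1}(K_j)}$.
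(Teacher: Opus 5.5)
Your reduction to a Lipschitz lift is exactly the paper's reduction to Lemma~\ref{lem:lip-lift-approximation}, and that part is fine: a finite $\delta$-net in $(Y,\dcan)$, lifting the finitely many functions $\dcan(y_k,\cdot)$, then the diameter bound.

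\textbf{The lift construction is circular.} Take the inf-convolution $v_\varepsilon(x)=\inf_{x'\in f^{-1}(K')}\bigl(\tilde v(f(x'))+(1+\delta)d_{\oep}(x,x')\bigr)$. For the net functions $v=\dcan(y_k,\cdot)$, the inequality $v_\varepsilon\geq f^*\tilde v-\delta$ that you still need is, up to $O(\delta)$, the statement $\dcan(f(x),f(x'))\leq(1+\delta)d_{\oep}(x,x')+O(\delta)$ for $x'\in f^{-1}(K')$. That is the proposition itself; the restriction on $x'$ is harmless by Lemma~\ref{lem:ltz-distance}(1). So the construction reduces nothing, and the heat-flow regularization plays no role in it.

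\textbf{The excursion step cannot be supplied by your tools.}
\begin{itemize}
\item A tube or segment argument of Cheeger--Colding type needs Bishop--Gromov, i.e.\ a lower Ricci bound. Here $\Ric(\oep)=-\chi-dd^cF_\varepsilon$, and only $F_\varepsilon$ and $|\nabla F_\varepsilon|_{\oep}$ are controlled.
\item The uniform Sobolev inequality, polynomial non-collapsing and $\mu_\varepsilon\leq C\Omega$ tell you that $X\setminus f^{-1}(K_j)$ has small measure. None of them says how almost-minimizing curves distribute, so they do not let you push a curve off that set. In Lee--Tosatti--Zhang's flow argument this is the role played by reduced length and reduced volume.
\item Your control of the excursion endpoints is also unjustified. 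The bound $\dim_{\mathcal H}(Y\setminus Y^\circ)\leq 2m-2$ says nothing about the $\oep$-geometry of $f^{-1}(Y\setminus K_j)$, where the Schwarz estimate is unavailable. What you do have, from $\tr_{\oep}\chi\leq C$, is $d_{FS}(f(a),f(b))\leq C\,\mathrm{Length}_{\oep}$. After that there is only the nonlinear modulus of continuity of $\mathrm{id}:(Y,d_{FS})\to(Y,\dcan)$, which does not add up over many excursions.
\end{itemize}

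\textbf{The missing idea} is to build the lift from an elliptic equation on $X$, for which smallness of the bad set \emph{in measure} is enough.
\begin{itemize}
\item On the base, the paper approximates $v$ by the solution $h$ of $\Delta_{\ocan}h=\lambda h+g$ with $g\in C^\infty(\C P^N)$ and $\Lip(h)<1+\delta$ (Lemmas~\ref{lem:sharp-lip} and~\ref{lem:approx-lip}). The sharp bound comes from Bochner with $\Ric(\ocan)\geq-\ocan$ on $Y^\circ$ and the zero-capacity cutoffs of Lemma~\ref{lem:zero-capacity}.
\item The lift is the solution of $\Delta_{\oep}h_\varepsilon=\lambda h_\varepsilon+f^*g$. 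Because $g$ is smooth on the ambient space, Lemma~\ref{lem:trace} gives $|\nabla f^*g|_{\oep}\leq C$ globally. Your $f^*P_sv$ has no such control near $f^{-1}(D)$.
\item The Bochner formula for $|\nabla h_\varepsilon|^2_{\oep}$ yields uniform $C^1$ bounds (Lemma~\ref{lem:gradient-estimate-on-total-space}). Its $dd^cF_\varepsilon$ term is integrated by parts using Lemma~\ref{lem:gradient}.
\item Then $h_\varepsilon\to f^*h$ uniformly. Local energy convergence on $f^{-1}(K)$ together with the sharp Schwarz estimate gives $\int_X(|\nabla h_\varepsilon|^2_{\oep}-\tfrac12\Lip(h)^2)_+\,d\mu_\varepsilon\to0$. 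On $X\setminus f^{-1}(K)$ this uses only the uniform gradient bound and the smallness of $\Omega(X\setminus f^{-1}(K))$.
\item Moser iteration (Lemma~\ref{lem:gradient-weak-to-sharp}) upgrades this $L^1$ smallness to $L^\infty$ smallness, which is the sharp Lipschitz bound.
\end{itemize}
No information about geodesics is needed anywhere.
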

We postpone the proof of this key proposition to next section, and show how the Gromov-Hausdorff convergence can be proved from it. 

\begin{proposition}\label{prop:distortion}\label{prop:upper-distance}
For $0<m\leq n$:
\begin{enumerate}
\item\label{item:distortion-uniform}
\[
 \sup_{x,x'\in X}\left|d_{\omega_\varepsilon}(x,x') -d_{\rm can}(f(x),f(x'))\right|\longrightarrow0.
\]
\item\label{item:distortion-gh}
\[
 d_{\rm GH}\bigl((X,d_{\omega_\varepsilon}),(Y,d_{\rm can})\bigr)\longrightarrow0.
\]
\end{enumerate}
\end{proposition}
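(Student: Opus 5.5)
We take Proposition~\ref{prop:lower-distance} as given. Then part (1) only needs the reverse one-sided bound
\[
\sup_{x,x'\in X}\bigl(d_{\omega_\varepsilon}(x,x')-d_{\rm can}(f(x),f(x'))\bigr)_+\longrightarrow0 .
\]
We obtain it by reducing to the regular region with Lemma~\ref{lem:ltz-distance}(1) and then applying Lemma~\ref{lem:ltz-distance}(2). Part (2) follows from (1) by a standard correspondence argument.

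For (1), fix $\delta>0$ and choose the compact $K\Subset Y^\circ$ from Lemma~\ref{lem:ltz-distance}(1), so that for all small $\varepsilon$ every point of $X$ lies within $d_{\omega_\varepsilon}$-distance $\delta$ of $f^{-1}(K)$. Given $x,x'\in X$, pick $\tilde x,\tilde x'\in f^{-1}(K)$ with $d_{\omega_\varepsilon}(x,\tilde x)<\delta$ and $d_{\omega_\varepsilon}(x',\tilde x')<\delta$. The triangle inequality and Lemma~\ref{lem:ltz-distance}(2) give
\[
d_{\omega_\varepsilon}(x,x')\le 2\delta+d_{\rm can}(f(\tilde x),f(\tilde x'))+o(1).
\]
Next we compare $d_{\rm can}(f(\tilde x),f(\tilde x'))$ with $d_{\rm can}(f(x),f(x'))$. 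Proposition~\ref{prop:lower-distance}, applied to the pairs $(x,\tilde x)$ and $(x',\tilde x')$, gives
\[
d_{\rm can}(f(x),f(\tilde x))\le d_{\omega_\varepsilon}(x,\tilde x)+o(1)<\delta+o(1),
\]
and the same bound for $(x',\tilde x')$. Hence
\[
d_{\rm can}(f(\tilde x),f(\tilde x'))\le d_{\rm can}(f(x),f(x'))+2\delta+o(1),
\]
where the $o(1)$ terms are uniform in the points. Combining the two estimates,
\[
d_{\omega_\varepsilon}(x,x')-d_{\rm can}(f(x),f(x'))\le 4\delta+o(1)
\]
uniformly on $X\times X$. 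Together with Proposition~\ref{prop:lower-distance}, letting $\varepsilon\to0$ and then $\delta\to0$ proves (1).

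For (2), consider the graph correspondence $\{(x,f(x))\}\subset X\times Y$. It is surjective onto $Y$ because $f$ is surjective. Here $Y$ is identified with $\overline{Y^\circ}$ via Theorem~\ref{thm:completion}, and $d_{\rm can}$ is understood as the extended distance on $Y$, continuous with respect to the homeomorphism, so that $d_{\rm can}(f(x),f(x'))$ makes sense for all $x,x'$. The distortion of this correspondence is exactly the supremum in (1), so $d_{\rm GH}\le\frac12\,\mathrm{dis}\to0$.

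The only delicate point is to make sure the uniformity in Lemma~\ref{lem:ltz-distance}(1) holds for all small $\varepsilon$ with one fixed $K$ for each $\delta$. The lemma states exactly this, so I expect no real obstacle beyond bookkeeping the $o(1)$ terms. The substantive difficulty is deferred to Proposition~\ref{prop:lower-distance}.
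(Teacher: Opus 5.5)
Your proposal is correct and follows the paper's proof essentially verbatim. You reduce to $f^{-1}(K)$ via Lemma~\ref{lem:ltz-distance}(1), apply Lemma~\ref{lem:ltz-distance}(2) there, use Proposition~\ref{prop:lower-distance} to move the endpoints back, and conclude with the graph correspondence of $f$. The only difference is the bookkeeping of constants ($4\delta+o(1)$ versus the paper's $7\delta$).
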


\begin{proof}
Fix $\delta>0$, and choose $K\Subset Y^\circ$ by
Lemma~\ref{lem:ltz-distance}\textup{(i)}. By
Lemma~\ref{lem:ltz-distance}\textup{(ii)} and
Proposition~\ref{prop:lower-distance}, for all sufficiently small
$\varepsilon>0$,
\[
 \begin{aligned}
 d_{\omega_\varepsilon}(z,z')
 &\leq d_{\rm can}(f(z),f(z'))+\delta
 &&(z,z'\in f^{-1}(K)),\\
 d_{\rm can}(f(x),f(x'))
 &\leq d_{\omega_\varepsilon}(x,x')+\delta
 &&(x,x'\in X).
 \end{aligned}
\]
For arbitrary $x,x'\in X$, choose $z,z'\in f^{-1}(K)$ with
\[
d_{\omega_\varepsilon}(x,z)<\delta,\qquad
d_{\omega_\varepsilon}(x',z')<\delta.
\]
The lower distance estimate then gives
\[
 d_{\rm can}(f(x),f(z))<2\delta,\qquad
 d_{\rm can}(f(x'),f(z'))<2\delta.
\]
Hence
\[
 \begin{aligned}
d_{\omega_\varepsilon}(x,x')&\leq2\delta+d_{\omega_\varepsilon}(z,z')\\
&\leq3\delta+d_{\rm can}(f(z),f(z'))\\
&\leq7\delta+d_{\rm can}(f(x),f(x')).
 \end{aligned}
\]
First $\varepsilon\to0$, then $\delta\downarrow0$, yields
\begin{equation}\label{eq:u9}
 \sup_{x,x'\in X}\bigl(d_{\omega_\varepsilon}(x,x')-d_{\rm can}(f(x),f(x'))\bigr)_+\longrightarrow0.
\end{equation}
Combining this with Proposition~\ref{prop:lower-distance} proves
\textup{(i)}. This is the metric reduction of
\cite[Lemma~4.3]{LTZ26}, with the uniform lower estimate also
controlling the endpoints outside $f^{-1}(K)$.

For \textup{(ii)}, the surjectivity of $f$ makes
$\mathcal R_f=\{(x,f(x)):x\in X\}$ a correspondence. Thus
\[
 \begin{aligned}
 d_{\rm GH}\bigl((X,d_{\omega_\varepsilon}),(Y,d_{\rm can})\bigr)
 &\leq\tfrac12\operatorname{dis}(\mathcal R_f)\\
 &=\tfrac12\sup_{x,x'\in X}
       |d_{\omega_\varepsilon}(x,x')-d_{\rm can}(f(x),f(x'))|
 \longrightarrow0.
 \end{aligned}\qedhere
\]
\end{proof}

\section{Proof of Proposition~\ref{prop:lower-distance}}

The key proposition can be reduced to the following lemma on Lipschitz approximation on the lift:\begin{lemma}\label{lem:lip-lift-approximation}
    For any $v\in \Lip(Y,\dcan)$ with $\Lip_{\dcan}(v)\leq 1$, and any $\delta>0$, there exits smooth functions $v_\varepsilon$ such that $\limsup_{\varepsilon\to 0}\|v_{\varepsilon}-f^*v\|_{L^\infty}\leq \delta$ and $\Lip_{d_{\oep}}(v_\varepsilon)\leq 1+\delta$.
 \end{lemma}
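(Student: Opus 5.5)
The plan is to build $v_\varepsilon$ as $f^*w$, corrected near the singular fibers, where $w$ is a good approximation of $v$ on $Y$. The pointwise gradient of $f^*w$ is then controlled on the regular part by the sharp Schwarz-type estimate (Lemma~\ref{lem:sharp-schwarz-estimate}) and elsewhere by the crude bound $\tr_{\omega_\varepsilon}\chi\le C$ (Lemma~\ref{lem:trace}). This two-region control reduces the statement to constructing $w$ on $Y$ alone. Note that $\Lip_{d_{\oep}}(v_\varepsilon)=\sqrt2\sup|\nabla v_\varepsilon|_{\omega_\varepsilon}$, so a pointwise gradient bound on all of $X$ is exactly what is needed.

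\textbf{Step 1 (regularization on $Y$).} Use the $\operatorname{RCD}(-1,2m)$ structure of $(Y,\dcan,\nu_{\rm can})$ from Theorem~\ref{thm:completion}. Let $P_t$ be the heat semigroup and set $w_t=P_tv$. Bakry--\'Emery gives $|\nabla w_t|\le e^{t}P_t|\nabla v|\le e^{t}$, and $\|w_t-v\|_\infty\to0$ as $t\to0$ because $v$ is Lipschitz on a compact space. On $Y^\circ$ the space is a smooth Riemannian manifold, so $w_t$ is smooth there by elliptic regularity. Fix $t$ with $e^t\le 1+\delta/4$ and $\|w_t-v\|_\infty\le\delta/4$. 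Then for every smooth $(1,0)$-covector we have $|\partial w_t|_{\ocan}\le (1+\delta/4)/\sqrt2$ pointwise on $Y^\circ$, matching the normalization used for $\Lip$.

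\textbf{Step 2 (regular region).} Fix $K\Subset Y^\circ$. Lemma~\ref{lem:sharp-schwarz-estimate}, in its dual form $|f^*\alpha|_{\omega_\varepsilon}^2\le(1+o_K(1))f^*|\alpha|^2_{\ocan}$ derived in its proof, gives
\[
|\nabla f^*w_t|_{\omega_\varepsilon}\le(1+o_K(1))\,f^*|\partial w_t|_{\ocan}\quad\text{on }f^{-1}(K).
\]
Hence $f^*w_t$ has local Lipschitz constant at most $1+\delta/2$ on $f^{-1}(K)$ for small $\varepsilon$.

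\textbf{Step 3 (near $D$; the main obstacle).} Near $f^{-1}(D)$ there is no sharp control. Lemma~\ref{lem:trace} only gives $|\nabla f^*w|_{\omega_\varepsilon}\le C|\partial w|_{\chi_Y}$, and $|\partial w_t|_{\chi_Y}$ need not even be bounded near $D$. I would therefore modify $w_t$ near $D$ so that it is \emph{locally constant} on a small neighborhood $U$ of $D$, at the cost of a small $L^\infty$ error. The construction has four parts.
\begin{enumerate}
\item By uniform continuity, cover $D$ by finitely many $\dcan$-balls of radius $r$ on which $\operatorname{osc}w_t\le\delta/8$.
\item Use $\dim_{\mathcal H}(Y\setminus Y^\circ)\le 2m-2$ (Theorem~\ref{thm:completion}), which gives zero capacity, to find a smaller closed neighborhood $U$ of $D$ and a transition region $A=U'\setminus U$ on which the $\dcan$-distance to $D$ is comparable to $r$.
\item Define $w$ as a \emph{McShane inf-convolution} of $w_t|_{Y\setminus U'}$ together with the locally constant values (ball averages) assigned on $U$. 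This keeps $\Lip_{\dcan}(w)\le 1+\delta/2$, provided the gaps between the assigned constants are at most $(1+\delta/4)$ times the $\dcan$-distances across $A$. This is exactly where the small oscillation chosen in part~(1) is used.
\item Smooth $w$ on $Y^\circ$ with a small loss in the Lipschitz constant.
\end{enumerate}
On $f^{-1}(U)$ the function $f^*w$ is locally constant, so its gradient vanishes there. Choosing $K\supset Y\setminus U$, Step~2 covers the rest of $X$.

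\textbf{Step 4 (conclusion).} Set $v_\varepsilon=f^*w$, after a final smoothing on $X$ if needed. Then $\|v_\varepsilon-f^*v\|_\infty\le\delta$, and $\sup_X|\nabla v_\varepsilon|_{\omega_\varepsilon}\le(1+\delta)/\sqrt2$ for $\varepsilon$ small, so $\Lip_{d_{\oep}}(v_\varepsilon)\le 1+\delta$ since $X$ is a smooth Riemannian manifold.

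The step I expect to be the main obstacle is Step~3. A function with small Lipschitz constant cannot be constant on a whole neighborhood of $D$ if $v$ varies along $D$. The local constancy must therefore hold only on separated pieces, with the jumps between pieces absorbed in the transition regions. Making this compatible with the Lipschitz bound $1+\delta$ requires the geometry of $\dcan$ near $D$: the noncollapsed RCD structure and the codimension-two singular set should ensure that points of $D$ can be separated by regular paths of nearly minimal length. If this direct construction fails, I would instead try to show that $\sup_{f^{-1}(Y\setminus K)}|\nabla f^*w_t|_{\omega_\varepsilon}$ can be made $\le1+\delta$ by using the uniform Sobolev inequality with the local Moser iteration of Lemma~\ref{lem:iteration}. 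That would mean adapting the argument of Lemma~\ref{lem:sharp-schwarz-estimate} on shrinking neighborhoods, with a cutoff whose energy is small by the capacity estimate.
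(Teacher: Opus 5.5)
\textbf{Step 3 is a genuine gap, and the flattening construction cannot work in general.} Steps 1--2 are fine.

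The problem is that if $w$ is locally constant on an open set $U\supset D$, then $w$ is constant on every connected component of $D$. For $m\ge2$, $D$ in general has positive-dimensional connected components $D_0$ (e.g.\ a discriminant divisor). A $1$-Lipschitz $v$ can oscillate on $D_0$ by far more than $2\delta$; for instance $v=\dcan(\cdot,z_j)$ with $z_j\in D_0$, which is exactly the kind of function needed to deduce Proposition~\ref{prop:lower-distance}. This contradicts $\|w-v\|_\infty\le\delta$.

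Splitting into ``separated pieces'' does not help. By continuity, $w$ must fail to be locally constant at some points of $D_0$ itself. Over those points you only have $|\nabla f^*w|_{\oep}^2\le C\,|dw|^2_{\chi_Y}$ from Lemma~\ref{lem:trace}. Here $C$ is not close to $1$, and $|dw|_{\chi_Y}$ is not controlled by $\Lip_{\dcan}(w)$. Your flattening does work when $D$ is finite, e.g.\ $m=1$, but not beyond that.

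The fallback fails for a structural reason. Moser iteration needs a differential inequality for $|\nabla f^*w_t|^2_{\oep}$ on all of $X$, but $f^*w_t$ satisfies no equation with respect to $\oep$. Its Bochner formula involves $\tr_{\oep}f^*(dd^cw_t)$ and higher derivatives of $w_t$, none of which is controlled near $D$. The sharp Schwarz iteration worked only because the function there was holomorphic, and only over $B_{2r}\Subset Y^\circ$, where the K\"ahler--Ricci flow comparison supplies the $L^2$ input. Likewise, the capacity cutoffs have small $\ocan$-energy, but the $\oep$-energy of their pull-backs is controlled only through $\chi_Y$, which dominates $\ocan$ rather than the reverse.

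\textbf{The missing idea is to stop pulling back and instead \emph{solve an equation on $(X,\oep)$}.} The paper proceeds as follows.
\begin{enumerate}
\item It chooses $g\in C^\infty(\C P^N)$ and $\lambda>1$ so that the solution of $\Delta_{\ocan}h=\lambda h+g$ satisfies $\|h-v\|_\infty<\delta$ and $\Lip(h)<1+\delta$ (Lemma~\ref{lem:approx-lip}, via the RCD Bochner inequality). Your heat-flow Step 1 could play the same role on $Y$.
\item It sets $v_\varepsilon=h_\varepsilon$, where $\Delta_{\oep}h_\varepsilon=\lambda h_\varepsilon+f^*g$. Because $g$ is smooth ambiently, $|\nabla f^*g|_{\oep}\le C$ on all of $X$.
\item With $\Ric(\oep)=-\chi-dd^cF_\varepsilon$, the Bochner formula for $|\nabla h_\varepsilon|^2$ has only controllable error terms. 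The $dd^cF_\varepsilon$ term is integrated by parts using $|\nabla F_\varepsilon|\le C$ (Lemma~\ref{lem:gradient}). The global uniform Sobolev inequality then gives a uniform gradient bound (Lemma~\ref{lem:gradient-estimate-on-total-space}).
\item One shows $h_\varepsilon\to f^*h$ uniformly on $X$, and in energy over each $f^{-1}(K)$ (Lemma~\ref{lem:convergence-approximation}).
\item Hence $\int_X(|\nabla h_\varepsilon|^2-\Lip(h)^2/2)_+\,d\mu_\varepsilon\to0$. Over $f^{-1}(K)$ this follows from the sharp Schwarz estimate. Near $f^{-1}(D)$ it holds because that set has small $\Omega$-measure while the gradient stays bounded.
\item Lemma~\ref{lem:gradient-weak-to-sharp} upgrades this $L^1$-smallness to a sup bound by a \emph{global} Moser iteration.
\end{enumerate}
No pointwise information near the singular fibers is ever needed, and that is precisely what your pullback construction lacks.
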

\begin{proof}[Lemma~\ref{lem:lip-lift-approximation} implies Proposition~\ref{prop:lower-distance}]

Fix $\delta>0$ and a finite $\delta$-net $\{z_1,\ldots,z_N\}$ in
$(Y,d_{\rm can})$. For $v_j(z)=d_{\rm can}(z,z_j)$,
\[
 \Lip(v_j)\leq1,\qquad
 d_{\rm can}(z,z')\leq2\delta+\max_j|v_j(z)-v_j(z')|.
\]
Choose $v_{\varepsilon,j}$ from the preceding lemma, 
and set
\[
 \begin{aligned}
 a_\varepsilon&=\max_j
       \bigl(\|v_{\varepsilon,j}-f^*v_j\|_\infty-\delta\bigr)_+\to0,\\
 b_\varepsilon&=\max_j
       \bigl(\Lip_{\omega_\varepsilon}(v_{\varepsilon,j})-1-\delta\bigr)_+\to0.
 \end{aligned}
\]
For all $x,x'\in X$,
\[
 \begin{aligned}
 d_{\rm can}(f(x),f(x'))
 &\leq2\delta+\max_j|v_j(f(x))-v_j(f(x'))|\\
 &\leq4\delta+2a_\varepsilon
                +\max_j|v_{\varepsilon,j}(x)-v_{\varepsilon,j}(x')|\\
 &\leq4\delta+2a_\varepsilon
                +(1+\delta+b_\varepsilon)d_{\omega_\varepsilon}(x,x').
 \end{aligned}
\]
Using $\diam(X,\omega_\varepsilon)\leq C$ from
Theorem~\ref{thm:bounds},
\[
 \limsup_{\varepsilon\to0}\sup_{x,x'\in X}
 \bigl(d_{\rm can}(f(x),f(x'))-d_{\omega_\varepsilon}(x,x')\bigr)_+
 \leq(4+C)\delta.
\]
Let $\delta\downarrow0$.
\end{proof}

We aim to prove Lemma~\ref{lem:lip-lift-approximation}.
Before moving to the proof, we introduce a zero-capacity lemma due to Sturm, which turned out to be important in the study of singular K\"ahler space, see \cite[Lemma 3.7]{Song14}\begin{lemma}\label{lem:zero-capacity}
    There are $\xi_j\in C_c^\infty(Y^\circ)$ with
$0\leq\xi_j\leq1$, $\xi_j\to1$ locally on $Y^\circ$, and
\[
 \int_Y\bigl((1-\xi_j)^2+|\nabla\xi_j|_{\omega_{\rm can}}^2\bigr)d\nu_{\rm can}\to0.
\]
\end{lemma}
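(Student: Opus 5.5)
The plan is to build $\xi_j$ by composing a logarithmic cutoff with a quasi-plurisubharmonic function that has poles along $D$, and to measure its energy with respect to the bounded-potential current $\omega_{\rm can}$ via Bedford--Taylor integration by parts. Since $D=Y\setminus Y^\circ$ is a proper analytic subset of the projective variety $Y$, the first step is to choose a $\chi_Y$-psh function $\sigma$ on $Y$, smooth on $Y^\circ$, with $\sigma\le -1$, $\sigma\to-\infty$ exactly on $D$, and with analytic singularities along $D$. For instance, take $\sigma=c\log\sum_k|s_k|^2_{h}-A$, where the $s_k$ are sections of $\mathcal O_Y(k_0)$ cutting out $D$ and $c$ is small enough that $\chi_Y+\ddc\sigma\ge0$. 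Then set
\[
 \xi_j=\eta\bigl(\log(-\sigma)/j\bigr)\quad\text{rescaled},
\]
where $\eta$ is a fixed cutoff that equals $1$ on $(-\infty,1]$ and $0$ on $[2,\infty)$. This is the standard $\log\log$ cutoff. It is smooth, compactly supported in $Y^\circ$, lies in $[0,1]$, and tends to $1$ locally uniformly on $Y^\circ$. The term $\int(1-\xi_j)^2\,d\nu_{\rm can}$ tends to $0$ by dominated convergence, because $\nu_{\rm can}=e^{\varphi_{\rm can}}f_*\Omega$ gives no mass to $D$.

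The main obstacle is the gradient term, because $\omega_{\rm can}$ is only known to be a current with bounded potential across $D$. Writing $\tau=\log(-\sigma)$, one has $|\nabla\xi_j|^2\le Cj^{-2}\mathbf 1_{\{j\le\tau\le2j\}}|\nabla\tau|^2_{\omega_{\rm can}}$. Moreover
\[
 |\nabla\tau|^2_{\omega_{\rm can}}\,\omega_{\rm can}^m=m\,\sqrt{-1}\partial\tau\wedge\bar\partial\tau\wedge\omega_{\rm can}^{m-1}.
\]
The key computation is that $\sqrt{-1}\partial\tau\wedge\bar\partial\tau=\frac{\sqrt{-1}\partial\sigma\wedge\bar\partial\sigma}{\sigma^2}\le -\ddc\tau + \frac{\ddc\sigma}{-\sigma}$. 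Since $\ddc\sigma\ge-\chi_Y$ and $\chi_Y\le C\omega$ for a smooth reference, this gives $\sqrt{-1}\partial\tau\wedge\bar\partial\tau\le-\ddc\tau+C\chi_Y$ in the sense of currents. With $\rho=(\tau/j-1)$ restricted to $\{j\le \tau\le 2j\}$, I would aim for
\[
 \int_Y|\nabla\xi_j|^2\,d\nu_{\rm can}\le \frac{C}{j}\int_{\{\tau\le 2j\}}\bigl(\chi_Y+\ddc\sigma\bigr)\wedge\omega_{\rm can}^{m-1}\cdot\frac1{-\sigma}+\cdots .
\]
To get there, integrate $\ddc$ of a truncation $\max(\tau,j)$ capped at $2j$ against $\omega_{\rm can}^{m-1}$ and use Stokes. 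The total mass $\int\chi_Y\wedge\omega_{\rm can}^{m-1}=V_Y$ is cohomological, and mixed Bedford--Taylor products of bounded potentials have the same mass computed on a resolution $\widetilde Y$, by the facts recalled earlier. This yields a bound of order $C/j\to0$. Each step is done on $\widetilde Y$, where $\pi^*\sigma$ and $\pi^*\varphi_{\rm can}$ are quasi-psh and bounded-minus-log, and the truncated functions $\max(\tau,j)$ are bounded. This makes the integration by parts legitimate.

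If the analytic estimate turns out delicate at points of $D$ where $\omega_{\rm can}$ degenerates, I would use a fallback instead. Theorem~\ref{thm:completion}(3) gives $\dim_{\mathcal H}D\le 2m-2$, and the noncollapsed RCD structure gives volume growth $\nu_{\rm can}(B(y,r))\le Cr^{2m}$. One can then cover $D$ by $N_k\lesssim r_k^{-(2m-2)-\epsilon'}$ balls of radius $r_k$ and take $\xi=\min_i$ of the distance cutoffs $\min(1,d_{\rm can}(\cdot,y_i)/r_k-1)_+$. This produces energy $\lesssim N_k r_k^{2m-2}\to0$ when $\epsilon'$ is handled using a covering with codimension strictly larger than $2$. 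That codimension may fail to be strict, so the Bedford--Taylor argument above is the primary route. Either way, the final step is a smoothing of these Lipschitz cutoffs on $Y^\circ$ by mollification in charts, which changes the energy by an arbitrarily small amount.
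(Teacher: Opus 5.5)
Your primary argument is correct, and it takes a different route from the paper. The paper does no computation for this lemma: it cites Sturm's lemma in the form \cite[Lemma~3.7]{Song14} and only checks that lemma's hypotheses. These are that $\omega_{\rm can}$ is smooth on $Y^\circ$, has bounded potentials, and is a K\"ahler current, $\omega_{\rm can}\ge C^{-1}\chi_Y$; the lower bound is derived from the flow trace bound $\tr_{\tilde\omega(t)}\chi\le C$ and its local limit. Your self-contained log-log cutoff argument shows that this lower bound is not needed. You only use three things: $\omega_{\rm can}$ is a smooth K\"ahler metric on $Y^\circ$, it has a bounded $\chi_Y$-psh potential on $Y$, and $\nu_{\rm can}(D)=0$. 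The citation buys brevity; your route buys weaker hypotheses and an explicit rate.

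A few corrections to tighten your version:
\begin{enumerate}
\item The identity is $\sqrt{-1}\partial\tau\wedge\bar\partial\tau=-\ddc\tau-\ddc\sigma/(-\sigma)$. You have the sign of the last term reversed, and it is exactly this minus sign that lets $\ddc\sigma\ge-\chi_Y$ be used. Consequently $-\tau$ is itself $\chi_Y$-psh, and $0\le\sqrt{-1}\partial\tau\wedge\bar\partial\tau\le\chi_Y+\ddc(-\tau)$ on $Y^\circ$.
\item On $\{\tau<2j+1\}\Subset Y^\circ$, the smooth measure $(\chi_Y+\ddc(-\tau))\wedge\omega_{\rm can}^{m-1}$ coincides with the Bedford--Taylor product built from the bounded potentials $\max(-\tau,-2j-1)$ and $\varphi_{\rm can}$. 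That product has total mass $V_Y$ on a resolution. Positivity alone then gives $\int_Y|\nabla\xi_j|^2_{\omega_{\rm can}}\,d\nu_{\rm can}\le m\|\eta'\|_\infty^2\,j^{-2}$.
\item The Stokes step with the two-sided truncation of $\tau$ is unnecessary. That truncation is in general neither quasi-psh nor minus quasi-psh, so it is the wrong object for Bedford--Taylor theory anyway.
\item No final mollification is needed, since $\xi_j\in C^\infty_c(Y^\circ)$ already.
\item Drop the fallback. The bound $\dim_{\mathcal H}D\le 2m-2$ gives neither covers by balls of a single radius nor smallness of $\sum_i r_i^{2m-2}$, so it does not imply zero capacity.
\end{enumerate}
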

The cited construction applies to the analytic set
$D=Y\setminus Y^\circ$: the current $\omega_{\rm can}$ has bounded
local potentials and is smooth on $Y^\circ$. The comparison-flow
trace bound and its local limit give
$\omega_{\rm can}\geq C^{-1}\chi_Y$ on $Y^\circ$; bounded local
potentials extend this inequality across $D$.
Thus the K\"ahler-current hypothesis of the cited lemma holds.

In this subsection, gradients and integrals are taken on the smooth manifold
$(Y^\circ,\omega_{\rm can})$, with measure $\nu_{\rm can}$.
The complement $D$ has measure and capacity zero. Thus we are in the
almost smooth setting of \cite[Definition~5]{S25RCD}, up to the constant
normalization of volume; the smooth open set is $Y^\circ$.
In particular,
\[
 W^{1,2}(Y,
 \ocan,\nu_{\rm can})
 =\overline{C^\infty_c(Y^\circ)}^{\|\cdot\|_{W^{1,2}}},
 \qquad
 \|u\|_{W^{1,2}}^2
 =\int_{Y^\circ}(u^2+|\nabla u|_{\omega_{\rm can}}^2)d\nu_{\rm can}.
\]
Indeed, for $u\in\Lip(Y)$,
\[
 \begin{aligned}
 \|(1-\xi_j)u\|_{L^2}&\leq\|u\|_{\infty}\|1-\xi_j\|_{L^2}\longrightarrow0,\\
 \|d((1-\xi_j)u)\|_{L^2}
 &\leq\|(1-\xi_j)du\|_{L^2}+\|u\|_{L^\infty}\|d\xi_j\|_{L^2}
 \longrightarrow0.
 \end{aligned}
\]
Smooth approximation of $\xi_ju$ in coordinate balls, followed by
density of Lipschitz functions in the metric Sobolev space, proves
the equality. The same cutoff estimate applies to any bounded
$u\in W^{1,2}_{\rm loc}(Y^\circ)$ with $du\in L^2$. in the following, without ambiguity, we write $W^{1,2}$ for $W^{1,2}(Y,\ocan,\nu_{\rm can}),$  write $\omega,\nu$ for $\ocan,\nu_{\rm can}$ and write $\Delta $ for $\Delta_{\ocan}$.

For $u\in W^{1,2}$, and $g\in L^\infty$, $u$ is a said to be a weak solution of $\Delta u=g$ if $\int_{Y^\circ}\langle \nabla u,\nabla v\rangle_{\ocan}d\nu_{\rm can}+\int_{Y^\circ}gvd\nu_{\rm can}=0$ for all $v\in W^{1,2}$.
For $\lambda>0$, $g\in L^\infty$, 
by the standard Lax-Milgram theorem, there is a unique $h\in W^{1,2}$ such that $\Delta h=\lambda h+g$. Moreover, $\|h\|_{L^\infty}\leq \lambda^{-1}\|g\|_{L^\infty}$, and if $g\in C^\infty(Y^\circ)$, then $h\in C^\infty(Y^\circ).$  Indeed, to see the $L^\infty$-bound, just take the test function to be $v=(h-\lambda^{-1}\|g\|_{L^\infty})_+$. 
\begin{lemma}
    \label{lem:Resolvent estimate}
Let $h\in W^{1,2}$ be the unique solution to $\Delta h=\lambda h+g$ for $\lambda>0$, and $g\in L^\infty$. Then $\Lip(h)\leq C_\lambda\|g\|_{L^\infty}.$    
\end{lemma}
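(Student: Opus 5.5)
The plan is to use the RCD$(-1,2m)$ structure of $(Y,\dcan,\nu_{\rm can})$ from Theorem~\ref{thm:completion}. In that setting the resolvent $(\lambda-\Delta)^{-1}$ is expressed through the heat semigroup $P_t$:
\[
 h=-\int_0^\infty e^{-\lambda t}P_tg\,dt .
\]
First I will check that the weak solution $h\in W^{1,2}$ constructed by Lax--Milgram coincides with this semigroup formula. The Cheeger energy of the RCD space agrees with the Dirichlet form $\int_{Y^\circ}|\nabla u|^2_{\ocan}d\nu_{\rm can}$ on $W^{1,2}=\overline{C_c^\infty(Y^\circ)}$, because we are in the almost smooth setting of \cite{S25RCD}. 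Hence the two Laplacians agree as self-adjoint operators, and the weak equation $\Delta h=\lambda h+g$ is exactly $h=-(\lambda-\Delta)^{-1}g$.

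Next I will use the $L^\infty$-to-Lipschitz regularization of the heat flow on RCD$(K,N)$ spaces with $K=-1$. This is the Bakry--\'Emery gradient estimate in its reverse, $L^\infty$-Lipschitz form:
\[
 |\nabla P_tg|^2\le \frac{2K}{e^{2Kt}-1}\,\|g\|_{L^\infty}^2,
\]
which for $K=-1$ gives $|\nabla P_tg|\le C\,(1+t^{-1/2})\,e^{t}\,\|g\|_{L^\infty}$ for $t>0$. The constant $1$ in $e^{t}$ comes from $K=-1$, with the exact constant depending on the normalization. The Sobolev-to-Lipschitz property of RCD spaces turns this into $\Lip(P_tg)\le C(1+t^{-1/2})e^{t}\|g\|_\infty$. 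Since $\Lip$ is a seminorm and lower semicontinuous under uniform convergence, integrating in $t$ gives
\[
 \Lip(h)\le C\|g\|_\infty\int_0^\infty e^{-\lambda t}e^{t}\bigl(1+t^{-1/2}\bigr)dt .
\]
This integral is finite when $\lambda>1$. For $0<\lambda\le1$, I would instead write $h=-\int_0^T e^{-\lambda t}P_tg\,dt+e^{-\lambda T}P_Th$ with $T=1$. The first term is Lipschitz with bound $C\|g\|_\infty$ by the same estimate. The second is Lipschitz with bound $C\|h\|_\infty\le C\lambda^{-1}\|g\|_\infty$, using the $L^\infty$ gradient estimate applied to $h$ and the bound $\|h\|_{L^\infty}\le\lambda^{-1}\|g\|_{L^\infty}$ proved above. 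This handles every $\lambda>0$, with $C_\lambda$ depending on $\lambda$, $m$ and the diameter.

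The main obstacle is the identification step. I must make sure that the singular set $D$ (of capacity zero, by Lemma~\ref{lem:zero-capacity}) causes no discrepancy between the smooth weak formulation on $Y^\circ$ and the metric-measure Laplacian, and that $\Lip$ with respect to $\dcan$ on the completion equals the essential sup of $|\nabla h|$ on $Y^\circ$. The equality of the Sobolev spaces was established just before the statement. Sobolev-to-Lipschitz, valid on RCD spaces, then gives the second point, because a $W^{1,2}$ function with $|\nabla h|\le L$ a.e.\ has a representative that is $L$-Lipschitz for $\dcan$. An alternative route, if one prefers to avoid the semigroup, is to apply Jiang's gradient estimate for solutions of Poisson equations on RCD spaces directly to $\Delta h=\lambda h+g$, whose right-hand side is in $L^\infty$. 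Either route gives the claimed bound $\Lip(h)\le C_\lambda\|g\|_{L^\infty}$.
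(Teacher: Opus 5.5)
Your proof is correct, but your main route differs from the paper's. The ``alternative'' you mention at the end is exactly what the paper does. The paper applies the local Lipschitz estimate for Poisson equations on balls of an $\mathrm{RCD}(-1,2m)$ space (\cite[Theorem~2.33]{HS26}) to $\Delta h=\lambda h+g$, using $\|\Delta h\|_{L^\infty}\le\lambda\|h\|_{L^\infty}+\|g\|_{L^\infty}\le 2\|g\|_{L^\infty}$. It then covers $Y$ by finitely many balls and passes to the Lipschitz representative as in Sz\'ekelyhidi.

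You instead write $h=-\int_0^\infty e^{-\lambda t}P_tg\,dt$ and integrate the Bakry--\'Emery $L^\infty$-to-Lipschitz regularization of the heat flow. This route is global and avoids local elliptic regularity and coverings. It only uses the $\mathrm{RCD}(-1,\infty)$ condition together with Sobolev-to-Lipschitz. It also gives a constant depending only on $\lambda$, not on $m$ or the diameter as you state. Its price is identifying the Lax--Milgram solution with the resolvent of the RCD heat semigroup, i.e.\ showing that the Cheeger energy of $(Y,\dcan,\nu_{\rm can})$ is the Dirichlet form on $\overline{C^\infty_c(Y^\circ)}$. The paper's route needs this same almost-smooth input implicitly, since the cited Poisson estimate concerns the RCD Laplacian.

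One correction simplifies your argument: the factor $e^{t}$ is spurious. For $K=-1$ the reverse Poincar\'e constant is $\frac{K}{e^{2Kt}-1}=\frac{1}{1-e^{-2t}}$ (up to normalization), which stays bounded for $t\ge 1$. Hence $\Lip(P_tg)\le C(1+t^{-1/2})\|g\|_{L^\infty}$, and the time integral converges for every $\lambda>0$. Your splitting at $T=1$ is valid but unnecessary. The growth $e^{t}$ belongs instead to the forward estimate $|\nabla P_tu|\le e^{t}P_t|\nabla u|$. Feeding that into the same resolvent formula gives $\Lip(h)\le\Lip(g)/(\lambda-\tfrac12)$, once you recall that $\Delta_{\ocan}=\tr_{\ocan}\ddc$ is half the Riemannian Laplacian. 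That is Lemma~\ref{lem:sharp-lip}, obtained without the Bochner formula and the capacity cut-offs.
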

\begin{proof}
    Szekelyhidi's theorem shows that the $(Y,\dcan,\nu_{\rm can})$ is a $RCD(-1,2m)$ space. \cite[Theorem 2.33]{HS26}(see also \cite{JKY14}) shows the quantitative Lipschitz regularity of solution of Poisson equations over balls.  Cover $Y$ by finite balls gives \[
    |\nabla h|(x)\leq C(\|h\|_{L^\infty}+\|\Delta h\|_{L^\infty})\leq C_\lambda\|g\|_{L^\infty},
    \] for $\nu_{\rm can}$-almost every $ x$.  Such $h$ has a unique $\Lip$-representative; see the proof of \cite[Proposition 9]{S25RCD}.
\end{proof}

\begin{lemma}\label{lem:sharp-lip}
    For $\lambda>1$ and $g\in \Lip(Y)$. The solution to $\Delta h=\lambda h+g$ satisfies the sharper $\Lip$-bound: \[
    \Lip(h)\leq \frac{1}{\lambda-1/2}\Lip(g).
    \]Moreover, $\|\lambda h+g\|_{L^\infty}\to 0$ as $\lambda\to \infty.$
\end{lemma}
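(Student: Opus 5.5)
The plan is to run a Bakry--\'Emery gradient argument for the resolvent equation $\Delta h=\lambda h+g$ on the RCD$(-1,2m)$ space $(Y,d_{\rm can},\nu_{\rm can})$. We then extract the second claim from the identity $\lambda h+g=\Delta h$ together with the resolvent convergence $\lambda h\to -g$.

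\emph{Gradient bound.} Since $\Ric(\ocan)=-\ocan+\omega_{WP}\geq-\ocan$ on $Y^\circ$, the Bochner formula there gives, for $w=|\nabla h|^2$ (with the paper's convention $\Delta=\tr\ddc$, so the Riemannian constants must be tracked; the target constant $1/2$ comes from this normalization),
\[
\Delta w\geq 2\operatorname{Re}\langle\nabla\Delta h,\nabla h\rangle-w
=2\lambda w+2\operatorname{Re}\langle\nabla g,\nabla h\rangle-w
\geq (2\lambda-1)w-2|\nabla g|\sqrt w .
\]
On the smooth part this formally gives $(2\lambda-1)\sqrt{w}\le 2\sup|\nabla g|$ at a maximum point, i.e. $\Lip(h)\le \Lip(g)/(\lambda-\tfrac12)$ after the normalization. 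Since $Y$ is singular, I will not use a maximum principle. Instead I will use the weak Bochner inequality valid on RCD$(-1,2m)$ spaces, tested against nonnegative $\phi\in W^{1,2}\cap L^\infty$. Equivalently, I can use the Bakry--\'Emery estimate $|\nabla P_t u|^2\le e^{2t}P_t|\nabla u|^2$ (with the appropriate normalization) for the heat semigroup.

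The cleanest route is the semigroup one. Write $h=-\int_0^\infty e^{-\lambda t}P_t g\,dt$, which is the resolvent formula given the sign convention $\Delta h-\lambda h=g$. By Bakry--\'Emery in the $L^\infty$-Lipschitz form $\Lip(P_tg)\le e^{ct}\Lip(g)$, where $c$ is the constant corresponding to $\Ric\ge -1$ in the paper's normalization (it should equal $1/2$), we get
\[
\Lip(h)\le\int_0^\infty e^{-\lambda t}e^{t/2}\Lip(g)\,dt=\frac{\Lip(g)}{\lambda-1/2}.
\]
This requires $\lambda>1/2$, which is ensured by the assumption $\lambda>1$. To pass from $|\nabla P_tg|\le e^{ct}P_t|\nabla g|$ almost everywhere to a global Lipschitz bound, I will use the Sobolev-to-Lipschitz property of RCD spaces. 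The identification of the Cheeger energy with the smooth Dirichlet form on $Y^\circ$ is the almost-smooth identification already recorded before Lemma~\ref{lem:Resolvent estimate}, which uses Lemma~\ref{lem:zero-capacity}. Uniqueness of the $W^{1,2}$ solution ensures that this semigroup $h$ is the Lax--Milgram solution.

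\emph{Convergence.} We have $\lambda h+g=-\int_0^\infty\lambda e^{-\lambda t}(P_tg-g)\,dt$. Since $g$ is Lipschitz on a compact RCD space, $\|P_tg-g\|_\infty\le C\Lip(g)\sqrt{t}$ (via Kuwada duality, $W_1(p_t(x,\cdot),\delta_x)\le C\sqrt t$ for small $t$), with a uniform bound $2\|g\|_\infty$ for large $t$. Hence
\[
\|\lambda h+g\|_\infty\le C\Lip(g)\int_0^\infty\lambda e^{-\lambda t}\sqrt t\,dt+o(1)=O(\lambda^{-1/2})\to0 .
\]

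The main obstacle is the rigorous justification of the Bakry--\'Emery Lipschitz contraction with the exact constant $1/2$ on this singular space. This involves matching the paper's complex normalization $|\nabla u|=\frac1{\sqrt2}|\nabla u|_{g}$ and $\Delta=\tr\ddc$ with the Riemannian RCD$(-1,2m)$ conventions, and ensuring that the Cheeger heat flow coincides with the Friedrichs extension on $Y^\circ$. I would settle the constant first by the smooth formal computation above, and then invoke the RCD$(K,N)$ gradient estimate $|\nabla P_tu|\le e^{-Kt}P_t|\nabla u|$ in Riemannian units, converting units at the end.
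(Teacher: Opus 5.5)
Your proposal is correct, but it follows a genuinely different route from the paper.

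\textbf{The paper's route.} The paper works only on the smooth locus. It uses interior regularity $h\in W^{3,p}_{\rm loc}(Y^\circ)$ and the a priori bound $|\nabla h|\in L^\infty$ from the preceding resolvent lemma. With $w=|\nabla h|_{\ocan}^2$ and $L=\sqrt2\,\Lip(g)/(2\lambda-1)$, it writes the Bochner inequality $\Delta w\geq(2\lambda-1)\sqrt w(\sqrt w-L)$ on $Y^\circ$. It tests this with $\xi_j^2(w-L^2)_+$ for the vanishing-capacity cutoffs of Lemma~\ref{lem:zero-capacity}, and uses $w\in L^\infty$ to kill the error term $\|z\|_\infty^2\int|\nabla\xi_j|^2$. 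For the second claim it uses the energy identity $\|\lambda h+g\|_{L^2}\leq(2\sqrt\lambda)^{-1}\|\nabla g\|_{L^2}$. It then upgrades this to $L^\infty$ by contradiction, combining the uniform Lipschitz bound on $\lambda h+g$ with the positive lower bound on volumes of $d_{\rm can}$-balls.

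\textbf{Your route.} You work globally on the RCD space. This relies on identifying the Cheeger energy with the closure of the smooth Dirichlet form on $C^\infty_c(Y^\circ)$; the paper records this, and its own argument needs it implicitly through the RCD-based resolvent lemma. Given that, the Lax--Milgram solution is $-\int_0^\infty e^{-\lambda t}P_tg\,dt$, and the Bakry--\'Emery/Kuwada contraction gives the sharp constant at once. Your normalization is right: $\Ric_{g_{\ocan}}\geq-g_{\ocan}$ and $e^{t\Delta_{\ocan}}=e^{(t/2)\Delta_{g_{\ocan}}}$, so $\Lip(P_tg)\leq e^{t/2}\Lip(g)$.

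\textbf{Comparison.} Your route needs no a priori gradient bound, no interior regularity and no cutoff argument. It also gives an explicit rate $O(\lambda^{-1/2})$ in the second claim. For that claim alone, $\|P_tg-g\|_\infty\to0$ together with dominated convergence in $\int_0^\infty e^{-s}\|P_{s/\lambda}g-g\|_\infty\,ds$ already suffices. The price is reliance on heavier RCD heat-flow machinery: Bakry--\'Emery, Kuwada duality or Sobolev-to-Lipschitz, and continuity of $P_tg$. Beyond the a priori bound, the paper uses only the classical Bochner formula on $Y^\circ$.
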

\begin{proof}
    From local regularity, $h\in W^{3,p}_{loc}(Y^\circ)$ for any $p>1$, and hence locally $C^2$. The previous lemma gives $|\nabla h|^2\in L^\infty$. Since $\Ric(\ocan)\geq -\ocan$, we have, on $Y^\circ$, \[
    \Delta|\nabla h|_\omega^2\geq \sum_{i,j}(|h_{ij}|^2+|h_{i\bar j}|^2)+(2\lambda-1)|\nabla h|_\omega^2+2\operatorname{Re}h_ig_{\bar i}\geq (2\lambda-1)|\nabla h|_\omega(|\nabla h|_\omega-L),
    \]where we take $L=(2\lambda-1)^{-1}\sqrt{2}\Lip(g)$. 
    Let $w=|\nabla u|_\omega^2$ and $z=(w-L^2)_+$. Test the Bochner formula with $\xi_j^2z$, where $\xi_j$ is the cut-off with capacity tends to $0$. Integration by parts and Young's inequality give
\[
 \begin{aligned}
\int\xi_j^2|\nabla z|_\omega^2
 +(2\lambda-1)\int\xi_j^2z\sqrt w(\sqrt w-L)
 &\leq-\int\xi_jz\langle \nabla \xi_j,\nabla w\rangle\\
 &\leq\frac14\int\xi_j^2|\nabla z|^2
       +\|z\|_\infty^2\int|\nabla\xi_j|^2.
 \end{aligned}
\]
Absorb the first term on the right. By  Fatou's lemma,
\[
 (\lambda-1)\int z\sqrt w(\sqrt w-L)=0
 \quad\Longrightarrow\quad z=0
 \quad\Longrightarrow\quad
 \Lip(h)\leq\frac{\Lip(g)}{\lambda-1/2}.
\]

For the uniform convergence, test the equation with $\lambda h+g$, we get \[
\|\lambda h+g\|_{L^2}\leq \frac{1}{2\sqrt{\lambda}}\|\nabla g\|_{L^2}\to 0. 
\]
After scaling, assume $\Lip(g)\leq 1$. For $\lambda>2$, $\Lip(\lambda h+g)\leq 3$. Hence if $\|\lambda h+g\|_{L^\infty}\ge a>0$, \[
\|\lambda h+g\|_{L^2}^2\geq \frac{a^2}{4}\inf_{y\in Y}\nu_{\rm can}(B_{\dcan}(y,a/6))>0.
\]A contradiction. 
\end{proof}

\begin{lemma}\label{lem:removable}For $\lambda>0$, $g\in L^\infty(Y)$, and $h\in C^\infty(Y^\circ)\cap L^\infty(Y^\circ)$, if $h$ satisfies $\Delta h=\lambda h+g$ pointwise on $Y^\circ$, then $h\in W^{1,2}$ and is the weak solution to $\Delta h=\lambda h+g$ on $Y$. 
\end{lemma}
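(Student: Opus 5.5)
We will show two things: first that $h$ has finite Dirichlet energy, so $h\in W^{1,2}$, and then that the pointwise equation passes to the weak formulation for every test function in $W^{1,2}$. Both steps use the Sturm cutoffs $\xi_j$ of Lemma~\ref{lem:zero-capacity} and the description of $W^{1,2}$ as the closure of $C^\infty_c(Y^\circ)$.

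\textbf{Energy bound.} Set $M=\|h\|_{L^\infty}$. Multiply the equation by $\xi_j^2h$ and integrate over $Y^\circ$ against $\nu$; this is legitimate because $\xi_j^2 h\in C^\infty_c(Y^\circ)$. Integration by parts gives
\[
\int\xi_j^2|\nabla h|^2\,d\nu+2\int\xi_jh\langle\nabla\xi_j,\nabla h\rangle\,d\nu=-\int\xi_j^2h(\lambda h+g)\,d\nu .
\]
By Young's inequality,
\[
\tfrac12\int\xi_j^2|\nabla h|^2\,d\nu\le 2M^2\int|\nabla\xi_j|^2\,d\nu+M(\lambda M+\|g\|_\infty).
\]
The first term on the right tends to zero by Lemma~\ref{lem:zero-capacity}, so the right side is bounded independently of $j$. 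Since $\xi_j\to1$ locally on $Y^\circ$, Fatou's lemma gives $\int_{Y^\circ}|\nabla h|^2\,d\nu<\infty$. Then $h$ is bounded, lies in $W^{1,2}_{\rm loc}(Y^\circ)$, and has $dh\in L^2$. By the cutoff remark after Lemma~\ref{lem:zero-capacity}, $\xi_jh\to h$ in $W^{1,2}$, hence $h\in W^{1,2}$.

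\textbf{Weak equation.} Fix $v\in W^{1,2}$ and first assume $v\in C^\infty_c(Y^\circ)$. Integrating the pointwise equation against $v$ gives
\[
\int\langle\nabla h,\nabla v\rangle\,d\nu+\int(\lambda h+g)v\,d\nu=0 .
\]
For general $v\in W^{1,2}$, choose $v_k\in C^\infty_c(Y^\circ)$ with $v_k\to v$ in $W^{1,2}$. Both terms are continuous in $v$ for the $W^{1,2}$ norm: the first because $\nabla h\in L^2$, the second because $\lambda h+g\in L^\infty\subset L^2$ ($\nu$ has finite mass). Passing to the limit gives the identity for all $v\in W^{1,2}$. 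This is exactly the weak formulation of $\Delta h=\lambda h+g$, with right side $\lambda h+g\in L^\infty$. Uniqueness from Lax--Milgram then identifies $h$ with the weak solution.

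\textbf{Main obstacle.} The only delicate point is the energy bound. A bounded smooth function on $Y^\circ$ need not a priori have finite energy near $D$. The resolution is the vanishing capacity of the cutoffs: the cross term is controlled by $M^2\int|\nabla\xi_j|^2\,d\nu\to0$, so no information about $h$ near $D$ is needed.
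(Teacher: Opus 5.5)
Your proposal is correct and follows essentially the same argument as the paper: you test with $\xi_j^2h$, absorb the cross term by Young's inequality using the vanishing energy of the Sturm cutoffs, apply Fatou to get $\nabla h\in L^2$, and then extend the weak equation from $C^\infty_c(Y^\circ)$ to all of $W^{1,2}$ by density. You also make explicit the density and continuity steps that the paper compresses into its final sentence, which is a helpful addition.
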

\begin{proof}
   Let $\xi_j$ be the sequence of cut-offs with capacity tends to $0$. Choose test function $\xi_j^2h$, then \[
   \begin{aligned}
\int\xi_j^2|\nabla h|^2+\lambda\int\xi_j^2h^2
 &=-\int g\xi_j^2h-2\int\xi_jh\langle \nabla h,\nabla \xi_j\rangle\\
&\leq\|g\|_{L^\infty}\|h\|_{L^\infty}+\frac12\int\xi_j^2|\nabla h|^2+2\|h\|_{L^\infty}^2\int|\nabla \xi_j|^2.
 \end{aligned}
   \]
   Here all integrals use $d\nu_{\rm can}$, and gradients are taken with respect to the metric $\ocan$.
By Fatou's lemma  $\nabla h\in L^2$. It follows that $h\in W^{1,2}$. The weak equation holds globally by taking limit of test functions in $W^{1,2}$. 
\end{proof}

\begin{lemma}\label{lem:approx-lip}
For any $v\in \Lip(Y)$ with $\Lip(v)\leq 1$, and every $\delta>0$, there exists $\lambda>1$ and a smooth function $g$ on $\C P^N$, such that the solution to the equation $\Delta h=\lambda h+g|_Y$ satifies \[
h\in C^\infty(Y^\circ)\cap \Lip(Y), \quad \|h-v\|_{L^\infty}<\delta, \quad \Lip(h)<1+\delta.
\]
\end{lemma}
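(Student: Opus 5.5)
The plan is a two-step approximation: first replace $v$ by a smooth ambient function $g$ with controlled Lipschitz constant on $Y$, then smooth through the resolvent of Lemma~\ref{lem:sharp-lip}. The natural choice is to take $g$ with $g|_Y\approx -\lambda v$, so that $h\approx v$. For a $\Lip$ function $v$ on $(Y,\dcan)$ one cannot directly extend to $\C P^N$ with $\dcan$-Lipschitz control, since $\dcan$ and the ambient Fubini--Study distance are only comparable on one side ($\ocan\ge C^{-1}\chi_Y$, so $d_{FS}|_Y\le C\dcan$). So I would use the resolvent twice. First I smooth $v$ by the resolvent itself. Let $h_1$ solve $\Delta h_1=\lambda_1 h_1-\lambda_1 v$. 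By Lemma~\ref{lem:sharp-lip} applied with $g=-\lambda_1 v$, $\Lip(h_1)\le \lambda_1/(\lambda_1-1/2)$, and $\|h_1-v\|_\infty=\lambda_1^{-1}\|\lambda_1 h_1-\lambda_1 v\|_\infty\to0$ as $\lambda_1\to\infty$; indeed the last claim of Lemma~\ref{lem:sharp-lip} gives $\|\lambda h+g\|_\infty\to0$, so even better $\|h_1-v\|_\infty\to0$. However $h_1$ is only smooth on $Y^\circ$, and its right-hand side $v$ is not the restriction of a smooth ambient function. I would therefore approximate $v$ itself uniformly on $Y$ by restrictions of smooth functions on $\C P^N$. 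By Stone--Weierstrass, $Y$ is compact and homeomorphic to $\overline{Y^\circ}$ (Theorem~\ref{thm:completion}), and restrictions of smooth ambient functions form a separating subalgebra of $C^0(Y)$. This gives $g_k\in C^\infty(\C P^N)$ with $\|g_k|_Y-v\|_\infty\to0$, but with no Lipschitz control.

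The key observation is that the Lipschitz bound of the resolvent can be obtained without a Lipschitz bound on $g$, at the cost of linearity. Write $h=h_v+h_e$, where $h_v$ solves the equation with datum $-\lambda v$ and $h_e$ with datum $-\lambda(g_k|_Y-v)$. Lemma~\ref{lem:Resolvent estimate} gives $\Lip(h_e)\le C_\lambda\lambda\|g_k|_Y-v\|_\infty$. For fixed $\lambda$ this tends to $0$ as $k\to\infty$, and also $\|h_e\|_\infty\le\|g_k|_Y-v\|_\infty$. Meanwhile $\Lip(h_v)\le\lambda/(\lambda-1/2)$ and $\|h_v-v\|_\infty\to0$ as $\lambda\to\infty$. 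So first choose $\lambda>1$ large so that $\lambda/(\lambda-1/2)<1+\delta/2$ and $\|h_v-v\|_\infty<\delta/2$, then choose $k$ so that $\Lip(h_e)<\delta/2$ and $\|h_e\|_\infty<\delta/2$. Set $g=-\lambda g_k$; then the solution $h$ of $\Delta h=\lambda h+g|_Y$ satisfies both bounds.

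Regularity: since $g|_Y$ is smooth on $Y^\circ$, the Lax--Milgram solution lies in $C^\infty(Y^\circ)$ by the remark preceding Lemma~\ref{lem:Resolvent estimate}. It is Lipschitz on $Y$ by Lemma~\ref{lem:Resolvent estimate}, using the unique Lipschitz representative. The main point to check is that the bound $\|h_v-v\|_\infty\to0$ in Lemma~\ref{lem:sharp-lip} is uniform enough. It is, since it is applied to a single fixed $v$ with $\Lip(v)\le1$, so no uniformity in $v$ is needed. A second point is that Lemma~\ref{lem:sharp-lip} gives the Lipschitz bound for the weak solution, and the decomposition uses only linearity and uniqueness of weak solutions. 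I expect the only delicate step is justifying the Stone--Weierstrass density on the singular $Y$ in the $\dcan$-topology, which rests on the canonical homeomorphism in Theorem~\ref{thm:completion}.
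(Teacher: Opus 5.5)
Your proposal is correct and is essentially the paper's proof. You fix $\lambda$ via Lemma~\ref{lem:sharp-lip} applied to the datum coming from $v$, approximate $v$ uniformly on $Y$ by some $g_0\in C^\infty(\C P^N)$ using Stone--Weierstrass, set $g=-\lambda g_0$, and control the difference from $h_v$ by linearity together with Lemma~\ref{lem:Resolvent estimate}.

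One small tidy-up concerns your first paragraph. The convergence $\|h_v-v\|_\infty\to0$ should come from applying the last claim of Lemma~\ref{lem:sharp-lip} to the \emph{fixed} datum $v$ and then rescaling by $-\lambda$, as the paper does with $\Delta h_v=\lambda h_v+v$. It should not come from applying that claim to the $\lambda$-dependent datum $-\lambda v$. As a result, your ``even better'' remark is unjustified, though it is never used.
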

\begin{proof}By Lemma~\ref{lem:sharp-lip}, we can choose $\lambda>1$ such that the solution $\Delta h_v
=\lambda h_v+v$, satisfies \[
\Lip(\lambda h_v)\leq \frac{\lambda}{\lambda-1/2}\Lip(v)\leq 1+\frac{\delta}{3},\qquad \|\lambda h_v+v\|_{L^\infty} \leq \frac{\delta}{3}.
\]

By Stone-Weierstrass theorem, the smooth function on $\C P^N$ are dense in $C(Y)$. So we can choose such $g_0\in C^\infty(\C P^N)$ such that $\|g_0-v\|_{L^\infty}<\min\{\delta/3,\delta/(3\lambda C_\lambda)\},$ and set $ g=-\lambda g_0.$ Then we have 
\begin{align*}
 \|h-v\|_{L^\infty}
 &\leq\|h+\lambda h_v\|_{L^\infty}
          +\|\lambda h_v+v\|_{L^\infty}\leq\lambda C_\lambda\|g_0-v\|_{L^\infty}+ \frac{\delta}{3}\leq \frac{3\delta}{3},\\
 \Lip(h)
 &\leq\Lip(\lambda h_v)
          +\lambda C_\lambda\|g_0-v\|_\infty<1+2\delta/3.\qedhere
\end{align*}
\end{proof}

Next, we show that the same method can be carried out also on $(X,\omega_\varepsilon)$, thanks to the previous uniform estimates. The next two lemmas use only a Ricci-potential identity, a Sobolev
inequality, and first-derivative bounds. We state these assumptions
explicitly so that the lemmas also apply to the K\"ahler--Ricci flow. And in the next two lemmas, we use $\omega$ to denote any K\"ahler form on $X$ with the assumptions stated in its statement.

\begin{lemma}\label{lem:gradient-estimate-on-total-space}
  Let $\omega$ vary among K\"ahler metrics on $X$ and put
$\mu=\omega^n/\int_X\omega^n$. Suppose, with common constants,
\[
 \begin{gathered}
 \Ric(\omega)=-\chi-dd^cF,\qquad \chi\geq0,\qquad
 \tr_\omega\chi+|\nabla F|_\omega^2\leq C,\\
 \|a\|_{L^{2q}(\mu)}^2
       \leq C_S\int_X(|\nabla a|_\omega^2+a^2)\,d\mu,\qquad q>1.
 \end{gathered}
\]
Fix $\lambda>0$. If
$\Delta_\omega H=\lambda H+G$ and
$\|G\|_{L^\infty}+\|\nabla G\|_{L^\infty}\leq C$, then
\[
 \|H\|_{L^\infty}+\|\nabla H\|_{L^\infty}\leq C'.
\]
The constant $C'$ depends only on $X,C$ and $\lambda$.
\end{lemma}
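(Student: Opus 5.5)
The $L^\infty$ bound comes first, from the maximum principle. At a maximum point of $H$ we have $\Delta_\omega H\le 0$, so $\lambda H\le -G\le \|G\|_\infty$. Arguing at a minimum in the same way gives
\[
\|H\|_{L^\infty}\le \lambda^{-1}\|G\|_{L^\infty}.
\]
This step needs no geometric input.

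For the gradient I would run a Bochner argument followed by Moser iteration, modelled on the proof of Lemma~\ref{lem:gradient}. Put $w=|\nabla H|_\omega^2$. The Bochner formula together with $\Ric(\omega)=-\chi-\ddc F$ gives
\[
\Delta_\omega w=\sum_{i,j}\bigl(|H_{ij}|^2+|H_{i\bar j}|^2\bigr)+2\lambda w+2\operatorname{Re}\langle\nabla G,\nabla H\rangle-\chi(\nabla H,\overline{\nabla H})-\ddc F(\nabla H,\overline{\nabla H}).
\]
The $\chi$ term is bounded by $(\tr_\omega\chi)w\le Cw$, and the $G$ term by $Cw^{1/2}\le C(1+w)$.

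The only dangerous term is $\ddc F(\nabla H,\overline{\nabla H})$, because we have no bound on the Hessian of $F$. I see two ways to handle it.
\begin{enumerate}
\item The weight trick of Lemma~\ref{lem:gradient}. Consider $e^{bF}w$; the cross term $2b\operatorname{Re}\langle\nabla F,\nabla w\rangle$ contains $2b\,F_{i\bar j}H_jH_{\bar i}$, and $b=\tfrac12$ cancels the mixed-Hessian coefficient of $F$. What remains contains $F_{ij}$, which we also do not control, so this route may leave an uncontrolled term.
\item Integration by parts inside the iteration, which avoids the issue entirely and is the route I would take. Test the inequality with $w^{p-1}$. The bad term becomes $\int w^{p-1}F_{i\bar j}H_jH_{\bar i}\,d\mu$. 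Move $\partial_i$ onto the other factors using the measure $d\mu$. This produces terms $|\nabla F|\,w^{p-2}|\nabla w|\,w$ and $|\nabla F|\,w^{p-1}|H_{ij}|\,|\nabla H|$, together with the same expression with $H_{i\bar j}$ in place of $H_{ij}$.
\end{enumerate}

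In the second route the terms are absorbed as follows.
\begin{itemize}
\item By Young's inequality and $|\nabla F|\le C$, the Hessian terms are absorbed into the good term $w^{p-1}\sum_{i,j}(|H_{ij}|^2+|H_{i\bar j}|^2)$, at a cost of $Cw^p$.
\item The term $w^{p-1}|\nabla w|$ is absorbed into $(p-1)w^{p-2}|\nabla w|^2$, at a cost of $Cw^p$.
\item Since $d\mu=e^{F}\Omega$ up to a constant and $\Ric(\omega)$ relates to $\Omega$, the divergence of $d\mu$ involves no extra factors beyond $F$ and $\Omega$, so nothing else appears.
\end{itemize}
Alternatively, one can avoid the measure computation by writing $\int\psi\,F_{i\bar j}H_jH_{\bar i}=-\int\bigl(\partial_i\psi\,H_jH_{\bar i}+\psi\,\partial_i(H_jH_{\bar i})\bigr)F_{\bar j}$ with divergence taken against $\omega^n$. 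This is exactly the step in Lemma~\ref{lem:sharp-schwarz-estimate}, where $|\nabla(f^*\omega_a)|\le Cu^{1/2}|\nabla^{1,0}h|$, carried out now with the full Hessian.

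After absorption, set $u=1+w$. The iteration inequality is
\[
\int_X|\nabla u^{p/2}|^2\,d\mu\le Cp^2\int_X u^p\,d\mu .
\]
Feed this into the assumed Sobolev inequality and iterate along $p_k=2q^k$, exactly as in Lemma~\ref{lem:trace}. This gives $\|u\|_{L^\infty}\le C\|u\|_{L^1}$. The starting bound comes from testing the equation with $H$:
\[
\int_X w\,d\mu=-\int_X(\lambda H^2+GH)\,d\mu\le C,
\]
using the $L^\infty$ bound on $H$. Altogether this yields $\|\nabla H\|_\infty\le C'$, with $C'$ depending only on $C$, $C_S$, $q$ and $\lambda$.

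The main obstacle is the term $\ddc F(\nabla H,\overline{\nabla H})$. Both the $F_{ij}$ and the $F_{i\bar j}$ parts must be handled purely through $|\nabla F|\le C$, and the integration by parts has to be arranged so that only Hessians of $H$ appear, never Hessians of $F$. Those Hessians of $H$ are then absorbed by the Bochner good terms.
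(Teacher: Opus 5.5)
Your route (2) is correct and is essentially the paper's proof: the paper also tests the Bochner identity with $(1+|\nabla H|_\omega^2)^{p-1}$ and integrates $\int\psi F_{i\bar j}H_jH_{\bar i}\,d\mu$ by parts (moving $\nabla_{\bar j}$ rather than $\nabla_i$; since $\mu$ is just the normalized Riemannian volume of $\omega$, no weight terms arise), then absorbs the resulting $H_{ij}$ and $\nabla\psi$ terms and iterates from the $L^1$ bound obtained by testing the equation with $H$. The differences are cosmetic: the paper bounds the contracted term $\sum_j H_{j\bar j}=\Delta_\omega H=\lambda H+G$ directly instead of absorbing it into $|H_{i\bar j}|^2$, and you should test with $(1+w)^{p-1}$ from the outset, not $w^{p-1}$, so that the iteration inequality is really in terms of $u=1+w$.
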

\begin{proof}
    The maximum principle and the test function $H$ imply
\begin{equation}\label{eq:total-resolvent-energy}
\|H\|_\infty\leq\lambda^{-1}\|G\|_\infty,\qquad \int_X(|\nabla H|^2+\lambda H^2)d\mu=-\int_XGHd\mu\leq\lambda^{-1}\|G\|_\infty^2.
\end{equation}
Write $u=|\nabla H|_\omega^2$. At an $\omega$-unitary frame,
with repeated indices summed, the Bochner identity is
\begin{equation}\label{eq:resolvent-bochner}
 \Delta_\omega u
 =|H_{ij}|^2+|H_{i\bar j}|^2+2\lambda u+2\operatorname{Re}(G_iH_{\bar i})
      -\chi_{i\bar j}H_jH_{\bar i}
      -F_{i\bar j}H_jH_{\bar i}.
\end{equation}
For a nonnegative real test function $\psi$, integration in the
$\bar j$ direction gives the invariant identity
\[
 \begin{split}
 I_\psi&:=\int_X\psi F_{i\bar j}H_jH_{\bar i}\,d\mu\\
 &=-\operatorname{Re}\int_XF_i\bigl(\psi_{\bar j}H_jH_{\bar i}+\psi H_{j\bar j}H_{\bar i} +\psi H_jH_{\bar i\bar j}\bigr)\,d\mu .
 \end{split}
\]
Since $|\nabla F|+|\Delta H|\leq C$,
\begin{equation}\label{eq:resolvent-F-term}
 |I_\psi|
 \leq C\int_X\bigl(u|\nabla\psi|+\psi\sqrt u+\psi\sqrt u\,|H_{ij}|\bigr)\,d\mu.
\end{equation}

Put $v=1+u$ and take $\psi=v^{p-1}$, $p\geq2$.
The two terms requiring absorption satisfy
\[
 \begin{split}
 C(p-1)v^{p-1}|\nabla v|
 &\leq\tfrac{p-1}{4}v^{p-2}|\nabla v|^2+Cp\,v^p,\\
 Cv^{p-1/2}|H_{ij}|
 &\leq\tfrac12v^{p-1}|H_{ij}|^2+Cv^p.
 \end{split}
\]
Also $\chi_{i\bar j}H_jH_{\bar i}\leq Cu$ and
$|G_iH_{\bar i}|\leq C\sqrt u$.
Testing \eqref{eq:resolvent-bochner}, using
\eqref{eq:resolvent-F-term}, and retaining the positive Hessian term
therefore gives
\[
 (p-1)\int v^{p-2}|\nabla v|^2
\leq Cp\int v^p,\qquad \int|\nabla v^{p/2}|^2\leq Cp^2\int v^p.
\]
The common Sobolev inequality yields
$\|v\|_{L^{pq}}\leq(Cp^2)^{1/p}\|v\|_{L^p}$.
Iteration with $p=2q^j$ gives $\|v\|_{L^\infty}\leq C\|v\|_{L^2}$.
Finally
\[\|v\|_{L^2}^2\leq\|v\|_{L^\infty}\|v\|_{L^1},\qquad
 \|v\|_{L^1}=1+\int ud\mu\leq C
\]
by \eqref{eq:total-resolvent-energy}. This bounds $\|v\|_{L^\infty}$.
\end{proof}

We want to show the sharp estimates for the gradient. The idea is similar to the those in section 3, that using Moser iteration to improve weak convergence to strong convergence. The difference is that we need to handle the term involving $\ddc F$ in the differential inequality of the quantity. 
\begin{lemma}\label{lem:gradient-weak-to-sharp}
Under the hypotheses in the preceding lemma, let $c\geq0$ be fixed and set
$z=(|\nabla H|_\omega^2-c)_+$. Then
\[
 \|z\|_{L^\infty}
 \leq C_c\left(\int_Xz\,d\mu\right)^\alpha,\qquad
 \alpha=\frac{q-1}{3q-1}>0.
\]
In particular, $\int_Xz\,d\mu\to0$ implies $\|z\|_{L^\infty}\to0$.
\end{lemma}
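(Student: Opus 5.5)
The plan is a Moser iteration for $z$, run in the spirit of Lemma~\ref{lem:gradient-estimate-on-total-space} but now exploiting the positive term $2\lambda u$ and the Hessian squares in \eqref{eq:resolvent-bochner}. Set $u=|\nabla H|_\omega^2$. By Lemma~\ref{lem:gradient-estimate-on-total-space} we have $\|H\|_{L^\infty}+\|\nabla H\|_{L^\infty}\leq C'$, so $0\leq z\leq C'$ and $u\leq C'$. From \eqref{eq:resolvent-bochner}, using $\chi_{i\bar j}H_jH_{\bar i}\leq Cu\leq C$ and $|G_iH_{\bar i}|\leq C$, we get
\[
 \Delta_\omega u\geq |H_{ij}|^2+|H_{i\bar j}|^2-C-F_{i\bar j}H_jH_{\bar i}.
\]
The constant $-C$ cannot be improved, so the iteration has to close with a lower power of $z$ on the right, as in Lemma~\ref{lem:sharp-schwarz-estimate}; this is where a power $\alpha<1$ comes from.

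First I test with $\psi=z^{p-1}$ for $p\geq 3$. On $\{z>0\}$ we have $\nabla z=\nabla u$. The dangerous term $\int\psi F_{i\bar j}H_jH_{\bar i}\,d\mu$ is handled by the integration by parts behind \eqref{eq:resolvent-F-term}:
\[
|I_\psi|\leq C\int\bigl(u|\nabla\psi|+\psi\sqrt u+\psi\sqrt u|H_{ij}|\bigr)\,d\mu .
\]
Since $u\leq C$, we have $u|\nabla\psi|\leq C(p-1)z^{p-2}|\nabla z|$. Young's inequality bounds this by $\tfrac{p-1}{4}z^{p-3}|\nabla z|^2\cdot z+Cpz^{p-2}$; more precisely, $z^{p-2}|\nabla z|\leq \tfrac14 z^{p-2}|\nabla z|^2+Cz^{p-2}$ after splitting so that the gradient weight matches $z^{p-2}|\nabla z|^2$. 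The Hessian term $\psi|H_{ij}|$ is absorbed into $\tfrac12\psi|H_{ij}|^2$ at the cost of $C\psi$. All leftover terms are bounded by $Cp\,z^{p-2}$, using $z\leq C$ to lower exponents. This yields
\[
\int|\nabla z^{p/2}|^2\,d\mu\leq Cp^2\int z^{p-2}\,d\mu,
\]
the exact analogue of \eqref{eq:sharp-schwarz-energy}.

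Next I combine this with the Sobolev inequality $\|a\|_{L^{2q}}^2\leq C_S\int(|\nabla a|^2+a^2)$ applied to $a=z^{p/2}$; the $a^2$ term is again $\leq C z^{p-2}$. This gives $\|z\|_{L^{pq}}^p\leq Cp^2\|z\|_{L^{p-2}}^{p-2}$. Iterating $p_{k+1}-2=p_kq$, starting from $p_0-2=1$ (so the base is $\int z\,d\mu$), gives $\|z\|_{L^\infty}\leq C\bigl(\int z\,d\mu\bigr)^{\alpha}$. The exponent is $\alpha=\lim_k\prod_j\frac{p_j-2}{p_j}\cdot$(normalization). Computing it requires tracking the recursion $p_{k+1}=qp_k+2$, which gives $p_k\sim(1+2/(q-1))q^k$ and an exponent limit $\prod$. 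I expect this bookkeeping to give $(q-1)/(3q-1)$ when started at $p_0=3$, with the base norm $\|z\|_{L^1}$. If the exact constant comes out differently, I would adjust the starting exponent, or interpolate $\|z\|_{L^2}\leq \|z\|_\infty^{1/2}\|z\|_{L^1}^{1/2}$ and absorb the sup norm.

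The main obstacle is the $\ddc F$ term: $F$ is only Lipschitz, so it must be moved onto $\psi$ and onto $H$, which produces third-order $H_{ij}$ terms. Those are controlled only because the Bochner formula retains $|H_{ij}|^2$ with a good sign. The constants depend on $c$ only through the bounds $u,z\leq C'$, which explains the dependence $C_c$.
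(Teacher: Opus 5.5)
Your proposal is essentially the paper's proof: you test \eqref{eq:resolvent-bochner} with $z^{p-1}$, move $dd^cF$ onto $\psi$ and $H$ via \eqref{eq:resolvent-F-term}, absorb into $z^{p-2}|\nabla z|^2$ and $z^{p-1}|H_{ij}|^2$, obtain $\int|\nabla z^{p/2}|^2\,d\mu\le Cp^2\int z^{p-2}\,d\mu$, and iterate with Lebesgue exponents $1\mapsto q(\,\cdot+2)$ (your test exponents $p_k$ are the paper's $p_j+2$). The only slip is in your asymptotics: with $p_0=3$ one has $p_k\sim\frac{3q-1}{q-1}q^k$, not $(1+\frac{2}{q-1})q^k$, and the telescoping identity $\prod_{j<k}\frac{p_j-2}{p_j}=\frac{q^k}{p_k-2}\to\frac{q-1}{3q-1}$ gives exactly the stated $\alpha$, so you need neither a different starting exponent nor an extra interpolation.
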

\begin{proof}
The preceding lemma gives $0\leq u\leq C$ and $0\leq z\leq C$.
Test \eqref{eq:resolvent-bochner} with $\psi=z^{p-1}$, $p\geq3$.
Use regularized positive parts and pass to the limit if necessary.
On $\{z>0\}$, $\nabla z=\nabla u$.
The first term in \eqref{eq:resolvent-F-term} is bounded by
\[
 C(p-1)z^{p-2}|\nabla z|\leq\tfrac{p-1}{4}z^{p-2}|\nabla z|^2+Cp\,z^{p-2}.
\]
The last term is absorbed by
\[
 Cz^{p-1}|H_{ij}|\leq\tfrac12z^{p-1}|H_{ij}|^2+Cz^{p-1}.
\]
All other terms are bounded by $Cz^{p-1}$.
Since $z^{p-1}+z^p\leq C z^{p-2}$ with a constant independent of
$p$, we obtain
\begin{equation}\label{eq:gradient-excess-energy}
 \int_X\bigl(|\nabla z^{p/2}|^2+z^p\bigr)\,d\mu\leq Cp^2\int_Xz^{p-2}\,d\mu.
\end{equation}
Thus
\[
\|z\|_{L^{pq}}\leq(Cp^2)^{1/p}\|z\|_{L^{p-2}}^{(p-2)/p}.
\]
Let $p_0=1$ and $p_{j+1}=q(p_j+2)$. Applying the last inequality
with $p=p_j+2$ gives
\[
 \|z\|_{L^{p_{j+1}}}\leq[C(p_j+2)^2]^{1/(p_j+2)}\|z\|_{L^{p_j}}^{p_j/(p_j+2)}.
\]
Here
\[
p_j=q^j\left(1+\frac{2q}{q-1}\right)-\frac{2q}{q-1},\qquad
 \prod_{j=0}^{k-1}\frac{p_j}{p_j+2}=\frac{q^k}{p_k}\longrightarrow\alpha .
\]
The product of the constants is bounded because
$\sum_j\log(C(p_j+2)^2)/(p_j+2)<\infty$.
Letting $k\to\infty$ proves the estimate.
\end{proof}

Next, we fix $\lambda>0$, and a smooth function $g\in C^\infty(\C P^N)$. Let $h_\varepsilon$ be the solution to $\Delta_{\oep}h_\varepsilon=\lambda h_\varepsilon+ f^*g$. The trace bound gives
$\|f^*g\|+|\nabla f^*g|_{\oep}\leq C$.
All hypotheses of Lemma~\ref{lem:gradient-estimate-on-total-space}
hold by Sections~2--3; hence
$|h_\varepsilon|+
  |\nabla h_\varepsilon|_{\oep}\leq C$. Let $h\in W^{1,2}$ be the unique solution to $\Delta_{\ocan}h=\lambda h+g$. 

\begin{lemma}
    For any $p\in [1,\infty)$, $\|h_\varepsilon-f^*h\|_{L^p(X,\Omega)}\to 0$
\end{lemma}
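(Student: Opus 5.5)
We argue by compactness plus identification of the limit, in the same spirit as Lemma~\ref{lem:compactness} and Lemma~\ref{lem:am-limit}. By Lemma~\ref{lem:gradient-estimate-on-total-space} we have $\|h_\varepsilon\|_\infty+\|\nabla h_\varepsilon\|_{\oep}\le C$. Lemma~\ref{lem:compactness} then shows that every sequence $\varepsilon_j\to0$ has a subsequence with $h_{\varepsilon_j}\to H_\infty$ in every $L^p(X,\Omega)$. Moreover, on $X^\circ$ the limit is almost everywhere a pullback $H_\infty=f^*k$, with $k$ bounded and measurable on $Y^\circ$. Since the solution $h\in W^{1,2}$ of $\Delta_{\ocan}h=\lambda h+g$ is unique, it suffices to show that $k$ is a weak solution of the same equation. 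Uniqueness then forces $k=h$, so the whole family converges.

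\textbf{Regularity of $k$.} We first show $k\in W^{1,2}_{\rm loc}(Y^\circ)$ with $\int_{Y^\circ}|\nabla k|^2_{\ocan}d\nu_{\rm can}<\infty$. Take a smooth compactly supported $(1,0)$-form $\alpha$ on $Y^\circ$ and pair $\partial h_\varepsilon$ with $f^*\alpha$. By Cauchy--Schwarz, and using Corollary~\ref{cor:Lp-convergence-in-base-direction}, namely $|f^*\alpha|^2_{\oep}\to f^*|\alpha|^2_{\ocan}$ in $L^p$, together with $e^{F_\varepsilon}\to e^{f^*\pcan}$, we get
\[
\Bigl|\int_X\langle \partial h_\varepsilon, f^*\alpha\rangle_{\oep}d\mu_\varepsilon\Bigr|\le \Bigl(\int|\nabla h_\varepsilon|^2d\mu_\varepsilon\Bigr)^{1/2}\bigl(\|\alpha\|_{L^2(\nu_{\rm can})}+o(1)\bigr).
\]
Integrating by parts, the left side becomes $-\int h_\varepsilon\,\mathrm{div}_{\mu_\varepsilon}(f^*\bar\alpha^\sharp)\,d\mu_\varepsilon$, which is awkward to control directly. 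Instead, I would use test functions of the form $f^*\phi$ with $\phi\in C_c^\infty(Y^\circ)$. In the weak formulation,
\[
\int\langle\nabla h_\varepsilon,\nabla f^*\phi\rangle_{\oep}d\mu_\varepsilon+\int(\lambda h_\varepsilon+f^*g)f^*\phi\, d\mu_\varepsilon=0,
\]
the first term equals $-\int h_\varepsilon\Delta_{\oep}f^*\phi\,d\mu_\varepsilon$. Here $\Delta_{\oep}f^*\phi=\tr_{\oep}f^*\ddc\phi$, which converges in $L^p(f^{-1}(\operatorname{supp}\phi))$ to $f^*\Delta_{\ocan}\phi$. This follows from Lemma~\ref{lem:metric-convergence}(1) (uniform on compacts), or already from Corollary~\ref{cor:Lp-convergence-in-base-direction} after polarization. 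Passing to the limit, using the $L^p$ convergence of $h_\varepsilon$ and of the densities, gives the distributional identity
\[
\int_Y k\,\Delta_{\ocan}\phi\,d\nu_{\rm can}=\int_Y(\lambda k+g)\phi\,d\nu_{\rm can},\qquad \phi\in C^\infty_c(Y^\circ).
\]
So $k$ is a bounded distributional solution on $Y^\circ$. Elliptic regularity then makes $k$ smooth on $Y^\circ$ with $\Delta k=\lambda k+g$ pointwise, and Lemma~\ref{lem:removable} yields $k\in W^{1,2}$ as the global weak solution. Hence $k=h$.

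\textbf{Expected obstacle.} The subtle step is justifying that the weak equation for $h_\varepsilon$, tested against $f^*\phi$, passes to the limit with the correct measure and Laplacian. Specifically, we need $\tr_{\oep}f^*\ddc\phi\,e^{F_\varepsilon}\to f^*(\Delta_{\ocan}\phi)\,e^{f^*\pcan}$ in $L^1(\Omega)$ on $f^{-1}(\operatorname{supp}\phi)$. This follows from the uniform convergence in Lemma~\ref{lem:metric-convergence}(1), since $f^*\ddc\phi$ is horizontal and $\varepsilon\osrf$ contributes only $O(\varepsilon)$ to horizontal duals. It also uses Lemma~\ref{lem:Linfty-density}. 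A second point is that the limit function is a pullback only almost everywhere on $X^\circ$, but $f^{-1}(D)$ has $\Omega$-measure zero, so $L^p(X,\Omega)$ convergence to $f^*h$ follows.
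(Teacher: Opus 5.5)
Your proposal is correct and follows essentially the same route as the paper. Both arguments use subsequential $L^p$ compactness from Lemma~\ref{lem:compactness}, test the equation for $h_\varepsilon$ against $f^*\phi$ with $\phi\in C^\infty_c(Y^\circ)$ and pass to the limit via the local metric convergence of Lemma~\ref{lem:metric-convergence}, and then conclude with interior elliptic regularity, Lemma~\ref{lem:removable}, and uniqueness of the $W^{1,2}$ solution. The preliminary ``Regularity of $k$'' detour through $(1,0)$-forms is unnecessary, and you leave it unfinished; Lemma~\ref{lem:removable} already supplies $k\in W^{1,2}$.
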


\begin{proof}
    Maximum principle and testing with $h_\varepsilon$ give $\|h_\varepsilon\|_{L^\infty}\leq \lambda^{-1}\|g\|_{L^\infty}$ and $\int_X(|\nabla h_\varepsilon|_{\oep}^2+\lambda h_\varepsilon^2)d\mu_\varepsilon\leq \lambda^{-1}\|g\|_{L^\infty}.$
So we can apply Lemma~\ref{lem:compactness} to conclude that passing to a subsequence, $h_\varepsilon\to f^*v$ in every $L^p(X,\Omega)$, and $\|v\|_{L^\infty}<\infty.$ For $\varphi\in C^\infty_c(Y^\circ)$, then we have \[
\int_Xh_\varepsilon(\Delta_{\oep} f^*\varphi-\lambda f^*\varphi)d\mu_\varepsilon=\int_X f^*(g\varphi)d\mu_\varepsilon.
\]
Since $\varphi$ is supported in a compact subset of $Y^\circ$, the metric $\omega_\varepsilon\to f^*\ocan+\varepsilon\osrf$ uniformly there. Therefore, \[
\int_X f^*v(f^*(\tr_{\ocan}\ddc\varphi)-\lambda f^*\varphi)e^{f^*\varphi_{\rm can}}\Omega=\int_Xf^*(g\varphi)e^{f^*\varphi_{\rm can}}\Omega,
\]
which implies  \[
\int_Yv(\Delta_{\ocan}\varphi-\lambda\varphi)\ocan^m=\int_Yg\varphi\ocan^m.
\]
Interior elliptic regularity implies $v\in C^\infty_{loc}(Y^\circ)$ and satisfies $\Delta_{\ocan} v=\lambda v+g$. Since $v\in L^\infty$, by Lemma~\ref{lem:removable}, $v$ can be extended to $Y$ and $v\in W^{1,2}$ and satisfies the global weak equation. From the uniqueness of solution, every subsequence has the same limit, and the lemma follows. \end{proof}

\begin{lemma}\label{lem:convergence-approximation} \leavevmode
\begin{enumerate}
        \item $\|h_\varepsilon-f^*h\|_{L^\infty}\to 0$. 
        \item For any $K\Subset Y^\circ$, \[
        \int_{f^{-1}(K)}|\nabla (h_\varepsilon-f^*h)|_{\oep}^2d\mu_\varepsilon\to 0. 
        \]
\end{enumerate}
\end{lemma}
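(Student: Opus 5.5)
Both statements will follow from a local energy identity on $f^{-1}(K)$ combined with Moser iteration. Write $e_\varepsilon=h_\varepsilon-f^*h$ and fix $K\Subset Y^\circ$, covered by finitely many coordinate balls $B_{2r}\Subset Y^\circ$ with cutoffs $\xi=\xi_{s,t}$ as in Lemma~\ref{lem:iteration}.

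\textbf{Local energy convergence (part (2)).} On $f^{-1}(B_{2r})$ the function $f^*h$ is smooth with $|\nabla f^*h|_{\oep}\le C$. By Lemma~\ref{lem:metric-convergence}(1), together with $\omega_\varepsilon^n/V_\varepsilon\to e^{f^*\pcan}\Omega$ uniformly there (Lemma~\ref{lem:Linfty-density}), we have
\[
\Delta_{\oep}f^*h=f^*\Delta_{\ocan}h+o(1)=\lambda f^*h+f^*g+o(1)
\]
uniformly on $f^{-1}(B_{2r})$. Hence $\Delta_{\oep}e_\varepsilon=\lambda e_\varepsilon+o(1)$ locally uniformly. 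We test this with $\xi^2e_\varepsilon$ and use $|e_\varepsilon|+|\nabla e_\varepsilon|_{\oep}\le C$, which comes from Lemma~\ref{lem:gradient-estimate-on-total-space}. The cutoff cross term is bounded by $C\int|e_\varepsilon|\,d\mu_\varepsilon\to0$ by the preceding lemma. The $o(1)$ term is bounded similarly. This gives
\[
\int\xi^2|\nabla e_\varepsilon|^2_{\oep}\,d\mu_\varepsilon\to0,
\]
which is part (2).

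\textbf{$L^\infty$ convergence on regular fibers.} Lemma~\ref{lem:iteration} applied to $U=e_\varepsilon$ gives
\[
\|e_\varepsilon\|_{L^\infty(f^{-1}(B_r))}\le C\Bigl(\|e_\varepsilon\|_{L^2}+\|\lambda e_\varepsilon+o(1)\|_{L^{p_*}}\Bigr)\to0.
\]
So $e_\varepsilon\to0$ uniformly on $f^{-1}(K)$ for every $K\Subset Y^\circ$.

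\textbf{Global $L^\infty$ convergence (part (1)).} The main obstacle is extending this uniform convergence across the singular fibers, and I will use equicontinuity. Lemma~\ref{lem:ltz-distance}(1) gives, for each $\delta>0$, a $K$ such that every $x\in X$ lies within $d_{\oep}$-distance $\delta$ of some $x'\in f^{-1}(K)$. Then
\[
|e_\varepsilon(x)|\le|e_\varepsilon(x')|+|h_\varepsilon(x)-h_\varepsilon(x')|+|h(f(x))-h(f(x'))|.
\]
The middle term is at most $C\delta$, since $\Lip_{d_{\oep}}h_\varepsilon\le C$ uniformly. For the last term, $h\in\Lip(Y,\dcan)$ by Lemma~\ref{lem:Resolvent estimate}, so it suffices to know $\dcan(f(x),f(x'))\lesssim d_{\oep}(x,x')+o(1)$. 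This is the very inequality we are trying to prove (Proposition~\ref{prop:lower-distance}), so I must avoid circularity.

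Instead I use $d_{\rm can}(f(x),K)<\delta$, also from Lemma~\ref{lem:ltz-distance}(1), and argue by contradiction. Suppose $|e_\varepsilon(x_\varepsilon)|\ge a>0$. Then $|e_\varepsilon|\ge a/2$ on the ball $B_{\oep}(x_\varepsilon,a/(4C))$, by the uniform Lipschitz bound on $h_\varepsilon$ and the $o(1)$ control from regular points. I do not yet know that $f^*h$ is $d_{\oep}$-Lipschitz, so I handle it via $L^p$ instead. Theorem~\ref{thm:bounds}(3) says this ball has $\mu_\varepsilon$-mass at least $c\,a^{a_{\rm vol}}$. Since $\mu_\varepsilon$ has mass tending to zero outside $f^{-1}(K_j)$, the ball must meet $f^{-1}(K)$ in a set of uniformly positive measure. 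On that set $e_\varepsilon=o(1)$ uniformly, while $|h_\varepsilon|$ there is close to $|h_\varepsilon(x_\varepsilon)|$.

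Carrying this through, I expect to work with the set $\{|h_\varepsilon-f^*h|\ge a/4\}$ directly. Its $\mu_\varepsilon$-measure tends to zero by the $L^1$ convergence. Bounding the oscillation of $f^*h$ on the ball needs the one-sided estimate $\dcan\circ f\le d_{\oep}+o(1)$ near the point, which I would obtain locally from the sharp Schwarz estimate (Lemma~\ref{lem:sharp-schwarz-estimate}) along paths that stay in the regular part. Making this step rigorous is where I expect the real difficulty; the fallback is to show that near singular fibers both $h_\varepsilon$ and $f^*h$ are within $O(\delta)$ of their values at nearby regular points, using only Lipschitz bounds relative to each respective metric.
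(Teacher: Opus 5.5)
\textbf{Part (2) and the local step are correct; part (1) has a genuine gap at exactly the step you flag.}

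Your treatment of part (2) is correct. Testing $\Delta_{\oep}e_\varepsilon=\lambda e_\varepsilon+o(1)$ with $\xi^2e_\varepsilon$ needs only $\int_X|e_\varepsilon|\,d\mu_\varepsilon\to0$ and the uniform gradient bounds. So, unlike the paper's ordering, it does not depend on part (1). Your uniform convergence on $f^{-1}(K)$ via Lemma~\ref{lem:iteration} is also fine.

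The gap in part (1) is that you never obtain a modulus of continuity for $f^*h$ with respect to $d_{\oep}$ that is uniform in $\varepsilon$ near the singular fibers. Without it, the contradiction argument on $B_{\oep}(x_\varepsilon,r)$ cannot be closed. Your proposed fix does not work. Lemma~\ref{lem:sharp-schwarz-estimate} holds only over compact subsets of $Y^\circ$, and you have no control over whether $\oep$-short paths near $f^{-1}(D)$ stay in such a set. Your fallback again requires $d_{\oep}$-close points to have $\dcan$-close images, which is the circular input you correctly identified.

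\textbf{The missing idea.} You need neither the sharp constant nor $\dcan$ on the $X$ side, only a qualitative modulus.

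The crude base-direction bound of Lemma~\ref{lem:trace} holds on all of $X$. It gives $f^*\omega_{FS}=\ell\chi\leq\ell C\oep$, hence $d_{FS}(f(x),f(x'))\leq\sqrt{\ell C}\,d_{\oep}(x,x')$ for all $x,x'\in X$.

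Meanwhile $h$ is $\dcan$-Lipschitz. By part (1) of Theorem~\ref{thm:completion}, $(\overline{Y^\circ},\dcan)$ is homeomorphic to $Y$, so $h$ is continuous on $Y$ in its original topology. By compactness it is then uniformly continuous for $d_{FS}$, with some modulus $\sigma$.

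Combining this with $\Lip_{d_{\oep}}(h_\varepsilon)\leq C$ gives, for $r=d_{\oep}(x,x')$, the bound $|e_\varepsilon(x)-e_\varepsilon(x')|\leq Cr+\sigma(\sqrt{\ell C}\,r)$. This bound is independent of $\varepsilon$.

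Hence $|e_\varepsilon(x)|\geq\delta$ forces $|e_\varepsilon|\geq\delta/2$ on a ball $B_{\oep}(x,r_\delta)$, with $r_\delta$ independent of $\varepsilon$. Theorem~\ref{thm:bounds}(3) then gives $\|e_\varepsilon\|_{L^2(\mu_\varepsilon)}^2\geq c\,\delta^2r_\delta^{a_{\rm vol}}$. This contradicts the $L^2$ convergence from the preceding lemma.

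This is the paper's argument. It makes both the local Moser iteration and Lemma~\ref{lem:ltz-distance} unnecessary for part (1).
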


\begin{proof}
For simplicity, put $e_\varepsilon:=h_\varepsilon-f^*h$.

(1)
Let $d_{FS}$ be the ambient Fubini-Study metric restricted to $Y$. By lemma~\ref{lem:trace} and the normalization $f^*\omega_{FS}=\ell\chi,$ \[
    0\leq f^*\omega_{FS}\leq \ell (\tr_{\oep}\chi)\oep\leq \ell C\oep.
    \]
    Therefore, for any cuver $\gamma$ in $X$, aftering taking infumum, \[
    d_{FS}(f(x),f(x'))\leq \inf_\gamma \operatorname{Length}_{FS}(f\circ\gamma)\leq \inf_{\gamma}\sqrt{\ell C}\operatorname{Length}_{\oep}(f\circ\gamma)=\sqrt{\ell C}d_{\oep}(x,x').
    \]
Apply Lemma~\ref{lem:gradient-estimate-on-total-space}, we get $|\nabla h_\varepsilon|_{\oep}\leq C$ for a uniform constant $C$.

Let $r=d_{\oep}(x,x')$, then \[
|e_\varepsilon(x)-e_\varepsilon(x')|\leq Cr+\sup_{\substack{y,y'\in Y\\d_{FS}(y,y')\leq\sqrt{\ell C}\,r}}|h(y)-h(y')|
\]
Since $(\overline{Y^\circ},\dcan)$ is homeomorphic to $(Y,d_{FS})$, $h$ is continuous on $Y$. Hence, by compactness of $Y$, for each $\delta>0$, there exits $r_\delta\in (0,1)$, which is independent of 
$\varepsilon$, such that \[
|e_\varepsilon(x)|\ge \delta \quad\Longrightarrow\quad
 |e_\varepsilon|\geq\delta/2
 \text{ on }B_{\omega_\varepsilon}(x,r_\delta).
\]
By the volume noncollapsing estimate in Theorem~\ref{thm:bounds}, \[
\|e_\varepsilon\|_{L^2(\mu_\varepsilon)}^2\geq\frac{\delta^2}{4}\mu_\varepsilon(B_{\oep}(x,r_\delta))\geq\frac{\delta^2r_\delta^{a_{\rm vol}}}{4C}>0.
\]
However, Lemma~\ref{lem:convergence-approximation} gives $e_\varepsilon\to 0$ in $L^2(\Omega)$, and also in $L^2(\mu_\varepsilon)$ since $|F_\varepsilon|\leq C$. 

Hence $\|e_\varepsilon\|_{L^\infty}\to 0.$ 

(2) Choose $\xi\in C^\infty_c(Y^\circ)$ with $0\leq\xi\leq1$
and $\xi=1$ near $K$; write $\xi$ also for $f^*\xi$.
On $\operatorname{supp}\xi$, one has \[
\Delta_{\oep}e_\varepsilon=\lambda e_\varepsilon+f^*\Delta_{\ocan}h-\Delta_{\oep}h.
\]
$\tr_{\oep}\chi\leq C$ implies $|\nabla\xi|_{\oep}\leq C, |\Delta_{\oep}f^*h|\leq C$, $|\Delta_{\oep}e_\varepsilon|\leq C$. Therefore \[\begin{aligned}
\int_X\xi^2|\nabla e_\varepsilon|_{\oep}^2d\mu_\varepsilon=&-\int_X\xi^2e_\varepsilon\Delta_{\oep}e_\varepsilon d\mu_\varepsilon-2\int_X\xi e_\varepsilon\langle\nabla\xi,\nabla e_\varepsilon\rangle_{\oep}d\mu_\varepsilon\\
\leq& C\|e_{\varepsilon}\|_{L^\infty}+\frac{1}{2}\int_X\xi^2|\nabla e_\varepsilon|_{\oep}^2d\mu_\varepsilon+2\int_Xe_\varepsilon|\nabla\xi|_{\oep}^2d\mu_\varepsilon.
\end{aligned}
\]
As a reusult, \[
\int_{f^{-1}(K)}|\nabla e_\varepsilon|_{\oep}^2d\mu_\varepsilon\leq C(\|e_\varepsilon\|_{L^\infty}+\|e_\varepsilon\|_{L^\infty}^2)\to 0. 
\]
\end{proof}

Now we can show the sharp gradient estimate. \begin{lemma}\label{lem:sharp-gradient}
$\limsup_{\varepsilon\to0}\Lip_{d_{\oep}}h_\varepsilon\leq\Lip_{d_{\rm can}}(h)$.
\end{lemma}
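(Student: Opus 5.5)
We will apply Lemma~\ref{lem:gradient-weak-to-sharp} with the threshold $c=L^2/2$, where $L=\Lip_{d_{\rm can}}(h)$. Recall the convention $\Lip_{d_\omega}(u)=\sqrt2\,\||\nabla u|_\omega\|_{L^\infty}$; on $Y^\circ$ we have $|\nabla h|_{\ocan}\le L/\sqrt2$ almost everywhere. Fix $\eta>0$, put $c_\eta=(L/\sqrt2+\eta)^2$, and set
\[z_\varepsilon=(|\nabla h_\varepsilon|^2_{\oep}-c_\eta)_+.\]
The hypotheses of Lemma~\ref{lem:gradient-estimate-on-total-space} hold uniformly for the cscK family: the trace bound comes from Lemma~\ref{lem:trace}, the gradient bound for $F_\varepsilon$ from Lemma~\ref{lem:gradient}, and the Sobolev inequality from Theorem~\ref{thm:sobolev}. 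Since $z_\varepsilon\le C$, it therefore suffices to show $\int_X z_\varepsilon\,d\mu_\varepsilon\to0$. Then $\|z_\varepsilon\|_\infty\to0$, so $\limsup\Lip(h_\varepsilon)\le\sqrt2(L/\sqrt2+\eta)$, and we let $\eta\downarrow0$.

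To prove the integral convergence, split $X=f^{-1}(K)\cup(X\setminus f^{-1}(K))$ for $K\Subset Y^\circ$ taken from a compact exhaustion.

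\emph{Outside $f^{-1}(K)$.} Here $z_\varepsilon\le C$ and $\mu_\varepsilon\le C\Omega$, so the contribution is at most $C\,\Omega(X\setminus f^{-1}(K))$. This is small uniformly in $\varepsilon$, because $\Omega(f^{-1}(D))=0$.

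\emph{On $f^{-1}(K)$.} Write $\nabla h_\varepsilon=\nabla f^*h+\nabla e_\varepsilon$. Then for any $\tau>0$,
\[|\nabla h_\varepsilon|^2_{\oep}\le(1+\tau)|\nabla f^*h|^2_{\oep}+(1+\tau^{-1})|\nabla e_\varepsilon|^2_{\oep}.\]
The term in $e_\varepsilon$ integrates to $o(1)$ by Lemma~\ref{lem:convergence-approximation}(2). For the term in $f^*h$, note that $h$ is smooth on $Y^\circ$ and $\partial f^*h=f^*\partial h$ is a horizontal covector. The sharp Schwarz estimate, Lemma~\ref{lem:sharp-schwarz-estimate}, in its dual form (as derived at the end of its proof) gives uniformly on $f^{-1}(K)$
\[|f^*\partial h|^2_{\oep}\le(1+o(1))\,f^*|\partial h|^2_{\ocan}\le(1+o(1))\,L^2/2.\]
Corollary~\ref{cor:Lp-convergence-in-base-direction} would also give this in $L^p$, which is enough. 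Choosing $\tau$ small relative to $\eta$, we get $z_\varepsilon\le C_\tau|\nabla e_\varepsilon|^2_{\oep}$ on $f^{-1}(K)$ for small $\varepsilon$, whose integral tends to $0$.

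The main obstacle is the pointwise bound $|\nabla h|_{\ocan}\le L/\sqrt2$ on $Y^\circ$ with the sharp constant. Since $(Y^\circ,\ocan)$ is a smooth Riemannian manifold, this is local: for $y\in Y^\circ$, small $d_{\rm can}$-balls around $y$ are geodesically convex in the smooth part. Hence $|h(y')-h(y)|\le L\,d_{\rm can}(y,y')$ with $d_{\rm can}$ asymptotically the Riemannian distance of $\ocan$, which gives $|\nabla h|_{g_{\ocan}}\le L$, that is, $|\nabla h|_{\ocan}\le L/\sqrt2$. Here $h\in\Lip(Y)$ with $h\in C^\infty(Y^\circ)$ by Lemma~\ref{lem:Resolvent estimate}, so $L$ is finite.

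A second point to check is that Lemma~\ref{lem:gradient-weak-to-sharp} is applied with $H=h_\varepsilon$ and $G=f^*g$. Its constant $C_c$ must be uniform in $\varepsilon$; this follows because all its inputs are uniform. The rest is bookkeeping in the order: $\varepsilon\to0$, then $K\uparrow Y^\circ$, then $\tau,\eta\to0$.
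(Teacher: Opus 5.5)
Your proposal is correct and follows the paper's proof. Both arguments split $\int_X(|\nabla h_\varepsilon|^2_{\oep}-c)_+\,d\mu_\varepsilon$ over $f^{-1}(K)$ and its complement: the inside is controlled by the sharp Schwarz estimate together with the local energy convergence of $e_\varepsilon$ from Lemma~\ref{lem:convergence-approximation}(2), the outside by $C\,\Omega(X\setminus f^{-1}(K))$, and Lemma~\ref{lem:gradient-weak-to-sharp} then upgrades the integral bound to $L^\infty$.

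The differences are cosmetic. You use the enlarged threshold $c_\eta$ with a $(1+\tau)$-Young splitting, whereas the paper takes $c=L^2/2$ and bounds the cross term $2|\nabla e_\varepsilon||\nabla f^*h|$ by Cauchy--Schwarz. You also state explicitly the pointwise bound $|\nabla h|_{\ocan}\le L/\sqrt2$ on $Y^\circ$ and the $\varepsilon$-uniformity of the constants, both of which the paper uses tacitly.
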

\begin{proof}
    Let $L_h=\Lip_{\dcan}(h)$ and fix $K\Subset Y^\circ$. Note that our notation is $|\nabla h|_{\omega}^2=\frac{1}{2}|\nabla h|_{g_{\omega}}^2$ where $g_\omega$ is the Riemannian metric of $\omega$. From the sharp Schwarz type estimate, \[
    |\nabla f^*h|_{\omega_\varepsilon}^2\leq (1+o_K(1))f^*|\nabla h|_{\ocan}^2\leq L_h^2/2+o_K(1)
\quad \text{ on } f^{-1}(K).    \]
Note that \[
(|\nabla h_\varepsilon|^2-L_h^2/2)_+\leq |\nabla e_\varepsilon|_{\oep}^2+2|\nabla e_\varepsilon|_{\oep}|\nabla f^* h|_{\oep}+(|\nabla f^*h|_{\oep}^2-L_h^2/2)_+.
\]
By Cauchy-Schwarz inequality and previous lemma, \[
\int_{f^{-1}(K)}(|\nabla h_\varepsilon|^2-L_h^2/2)_+d\mu_\varepsilon\leq \int_{f^{-1}(K)}|\nabla e_\varepsilon|_{\oep}^2d\mu_\varepsilon+C_K\left(\int_{f^{-1}(K)}|\nabla e_\varepsilon|^2d\mu_\varepsilon\right)^{1/2}+o_K(1)\longrightarrow0
\]
Since $\|e_\varepsilon\|_{L^\infty}\to 0$, and $|\nabla h_\varepsilon|_{\oep}$ is uniformly bounded, \[
\limsup_{\varepsilon\to0}\int_X(|\nabla h_\varepsilon|^2-L_h^2/2)_+d\mu_\varepsilon\leq C\Omega\bigl(X\setminus f^{-1}(K)\bigr).
\]
Let $K\uparrow Y^\circ$. Since $\Omega(f^{-1}(D))=0$,
\[\int_X(|\nabla h_\varepsilon|^2-L_h^2/2)_+d\mu_\varepsilon\longrightarrow0.
\]

Thus with Lemma~\ref{lem:gradient-weak-to-sharp}, we get the desired estimate. 
\end{proof}

\begin{proof}[Proof of Lemma~\ref{lem:lip-lift-approximation}]
    Lemma~\ref{lem:approx-lip} provides $g\in C^\infty(\C P^N)$, such that the solution $\Delta_{\ocan}h=\lambda h+g$ satisfies $\|h-v\|_{L^\infty}<\delta$, and $\Lip(h)<1+\delta$. Set $v_\varepsilon=h_\varepsilon$, where $h_\varepsilon$ is the solution to $\Delta_{\oep}h_\varepsilon=\lambda h_\varepsilon+f^*g$. The preceding lemma shows the desired approximation property. 
\end{proof}
\begin{proof}[Completion of the proof of Theorem~\ref{thm:main}]
The case $m=0$ was proved in Section~1. For $m>0$,
Lemma~\ref{lem:metric-convergence} proves the local metric assertion.
Lemma~\ref{lem:lip-lift-approximation} gives
Proposition~\ref{prop:lower-distance}; Proposition~\ref{prop:distortion}
then proves Gromov--Hausdorff convergence.
\end{proof}

\section{A new proof of Gromov-Hausdorff convergence for K\"ahler--Ricci flow}

Based on our previous discussion on the Lipschitz approximation, we can give an independent new proof of the theorem of Lee-Tosatti-Zhang \cite{LTZ26} on the Gromov-Haudorff convergence of the long-time K\"ahler--Ricci flow on compact K\"ahler manifolds with semi-ample canonical line bundle.

Let $\mathchanged{\tilde\omega}(t)$ solve the normalized K\"ahler--Ricci flow
\begin{equation}\label{eq:krf-flow}
 \partial_t\mathchanged{\tilde\omega}(t)=-\Ric(\mathchanged{\tilde\omega}(t))-\mathchanged{\tilde\omega}(t),\qquad
 \mathchanged{\tilde\omega}(0)=\mathchanged{\omega}.
\end{equation}
We retain $f:X\to Y$, $m=\dim_{\C}Y$, $d=n-m$, $\chi$, and $\Omega$
from Section 2. In particular,
$\Ric(\Omega)=-\chi$ and $\int_X\Omega=1$.
The cohomology class is
\[
 [\mathchanged{\tilde\omega}(t)]=(1-e^{-t})2\pi c_1(K_X)+e^{-t}[\mathchanged{\omega}].
\]
Write
\begin{equation}\label{eq:krf-normalization}
 \begin{gathered}
 \chi(t)=(1-e^{-t})\chi+e^{-t}\mathchanged{\omega},\qquad
 \mathchanged{\tilde\omega}(t)=\chi(t)+dd^c\mathchanged{\tilde\varphi}(t),\\
 \mathchanged{\dot{\tilde\varphi}}=\log\frac{e^{dt}\mathchanged{\tilde\omega}(t)^n}{\binom{n}{m}V_FV_Y\Omega}-\mathchanged{\tilde\varphi},
 \qquad \mathchanged{\tilde\varphi}(0)=0,\\
 V(t)=\int_X\mathchanged{\tilde\omega}(t)^n=\int_X\chi(t)^n,\qquad
 d\mu_t=\frac{\mathchanged{\tilde\omega}(t)^n}{V(t)}=e^{F(t)}\Omega.
 \end{gathered}
\end{equation}
Thus
\begin{equation}\label{eq:krf-ricci-potential}
 F(t)=\mathchanged{\dot{\tilde\varphi}}(t)+\mathchanged{\tilde\varphi}(t)-\log(e^{dt}V(t)),\qquad
 \Ric(\mathchanged{\tilde\omega}(t))=-\chi-dd^cF(t).
\end{equation}
We summarize some facts along the NKRF in the next lemma.
\begin{lemma}\label{lem:krf-uniform}
There exist $q>1$ and constants independent of $t\geq0$ such that:
\begin{enumerate}
\item\label{item:krf-density-gradient}
\[
 C^{-1}\leq e^{F(t)}\leq C,\qquad\tr_{\mathchanged{\tilde\omega}(t)}\chi+|\nabla F(t)|_{\mathchanged{\tilde\omega}(t)}^2\leq C.
\]
\item\label{item:krf-sobolev}
For every $v\in W^{1,2}(X,\mathchanged{\tilde\omega}(t))$,
\begin{equation}\label{eq:krf-sobolev}
 \left(\int_X|v|^{2q}\,d\mu_t\right)^{1/q}
 \leq C\int_X\bigl(|\nabla v|_{\mathchanged{\tilde\omega}(t)}^2+v^2\bigr)\,d\mu_t.
\end{equation}
\item\label{item:krf-volume-diameter}
For $x\in X$ and $0<r\leq1$,
\begin{equation}\label{eq:krf-volume-diameter}
 \mu_t(B_{\mathchanged{\tilde\omega}(t)}(x,r))\geq C^{-1}r^{2q/(q-1)},\qquad
 \diam(X,\mathchanged{\tilde\omega}(t))\leq C.
\end{equation}
\end{enumerate}
\end{lemma}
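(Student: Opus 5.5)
Part (1) follows from Theorem~\ref{thm:NKRF-convergence}(1) and the flow analogue of Section~3. The density $e^{F(t)}=e^{\dot{\tilde\varphi}+\tilde\varphi}\,\Omega^{-1}\tilde\omega(t)^n/(e^{dt}V(t)\Omega^{-1}\cdots)$ is, by \eqref{eq:krf-ricci-potential}, $e^{\dot{\tilde\varphi}+\tilde\varphi}$ times $\binom nm V_FV_Y/(e^{dt}V(t))$. The first factor is bounded above and below by Theorem~\ref{thm:NKRF-convergence}(1). For the second, $e^{dt}V(t)\to\binom nm V_FV_Y$, as in the expansion of $V_\varepsilon$, and it is bounded above and below for all $t\geq0$ by continuity and positivity. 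This gives $C^{-1}\le e^{F(t)}\le C$.

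The trace bound $\tr_{\tilde\omega(t)}\chi\le C$ is the parabolic Schwarz lemma of Song--Tian \cite{ST07}. It comes from the maximum principle applied to $\log\tr_{\tilde\omega}\chi-A\tilde\varphi$, using $\Ric(\tilde\omega)=-\chi-\ddc F$ together with the bisectional curvature bound of $\omega_Z$ from Lemma~\ref{lem:trace-lp}. For the gradient, I will use $\dot{\tilde\varphi}$ as the analogue of $F$. Since $\tilde\varphi$ and $\dot{\tilde\varphi}$ are bounded, the standard estimate $|\nabla\dot{\tilde\varphi}|^2\le C$ along the NKRF (Song--Tian/Zhang type) gives $|\nabla(\dot{\tilde\varphi}+\tilde\varphi)|$ bounded once $|\nabla\tilde\varphi|$ is bounded. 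That bound follows from the parabolic maximum principle for $|\nabla\tilde\varphi|^2/(A-\tilde\varphi)$ in the spirit of \cite{ST16}. If needed, I will instead run a parabolic version of the Chen--Cheng computation of Lemma~\ref{lem:gradient} for $e^{F/2}|\nabla F|^2$, whose evolution only involves $\tr\chi$ and $\nabla\tr\chi$. Proving this gradient bound uniformly in $t$ is the step I expect to be the main obstacle.

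For (2), I will apply Theorem~\ref{thm:sobolev} with $\eta=\tilde\omega(t)$. The density ratio $(\eta^n/V_\eta)/(\omega^n/V_\omega)$ is comparable to $e^{F(t)}\Omega/\omega^n$, so it is bounded above and below. Hence the hypotheses hold uniformly, and $I_\eta=[\tilde\omega(t)]\cdot[\omega]^{n-1}$ is bounded. The stated form with $\int(|\nabla v|^2+v^2)$ then follows from the mean-zero version by adding $|\bar v|\le\|v\|_{L^2}$.

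For (3), the diameter bound is known along the NKRF (Song--Tian, \cite{TWY18}/\cite{LTZ26}); alternatively it follows from the volume bound below combined with $\mu_t(X)=1$. For the volume bound, the Sobolev inequality \eqref{eq:krf-sobolev} gives noncollapsing by the standard argument. Test with the cutoff $v=(r-d(x,\cdot))_+$, which has $|\nabla v|\le1$ and $v\ge r/2$ on $B(x,r/2)$. This yields
\[
V(r/2)^{1/q}(r/2)^2\le C\,V(r)\,(1+r^2).
\]
Iterating on dyadic radii gives $V(r)\ge C^{-1}r^{2q/(q-1)}$, with the base case $V(1)$ bounded below via the diameter bound and a covering argument.
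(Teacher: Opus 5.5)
Your arguments for the density bound, the trace bound, the Sobolev inequality and the diameter are fine and are essentially the paper's, which cites Song--Tian and \cite{GPSS23}. The gap is the bound on $|\nabla F(t)|_{\tilde\omega(t)}$. You split it into separate bounds for $|\nabla\dot{\tilde\varphi}|$ and $|\nabla\tilde\varphi|$. Neither is what Song--Tian prove, and the maximum-principle argument you sketch does not close. Along the flow
\[
(\partial_t-\Delta_{\tilde\omega})\dot{\tilde\varphi}=d-\dot{\tilde\varphi}+e^{-t}\tr_{\tilde\omega}\chi-e^{-t}\tr_{\tilde\omega}\omega,
\]
and $(\partial_t-\Delta_{\tilde\omega})|\nabla\tilde\varphi|^2$ contains the term $2\operatorname{Re}\langle\nabla(\dot{\tilde\varphi}+\tr_{\tilde\omega}\chi(t)),\nabla\tilde\varphi\rangle$, where $\tr_{\tilde\omega}\chi(t)$ includes $e^{-t}\tr_{\tilde\omega}\omega$. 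The Schwarz lemma controls only $\tr_{\tilde\omega}\chi$. Nothing here controls $e^{-t}\tr_{\tilde\omega}\omega$, let alone its gradient; that would require a global fiberwise lower bound $\tilde\omega(t)\geq c\,e^{-t}\omega$, which no estimate in the paper or your proposal provides near singular fibers.

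The missing observation is that these bad terms cancel exactly in the sum. By \eqref{eq:krf-ricci-potential}, $F(t)$ differs from $u=\dot{\tilde\varphi}+\tilde\varphi$ by a function of $t$ alone, so $\nabla F=\nabla u$, and
\[
(\partial_t-\Delta_{\tilde\omega})u=\tr_{\tilde\omega}\chi-m,\qquad \Ric(\tilde\omega)=-\chi-dd^cu .
\]
The estimate the paper cites, \cite[equation~(3.19)]{ST16}, is a maximum-principle bound for exactly this $u$. Your fallback of working with $F$ directly is the right idea. In the parabolic setting the term $F_{i\bar j}F_jF_{\bar i}$ cancels against $\partial_t g$, so no $e^{F/2}$ weight is needed. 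The remaining term $2\operatorname{Re}\langle\nabla\tr_{\tilde\omega}\chi,\nabla F\rangle$ can be absorbed using the gradient term in the parabolic Schwarz lemma. As written, however, this is unexecuted, so the gradient part of (1) is not established.

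A smaller point in (3): the inequality $V(r/2)^{1/q}(r/2)^2\leq C(1+r^2)V(r)$ propagates lower bounds from small radii to large ones. So the base of the iteration is $V(r2^{-k})^{q^{-k}}\to1$, which holds for each fixed smooth metric, not a lower bound on $V(1)$. Obtaining $V(1)$ from the diameter while deriving the diameter from the volume bound would also be circular.
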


\begin{proof}
(1) The estimates of Song--Tian
\cite[Lemma~2.1 and Propositions~2.1, 2.2, 3.1]{ST16} give
\[
 |\mathchanged{\tilde\varphi}|+|\mathchanged{\dot{\tilde\varphi}}|\leq C,\qquad
 \tr_{\mathchanged{\tilde\omega}(t)}\chi\leq C,\qquad
 |\nabla(\mathchanged{\dot{\tilde\varphi}}+\mathchanged{\tilde\varphi})|_{\mathchanged{\tilde\omega}(t)}^2\leq C.
\]
The last inequality is \cite[equation~(3.19)]{ST16}, obtained by
the maximum principle.
The cohomological volume satisfies
\[
 V(t)=\sum_{k=0}^m\binom nk(1-e^{-t})^k e^{-(n-k)t}
                          \int_X\chi^k\wedge\mathchanged{\omega}^{n-k},
 \qquad C^{-1}\leq e^{dt}V(t)\leq C.
\]

(2) Fix a smooth reference K\"ahler form with volume one.
Part (1) implies that the density of $\mu_t$ relative to its volume
form is bounded above and below by fixed positive constants.
Consequently all its Nash--Yau entropies are bounded, and the lower
density function in \cite[Theorem~2.1]{GPSS23} can be chosen to be
a positive constant. Moreover,
\[
 [\mathchanged{\tilde\omega}(t)]\cdot[\mathchanged{\omega}]^{n-1}
 =(1-e^{-t})A\cdot B^{n-1}+e^{-t}B^n\leq C.
\]
The cited theorem gives a uniform mean-zero Sobolev inequality for
some $q>1$. Since $\mu_t(X)=1$ and
$|\int_Xv\,d\mu_t|^2\leq\int_Xv^2\,d\mu_t$, it implies
\eqref{eq:krf-sobolev}.

(3) It is a standard consequence of the uniform Sobolev inequality from part (2). 
\end{proof}

Fix $\lambda>0$ and $g\in C^\infty(\C P^N,\mathbb R)$.
Define $h_t, h$ to be the unique solution to 
\begin{equation}\label{eq:krf-resolvents}
\Delta_{\mathchanged{\tilde\omega}(t)}h_t=\lambda h_t+f^*g,
 \qquad \Delta_{\ocan}h=\lambda h+g.
\end{equation}
The trace bound $\tr_{\mathchanged{\tilde\omega}(t)}\chi\leq C$ gives
\[
\|f^*g\|_{L^\infty}+\|\nabla^{\mathchanged{\tilde\omega}(t)}(f^*g)\|_{L^\infty}\leq C.
\]
By Lemmas~\ref{lem:krf-uniform} and~\ref{lem:gradient-estimate-on-total-space}(note that all properties we used for $\oep$ in Lemma~\ref{lem:gradient-estimate-on-total-space} are $\tr_{\oep}\chi\le C$ and $|\nabla F_\varepsilon|_{\oep}\leq C$),
we have
\begin{equation}\label{eq:krf-resolvent-bound}
 \|h_t\|_{L^\infty}+\|\nabla^{\mathchanged{\tilde\omega}(t)} h_t\|_{L^\infty}\leq C.
\end{equation}
The constants may depend on the fixed $\lambda$ and $g$.

\begin{lemma}\label{lem:krf-resolvent-limit}
As $t\to\infty$,
\[
 h_t\longrightarrow f^*h
 \quad\text{locally uniformly on }X^\circ
 \quad\text{and in }L^p(X,\Omega),\qquad 1\leq p<\infty.
\]
\end{lemma}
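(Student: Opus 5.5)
We follow the proof of the corresponding cscK lemma (the $L^p$ convergence $h_\varepsilon\to f^*h$), with the flow facts of Theorem~\ref{thm:NKRF-convergence} and Lemma~\ref{lem:krf-uniform} replacing the cscK inputs. The argument has three steps: compactness, identification of the limit by uniqueness, and an upgrade to local uniform convergence.

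\emph{Step 1: compactness.} Testing \eqref{eq:krf-resolvents} with $h_t$, and using the maximum principle, gives
\[
\|h_t\|_{L^\infty}\leq\lambda^{-1}\|g\|_{L^\infty},\qquad \int_X|\nabla h_t|^2_{\tilde\omega(t)}\,d\mu_t\leq C .
\]
Lemma~\ref{lem:compactness} applies verbatim, since its proof only used uniform equivalence of $\mu$ with $\Omega$, the cohomological bound on $\int\tr_{\omega}\tilde\omega(t)\,\Omega$, and the vanishing fiber class $[\tilde\omega(t)|_{X_y}]=e^{-t}[\omega_y]$. Hence, for any $t_j\to\infty$, a subsequence satisfies $h_{t_j}\to f^*v$ in every $L^p(X,\Omega)$, with $v\in L^\infty(Y^\circ)$ and $v$ fiberwise constant.

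\emph{Step 2: identification of the limit.} Fix $\varphi\in C^\infty_c(Y^\circ)$. The weak form of the equation is
\[
\int_X h_t\bigl(\Delta_{\tilde\omega(t)}f^*\varphi-\lambda f^*\varphi\bigr)\,d\mu_t=\int_X f^*(g\varphi)\,d\mu_t .
\]
By Theorem~\ref{thm:NKRF-convergence}(3), $\tr_{\tilde\omega(t)}f^*\ddc\varphi\to f^*\Delta_{\ocan}\varphi$ uniformly on $f^{-1}(\operatorname{supp}\varphi)$. Indeed, $f^*\ddc\varphi$ is horizontal, and the model metric $e^{-t}\omega_{\rm SRF}+(1-e^{-t})f^*\ocan$ has horizontal part converging to $f^*\ocan$. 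By Theorem~\ref{thm:NKRF-convergence}(2), $d\mu_t=e^{F(t)}\Omega\to e^{f^*\pcan}\Omega$ in $L^1$; here $F(t)\to f^*\pcan$ follows from $\tilde\varphi\to f^*\pcan$, $\dot{\tilde\varphi}\to0$ and $e^{dt}V(t)\to\binom nd V_FV_Y$. Passing to the limit, and using $f_*(e^{f^*\pcan}\Omega)=\nu_{\rm can}$, gives $\int_Y v(\Delta_{\ocan}\varphi-\lambda\varphi)\,d\nu_{\rm can}=\int_Y g\varphi\,d\nu_{\rm can}$. Interior elliptic regularity then gives $v\in C^\infty(Y^\circ)$ with $\Delta_{\ocan}v=\lambda v+g$ pointwise. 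Lemma~\ref{lem:removable} shows $v$ is the global $W^{1,2}$ weak solution, so $v=h$ by uniqueness. Since the subsequence was arbitrary, the full family converges: $h_t\to f^*h$ in $L^p(X,\Omega)$.

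\emph{Step 3: local uniform convergence.} This is the main point needing care, because Lemma~\ref{lem:iteration} was proved for the cscK family. I would redo the local Moser iteration using the flow's uniform Sobolev inequality \eqref{eq:krf-sobolev}, its inhomogeneous version being harmless, together with the cutoffs $\xi_{s,t}$, whose gradients are bounded by $\tr_{\tilde\omega(t)}\chi\leq C$. Apply it to $U=h_t-f^*h$ on $f^{-1}(B_{2r})$, $B_{2r}\Subset Y^\circ$. There
\[
\Delta_{\tilde\omega(t)}U=\lambda U+f^*\Delta_{\ocan}h-\tr_{\tilde\omega(t)}f^*\ddc h ,
\]
and the last two terms tend to $0$ uniformly on $f^{-1}(B_{2r})$ by Theorem~\ref{thm:NKRF-convergence}(3), since $h$ is smooth on $Y^\circ$. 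Together with $\|U\|_{L^2}\to0$, this gives $\|U\|_{L^\infty(f^{-1}(B_r))}\to0$. Covering a compact set by finitely many such balls completes the proof.

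As an alternative for Step 3, one could use the uniform gradient bound \eqref{eq:krf-resolvent-bound}, the comparison $d_{FS}\le C d_{\tilde\omega(t)}$ and the volume noncollapsing \eqref{eq:krf-volume-diameter}, exactly as in Lemma~\ref{lem:convergence-approximation}(1). This would even give global uniform convergence, using continuity of $h$ on $Y$, which follows from $h\in\Lip(Y)$ and Theorem~\ref{thm:completion}.
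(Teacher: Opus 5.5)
Your proof is correct, but it runs in the opposite order from the paper's.

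\emph{The paper's route.} It starts with local uniform convergence. For $t_j\to\infty$, the gradient bound \eqref{eq:krf-resolvent-bound} together with $\tilde\omega(t_j)\le C_K\omega$ on compact subsets of $X^\circ$ (from Theorem~\ref{thm:NKRF-convergence}(3)) gives equicontinuity in the fixed metric. Arzel\`a--Ascoli then yields a locally uniformly convergent subsequence. Fiberwise constancy of the limit comes from $\operatorname{osc}_{X_y}h_{t_j}\le C\diam(X_y,\tilde\omega(t_j)|_{X_y})\to0$, not from Lemma~\ref{lem:compactness}. The identification of the limit via the weak equation, Lemma~\ref{lem:removable} and uniqueness is exactly your Step~2. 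Finally, the $L^p$ convergence follows at once from dominated convergence, since $\Omega(X\setminus X^\circ)=0$.

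\emph{Your route.} Your Steps~1--2 need only the energy bound, so they carry over the cscK argument unchanged. You pay for this in Step~3, which needs a flow version of Lemma~\ref{lem:iteration}. There you need $\|U\|_{L^{p_*}}\to0$, not just $\|U\|_{L^2}\to0$, to control the term $\lambda U$ in $\Delta U$; this follows by interpolation with the uniform $L^\infty$ bound.

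\emph{Comparison.} Since the uniform gradient bound is already available at this point, the Arzel\`a--Ascoli route makes the Moser upgrade unnecessary. Your alternative Step~3 is precisely the argument the paper uses in the next lemma, Lemma~\ref{lem:krf-resolvent-energy}(1). That variant therefore merges the two lemmas and gives global uniform convergence directly.
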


\begin{proof}
Take any sequence $t_j\to\infty$.
On a compact subset of $X^\circ$, the bound
$\mathchanged{\tilde\omega}(t_j)\leq C_K\mathchanged{\omega}$ and \eqref{eq:krf-resolvent-bound}
give equicontinuity in the fixed smooth metric.
Arzel\`a--Ascoli and a compact exhaustion give a subsequence
converging locally uniformly to a bounded continuous function.
On each fiber over $K\Subset Y^\circ$,
\[
 \operatorname{osc}_{X_y}h_{t_j}
 \leq C\diam(X_y,\mathchanged{\tilde\omega}(t_j)|_{X_y})\to 0.
\]
Thus the limit is $f^*h_\infty$, for a bounded continuous function
$h_\infty$ on $Y^\circ$. 

For $\psi\in C^\infty_c(Y^\circ)$, integration by parts gives
\[
 \int_X h_{t_j}
   \bigl(\lambda f^*\psi-\Delta_{\mathchanged{\tilde\omega}(t_j)}f^*\psi\bigr)d\mu_{t_j}
 =\int_X f^*(g\psi)d\mu_{t_j}.
\]
All integrands are supported in a fixed compact subset of $X^\circ$.
The local uniform convergence implyies
\[
 \int_Y h_\infty
     (\lambda\psi-\Delta^{\mathbb R}_{\omega_{\rm can}}\psi)d\nu_{\rm can}
 =\int_Y g\psi\,d\nu_{\rm can}.
\]
Interior elliptic regularity gives $h_\infty\in C^\infty(Y^\circ)$.
By Lemma~\ref{lem:removable}, $h_\infty=h$.
Every subsequential limit is therefore $f^*h$, which proves local
uniform convergence for the whole family. Since $h_t,h$ are bounded
and $\Omega(X\setminus X^\circ)=0$, dominated convergence gives the
finite-$L^p$ limits.
\end{proof}

\begin{lemma}\label{lem:krf-resolvent-energy}
Put $e_t=h_t-f^*h$. Then:
\begin{enumerate}
\item\label{item:krf-resolvent-uniform}
$\|e_t\|_\infty\to0$.
\item\label{item:krf-resolvent-energy}
For every $K\Subset Y^\circ$,
\[
 \int_{f^{-1}(K)}|\nabla e_t|_{\mathchanged{\tilde\omega}(t)}^2\,d\mu_t\longrightarrow0.
\]
\end{enumerate}
\end{lemma}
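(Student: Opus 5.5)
The plan is to copy the proof of Lemma~\ref{lem:convergence-approximation}, using the flow estimates in place of the cscK estimates. For part (1) we first compare distances. Lemma~\ref{lem:krf-uniform}(1) gives $f^*\omega_{FS}=\ell\chi\leq \ell C\,\tilde\omega(t)$, so
\[
d_{FS}(f(x),f(x'))\leq \sqrt{\ell C}\,d_{\tilde\omega(t)}(x,x').
\]
The gradient bound \eqref{eq:krf-resolvent-bound} then yields
\[
|e_t(x)-e_t(x')|\leq C r+\sup\{|h(y)-h(y')| : d_{FS}(y,y')\leq \sqrt{\ell C}\,r\},
\]
where $r=d_{\tilde\omega(t)}(x,x')$. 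By Theorem~\ref{thm:completion}, $h$ is continuous on the compact space $(Y,d_{FS})$. Indeed, by Lemma~\ref{lem:Resolvent estimate} $h$ is $d_{\rm can}$-Lipschitz, and $d_{\rm can}$ induces the same topology as $d_{FS}$. Hence for each $\delta>0$ there is an $r_\delta$, independent of $t$, such that $|e_t(x)|\geq\delta$ forces $|e_t|\geq\delta/2$ on $B_{\tilde\omega(t)}(x,r_\delta)$.

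The volume lower bound in Lemma~\ref{lem:krf-uniform}(3) then gives
\[
\|e_t\|_{L^2(\mu_t)}^2\geq c\,\delta^2 r_\delta^{2q/(q-1)}.
\]
On the other hand, Lemma~\ref{lem:krf-resolvent-limit} gives $e_t\to0$ in $L^2(\Omega)$, and since $e^{F(t)}\leq C$ this also holds in $L^2(\mu_t)$. These are incompatible for large $t$, so $\|e_t\|_\infty\to0$.

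For part (2), fix $\xi\in C^\infty_c(Y^\circ)$ with $0\leq\xi\leq1$ and $\xi=1$ near $K$, and write $\xi$ also for $f^*\xi$. On $\operatorname{supp}\xi$ we have
\[
\Delta_{\tilde\omega(t)}e_t=\lambda e_t+f^*(\Delta_{\rm can}h)-\Delta_{\tilde\omega(t)}f^*h,
\]
using $\Delta_{\rm can}h=\lambda h+g$. Here $h$ is smooth on $Y^\circ$ by Lemma~\ref{lem:Resolvent estimate} and local regularity. The trace bound $\tr_{\tilde\omega(t)}\chi\leq C$ controls $|\nabla\xi|_{\tilde\omega(t)}$ and $|\Delta_{\tilde\omega(t)}f^*h|=|\tr_{\tilde\omega(t)}f^*\ddc h|$ on $\operatorname{supp}\xi$. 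Together with the bounds on $h_t$, this gives $|\Delta_{\tilde\omega(t)}e_t|\leq C_K$ there.

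Integrate $\xi^2|\nabla e_t|^2$ by parts against $d\mu_t$. We use the unweighted Laplacian, and $d\mu_t$ is $\tilde\omega(t)^n$ up to a constant, so no drift term appears. Young's inequality then gives
\[
\int\xi^2|\nabla e_t|^2\,d\mu_t\leq C\|e_t\|_\infty+C\|e_t\|_\infty^2,
\]
and the right side tends to zero by part (1).

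The main subtlety is the continuity of $h$ in the $d_{FS}$ topology and its uniform modulus, which part (1) relies on. This comes from the Lipschitz regularity in Lemma~\ref{lem:Resolvent estimate} and the homeomorphism in Theorem~\ref{thm:completion}. Everything else is a direct transcription of the cscK argument, because only the trace bound, the gradient bound, the density bounds and noncollapsing were used there.
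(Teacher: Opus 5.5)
Your proposal is correct and follows the paper's own route. The paper proves this lemma simply by declaring it identical to the proof of Lemma~\ref{lem:convergence-approximation}, and you carry out exactly that transcription: you replace the cscK inputs by the trace bound $\tr_{\tilde\omega(t)}\chi\le C$, the resolvent bound \eqref{eq:krf-resolvent-bound}, the noncollapsing estimate of Lemma~\ref{lem:krf-uniform}(3), the density bound $e^{F(t)}\le C$, and the $L^2$ convergence from Lemma~\ref{lem:krf-resolvent-limit}.
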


\begin{proof}
The proof is identical to the proof of \ref{lem:convergence-approximation}.
\end{proof}

\begin{lemma}
\[
\limsup_{t\to\infty}\Lip_{\mathchanged{\tilde\omega}(t)}(h_t)
 \leq\Lip_{d_{\rm can}}(h).
\]
\end{lemma}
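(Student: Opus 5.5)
We mirror the proof of Lemma~\ref{lem:sharp-gradient}, with $\mathchanged{\tilde\omega}(t)$ in place of $\oep$. Put $L_h=\Lip_{\dcan}(h)$. In the paper's normalization $\Lip_{d_\omega}(u)=\sqrt2\||\nabla u|_\omega\|_{L^\infty}$, so it suffices to show that
\[
z_t:=\bigl(|\nabla h_t|^2_{\mathchanged{\tilde\omega}(t)}-L_h^2/2\bigr)_+
\]
satisfies $\|z_t\|_{L^\infty}\to0$. The hypotheses of Lemma~\ref{lem:gradient-estimate-on-total-space} hold uniformly in $t$ by Lemma~\ref{lem:krf-uniform}(1),(2), using \eqref{eq:krf-ricci-potential} and $\Ric(\mathchanged{\tilde\omega}(t))=-\chi-dd^cF(t)$. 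Hence Lemma~\ref{lem:gradient-weak-to-sharp} applies with $c=L_h^2/2$, and it is enough to prove
\[
\int_X z_t\,d\mu_t\to0 .
\]

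\textbf{Step 1: pointwise control on compact sets.} Fix $K\Subset Y^\circ$. By Theorem~\ref{thm:NKRF-convergence}(3), $\mathchanged{\tilde\omega}(t)$ is uniformly close in relative norm to $e^{-t}\osrf+(1-e^{-t})f^*\ocan$ on $f^{-1}(K)$. Use the $\osrf$-orthogonal horizontal/vertical splitting, exactly as in Corollary~\ref{cor:Lp-convergence-in-base-direction}. For basic $(1,0)$-forms this duality gives
\[
|\nabla f^*h|^2_{\mathchanged{\tilde\omega}(t)}\le(1+o_K(1))f^*|\nabla h|^2_{\ocan}\le L_h^2/2+o_K(1)\quad\text{on } f^{-1}(K).
\]
Here $h$ is smooth on $Y^\circ$, and $|\nabla h|_{\ocan}^2\le L_h^2/2$ there because $Y^\circ$ is a smooth Riemannian manifold with $d_{\rm can}$ locally given by $\ocan$. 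This replaces the sharp Schwarz estimate used in the cscK case.

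\textbf{Step 2: integral smallness.} With $e_t=h_t-f^*h$, expand as before:
\[
z_t\le|\nabla e_t|^2+2|\nabla e_t||\nabla f^*h|+\bigl(|\nabla f^*h|^2-L_h^2/2\bigr)_+ .
\]
Integrate over $f^{-1}(K)$ and apply Lemma~\ref{lem:krf-resolvent-energy}(2) together with Cauchy--Schwarz; the $|\nabla f^*h|$ factor is bounded using $\tr_{\mathchanged{\tilde\omega}(t)}\chi\le C$ on $f^{-1}(K)$. This gives $\int_{f^{-1}(K)}z_t\,d\mu_t\to0$. On the complement, \eqref{eq:krf-resolvent-bound} and $e^{F(t)}\le C$ give
\[
\int_{X\setminus f^{-1}(K)}z_t\,d\mu_t\le C\,\Omega\bigl(X\setminus f^{-1}(K)\bigr).
\]
Letting $K\uparrow Y^\circ$ and using $\Omega(f^{-1}(D))=0$ yields $\int_Xz_t\,d\mu_t\to0$. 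Lemma~\ref{lem:gradient-weak-to-sharp} then gives $\|z_t\|_\infty\to0$, which is the claim.

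\textbf{Main obstacle.} The delicate point is Step 1. Theorem~\ref{thm:NKRF-convergence}(3) controls the metric only relative to the degenerate reference $e^{-t}\osrf+(1-e^{-t})f^*\ocan$. One must check that the duality computation for horizontal covectors really yields the factor $(1+o(1))$ with no loss from mixed terms. The argument is that the reference has no mixed terms in the $\osrf$-splitting, and its horizontal block is $f^*\ocan+O(e^{-t})$. A second point is that Lemma~\ref{lem:krf-resolvent-energy}(2) is only cited as ``identical''. One should confirm that its proof uses only $\|e_t\|_\infty\to0$, $\tr\chi\le C$ and the Laplacian comparison
\[
\Delta_{\mathchanged{\tilde\omega}(t)}f^*h-f^*\Delta_{\ocan}h\to0 \quad\text{uniformly on } f^{-1}(K).
\]
The last fact follows from Theorem~\ref{thm:NKRF-convergence}(3) in the same way as in Step 1.
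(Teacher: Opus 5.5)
Your proposal is correct and follows the paper's route. The paper's proof only says the argument is identical to the cscK case; its reference to Lemma~\ref{lem:approx-lip} is evidently meant to be Lemma~\ref{lem:sharp-gradient}. Your Step~1 supplies exactly the input that replaces the sharp Schwarz estimate: Theorem~\ref{thm:NKRF-convergence}(3) combined with the block-diagonal duality of Corollary~\ref{cor:Lp-convergence-in-base-direction}. One minor point: the energy estimate in Lemma~\ref{lem:krf-resolvent-energy}(2) needs only the bound $|\Delta_{\tilde\omega(t)}e_t|\le C$ on $f^{-1}(\operatorname{supp}\xi)$, which follows from $\tr_{\tilde\omega(t)}\chi\le C$. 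So the uniform Laplacian comparison you flag is true but not actually required.
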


\begin{proof}
Just identical to the proof of Lemma~\ref{lem:approx-lip}.
\end{proof}

The lower distance estimate replaces
the reduced-length argument of \cite[Sections~4.2--5]{LTZ26}.
The inputs used here are the Song--Tian flow estimates,
the static Sobolev inequality, regular-locus convergence, and the analysis on the 
canonical model; neither
reduced length nor reduced-volume monotonicity enters this proof.

\bibliographystyle{amsplain}
\bibliography{ref}
\end{document}